\def\p{\partial}
\def\D{\mathfrak{D}}
\def\C{\mathbb{C}}
\def\R{\mathbb{R}}
\def\Z{\mathbb{Z}}
\def\kah{K\"{a}hler }
\def\B{\mathcal{B}}
\def\H{\mathcal{H}}
\def\T{\mathcal{T}}
\def\F{\mathcal{F}}
\newcommand{\detA}{\det A\mkern2mu}
\newcommand{\dist}{\operatorname{dist}}
\newcommand{\bF}{\boldsymbol{F}}
\begin{document}

	\newtheorem{claim}{Claim}
	\newtheorem{theorem}{Theorem}[section]
	\newtheorem{lemma}[theorem]{Lemma}
	\newtheorem{corollary}[theorem]{Corollary}
	\newtheorem{proposition}[theorem]{Proposition}
	\newtheorem{question}{question}[section]
	\newtheorem{definition}[theorem]{Definition}
	\newtheorem{remark}[theorem]{Remark}

	\newtheoremstyle{plainnormal}
	{\topsep}{\topsep}{\normalfont}{}{\bfseries}{.}{ }{}
	\theoremstyle{plainnormal}
	
	\newtheorem{example}[theorem]{Example}

	\numberwithin{equation}{section}

	\title[Short title for running heads]{New Calabi--Yau Metrics of Taub--NUT Type on $\C^{N+1}$}
	\author{Tengfei Ma}
	\address{School of Mathematics, Nanjing University, Nanjing, 210093, P.R. China}
	\email{math\_mtf@163.com}
	\keywords{Calabi--Yau metric, Taub--NUT metric, generalized Gibbons--Hawking ansatz,  gluing method}

	\begin{abstract}
		We construct a class of complete non-flat Calabi--Yau metrics on $\mathbb{C}^{N+1}$ for every $N\geq 3$, which generalize the Taub--NUT metrics from $\mathbb{C}^{2}$ and $\mathbb{C}^{3}$ and whose tangent cone at infinity is $\mathbb{R}^{N}$.  
		The construction relies on the generalized Gibbons--Hawking ansatz.  
		A key obstacle is that the volume-form defect of the ansatz fails to decay near certain components of the discriminant locus, producing singularities more severe than those encountered in dimension three, we resolve this by a gluing procedure.
	\end{abstract}
	\maketitle
	\tableofcontents

	\section{Introduction}
	\subsection{Motivation and History}
	The study of Calabi--Yau metrics originates from Calabi's extremal-metric programme \cite{calabi1979metriques} and Yau's subsequent proof of the Calabi conjecture for {compact} \kah manifolds \cite{yau1978ricci}.
	Since then, the existence of complete, non-compact Calabi--Yau metrics has become a central theme in \kah geometry.  
	Seminal results were obtained by Tian–-Yau~\cite{tian1987existence,tian1990complete,tian1991complete}, who constructed such metrics on the complement of a smooth anti-canonical divisor in a projective manifold. Readers may also consult \cite{hein2010gravitational,conlon2013asymptotically,conlon2015asymptotically} and the references therein for further related developments.
	
	In the present article we focus on complete Calabi--Yau metrics on the complex affine space $\C^{N+1}$.  The flat Euclidean metric is a trivial example.  When $N=1$, i.e.,\ in complex dimension two, the Taub--NUT metric supplies a distinguished example.  
	It first appeared in Taub's 1951 cosmological model~\cite{taub1951empty}, was extended by Newman--Tamburino--Unti~\cite{newman1963empty} to incorporate the now-called NUT charge, and was reinterpreted by Gibbons--Hawking~\cite{gibbons1978gravitational} as an ALF gravitational instanton.
	LeBrun~\cite{lebrun1991complete} observed that the underlying manifold of these metrics is diffeomorphic to~$\mathbb{C}^{2}$
	and thus that the  Taub--NUT metric is a complete Calabi--Yau metric on~$\mathbb{C}^{2}$. In particular, the tangent cone at infinity of these metrics is~$\mathbb{R}^{3}$
	and their geodesic balls have non-maximal volume growth.
	\[
	\operatorname{Vol}\bigl(B_{p}(R)\bigr)\sim R^{3}.
	\]
	
	Very recently, Li~\cite{li2023syz} produced a new family of complete Calabi--Yau metrics on~$\mathbb{C}^{3}$ whose tangent cone at infinity is~$\mathbb{R}^{4}$ and whose volume growth rate is again non-maximal:
	\[
	\operatorname{Vol}\bigl(B_{p}(R)\bigr)\sim R^{4}.
	\]
	These metrics are naturally regarded as higher-dimensional analogues of the classical Taub--NUT metric.  
	Li's strategy is to build an ansatz on a model singular~$\mathbb{T}^{2}$-bundle near infinity and then deform it to a genuine Calabi--Yau metric by means of the Tian--Yau--Hein package~\cite{tian1990complete,hein2010gravitational}.  
	The main difficulty is that, near the discriminant locus, the volume-form error of the ansatz decays only at order $1$ with respect to the distance function; whereas the theory of~\cite{hein2010gravitational} requires a decay rate strictly greater than quadratic.
	
	The main result of the present article (Theorem~\ref{maintheorem}) extends this Taub--NUT-type construction to~$\mathbb{C}^{N+1}$ for every~$N\ge 3$, yielding complete Calabi--Yau metrics whose tangent cone at infinity is the flat~$\mathbb{R}^{N+2}$ and whose volume growth is
	\[
	\operatorname{Vol}\bigl(B_{p}(R)\bigr)\sim R^{N+2},
	\]
	hence again non-maximal.  
	Compared with the~$\mathbb{C}^{3}$ case treated in~\cite{li2023syz}, the principal difficulty in higher dimensions is that the volume-form error exhibits substantially worse decay behaviour. Indeed, it can fail to decay at all near certain components of the discriminant locus (see the discussion after Theorem~\ref{maintheorem}).  
	We overcome this obstruction by a careful gluing technique.  
	Our metrics have bounded curvature ($\lVert\mathrm{Rm}\rVert_{L^{\infty}}<\infty$), but they are not $L^{2}$-integrable.
	
	We emphasize that the metrics constructed here do \emph{not} exhibit maximal volume growth.  
	At present, two families of complete Calabi--Yau metrics on~$\mathbb{C}^{N+1}$ with non-maximal growth are known.  
	First, Apostolov--Cifarelli~\cite{apostolov2025hamiltonian} recently produced examples whose volume behaves like
	\[
	\operatorname{Vol}\bigl(B_{p}(R)\bigr)\sim R^{2N+1}.
	\]
	When~$N=1$, their construction recovers the classical Taub--NUT metric on~$\mathbb{C}^{2}$, but for~$N\ge 2$ their growth rate differs from both Li's examples~\cite{li2023syz} and ours.  
	Second, in even dimensions, the so-called Taubian--Calabi metrics on~$\mathbb{C}^{2N}$ ($N\ge 1$) were constructed in~\cite{lee1996massive} and generalised in~\cite{min2023construction}. According to~\cite{gibbons1997hyperkahler}, the terminology was suggested by Ro\v{c}ek~\cite{rovcek1985supersymmetry}.  
	These metrics satisfy
	\[
	\operatorname{Vol}\bigl(B_{p}(R)\bigr)\sim R^{4N-1},
	\]
	and they therefore coincide with the present growth rate only when ~$N=1$.
	
	Finally, we remark that Li~\cite{li2019new}, Conlon--Rochon~\cite{conlon2021new}, Sz\'{e}kelyhidi~\cite{szekelyhidi2019degenerations}, and more recently Firester~\cite{firester2024complete}, have constructed complete Calabi--Yau metrics on~$\mathbb{C}^{N+1}$ whose tangent cones are of the form~$V\times\mathbb{C}$ with~$V$ an~$N$-dimensional Calabi--Yau cone having smooth cross-section.  
	All of these examples enjoy maximal volume growth and are therefore distinct from the metrics constructed in this article.
	
	\subsection{Main Theorem}
	We introduce a holomorphic Lie-group action on $\mathbb{C}^{N+1}$ that will be referred to as the \emph{diagonal $\mathbb{T}^{N}$-action}, defined by
	\begin{equation*}
		(e^{i\theta_{1}},\dots,e^{i\theta_{N}})\cdot(z_{0},z_{1},\dots,z_{N})=\bigl(e^{-i\theta_{\Sigma}}z_{0},\,e^{i\theta_{1}}z_{1},\dots,e^{i\theta_{N}}z_{N}\bigr),
	\end{equation*}
	where $\theta_{\Sigma}\coloneqq\sum_{j=1}^{N}\theta_j$. This action evidently preserves the standard holomorphic volume form
	\[
	\sqrt{-1}^{N}\,\mathrm{d}z_{0}\wedge\dots\wedge\mathrm{d}z_{N}.
	\]
	If a tensor is unchanged under this action, we say that the tensor is $\mathbb{T}^{N}$-invariant. The main result of this paper is as follows:
	
	\begin{theorem}\label{maintheorem}
		There exists a family of complete, non-flat, $\mathbb{T}^{N}$-invariant Calabi--Yau metrics on $\C^{N+1}$  whose tangent cone at infinity is $\R^{N+2}$.
	\end{theorem}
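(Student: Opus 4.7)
The strategy, following and extending Li's approach in dimension three, is to produce a good approximate Calabi--Yau metric via a generalized Gibbons--Hawking ansatz and then to perturb it to a genuine Calabi--Yau metric using the Tian--Yau--Hein package. The diagonal $\mathbb{T}^{N}$-action has real quotient of dimension $N+2$, matching the prescribed tangent cone, so the ansatz should be modelled on an open subset of $\mathbb{R}^{N+2}$ minus a discriminant locus $\Delta$. I would begin by making the ansatz explicit: a $\mathbb{T}^{N}$-invariant K\"ahler metric on $\mathbb{C}^{N+1}$ compatible with the standard holomorphic volume form is encoded by a positive-definite symmetric matrix-valued function $H=(h_{ij})$ on $\mathbb{R}^{N+2}\setminus\Delta$ together with a connection 1-form $\Theta$ on the principal $\mathbb{T}^{N}$-bundle; Ricci-flatness reduces to a monopole-type system schematically of the form $dH=\ast\,d\Theta$, together with an integrability constraint reflecting the K\"ahler condition.

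Next, I would analyze the stratification of $\Delta$ induced by the subsets $S\subset\{0,1,\dots,N\}$ of coordinates that vanish, and construct the model $H$ by superposing Green's functions on $\mathbb{R}^{N+2}$ with sources along the images of the coordinate hyperplanes. Near a generic codimension-three stratum the local model is a $4$-dimensional Taub--NUT factor times a flat $\mathbb{R}^{N-1}$ factor, and the volume-form defect decays at an acceptable rate. The essentially new phenomenon for $N\ge 3$ is that along deeper strata, where several coordinates vanish simultaneously, the decay degrades and, for certain components, the defect fails to decay at all. I would resolve this by a gluing procedure: near each singular stratum I construct a local model by iterating the Taub--NUT construction in the directions transverse to the stratum, treating the product of the associated lower-dimensional Taub--NUT factors as a refined ansatz, and then patch to the global Gibbons--Hawking ansatz using $\mathbb{T}^{N}$-invariant cutoffs in the base.

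The key technical step is to verify that the glued approximate metric $\omega_{0}$ is K\"ahler, quasi-isometric to the conical model $\mathbb{R}^{N+2}$ at infinity, has bounded curvature, and satisfies a volume-form identity
\begin{equation*}
\omega_{0}^{N+1}=(1+f)\,\Omega\wedge\overline{\Omega}
\end{equation*}
with $f$ decaying strictly faster than quadratically in the distance function of $\omega_{0}$. Once such an $\omega_{0}$ is in hand, Hein's existence theorem for the complex Monge--Amp\`ere equation $(\omega_{0}+\db u)^{N+1}=\Omega\wedge\overline{\Omega}$ applies in $\mathbb{T}^{N}$-invariant weighted H\"older spaces adapted to the noncollapsed $\mathbb{R}^{N+2}$ structure at infinity, yielding a bounded solution $u$ and hence a genuine Calabi--Yau metric. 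Completeness, bounded curvature, non-flatness, and the identification of the tangent cone with $\mathbb{R}^{N+2}$ follow from the explicit asymptotics of $\omega_{0}$ together with Schauder estimates for $u$.

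The main obstacle is the gluing step itself. One must design, for each deep stratum of $\Delta$, a local model whose defect decays strictly faster than quadratically, and patch these together without destroying the decay across intersections of strata; the combinatorial complexity of the stratification grows with $N$, and ensuring $\mathbb{T}^{N}$-invariance, the K\"ahler condition, and the sharp decay simultaneously is where the dimension-three argument of \cite{li2023syz} breaks down and where the new technical effort of this paper must be concentrated.
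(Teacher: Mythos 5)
Your high-level framework — generalized Gibbons--Hawking ansatz, Green-function superposition, identification that the volume-form defect fails to decay along deep strata, gluing to repair it, then Tian--Yau--Hein — matches the paper's. But the mechanism you propose for the gluing has a genuine gap, and it is exactly the step where the paper's real work lives.

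You suggest building a local model near a deep stratum by "treating the product of the associated lower-dimensional Taub--NUT factors as a refined ansatz" and then patching to the global ansatz with $\mathbb{T}^{N}$-invariant cutoffs in the base. This would not work. Transverse to a stratum $\D_{I}$ with $|I|=n+1\ge 3$, the local model is \emph{not} a product of lower-dimensional Taub--NUT factors; it is a single Taub--NUT-type metric on $\mathbb{C}^{n+1}$ whose discriminant locus is itself a nontrivial $(n-1)$-dimensional cone. Because the Calabi--Yau condition $\det V=W$ is nonlinear and the Green-function ansatz only kills its linearization, a product of $\mathbb{C}^{2}$ Taub--NUT factors (or any explicit superposition of Green functions) still has $O(1)$ volume-form error along the transverse discriminant locus — so patching such a "model" with cutoffs does not improve the defect at all. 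The only thing that improves the defect along $\D_{I}$ is a transverse model that is an \emph{exact} Calabi--Yau metric, and no such metric exists in closed form on $\mathbb{C}^{n+1}$ for $n\ge 2$.

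What the paper actually does is an inductive surgery: it proves (Theorem~\ref{MAkejiexing}) that for each $I$ with $|I|=n+1$, once the ansatz has already been repaired along all strata $\D_{K}$ with $K\subsetneqq I$, the transverse model on $\mathbb{C}^{n+1}$ has volume-form error decaying strictly faster than quadratically (Lemma~\ref{decayvolumeerror}), so Tian--Yau--Hein can be invoked \emph{on that lower-dimensional model space} to produce an exact Calabi--Yau metric $\omega_{\mathbb{C}^{n+1}}$. The gluing (Subsection~\ref{subsectiongluingcalabiyau}) then splices this solved metric back into the GH coefficients via $\omega_{\mathrm{glu}}=\omega_{1}+dd^{c}(f\varphi)+\omega_{2}$, at the level of a K\"ahler potential on the total space $\mathbb{C}^{n+1}\times\mathbb{R}^{2(N-n)}$, not at the level of cutoffs in the base $\mathcal{B}$. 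This requires identifying the GH structure near $\D_{I}$ biholomorphically with the model (Proposition~\ref{smoothstructurenearBI}), extending the metric smoothly across the removed compact set (Subsection~\ref{subsctionsurgeryoriginal}, using the logarithmic growth bound of Lemma~\ref{zzengzhangxing}), and carefully tracking inductive hypotheses (Proposition~\ref{inductionhyp}) because each surgery only produces GH coefficients on a shrinking domain $\F_{n}$ and the output of one surgery is the input of the next. Your proposal acknowledges that the gluing is the hard part, but without the inductive solve-then-splice structure — and in particular without recognizing that you must solve a Monge--Amp\`ere equation at every level $2\le n\le N-1$, not just once at the end — the argument cannot be made to close.
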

	
	The toric symmetry of the metrics gives rise to $\mathbb{T}^{N}$-fibrations via the corresponding moment maps and \kah quotient coordinates. The base of each fibration is $\mathbb{R}^{N}\times\mathbb{C}$, and the discriminant locus $\D\subset\mathbb{R}^{N}\times\mathbb{C}$ (see Definition~\ref{discriminateloucsdef}) is an unbounded $(N-1)$-dimensional subset.
	
	These metrics are parametrised by positive-definite real symmetric matrices $A=(A_{ij})$ of rank $N$.  Away from a tubular neighbourhood of $\D$, the metric is asymptotically a flat $\mathbb{T}^{N}$-fibration over an open subset of $\R^{N+2}$.  Transverse to $\D$, it is modelled on a fibration of Taub--NUT-type metrics on $\C^{n+1}$ $(1\le n<N)$, where $n$ may vary along different components of the locus.
	
	The volume growth of geodesic balls satisfies
	\[
	\operatorname{Vol}\bigl(B_{p}(R)\bigr)\sim R^{N+2}\quad\text{as }R\to\infty,
	\]
	so these metrics do not have maximal volume growth.  The curvature decays cubically with respect to the distance to $\D$. In particular, it is bounded but does not decay near $\D$.
	
	When $N=1$ or $2$, the Calabi--Yau metrics of Theorem~\ref{maintheorem} correspond to the classical Taub--NUT metrics and the Taub--NUT-type metrics on $\C^{3}$ constructed in \cite{li2023syz}, respectively.  Thus Theorem~\ref{maintheorem} provides a higher-dimensional generalisation of the Taub--NUT metric.

	\subsection{Obstructions and Strategy}
	Compared with the construction of the Taub--NUT metric on $\C^{3}$ in \cite{li2023syz}, the principal difficulty in proving Theorem~\ref{maintheorem} is the deterioration in the volume-form error decay for the first-order asymptotic metric in higher dimensions. This phenomenon essentially arises because, as the dimension increases, the discriminant locus $\D$ acquires more intricate topological features.
	
	Indeed, one can construct a first-order asymptotic metric $g^{(1)}$ using the generalized Gibbons--Hawking ansatz, analogous to Section~2.1 of \cite{li2023syz}; see Subsection~\ref{subsectionfirstorder}. 
	In \cite{li2023syz} the volume-form error decays at least at  order $1$ along the discriminant locus, whereas in our setting it may fail to decay at any positive order along some part of the discriminant locus; see the discussion at the end of Subsection~\ref{subsectionfirstorder}. 
	Note that in the classical theory of complete non-compact \kah manifolds, the solvability of the complex Monge--Amp\`ere equation typically requires the volume-form error to decay by more than two orders; see Tian--Yau--Hein package \cite{hein2010gravitational}.

	To overcome the lack of sufficient decay, we employ a gluing technique. Roughly speaking, our main idea is to approximate the first-order asymptotic metric $g^{(1)}$ along the discriminant locus by a model metric. This model metric is defined as a product metric on a certain product space; see Subsection~\ref{subsectionstructureonmodel}. We will show that this product space is isomorphic to $\C^{n} \times \R^{2(N - n)}$, where all topological gaps are concentrated in the $\C^{n}$ factor. 
	
	Next, we apply the gluing technique in a conical region of $\C^{n}\times\R^{2(N-n)}$ to deform $g^{(1)}$ into a Calabi--Yau metric; see Subsection~\ref{subsectiongluingcalabiyau}.  After that we add back the error produced when the model metric was used to approximate $g^{(1)}$.  This procedure is referred to as a \emph{surgery}.  We prove that the volume-form error of the metric obtained after surgery decays at least at order $1$ along the locus; see Subsection~\ref{subsectionpde}.  We then improve this decay rate by solving a Laplace-type equation, following Section~2.9 of \cite{li2023syz}, and finally apply the Tian--Yau--Hein package to solve the complex Monge--Amp\`ere equation and construct the desired Calabi--Yau metric; see Subsection~\ref{subsectionpde}.

	When constructing the Taub--NUT metric on $\C^{N+1}$ with $N \geq 4$, at least two surgeries are required. Since the output of one surgery influences the next, the analysis becomes highly entangled. To address this, our proof of Theorem~\ref{maintheorem} is based on mathematical induction. Roughly speaking, in Section~\ref{sectioninduction}, we assume that after a certain surgery, the resulting Gibbons--Hawking coefficients satisfy the desired properties. These assumptions are satisfied by the coefficients corresponding to $g^{(1)}$. We then use these properties to perform the next surgery, and verify that the new coefficients again satisfy the same assumptions. After finitely many such steps, we obtain a proof of Theorem~\ref{maintheorem}. Note that after each surgery, the resulting Gibbons--Hawking coefficients are not defined over the entire base space, yet the induction procedure remains valid.
	
	\subsection{Outline of This Paper}
	This paper is organized as follows.
	
	Section~\ref{sectionprel} reviews the generalized Gibbons--Hawking ansatz following~\cite{zharkov2004limiting}.
	In Subsection~\ref{examplefalt} we fix notation for the discriminant locus adapted to our setting, and in Subsection~\ref{subsectioncompacteqation} we record the distributional equation that compactifies the principal~$\mathbb{T}^{N}$-bundle.
	This equation already appears in~\cite{zharkov2004limiting} in greater generality, here we specialise it to the case required by our main theorem.
	
	Section~\ref{sectionlinearization} constructs the first-order asymptotic metric~$g^{(1)}$ from~\cite{li2023syz} and observes that its volume-form error fails to decay along certain components of the discriminant locus.
	Subsection~\ref{subsectionthegeometry} exploits the symmetries of the locus to simplify later arguments, while Subsection~\ref{subsectionjianjing1} derives the asymptotics of~$g^{(1)}$ nearby. Lemma~\ref{alphajianjin} provides the key estimate for the gluing procedure.
	
	Section~\ref{sectioninduction} formulates the inductive hypotheses (Proposition~\ref{inductionhyp}) needed in the proof of Theorem~\ref{maintheorem}.
	Propositions~\ref{prop:smooth-structure-DK} and~\ref{buchongassumation} in Subsection~\ref{subsectionsmoothstructure} clarify and supplement these hypotheses: the former shows how the GH coefficients induce smooth structures near the discriminant locus, and the latter imposes additional geometric constraints on the resulting GH metrics.
	All assumptions are satisfied by the initial metric~$g^{(1)}$. Example~\ref{biaozhunmuxing} defines the model geometry that serves as the local model for our higher-dimensional Taub--NUT metric along the locus.
	
	Section~\ref{sectionholpoint} produces a \kah structure from the Gibbons--Hawking ansatz under the inductive hypotheses.
	Subsections~\ref{subsectionholomorphicmap}--\ref{subsectiontheimageofthehol} prove that this structure is biholomorphic to the model metric of Example~\ref{biaozhunmuxing}.
	Our metric is, in a suitable sense, defined only ``outside a compact set'', the proof follows~\cite[Section~2.4]{li2023syz}, but the preceding surgeries create extra technicalities.
	We carefully analyse the relation between the complex structure already present along the locus and the one induced by the inductive hypothesis (Subsection~\ref{subsectionlogarithmicgrowth}). Lemma~\ref{zzengzhangxing} is crucial for the subsequent gluing.
	
	Section~\ref{sectionsurgery} carries out the gluing.
	Subsection~\ref{subsctionsurgeryoriginal} extends the \kah metric from Section~\ref{sectionholpoint} smoothly to the whole model space, using toric symmetry and Lemma~\ref{zzengzhangxing}.
	Subsection~\ref{subsectionasymptoticprop} studies the asymptotic behavior of the extended metric and introduces global weighted Sobolev norms needed for Tian--Yau--Hein package.
	Subsection~\ref{subsectiongluingcalabiyau} performs a fibrewise gluing of Calabi--Yau metrics along the conical region of the locus, producing new GH coefficients that satisfy all assumptions of Section~\ref{sectioninduction}; induction then gives Theorem~\ref{maintheorem}.
	Finally, Subsection~\ref{subsectionpde} solves the complex Monge--Amp\`ere equation (Theorem~\ref{MAkejiexing}) with sharp estimates by approximating Green kernels region-by-region, solving the Laplace equation and improving the decay of the volume-form error, following~\cite[Section~2.8]{li2023syz}.
	
	\subsection{Acknowledgments} 
	The author thanks his advisor, Professor Gang Tian, for his continuous guidance and for suggesting the investigation of metric problems in \kah geometry that motivated the present work.

	\section{Preliminaries}\label{sectionprel}
	
	\subsection{Generalised Gibbons--Hawking Ansatz}
	The Gibbons--Hawking ansatz was introduced in 1978 to construct circle-invariant ALF gravitational instantons \cite{gibbons1978gravitational}.  
	Pedersen--Poon reformulated the four-dimensional Gibbons--Hawking monopole equations as a moment-map condition for a tri-hamiltonian circle action \cite{pedersen1991hamiltonian} and, in a companion paper, extended the ansatz to arbitrary $4n$ dimensions with an $m$-torus fibre, obtaining toric hyper-K\"ahler metrics governed by generalized abelian monopole equations \cite{pedersen1988hyper}.  
	Zharkov \cite{zharkov2004limiting} later adapted the framework to local Calabi--Yau geometry, interpreting it as a semi-flat SYZ model and analysing its large-complex-structure limit.
	
	We follow the notation of \cite{zharkov2004limiting}, whose Theorem~2.1 gives the generalized Gibbons--Hawking ansatz in greater generality. Here we record only the special form needed in the present paper.  
	Suppose $M$ is a complex $(N+1)$-dimensional K\"ahler manifold endowed with a non-vanishing holomorphic volume form and admitting a holomorphic, isometric, free $T^{N}$-action.  
	The \emph{generalized Gibbons--Hawking ansatz} expresses the K\"ahler and Calabi--Yau conditions in terms of the $N$ moment-map coordinates and the holomorphic coordinates on the K\"ahler quotient.
	
	Let $\mathfrak{t}^{N}$ denote the Lie algebra of $T^{N}$, and let $\mathfrak{t}^{N}_{\mathbb{Z}}$ be the natural integral lattice in $\mathfrak{t}^{N}$.  
	A choice of basis for $\mathfrak{t}^{N}_{\mathbb{Z}}$ defines linear coordinates $\mu_{i}$ on the dual space $(\mathfrak{t}^{N})^{*}\simeq\mathbb{R}^{N}$.  
	Let $Y$ be either $\mathbb{C}$ or $\mathbb{C}^{*}$, and let $\eta$ denote the standard complex coordinate on $\mathbb{C}$ or the logarithmic coordinate on $\mathbb{C}^{*}$ with period $1$ (so that $e^{2\pi\sqrt{-1}\eta}$ are the standard coordinates on $\mathbb{C}^{*}$).  
	We consider a principal $T^{N}$-bundle $\pi\colon M\to\mathcal{B}^{0}$ over an open subset $\mathcal{B}^{0}$ of $(\mathfrak{t}^{N})^{*}\times Y$ whose first Chern class is an element $c_{1}\in H^{2}(\mathcal{B}^{0},\mathfrak{t}^{N}_{\mathbb{Z}})$.  
	Later we shall partially compactify $M$ into a singular $T^{N}$-bundle.  
	The summation convention is used throughout.

	\begin{theorem}\label{GibbonsHawking}
		(cf.\ Theorem 2.1 in \cite{zharkov2004limiting})
		Let $V_{ij}$ be smooth real symmetric positive-definite matrix functions and let $W$ be a smooth positive real function on $\mathcal{B}^{0}$, locally given by a potential function $\Phi$:
		\begin{equation}\label{GibbonsHawkingpotential}
			V_{ij}=\frac{\partial^{2}\Phi}{\partial\mu_{i}\partial\mu_{j}},\quad
			W=-4\frac{\partial^{2}\Phi}{\partial\eta\partial\bar\eta},\quad
			1\le i,j\le N.
		\end{equation}
		Then the following $\mathfrak{t}^{N}$-valued real $2$-form is closed:
		\begin{align}\label{GibbonsHawkingcurvature}
			F_{j}=\sqrt{-1}\left(\frac{1}{2}\frac{\partial W}{\partial\mu_{j}}\mathrm{d}\eta\wedge \mathrm{d}\bar\eta
			+\frac{\partial V_{ij}}{\partial\eta}\mathrm{d}\mu_{i}\wedge \mathrm{d}\eta
			-\frac{\partial V_{ij}}{\partial\bar\eta}\mathrm{d}\mu_{i}\wedge \mathrm{d}\bar\eta
			\right).
		\end{align}
		Suppose further that $\frac{1}{2\pi}(F_{1},\dots,F_{N})$ lies in the cohomology class $c_{1}\in H^{2}(\mathcal{B}^{0},\mathfrak{t}^{N}_{\mathbb{Z}})$.  
		Then there exists a connection $\vartheta$ on the principal bundle $M\to\mathcal{B}^{0}$ with curvature $\mathrm{d}\vartheta_{i}=F_{i}$ for $i=1,\dots,N$, such that $M$ is a K\"ahler manifold with metric tensor
		\begin{equation}\label{GibbonsHawkingmetrictensor}
			h=V^{-1}_{ij}\zeta_{i}\otimes\bar\zeta_{j}+W\,\mathrm{d}\eta\otimes \mathrm{d}\bar\eta,\quad
			\omega=\mathrm{d}\mu_{j}\wedge\vartheta_{j}+\frac{\sqrt{-1}}{2}W\,\mathrm{d}\eta\wedge \mathrm{d}\bar\eta,
		\end{equation}
		where $\zeta_{j}=V_{ij}\,\mathrm{d}\mu_{i}+\sqrt{-1}\,\vartheta_{j}$ and $\mathrm{d}\eta$ form a basis of $(1,0)$-forms defining an integrable complex structure.  
		There is a nowhere-vanishing holomorphic form on $M$:
		\begin{equation}\label{GibbonsHawkingholomorphicvolume}
			\Omega=\wedge_{j=1}^{N}(-\sqrt{-1}\zeta_{j})\wedge \mathrm{d}\eta.
		\end{equation}
		The Calabi--Yau condition $\omega^{N}=\frac{N!}{2^{N}}\sqrt{-1}^{N^{2}}\Omega\wedge\overline{\Omega}$ is equivalent to
		\begin{equation}\label{GibbonsHawkingCY}
			\det(V_{ij})=\det(W).
		\end{equation}
	\end{theorem}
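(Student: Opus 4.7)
The plan is to verify, in order: (i) closedness of $F_j$, (ii) existence of a connection on $M$ realising it, (iii) integrability of the almost complex structure determined by $\zeta_j$ and $\d\eta$, (iv) the \kah condition $\d\omega=0$, (v) holomorphicity of $\Omega$, and (vi) the algebraic reduction of the Calabi--Yau equation.

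For (i), I would differentiate \eqref{GibbonsHawkingcurvature} and use the symmetry of the third derivatives of $\Phi$: the $\d\mu_k\wedge \d\mu_i$ terms meet coefficients symmetric in $(i,k)$ and are killed, while the remaining $\d\mu_i\wedge \d\eta\wedge \d\bar\eta$ terms cancel via the potential identity $\p_\eta\p_{\bar\eta}V_{ij}=-\tfrac14\p_{\mu_i}\p_{\mu_j}W$ read off from \eqref{GibbonsHawkingpotential}. Step (ii) is then standard Chern--Weil: once $[F_j/2\pi]\in H^2(\B^0,\mathfrak{t}^N_{\Z})$ matches the prescribed $c_1$, a connection $\vartheta$ with curvature $F_j$ exists.

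Step (iii) is the conceptual core. I would compute
\begin{equation*}
\d\zeta_j \;=\; \d V_{ij}\wedge \d\mu_i + \sqrt{-1}\,F_j
\end{equation*}
and observe that the specific shape of $F_j$ in \eqref{GibbonsHawkingcurvature} is engineered precisely so that the $\d\mu_i\wedge \d\bar\eta$ piece of $\d V_{ij}\wedge \d\mu_i$ is cancelled by the corresponding term in $\sqrt{-1}\,F_j$. What remains is supported on $\d\mu_i\wedge \d\eta$ and $\d\eta\wedge \d\bar\eta$, which are of type $(2,0)+(1,1)$ in the basis $(\zeta_j,\d\eta,\bar\zeta_j,\d\bar\eta)$; Newlander--Nirenberg then yields integrability. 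This is the step where all the potential identities must conspire exactly, and it would be the main point to verify carefully.

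For (iv), one finds $\d\omega=-\d\mu_j\wedge F_j+\tfrac{\sqrt{-1}}{2}\,\d W\wedge \d\eta\wedge \d\bar\eta$; the $V_{ij}$-symmetric terms vanish against antisymmetric wedges of two $\d\mu$'s, and the remaining $\d\mu_j\wedge \d\eta\wedge \d\bar\eta$ contributions cancel to give $\d\omega=0$. For (v), every term of $\d\zeta_j$ carries a $\d\eta$- or $\d\bar\eta$-factor that is killed by the $\d\eta$ already present in $\Omega$, so a short bookkeeping shows $\d\Omega=0$, and a closed $(N+1,0)$-form on an $(N+1)$-fold is automatically holomorphic. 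For (vi), I would expand both sides of the Calabi--Yau identity in the real basis $(\d\mu_i,\vartheta_j,\d\eta,\d\bar\eta)$: $\omega^{N+1}/(N+1)!$ picks up a scalar $W$ times the real volume monomial $\d\mu_1\wedge\cdots\wedge \d\mu_N\wedge\vartheta_1\wedge\cdots\wedge\vartheta_N\wedge\d\eta\wedge \d\bar\eta$, while $\Omega\wedge\overline{\Omega}$ picks up $\det V$ times the same monomial, where $\det V$ arises as the determinant of the block change-of-basis matrix relating $(\zeta_j,\bar\zeta_j)$ to $(\d\mu_i,\vartheta_j)$. Matching coefficients reduces the Calabi--Yau condition to the claimed algebraic identity $\det V=W$.
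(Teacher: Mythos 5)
Your proposal follows the same line as the paper's proof in \cite{zharkov2004limiting} essentially step by step: closedness of $F_j$ reduces to the integrability identities \eqref{GibbonsHawkingintegrability} obtained from the potential $\Phi$; Chern--Weil supplies the connection; $\mathrm{d}\zeta_j$ is computed and shown to have vanishing $(0,2)$-part (equivalently the paper says the differential ideal generated by the $(1,0)$-forms is closed, which is the same integrability criterion); $\mathrm{d}\omega=0$ follows by the same cancellations; and the Calabi--Yau condition is a determinant comparison in the frame $(\mathrm{d}\mu_i,\vartheta_j,\mathrm{d}\eta,\mathrm{d}\bar\eta)$.

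One small slip to fix in step (v): you write that ``every term of $\mathrm{d}\zeta_j$ carries a $\mathrm{d}\eta$- or $\mathrm{d}\bar\eta$-factor that is killed by the $\mathrm{d}\eta$ already present in $\Omega$.'' A $\mathrm{d}\bar\eta$-factor by itself is \emph{not} killed by wedging with $\mathrm{d}\eta$; what is actually true (and what the computation in step (iii) delivers, cf.\ \eqref{GibbonsHawkingholomorphicdifferential}) is the stronger fact that $\mathrm{d}\zeta_j$ is a multiple of $\mathrm{d}\eta$, so every term genuinely carries a $\mathrm{d}\eta$-factor and $\mathrm{d}\Omega=0$ follows. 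Also note that you have correctly written $\omega^{N+1}$ in step (vi), matching the complex dimension $N+1$ of $M$; the exponent $N$ appearing in the theorem's statement and in the paper's displayed formula $\omega^{N}=W\det(V_{ij})^{-1}\cdots$ is evidently a typographical shift of index, and your version is the dimensionally consistent one.
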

	
	\begin{remark}
		By the Frobenius theorem, \eqref{GibbonsHawkingpotential} is equivalent to the following system, which we shall refer to as the integrability condition:
		\begin{equation}\label{GibbonsHawkingintegrability}
			\frac{\partial V_{ij}}{\partial\mu_{k}}=\frac{\partial V_{ik}}{\partial\mu_{j}},\quad
			\frac{\partial^{2}W}{\partial\mu_{i}\partial\mu_{j}}
			+4\frac{\partial^{2}V_{ij}}{\partial\eta\partial\bar\eta}=0.
		\end{equation}
	\end{remark}
	
	For the reader's convenience we briefly recall the proof given in \cite{zharkov2004limiting,li2023syz}, it may be regarded as an additional explanation of the theorem.
	
	\begin{proof}
		Using \eqref{GibbonsHawkingcurvature} and the integrability condition \eqref{GibbonsHawkingintegrability},
		\begin{equation}\label{GibbonsHawkingdF}
			\mathrm{d} F_{j}=\frac{\sqrt{-1}}{2}\left(
			\frac{\partial^{2}W}{\partial\mu_{i}\partial\mu_{j}}
			+4\frac{\partial^{2}V_{ij}}{\partial\eta\partial\bar\eta}
			\right)\mathrm{d}\mu_{i}\wedge \mathrm{d}\eta\wedge \mathrm{d}\bar\eta,
		\end{equation}
		so closedness of $F_{j}$ is equivalent to \eqref{GibbonsHawkingintegrability}.  
		Since $\frac{1}{2\pi}F$ represents the appropriate first Chern class, $F$ must be the curvature of a $T^{N}$-connection $\vartheta$.  
		Moreover, if $\theta_{j}$ denote coordinates on the torus fibre such that $X_{j}$ are the Hamiltonian vector fields, then the connection $1$-forms can be written, up to exact forms on $\mathcal{B}^{0}$, in terms of the local potential $\Phi$:
		\begin{equation}
			\vartheta_{j}=\mathrm{d}\theta_{j}+\sqrt{-1}\left(
			\frac{\partial^{2}\Phi}{\partial\mu_{j}\partial\eta}\,\mathrm{d}\eta
			-\frac{\partial^{2}\Phi}{\partial\mu_{j}\partial\bar\eta}\,\mathrm{d}\bar\eta
			\right).
		\end{equation}
		One verifies explicitly that $F_{j}=\mathrm{d}\vartheta_{j}$.  
		Gauge-equivalent choices of the connection define the structures on $M$ up to holomorphic isometry.
		
		Integrability of the complex structure follows from the fact that the differential ideal generated by the $(1,0)$-forms is closed:
		\begin{equation}\label{GibbonsHawkingholomorphicdifferential}
			\mathrm{d}\zeta_{j}=\left(
			\frac{1}{2}\frac{\partial W}{\partial\mu_{j}}\,\mathrm{d}\bar\eta
			-2\frac{\partial V_{ij}}{\partial\eta}\,\mathrm{d}\mu_{i}
			\right)\wedge \mathrm{d}\eta,
		\end{equation}
		where we have used \eqref{GibbonsHawkingintegrability}, \eqref{GibbonsHawkingcurvature} and the definition of $\zeta_{j}$.
		
		Note that $\mathrm{d}\eta=\Omega\!\left(X_{1},\dots,X_{N},\cdot\right)$, accordingly we refer to $\eta$ as the \emph{holomorphic moment coordinate}.  
		The K\"ahler condition $\mathrm{d}\omega=0$ follows from \eqref{GibbonsHawkingintegrability} and \eqref{GibbonsHawkingcurvature}.  
		The Calabi--Yau condition follows from the more general formula
		\[
		\omega^{N}=W\det(V_{ij})^{-1}\frac{N!}{2^{N}}\sqrt{-1}^{N^{2}}\Omega\wedge\overline{\Omega}.
		\]
		This completes the proof.
	\end{proof}
	
	In the remainder of the paper we abbreviate ``Gibbons--Hawking'' to GH for brevity.
	We refer to the functions $V_{ij}$ and $W$ appearing in the theorem as the \emph{GH coefficients}.
	Thus the generalized Gibbons--Hawking ansatz asserts that, given GH coefficients satisfying the positivity assumption, the integrability condition \eqref{GibbonsHawkingintegrability}, and the requirement that the curvature defined by \eqref{GibbonsHawkingcurvature} lies in the cohomology class $2\pi c_{1}$ of the principal bundle, one obtains a GH structure (an integrable complex structure, a holomorphic volume form, and a metric). In particular, if the GH coefficients also satisfy the Calabi--Yau condition \eqref{GibbonsHawkingCY}, then the induced GH metric is Calabi--Yau.
	
	\subsection{The Flat Example}\label{examplefalt}
	In this section we write down the moment map of the flat metric on $\mathbb{C}^{N+1}$ together with its associated GH coefficients.  
	Our goal is to describe the shape of the discriminant locus and to introduce notation that will be used throughout the paper.

	The motivation comes from Theorem~\ref{maintheorem}: the Taub--NUT metric on $\mathbb{C}^{N+1}$ constructed there and the flat metric share the same discriminant locus when viewed as singular principal $\mathbb{T}^{N}$-bundles under their respective moment maps.  
	More generally, for any complete $\mathbb{T}^{N}$-invariant \kah metric on $\mathbb{C}^{N+1}$ endowed with the standard holomorphic volume form, the discriminant locus is a union of codimension-$3$ affine half-spaces. However, not every metric gives rise to a locus that extends to infinity.
	
	The affine space $\C^{N+1}$ with the standard Euclidean metric
	\[
	\omega=\frac{\sqrt{-1}}{2}\sum_{i=0}^{N}\mathrm{d} z_{i}\wedge d\bar z_{i}
	\]
	and holomorphic volume form
	\[
	\Omega=\sqrt{-1}^{N}\mathrm{d} z_{0}\wedge \mathrm{d} z_{1}\wedge\cdots\wedge \mathrm{d} z_{N}
	\]
	admits the diagonal $\mathbb{T}^{N}$-action whose $k$-th circle factor acts by
	\[
	e^{\sqrt{-1}\theta_{k}}\cdot(z_{0},z_{1},\dots,z_{N})
	=\bigl(e^{-\sqrt{-1}\theta_{k}}z_{0},z_{1},\dots,e^{\sqrt{-1}\theta_{k}}z_{k},z_{k+1},\dots,z_{N}\bigr).
	\]
	The corresponding moment coordinates are
	\[
	\mu_{i}=\frac{1}{2}\bigl(|z_{i}|^{2}-|z_{0}|^{2}\bigr),\quad i=1,\dots,N,\qquad
	\eta=z_{0}z_{1}\cdots z_{N}.
	\]
	This defines a $\mathbb{T}^{N}$-bundle away from the singular locus $\bigcup_{i<j}\{z_{i}=z_{j}=0\}$.  
	In moment coordinates we have
	\[
	V^{-1}_{ij}=|z_{0}|^{2}+\delta_{ij}|z_{i}|^{2},\qquad
	W^{-1}=|z_{0}z_{1}\cdots z_{N}|^{2}\Bigl(\frac{1}{|z_{0}|^{2}}+\cdots+\frac{1}{|z_{N}|^{2}}\Bigr),
	\]
	viewed as functions of $\mu_{i}$ and $\eta$.  
	(Note that once the holomorphic volume form is fixed, the coordinate $\eta$ is independent of the metric.)
	
	Next we describe the discriminant locus.
	
	\begin{definition}\label{discriminateloucsdef}
		Define the singular $\mathbb{T}^{N}$-bundle
		\[
		\begin{split}
			\pi\colon\C^{N+1}&\longrightarrow\R^{N}\times\C,\\
			(z_{0},z_{1},\dots,z_{N})&\longmapsto
			\Bigl(
			\tfrac{1}{2}(|z_{1}|^{2}-|z_{0}|^{2}),\,
			\dots,\,
			\tfrac{1}{2}(|z_{N}|^{2}-|z_{0}|^{2}),\,
			z_{0}z_{1}\cdots z_{N}
			\Bigr).
		\end{split}
		\]
		For any subset
		\[
		I=\{i_{1},\dots,i_{k}\}\subseteq\{0,1,\dots,N\}\quad(k\ge 2)
		\]
		define
		\[
		\D_{I}=\pi\bigl(\{z_{i_{1}}=\cdots=z_{i_{k}}=0\ \text{and}\ z_{j}\ne 0\ \text{if}\ j\notin I\}\bigr).
		\]
		The discriminant locus is $\bigcup_{|I|\ge 2}\D_{I}$.
		
		Notice that $\D_{I}$ is an $(N+1-k)$-dimensional Hausdorff open subset of $\R^{N}\times\C$, we set
		\[
		\p\D_{I}=\bigcup_{I\subsetneqq J}\D_{J},
		\]
		its topological boundary.
	\end{definition}
	The set $\D$ is contained in $\{\eta=0\}$, the regions $\D_{I}$ are unbounded.  
	For instance,
	\[
	\D_{01}=\{\mu_{1}=0,\ \mu_{j}>0,\ \eta=0\},\qquad
	\D_{12}=\{\mu_{1}=\mu_{2}=s<0,\ \mu_{j}>s,\ \eta=0\}.
	\]
	
	\subsection{Compactification and Distributional Equation}\label{subsectioncompacteqation}
	In the GH ansatz \eqref{GibbonsHawking} the curvature $F$ defined by \eqref{GibbonsHawkingcurvature} is required to lie in the cohomology class $2\pi c_{1}$.  
	This constraint reveals the topological gap for the existence of a GH structure, namely the non-triviality of the first Chern class.

	Zharkov \cite{zharkov2004limiting} translated this requirement into a distributional equation, a form that is more convenient for our purposes.  
	Although the discussion in \cite{zharkov2004limiting} applies to more general situations, we restrict ourselves to the map $\pi$ in Definition~\ref{discriminateloucsdef} and write $\B$ for the base $\R^{N}\times\C$ and $\B^{0}$ for $\R^{N}\times\C\setminus\D$.  
	Recall that the loci $\D_{ij}$ introduced in Definition~\ref{discriminateloucsdef} are $(N-1)$-dimensional Hausdorff open subsets of $\B$.  
	In the language of currents the equation reads
	
	\begin{equation}\label{cherncon}
		\begin{aligned}
			\frac{\sqrt{-1}}{4\pi}
			\sum_{1\le i,j\le N}
			\Bigl(
			\frac{\partial^{2}W}{\partial\mu_{i}\partial\mu_{j}}
			+4\frac{\partial^{2}V_{ij}}{\partial\eta\partial\bar{\eta}}
			\Bigr)
			\,\mathrm{d}\mu_{i}\wedge\mathrm{d}\eta\wedge\mathrm{d}\bar{\eta}\otimes e_{j}\\[4pt]
			=\sum_{i=1}^{N}\D_{0i}\otimes e_{i}
			+\sum_{1\le i<j\le N}\D_{ij}\otimes(e_{j}-e_{i}).
		\end{aligned}
	\end{equation}
	
	Here $\D_{ij}$ denotes the current associated with the locus $\D_{ij}$, explicitly, in the sense of currents,
	
	\[
	\D_{0i}
	=-\mathcal{H}^{N-1}|_{\D_{0i}}
	\;\mathrm{d}\mu_{i}\wedge\mathrm{d}\operatorname{Re}\eta
	\wedge\mathrm{d}\operatorname{Im}\eta,
	\]
	where the measure $\mathcal{H}^{N-1}|_{\D_{0i}}$ is given by
	\[
	\mathcal{H}^{N-1}|_{\D_{0i}}(f)
	=\idotsint_{\mathbb{R}_{+}^{N-1}}
	f(t_{1},\dots,\widehat{t_{i}},\dots,t_{N},0)\,
	\mathrm{d}t_{1}\cdots\widehat{\mathrm{d}t_{i}}\cdots\mathrm{d}t_{N}.
	\]
	where  $f\in C^{\infty}_{c}(\R^{N}\times\C)$. Similarly,
	\[
	\D_{ij}
	=-\mathcal{H}^{N-1}|_{\D_{ij}}
	\;(\mathrm{d}\mu_{j}-\mathrm{d}\mu_{i})\wedge\mathrm{d}\operatorname{Re}\eta
	\wedge\mathrm{d}\operatorname{Im}\eta
	\]
	with
	\[
	\mathcal{H}^{N-1}|_{\D_{ij}}(f)
	=\idotsint_{\mathbb{R}_{+}^{N-1}}
	f(t_{1},\dots,\widehat{t_{i}},\dots,\widehat{t_{j}},\dots,t_{N},0)\,
	\mathrm{d}t_{1}\cdots\widehat{\mathrm{d}t_{i}}\widehat{\mathrm{d}t_{j}}\cdots\mathrm{d}t_{N}.
	\]
	
	Whenever the GH coefficients $V_{ij}$ and $W$ satisfy equation \eqref{cherncon} in the sense of currents, the principal $\mathbb{T}^{N}$-bundle over $\B^{0}$ can be compactified to a singular $\mathbb{T}^{N}$-bundle over $\B\supset\B^{0}$ by allowing the torus fibres to degenerate.  
	Henceforth we refer to equation \eqref{cherncon} as the \emph{Chern-class condition} for the GH ansatz.

	\section{Linearization Method}\label{sectionlinearization}
	\subsection{First-Order Asymptotic Metric near Infinity}\label{subsectionfirstorder}
	It is evident that constructing a Calabi--Yau metric directly via the GH ansatz is exceedingly difficult.  
	Not only does the Calabi--Yau condition \eqref{GibbonsHawkingCY} impose a nonlinear equation on the GH coefficients, but one must also satisfy \eqref{GibbonsHawkingintegrability} and \eqref{cherncon}.  
	Solving these equations simultaneously for the coefficients~$V_{ij}$ and~$W$
	amounts to resolving a singular, fully nonlinear system---an undertaking that is intractable.
	
	Following the strategy of \cite[Section~2.1]{li2023syz}, we therefore linearise the problem around a constant solution.  
	We perturb the GH coefficients so that the resulting deviations solve the linearised Calabi--Yau equation derived from \eqref{GibbonsHawkingCY}, in the linearisation we may further require that the perturbation satisfy both \eqref{GibbonsHawkingintegrability} and \eqref{cherncon}.
	The perturbed coefficients give rise to a first-order asymptotic metric $g^{(1)}$, at the end of this section we analyse the decay of its volume-form error.
	
	Henceforth we fix $\pi\colon M\to\B\setminus\D$ to be a principal $\mathbb{T}^{N}$-bundle whose topological configuration coincides with that of the bundle described in Example~\ref{examplefalt}. Here $\B=\R^{N}\times\C$ and $\D$ is the discriminant locus introduced in Definition~\ref{discriminateloucsdef}.

	The basic idea is to perturb the constant solution in a way that encodes the underlying topology.
	The constant solution is given by
	\[
	g_{A}=A_{ij}\,\mathrm{d}\mu_{i}\otimes \mathrm{d}\mu_{j}+\detA\,|d\eta|^{2}.
	\]
	Its associated volume form is
	\[
	\mathrm{d}\operatorname{Vol}_A=(\detA)^{3/2}\,\mathrm{d}\mu_{1}\wedge\dots\wedge \mathrm{d}\mu_{N}\wedge \mathrm{d}\operatorname{Re}\eta\wedge d\operatorname{Im}\eta.
	\]
	For a point $\vec{\mu}=(\mu_{1},\dots,\mu_{N},\eta)\in\R^{N}\times\C$ we set
	\[
	|\vec{\mu}|_{A}=\sqrt{A_{ij}\mu_{i}\mu_{j}+\detA|\eta|^{2}}.
	\]
	
	Written in terms of the local potential $\Phi$, the Calabi--Yau condition \eqref{GibbonsHawkingCY} becomes
	\[
	\det\Bigl(\frac{\partial^{2}\Phi}{\partial\mu_{i}\partial\mu_{j}}\Bigr)
	=-4\frac{\partial^{2}\Phi}{\partial\eta\partial\bar{\eta}},
	\]
	whose linearisation at the constant solution is the Laplace equation
	\[
	\Delta_A\phi
	=A^{-1}_{ij}\frac{\partial^{2}\phi}{\partial\mu_{i}\partial\mu_{j}}
	+4(\detA)^{-1}\frac{\partial^{2}\phi}{\partial\eta\partial\bar{\eta}}
	=0.
	\]
	Here $\Delta_{A}$ denotes the Laplacian of the metric $g_{A}$.  
	Away from the discriminant locus, the first-order corrections
	\begin{equation}\label{potentialvij}
		v_{ij}=\frac{\partial^{2}\phi}{\partial\mu_{i}\partial\mu_{j}},
		\qquad
		w=-4\frac{\partial^{2}\phi}{\partial\eta\partial\bar{\eta}}
	\end{equation}
	to $V_{ij}$ and $W$ should therefore be $\Delta_{A}$-harmonic, i.e.
	\begin{equation}\label{CYconvij}
		\Delta_{A}v_{ij}=0,\qquad
		\Delta_{A}w=0,\qquad
		\detA A^{-1}_{ij}v_{ij}=w.
	\end{equation}
	
	To encode the topology we recall the distributional equation \eqref{cherncon} satisfied by $V_{ij}$ and $W$.  
	Linearising yields the current equation
	\begin{equation}\label{chernvij}
		\begin{aligned}
			&\frac{\sqrt{-1}}{4\pi}
			\biggl(\frac{\partial^{2}w}{\partial\mu_{i}\partial\mu_{j}}
			+4\frac{\partial^{2}v_{ij}}{\partial\eta\partial\bar{\eta}}\biggr)
			\mathrm{d}\mu_{i}\wedge \mathrm{d}\eta\wedge \mathrm{d}\bar{\eta}\otimes e_{j}\\[4pt]
			&\qquad=\sum_{i=1}^{N}\D_{0i}\otimes e_{i}
			+\sum_{1\le i<j\le N}\D_{ij}\otimes(e_{j}-e_{i}),
		\end{aligned}
	\end{equation}
	where $v_{ij}$ and $w$ are globally defined, whereas $\phi$ is only a local potential.  
	Regarding $v_{ij}$ and $w$ as the unknowns, the existence of a local function $\phi$ satisfying \eqref{potentialvij} is equivalent to the integrability conditions
	\begin{equation}\label{jiaohuanvij}
		\frac{\partial v_{ij}}{\partial\mu_{k}}
		=\frac{\partial v_{ik}}{\partial\mu_{j}},\qquad
		\frac{\partial^{2}w}{\partial\mu_{i}\partial\mu_{j}}
		=-4\frac{\partial^{2}v_{ij}}{\partial\eta\partial\bar{\eta}}
	\end{equation}
	away from $\D$. These conditions translate directly into the analogous constraints on $V_{ij}$ and $W$ in \eqref{GibbonsHawkingintegrability}.

	\begin{lemma}
		Let $\alpha(N)$ denote the volume of the unit ball in $\R^{N}$. For $1\le i < j \le N$ define  
		\begin{align*}
			\alpha_{0i}
			= \frac{2\pi\sqrt{\detA}}{N(N+2)\alpha(N+2)}
			\idotsint_{\mathbb{R}_+^{N-1}}
			\frac{1}{\lvert\vec{\mu\vphantom{t}}-\vec{t}^{\mkern2mu\scriptscriptstyle(i)}\rvert_{A}^{N}}\,
			{\rm d}t_{1}\!\cdots\widehat{{\rm d}t_{i}}\!\cdots{\rm d}t_{N},
		\end{align*}
		where
		\begin{align*}
			\vec{t}^{\mkern2mu\scriptscriptstyle(i)}=(t_{1},\dots,t_{i-1},\overset{\raisebox{-1pt}{$\scriptstyle i$}}{0},t_{i+1},\dots,t_{N},0)\in\mathbb{R}^{N}\times\mathbb{C},
		\end{align*}
		and
		\begin{align*}
			\alpha_{ij}
			= \frac{2\pi\sqrt{\detA}}{N(N+2)\alpha(N+2)}
			\idotsint_{\mathbb{R}_+^{N-1}}
			\frac{1}{\lvert\vec{\mu\vphantom{t}}-\vec{t}^{\mkern2mu\scriptscriptstyle(ij)}+s\bm{\vec{1}}\rvert_{A}^{N}}\,
			{\rm d}s\,
			{\rm d}t_{1}\!\cdots\!\widehat{{\rm d}t_{i}}\widehat{{\rm d}t_{j}}\!\cdots{\rm d}t_{N},
		\end{align*}
		with
		$$\vec{t}^{\mkern2mu\scriptscriptstyle(ij)}=
		(t_{1},\dots,t_{i-1},\overset{\raisebox{-1pt}{$\scriptstyle i$}}{0},t_{i+1},\dots,
		t_{j-1},\overset{\raisebox{-1pt}{$\scriptstyle j$}}{0},t_{j+1},\dots,t_{N},0)
		\in\mathbb{R}^{N}\times\mathbb{C},$$
		and
		\begin{align*}
			\bm{\vec{1}}=(1,\dots,1,0)\in \R^{N}\times \C. 
		\end{align*}
		Then, in the sense of distributions,
		\begin{align*}
			\Delta_A\alpha_{0i}\,\mathrm{d}\!\operatorname{Vol}_{A}
			&=-2\pi\sqrt{\det A}\,\mathcal{H}^{N-1}|_{\D_{0i}},\\[2pt]
			\Delta_A\alpha_{ij}\,\mathrm{d}\!\operatorname{Vol}_{A}
			&=-2\pi\sqrt{\det A}\,\mathcal{H}^{N-1}|_{\D_{ij}}.
		\end{align*}
		Setting formally $\alpha_{0i}=\alpha_{i0}$, $\alpha_{ij}=\alpha_{ji}$ and $\alpha_{ii}=0$, one further has
		\begin{equation}\label{alphaijjiaohuan}
			\begin{aligned}
				\frac{\p \alpha_{ij}}{\p\mu_{k}}=\frac{\p \alpha_{ik}}{\p\mu_{j}},\quad
				\frac{\p \alpha_{0i}}{\p\mu_{j}}=\frac{\p \alpha_{0j}}{\p\mu_{i}}=-\sum_{t=1}^{N} \frac{\p  \alpha_{ij}}{\p\mu_{t}}
			\end{aligned}
		\end{equation}
		for any distinct indices $1\le i,j,k\le N$.
	\end{lemma}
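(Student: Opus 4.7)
The plan is to recognise $\alpha_{0i}$ and $\alpha_{ij}$ as Newtonian potentials, for the Laplacian $\Delta_{A}$ of the flat constant-coefficient metric $g_{A}$, of the Hausdorff measures on the loci $\D_{0i}$ and $\D_{ij}$, and to deduce everything from two facts: (i) the kernel $|\vec\mu-\vec y|_{A}^{-N}$ is a multiple of the fundamental solution of $\Delta_{A}$; (ii) the kernel is translation-invariant in the argument, so derivatives $\partial/\partial\mu_{\ell}$ convert into derivatives in the integration variables.

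For (i), choose $B$ with $A=B^{T}B$ and pass to coordinates $(\vec\nu,\xi)=(B\vec\mu,\sqrt{\det A}\,\eta)$: then $g_{A}$ is the standard Euclidean metric on $\R^{N+2}$, $|\vec\mu|_{A}$ becomes the Euclidean norm, and $\mathrm{d}\mathrm{Vol}_{A}$ becomes Lebesgue measure. The classical distributional identity $\Delta(r^{-(d-2)})=-(d-2)d\,\alpha(d)\,\delta_{0}$ with $d=N+2$ pulls back to
\[
\Delta_{A}\!\left(\tfrac{1}{|\vec\mu-\vec y|_{A}^{N}}\right)\mathrm{d}\mathrm{Vol}_{A}
=-N(N+2)\alpha(N+2)\,\delta_{\vec y}.
\]
I would then interchange $\Delta_{A}$ with the parameter integrals in the definitions of $\alpha_{0i}$ and $\alpha_{ij}$ (by a routine mollification of the kernel, with $\int_{1}^{\infty}t^{N-2}\cdot t^{-N}\,\mathrm{d}t<\infty$ ensuring absolute convergence off the locus) and superpose $\delta_{\vec y}$ as $\vec y$ sweeps $\vec t^{\mkern2mu(i)}$ over $\D_{0i}$, respectively $\vec t^{\mkern2mu(ij)}-s\bm{\vec 1}$ over $\D_{ij}$. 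The resulting measures are precisely $\H^{N-1}|_{\D_{0i}}$ and $\H^{N-1}|_{\D_{ij}}$, and the prefactor $\tfrac{2\pi\sqrt{\det A}}{N(N+2)\alpha(N+2)}$ was chosen exactly so that the overall constant collapses to $-2\pi\sqrt{\det A}$.

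For the symmetries \eqref{alphaijjiaohuan} I would differentiate under the integral, using the two crucial identities $\partial/\partial\mu_{k}=-\partial/\partial t_{k}$ for $k\notin\{i,j\}$ and $\sum_{t=1}^{N}\partial/\partial\mu_{t}=\partial/\partial s$ on the integrand of $\alpha_{ij}$. Integration by parts in $t_{k}$ converts $\partial\alpha_{ij}/\partial\mu_{k}$ into an integral of $|\vec\mu-\vec t^{\mkern2mu(ijk)}+s\bm{\vec 1}|_{A}^{-N}$ over the face $\{t_{k}=0\}$, an expression manifestly symmetric in $\{i,j,k\}$, from which $\partial\alpha_{ij}/\partial\mu_{k}=\partial\alpha_{ik}/\partial\mu_{j}$ follows at once. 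Integration by parts in $s$ represents $\sum_{t}\partial\alpha_{ij}/\partial\mu_{t}$ as a boundary integral on $\{s=0\}$; the analogous integration by parts in $t_{j}$ applied to $\alpha_{0i}$ produces exactly the same boundary integral but with the opposite sign, yielding $\partial\alpha_{0i}/\partial\mu_{j}=-\sum_{t}\partial\alpha_{ij}/\partial\mu_{t}$. Swapping $i\leftrightarrow j$ and invoking $\alpha_{ij}=\alpha_{ji}$ then gives $\partial\alpha_{0i}/\partial\mu_{j}=\partial\alpha_{0j}/\partial\mu_{i}$; the boundary terms at infinity vanish by the decay of the kernel.

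I do not foresee a substantive obstacle. The only care required is justifying the interchange of $\Delta_{A}$ with the parameter integrals and the differentiation under the integral near the singular support of the kernel, both standard in Newtonian potential theory and handled by a cutoff-and-limit argument; checking that the numerical constant collapses to $-2\pi\sqrt{\det A}$ is then a direct computation.
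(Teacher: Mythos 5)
Your proof is correct and follows essentially the same route as the paper's: both identify the $\alpha$'s as superpositions of the $\Delta_A$-Green function $-\frac{1}{N(N+2)\alpha(N+2)}|\cdot|_A^{-N}$ over the strata $\D_{0i}$, $\D_{ij}$, and both obtain the commutativity relations by moving $\partial/\partial\mu_\ell$ onto the integration variables. The only cosmetic difference is in the second part: the paper first re-parametrises the integrals so that the $\mu_\ell$-dependence sits in the limits of integration and then applies the fundamental theorem of calculus, whereas you keep the integrand fixed, exploit the translation identities $\partial/\partial\mu_k=-\partial/\partial t_k$ and $\sum_t\partial/\partial\mu_t=\partial/\partial s$, and then integrate by parts in $t_k$ or $s$; these are the same manipulation written from two sides.
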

	\begin{proof}
		The Green function on $\mathbb{R}^{N+2}$ is given by
		\[
		G(x,y)=-\frac{1}{N(N+2)\alpha(N+2)}\frac{1}{|x-y|_{A}^{N}},
		\]
		so the coefficients $\alpha_{ij}$ can be obtained explicitly by integration.
		
		To verify the commutativity relations, start with
		\[
		\alpha_{01}=\int_{-\infty}^{\mu_2}\cdots\int_{-\infty}^{\mu_n}
		\frac{1}{|(\mu_{1},t_{2},\dots,t_{n},\eta)|_{A}^{N}}\,
		\mathrm{d}t_{2}\cdots\mathrm{d}t_{n}.
		\]
		Differentiating with respect to $\mu_{2}$ yields
		\[
		\frac{\partial\alpha_{01}}{\partial\mu_{2}}
		=\int_{-\infty}^{\mu_3}\cdots\int_{-\infty}^{\mu_n}
		\frac{1}{|(\mu_{1},\mu_{2},\dots,\mu_{1},\eta)|_{A}^{N}}\,
		\mathrm{d}t_{3}\cdots\mathrm{d}t_{n}
		=\frac{\partial\alpha_{02}}{\partial\mu_{1}}.
		\]
		Similar arguments show that
		\[
		\frac{\partial\alpha_{0i}}{\partial\mu_{j}}
		=\frac{\partial\alpha_{0j}}{\partial\mu_{i}}
		\quad\text{and}\quad
		\frac{\partial\alpha_{ij}}{\partial\mu_{k}}
		=\frac{\partial\alpha_{ik}}{\partial\mu_{j}}
		\]
		for any distinct indices $1\le i,j,k\le N$.
		
		Next consider
		\[
		\alpha_{12}
		=\int_{\mu_{1}}^{+\infty}\!\int_{\mu_1-s}^{+\infty}\!\cdots\!\int_{\mu_n-s}^{+\infty}
		\frac{1}{|(s,\mu_{2}-\mu_{1}+s,\mu_{3}-t_{3},\dots,\mu_{n}-t_{n},\eta)|_{A}^{N}}\,
		\mathrm{d}s\,\mathrm{d}t_{3}\cdots\mathrm{d}t_{n}.
		\]
		Differentiating with respect to $\mu_{1}$ gives
		\begin{align*}
			\frac{\partial\alpha_{12}}{\partial\mu_{1}}
			&=-\frac{\partial\alpha_{12}}{\partial\mu_{2}}+\idotsint\limits_{\mathbb{R}_+^{N-2}}
			\frac{1}{\,\bigl|\vec{\mu}-\vec{t}^{\,(12)}\bigr|_{A}^{N}\,}\,
			\mathrm{d}t_{3}\cdots\mathrm{d}t_{N}\\[4pt]
			&\quad -\sum_{i=3}^{N}\;
			\idotsint\limits_{\mathbb{R}_+^{N-2}}
			\frac{1}{\,\bigl|\vec{\mu}-\vec{t}^{\,(12i)}+s\mathbf{1}\bigr|_{A}^{N}\,}\,
			\mathrm{d}s\,\mathrm{d}t_{3}\cdots\widehat{\mathrm{d}t_{i}}\cdots\mathrm{d}t_{N}\\[4pt]
			&=-\frac{\partial\alpha_{12}}{\partial\mu_{2}}-\frac{\partial\alpha_{01}}{\partial\mu_{2}}
			-\sum_{i=3}^{N}\frac{\partial\alpha_{12}}{\partial\mu_{i}}
			.
		\end{align*}
		An analogous computation establishes
		\[
		\frac{\partial\alpha_{0i}}{\partial\mu_{j}}
		=\frac{\partial\alpha_{0j}}{\partial\mu_{i}}
		=-\sum_{t=1}^{N}\frac{\partial\alpha_{ij}}{\partial\mu_{t}}
		\quad\text{for }i\ne j.
		\]
		This completes the proof.
	\end{proof}
	
	Building on the preceding lemma we readily obtain the following proposition, whose integrability condition is supplied by equation \eqref{alphaijjiaohuan}.
	\begin{proposition}
		The quantities defined by
		\[
		v_{ii}=\alpha_{0i}+\sum_{k\ne i}\alpha_{ik},\qquad
		v_{ij}=-\alpha_{ij}\ \ (i\ne j),\qquad
		w=\detA\,A^{-1}_{ij}v_{ij}
		\]
		solve the integrability condition \eqref{jiaohuanvij} and the harmonicity condition \eqref{CYconvij} away from $\D$, and satisfy the distributional equation \eqref{chernvij} globally.
	\end{proposition}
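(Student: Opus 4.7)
The proof splits naturally into three verifications: the harmonicity conditions of \eqref{CYconvij}, the integrability conditions \eqref{jiaohuanvij} (pointwise, away from $\D$), and the distributional Chern-class equation \eqref{chernvij} (globally). My plan is to leverage the preceding lemma so that each of the three becomes essentially a bookkeeping exercise.

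\textbf{Harmonicity.} Each $\alpha_{0i}$ and $\alpha_{ij}$ is obtained by integrating translates of the $\R^{N+2}$-Green kernel $G(x,y)=-(N(N+2)\alpha(N+2))^{-1}|x-y|_{A}^{-N}$, which is $\Delta_{A}$-harmonic in the complement of its pole. A Fubini argument (differentiation under the integral, justified by the locally uniform convergence of the integrand away from $\D$) shows that each $\alpha$ is $\Delta_A$-harmonic on $\B\setminus\D$. Linearity then yields $\Delta_{A}v_{ij}=0$ and $\Delta_{A}w=0$ there, while the identity $w=\detA\,A^{-1}_{ij}v_{ij}$ holds by definition.

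\textbf{Integrability, first equation.} We check $\partial v_{ij}/\partial\mu_{k}=\partial v_{ik}/\partial\mu_{j}$ by cases. If $i,j,k$ are mutually distinct, both sides equal $-\partial\alpha_{ij}/\partial\mu_{k}=-\partial\alpha_{ik}/\partial\mu_{j}$ by the first half of \eqref{alphaijjiaohuan}. If $j=k$ it is trivial. The delicate case is $i=j\ne k$: expanding
\[
\frac{\partial v_{ii}}{\partial\mu_{k}}
=\frac{\partial\alpha_{0i}}{\partial\mu_{k}}
+\frac{\partial\alpha_{ik}}{\partial\mu_{k}}
+\sum_{k'\ne i,k}\frac{\partial\alpha_{ik'}}{\partial\mu_{k}},
\]
substitute $\partial\alpha_{0i}/\partial\mu_{k}=-\sum_{t=1}^{N}\partial\alpha_{ik}/\partial\mu_{t}$ from \eqref{alphaijjiaohuan} into the first term, and use $\partial\alpha_{ik'}/\partial\mu_{k}=\partial\alpha_{ik}/\partial\mu_{k'}$ in the last sum. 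A telescoping collapse then yields $-\partial\alpha_{ik}/\partial\mu_{i}=\partial v_{ik}/\partial\mu_{i}$, as required.

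\textbf{Integrability, second equation.} Iterating the first commutativity gives $\partial^{2}v_{kl}/\partial\mu_{i}\partial\mu_{j}=\partial^{2}v_{ij}/\partial\mu_{k}\partial\mu_{l}$. Therefore
\[
\frac{\partial^{2}w}{\partial\mu_{i}\partial\mu_{j}}
=\detA\,A^{-1}_{kl}\frac{\partial^{2}v_{kl}}{\partial\mu_{i}\partial\mu_{j}}
=\detA\,A^{-1}_{kl}\frac{\partial^{2}v_{ij}}{\partial\mu_{k}\partial\mu_{l}}
=\detA\,\Delta_{A}v_{ij}-4\frac{\partial^{2}v_{ij}}{\partial\eta\partial\bar\eta},
\]
and the first term vanishes away from $\D$ by the harmonicity step.

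\textbf{Distributional Chern-class condition.} The identity displayed above continues to hold in the distributional sense, so the LHS of \eqref{chernvij} reduces, up to the factor $\frac{\sqrt{-1}}{4\pi}\,d\eta\wedge d\bar\eta$ and the identification $\frac{\sqrt{-1}}{2}d\eta\wedge d\bar\eta=d\!\operatorname{Re}\eta\wedge d\!\operatorname{Im}\eta$, to the distribution $\detA\,\Delta_{A}v_{ij}\,d\mu_{i}\otimes e_{j}$. Substituting the definition of $v_{ij}$ and applying the two distributional Laplacian identities of the lemma, together with $d\!\operatorname{Vol}_{A}=(\detA)^{3/2}d\mu_{1}\!\wedge\!\cdots\!\wedge d\mu_{N}\wedge d\!\operatorname{Re}\eta\wedge d\!\operatorname{Im}\eta$, a direct index count matches the diagonal contribution $\alpha_{0i}$ to $\D_{0i}\otimes e_{i}$ and the off-diagonal contribution (which enters $v_{ii}$, $v_{jj}$ with sign $+1$ and $v_{ij}$ with sign $-1$) to $\D_{ij}\otimes(e_{j}-e_{i})$, with the prefactor $2\pi\sqrt{\detA}$ exactly cancelling the $\sqrt{-1}/(4\pi)$ and the $\detA$ from the Laplacian reduction.

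\textbf{Main obstacle.} The genuine work is the $i=j$ case of the first integrability identity, where the asymmetric definition $v_{ii}=\alpha_{0i}+\sum_{k\ne i}\alpha_{ik}$ has to match an off-diagonal entry; this is exactly the reason for the second half of \eqref{alphaijjiaohuan}. Once that telescoping is in hand, the distributional equation is a careful but mechanical index-and-sign check against the lemma.
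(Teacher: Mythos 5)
Your proposal is correct and follows the same approach the paper intends: the paper gives no explicit proof, only the remark that the integrability condition ``is supplied by equation \eqref{alphaijjiaohuan},'' and you have filled in precisely the details that remark leaves implicit. The telescoping in the $i=j\ne k$ case is the genuine content, and you identified it as such. Two small points worth flagging. First, in the distributional step you assert that the identity $\partial^{2}w/\partial\mu_{i}\partial\mu_{j}=\detA\,A^{-1}_{kl}\,\partial^{2}v_{ij}/\partial\mu_{k}\partial\mu_{l}$ ``continues to hold in the distributional sense'' without justification; this does require a short argument (the first derivatives $\partial v_{ij}/\partial\mu_{k}$ blow up like $\operatorname{dist}^{-2}$ near a codimension-$3$ set and are therefore $L^{1}_{\mathrm{loc}}$, so the pointwise symmetry in $(i,j,k)$ persists distributionally, and one further distributional derivative then commutes). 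Second, the concluding line of your $i=j$ computation reads ``$-\partial\alpha_{ik}/\partial\mu_{i}=\partial v_{ik}/\partial\mu_{i}$'' as if that were what needed to be shown; what you actually established is $\partial v_{ii}/\partial\mu_{k}=-\partial\alpha_{ik}/\partial\mu_{i}=\partial v_{ik}/\partial\mu_{i}$, and the line should make the first equality explicit.
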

	We now introduce the K\"ahler ansatz $(g^{(1)},\omega^{(1)},J^{(1)},\Omega^{(1)})$ via the generalized Gibbons--Hawking construction by setting
	\[
	V_{ij}^{(1)}=A_{ij}+v_{ij},\qquad W^{(1)}=\det A+w.
	\]
	The superscript ``(1)'' indicates the first-order asymptotic metric and signifies that no surgery has yet been performed.
	Positive-definiteness of the matrix $\bigl(V_{ij}^{(1)}\bigr)$ is ensured by the pointwise inequality $\alpha_{ij}>0$, which can be read off from the explicit integral formulae.
	A caveat is that smoothness of the metric across the discriminant locus $\D$ is not a priori guaranteed, this issue will be addressed in the next stage of the construction.
	
	The quality of the approximation to the Calabi--Yau condition is measured by the volume-form error
	\[
	E^{(1)}=\frac{W^{(1)}}{\det\!\bigl(V_{ij}^{(1)}\bigr)}-1.
	\]
	Expanding the determinant we obtain
	\[
	\begin{split}
		E^{(1)}
		&=\frac{\det A+w}{\det A+w+\sum_{k=2}^{N}\sigma_{k}\!\bigl(A^{-1}(v_{ij})\bigr)}-1\\[4mm]
		&=-\frac{(\det A)\sum_{k=2}^{N}\sigma_{k}\!\bigl(A^{-1}(v_{ij})\bigr)}
		{\det A+w+(\det A)\sum_{k=2}^{N}\sigma_{k}\!\bigl(A^{-1}(v_{ij})\bigr)},
	\end{split}
	\]
	where $\sigma_{k}\!\bigl(A^{-1}(v_{ij})\bigr)$ denotes the elementary symmetric polynomial of degree $k$ in the eigenvalues of $A^{-1}(v_{ij})$. The term $\sigma_k$ can be regarded as a degree-$k$ polynomial in the variables $\alpha_{ij}$, while $w$ can be viewed as a linear combination of the $\alpha_{ij}$. Consequently, along a stratum $\D_{I}$ with $|I|\geq 3$ the numerator does not decay, so $E^{(1)}$ remains $O(1)$.
	This is a new obstruction that does not appear in the three-dimensional case treated in \cite{li2023syz}.
	
	In summary, away from the discriminant locus we have
	\[
	E^{(1)}=O\!\left(\frac{1}{|\vec\mu|_{A}^{2}}\right)
	\qquad\text{as }|\vec\mu|_{A}\to\infty,
	\]
	while along the simple-intersection region of $\D$ (codimension-one strata) the decay drops to
	\[
	E^{(1)}=O\!\left(\frac{1}{|\vec\mu|_{A}}\right),
	\]
	and along the deep-intersection region (i.e.\ intersection multiplicity at least three) the error is  
	\[
	E^{(1)}=O\!\left(1\right).
	\]
	
	Standard analytic machineries \cite{hein2010gravitational} that produce Ricci-flat metrics from approximate solutions require a volume-form error decay strictly faster than quadratic.
	Our ansatz fails this requirement, so the error must be corrected before those tools can be applied.
	In \cite{li2023syz} Yang Li overcame the analogous difficulty on $\mathbb C^{3}$ by exploiting the Green-function behaviour of $g^{(1)}$ in different regions.
	Notice, however, that outside the origin the discriminant locus in $\mathbb C^{3}$ has at most simple intersection; the problem can then be reduced to solving a Laplace-type equation after a Ricci-curvature correction.
	In higher dimensions the volume-form error may not decay along certain directions, so the three-dimensional strategy does not carry over.
	Instead, we shall perform a surgery along the deep-intersection strata $\D_{I}$ with $|I|\geq 3$ to force the error to decay at the required rate.
	
	\subsection{The Geometry of the Discriminant Locus}\label{subsectionthegeometry}
	In the sequel, whenever we analyse properties near \(\D_{I}\) we routinely reduce to the case \(I=\{0,1,\dots,n\}\).  
	This reduction is justified by the natural symmetry of the discriminant locus of the singular \(\mathbb{T}^{n}\)-bundle \(M\):
	
	Fix an integral basis \(e_{1},\dots,e_{N}\) of the Lie algebra \(\mathfrak t\) of \(\mathbb{T}^{n}\) and let \(\D\) be the corresponding discriminant locus.  
	Given \(I=\{i_{1},\dots,i_{n}\}\), two cases occur.
	
	\begin{enumerate}
		\item If \(0\in I\), we reorder the basis so that \(I=\{0,1,\dots,N\}\).
		
		\item If \(0\notin I\), we replace the basis by the new integral basis
		\[
		-e_{i_{1}},\qquad e_{k}-e_{i_{1}}\quad(k\neq i_{1}).
		\]
	\end{enumerate}
	
	The second change induces an affine automorphism of the base \(\B\), equivalently, we obtain a new projection \(\pi'\colon M\to \B'\) under which \(\D_{I}\) is mapped to \(\D_{J}\) with \(J=\{0,i_{2},\dots,i_{n}\}\).  
	This affine transformation alters the metric \(g_{A}\) on \(\B\), that is, it changes the positive-definite matrix \(A\) but keeps its eigenvalues within a uniform compact set.  
	Moreover, our ansatz metric \(g^{(1)}\) is compatible with this change of \(A\).

	To streamline later discussions we fix a notational convention.  
	Let 
	$$
	I=\{i_{1},\dots,i_{n}\}\subseteq\{0,1,\dots,N\}
	$$ 
	be an index set.  
	Whenever a subscript \(I\) is used, if \(0\in I\) and the object labelled by \(I\) is defined only on \(\{1,\dots,N\}\), the index \(0\) is automatically dropped.  
	For example, if \(I=\{0,1,2\}\), then \(\mu_{I}\) denotes the column vector \((\mu_{1},\mu_{2})^{\top}\).

	Below we assume \(0\in I\), so that \(I=\{0,i_{1},\dots,i_{n}\}\); the case \(0\notin I\) is completely analogous.  
	We write  
	\[
	\mu_{I}=(\mu_{i_{1}},\dots,\mu_{i_{n}})^{\top},\qquad 
	\vec{\mu}_{I}=(\mu_{i_{1}},\dots,\mu_{i_{n}},\eta)^{\top}
	\]  
	for the corresponding column vectors.
	
	Define  
	\[
	A_{I}=(A_{i_{t}i_{s}})_{1\le t,s\le n}
	\]  
	to be the principal sub-matrix of \(A\) whose rows and columns are indexed by \(I\), and set  
	\[
	A^{-1}_{I}:=(A_{I})^{-1}.
	\]  
	Notice that in general \((A^{-1})_{I}\neq(A_{I})^{-1}\).
	
	If \(J=\{j_{1},\dots,j_{k}\}\) is another index set disjoint from \(I\), we define  
	\[
	A_{IJ}=(A_{i_{t}j_{s}})_{\substack{1\le t\le n\\ 1\le s\le k}}
	\]  
	so that \(A_{IJ}^{\top}=A_{JI}\).
	
	For every index set \(I\) let \(I^{\prime}:=\{0,1,\dots,N\}\setminus I\).  Introduce the matrix  
	\[
	G_{I}:=A_{I}-A_{II^{\prime}}A_{I^{\prime}}^{-1}A_{I^{\prime}I}.
	\]  
	The eigenvalues of \(G_{I}\) lie in the interval  
	\[
	\Bigl[\lambda\Bigl(\frac{\lambda}{\Lambda}\Bigr)^{N-1},\;\Lambda\Bigr].
	\]

	Let $I=\{0,1,\dots,n\}$. For any point $p\in\mathcal B$ we write $p_{I}=\operatorname{pr}_{I}(p)$ for its projection onto $\D_{I}$, i.e.
	\begin{equation*}
		\operatorname{dist}_{A}(p,p_{I})=\min_{q\in\D_{I}}\operatorname{dist}_{A}(p,q).
	\end{equation*}
	Observe the identity
	\begin{align*}
		\begin{aligned}
			\mu^{\mathstrut\top}\!A\mu
			&=\begin{pmatrix}\mu_{I}^{\top}&\mu_{I'}^{\top}\end{pmatrix}
			\begin{pmatrix}A_{I}&A_{II'}\\A_{I'I}&A_{I'}\end{pmatrix}
			\begin{pmatrix}\mu_{I}\\\mu_{I'}\end{pmatrix}\\[4pt]
			&=\mu_{I}^{\top}G_{I}\mu_{I}
			+(\mu_{I'}+A_{I'}^{-1}A_{I'I}\mu_{I})^{\top}\!A_{I'}(\mu_{I'}+A_{I'}^{-1}A_{I'I}\mu_{I}).
		\end{aligned}
	\end{align*}
	Since $\D_{I}$ is a Hausdorff open subset, $p_{I}$ lies in the interior of $\D_{I}$ if and only if every entry of
	\begin{equation*}
		\nu_{I,I'}=(\nu_{I,i})_{n+1\le i\le N}=\mu_{I'}+A_{I'}^{-1}A_{I'I}\mu_{I}
	\end{equation*}
	is strictly positive. In that case $p_{I}=(0,\dots,0,\nu_{I}^{\top},0)$ and
	\begin{equation*}
		\operatorname{dist}_{A}^{2}(p,\D_{I})=\mu_{I}^{\top}G_{I}\mu_{I}+\det A\,|\eta|^{2}.
	\end{equation*}
	In general, $p_{I}$ may lie on the boundary of $\D_{I}$, then the distance from $p$ to $\D_{I}$ equals the distance from $p$ to $\D_{I\cup J}$ for some index set $J$. Henceforth we assume $p_{I}\in\D_{I}$ and investigate the relation between the projections $\operatorname{pr}_{I\cup J}(p_{I})$ and $\operatorname{pr}_{I\cup J}(p)$.
	
	Let $J$ be another index set disjoint from $I$, and set $K=\{0,1,\dots,N\}\setminus(I\cup J)$. Then
	\begin{align*}
		\begin{aligned}
			\begin{pmatrix}\nu_{I,J}\\\nu_{I,K}\end{pmatrix}
			&=\begin{pmatrix}\mu_{J}\\\mu_{K}\end{pmatrix}
			+\begin{pmatrix}A_{J}&A_{JK}\\A_{KJ}&A_{K}\end{pmatrix}^{-1}
			\begin{pmatrix}A_{JI}\\A_{KI}\end{pmatrix}\mu_{I}\\[4pt]
			&=\begin{pmatrix}
				\mu_{J}+s_{J}^{-1}s_{JI}\mu_{I}\\[4pt]
				\mu_{K}+A_{K}^{-1}\bigl(A_{KI}-A_{KJ}s_{J}^{-1}s_{JI}\bigr)\mu_{I}
			\end{pmatrix},
		\end{aligned}
	\end{align*}
	where
	\begin{equation*}
		s_{J}=A_{J}-A_{JK}A_{K}^{-1}A_{KJ},\qquad
		s_{JI}=A_{JI}-A_{JK}A_{K}^{-1}A_{KI}.
	\end{equation*}
	Consequently,
	\begin{equation*}
		\nu_{I,K}+A_{K}^{-1}A_{KJ}\nu_{I,J}=\mu_{K}+A_{K}^{-1}A_{KI}\mu_{I}=\nu_{I\cup J,K},
	\end{equation*}
	which implies $\operatorname{pr}_{I\cup J}(p)=\operatorname{pr}_{I\cup J}(p_{I})$. Thus, if both projections lie in the interior of $\D_{I\cup J}$, we obtain
	\begin{equation}\label{affinejulihengdengshi}
		\operatorname{dist}_{A}^{2}(p,\D_{I\cup J})
		=\operatorname{dist}_{A}^{2}(p,\D_{I})
		+\operatorname{dist}_{A}^{2}(p_{I},\D_{I\cup J}).
	\end{equation}
	If $\operatorname{pr}_{I\cup J}(p_{I})$ is not in the interior of $\D_{I\cup J}$, then $\operatorname{pr}_{I\cup J}(p)$ is not either. Hence there exists an index set $L\supset J$ such that $\operatorname{pr}_{I\cup J}(p_{I})\in\D_{I\cup L}$, and then $\operatorname{pr}_{I\cup L}(p)\in\D_{I\cup L}$. Therefore
	\begin{equation*}
		\operatorname{dist}_{A}^{2}(p,\D_{I})
		+\operatorname{dist}_{A}^{2}(p_{I},\D_{I\cup J})
		=\operatorname{dist}_{A}^{2}(p,\D_{I\cup L})
		\ge\operatorname{dist}_{A}^{2}(p,\D_{I\cup J}).
	\end{equation*}
	Conversely, there also exists an index set $L'\supset J$ such that $\operatorname{pr}_{I\cup J}(p)\in\D_{I\cup L'}$, and thus
	\begin{align*}
		\operatorname{dist}_{A}^{2}(p,\D_{I\cup L'})
		&=\operatorname{dist}_{A}^{2}(p,\D_{I\cup J})
		=\operatorname{dist}_{A}^{2}(p,\D_{I})
		+\operatorname{dist}_{A}^{2}(p_{I},\D_{I\cup L'})\\[2pt]
		&\ge\operatorname{dist}_{A}^{2}(p,\D_{I})
		+\operatorname{dist}_{A}^{2}(p_{I},\D_{I\cup J}).
	\end{align*}
	Hence identity~\eqref{affinejulihengdengshi} holds whenever $p_{I}\in\D_{I}$. Moreover, when $p_{I}\in\partial\D_{I}$ we have
	\begin{equation*}
		\operatorname{dist}_{A}^{2}(p,\D_{I\cup J})
		\le\operatorname{dist}_{A}^{2}(p,\D_{I})
		+\operatorname{dist}_{A}^{2}(p_{I},\D_{I\cup J}).
	\end{equation*}

	\subsection{Asymptotic Properties of \texorpdfstring{$g^{(1)}$}{g(1)}}\label{subsectionjianjing1}
	
	In this section we analyze the asymptotic behavior of the ansatz metric $g^{(1)}$ near the discriminant locus, this is equivalent to studying the functions $\alpha_{ij}$.  
	First observe that for $i,j\in I$ the function $\alpha_{ij}$ becomes singular along $\D_{I}$ and is a smooth harmonic function away from $\D_{ij}$.  
	Moreover, there exists a uniform constant $C$ such that
	\begin{equation*}
		0<\alpha_{ij}<\frac{C}{\operatorname{dist}_{A}(\,\cdot\,,\D_{ij})}.
	\end{equation*}
	For $2\le|I|\le N$ we introduce the following region in $\B$:
	\begin{equation}\label{defineBI}
		\B_{I}:=\bigl\{p\in\B\bigm| C_{0}\operatorname{dist}_{A}(p,\D_{I})<\operatorname{dist}_{A}(p,\partial\D_{I})\bigr\},
	\end{equation}
	where $C_{0}\gg 1$ are uniform constants. 
	\begin{align*}
		\B_{a}=\bigl\{p\in\B \bigm| 2 C_{0}^{N-1}\operatorname{dist}_{A}(p,\D)>\operatorname{dist}_{A}(p,O)\bigr\},
	\end{align*}
	It is not hard to verify that
	\[
	\B=\B_{a}\cup\bigcup_{2\le|I|\le N}\B_{I}.
	\]
	When $i,j\in I$ we decompose $\alpha_{ij}$ on $\B_{I}$ into a singular leading term and a smooth decaying term, this decomposition will be the cornerstone of the surgery construction in later sections.
	
	\begin{lemma}\label{alphajianjin}
		Let $2\le|I|\le N+1$ and $i,j\in I$.  Then we have the following decomposition:
		\begin{equation*}
			\alpha_{ij}=\alpha_{I,ij}+\beta_{I,ij},
		\end{equation*}
		where $\alpha_{I,ij}$ depends only on $\mu_{I}$ and $\eta$, while $\beta_{I,ij}$ is a smooth $\Delta_{A}$-harmonic function in $\B_{I}$ satisfying
		\begin{equation*}
			|\beta_{I,ij}|\le\frac{C}{\operatorname{dist}(\,\cdot\,,\partial\D_{I})}.
		\end{equation*}
		If $I\subset J$ and $|J|=|I|+1$, then in $\B_{I}$ we have
		\begin{equation*}
			|\beta_{I,ij}-\beta_{J,ij}|=|\alpha_{I,ij}-\alpha_{J,ij}|\le\frac{C}{\operatorname{dist}(\,\cdot\,,\D_{J})}.
		\end{equation*}
		More generally, if $I\subset J$, then in $\B_{I}$ we have
		\begin{equation*}
			|\beta_{I,ij}-\beta_{J,ij}|=|\alpha_{I,ij}-\alpha_{J,ij}|\le\max\limits_{\substack{I\subset K\subset J\\|K|=|J|+1}}\frac{C}{\operatorname{dist}(\,\cdot\,,\D_{K})}.
		\end{equation*}
		The constant $C$ is uniform.  For notational convenience we set $\alpha_{I,ij}=0$ and $\beta_{I,ij}=\alpha_{ij}$ whenever $i\notin I$ or $j\notin I$.
	\end{lemma}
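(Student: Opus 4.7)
The plan is to construct the decomposition by performing a controlled extension of the integration domain in the defining integrals of $\alpha_{ij}$. By the symmetry reduction of Subsection~\ref{subsectionthegeometry} I may assume $I=\{0,1,\dots,n\}$, so that the ``tangential'' directions to $\D_{I}$ correspond to the indices $k\in I'=\{n+1,\dots,N\}$. In the explicit integral formulas for $\alpha_{ij}$, I translate $t_{k}\mapsto t_{k}+\mu_{k}$ (or $t_{k}\mapsto t_{k}+\mu_{k}+s$ in the shifted case $i,j\neq 0$) for every $k\in I'$, which absorbs the $\mu_{I'}$ dependence into the lower limits of integration. Then I define $\alpha_{I,ij}$ to be the integral obtained from $\alpha_{ij}$ after extending the range of each of these tangential variables from the positive half-line to all of $\R$. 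The resulting integrand no longer contains $\mu_{k}$ for $k\in I'$, so $\alpha_{I,ij}$ is manifestly a function of $\mu_{I}$ and $\eta$ alone. The complement between the two integration regions defines $\beta_{I,ij}$ as an integral over the set where at least one tangential variable has been added beyond its original lower limit, hence satisfies $|u_{k}|\gtrsim\nu_{I,k}$ for some $k\in I'$.

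The main estimate is then a standard polar-coordinate bound for the Green kernel: integrating $|x|_{A}^{-N}$ over a half-space $\{u_{k}<-\nu_{I,k}\}$ with the other tangential variables running over $\R$ reduces (after performing the Gaussian-like integrals in the remaining $(N-1)$ tangential variables) to a one-dimensional integral of type $\int_{r>\nu_{I,k}}r^{-2}\,\mathrm{d}r$, which gives a decay of order $\nu_{I,k}^{-1}$. On $\B_{I}$, Subsection~\ref{subsectionthegeometry} gives $\nu_{I,k}\sim\operatorname{dist}_{A}(p,\partial\D_{I})$ uniformly, yielding the claimed bound on $\beta_{I,ij}$. Harmonicity of $\beta_{I,ij}$ on $\B_{I}$ follows because $\alpha_{ij}$ is $\Delta_{A}$-harmonic away from $\D_{ij}$, and $\alpha_{I,ij}$, being an integral of translates of the Green kernel over a linear subspace, is harmonic off its own singular set; inside $\B_{I}$ both singular sets coincide with $\D_{I}$ by construction of the extension, so the difference is smooth and $\Delta_{A}$-harmonic there.

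For the nested statement with $J=I\cup\{k_{0}\}$, the difference $\alpha_{I,ij}-\alpha_{J,ij}$ is the same kind of integral but with extension only in the single direction $u_{k_{0}}$; the identical polar-coordinate argument produces a bound of order $\nu_{J,k_{0}}^{-1}\sim\operatorname{dist}_{A}(p,\D_{J})^{-1}$. The identity $\beta_{I,ij}-\beta_{J,ij}=\alpha_{J,ij}-\alpha_{I,ij}$ is immediate from the definitions. The general case $I\subsetneq J$ follows by telescoping through a saturating chain $I=I_{0}\subset I_{1}\subset\cdots\subset I_{m}=J$ with $|I_{l+1}|=|I_{l}|+1$: each step contributes $C/\operatorname{dist}_{A}(p,\D_{I_{l+1}})$, and since the inclusion $I_{1}\subset I_{l+1}$ forces $\D_{I_{l+1}}\subset\overline{\D_{I_{1}}}$ (hence $\operatorname{dist}_{A}(p,\D_{I_{l+1}})\geq\operatorname{dist}_{A}(p,\D_{I_{1}})$), the whole sum is dominated by the worst choice of first step $K=I_{1}$ ranging over $\{K:I\subsetneq K\subseteq J,\ |K|=|I|+1\}$, up to a dimensional factor that can be absorbed into the uniform constant.

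The main obstacle is ensuring that the polar-coordinate estimate is uniform in $A$, since the quadratic form $|\cdot|_{A}^{2}$ couples all coordinates via the Schur-complement decomposition. Concretely, one must check that the elementary quantity $\min_{k\in I'}\nu_{I,k}$ arising from the change of variables is indeed comparable to the intrinsic distance $\operatorname{dist}_{A}(p,\partial\D_{I})$; this follows from the spectral bounds on $G_{I}$ recalled in Subsection~\ref{subsectionthegeometry} together with the identity~\eqref{affinejulihengdengshi} applied iteratively to express the distance to each boundary stratum $\D_{K}\subset\partial\D_{I}$ in terms of a single tangential coordinate.
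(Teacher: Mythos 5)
Your definition of $\alpha_{I,ij}$ coincides with the paper's—you enlarge the integration range in the $I'$-directions from half-lines to full lines—so the decomposition itself is identical. The two proofs diverge in how decay and smoothness are established. The paper computes the distributional Laplacians $\Delta_A\alpha_{I,01}$, $\Delta_A\beta_{I,01}$, and $\Delta_A(\alpha_{I,01}-\alpha_{J,01})$ as $\mathcal{H}^{N-1}$-currents supported on explicit codimension-three affine sets $S$, $S'$, $Y$, and then reads off both smoothness and the claimed bounds from the facts that $S'$ and $Y$ stay at distance $\gtrsim\operatorname{dist}_A(\cdot,\D_J)$ from $\B_I$ (via the projection identity~\eqref{affinejulihengdengshi}) and that Newton potentials of codimension-three measures decay like the reciprocal of the distance to their support. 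You instead estimate the defect integral head-on, overbounding the mixed half-line/full-line domain, collapsing the free tangential variables with the quadratic-form integration formula, and reducing to a one-dimensional half-line integral. Both routes can be made rigorous, but the current-theoretic argument is cleaner: it never has to track the residual $\mathbb{R}_+$-constraints on $t_2,\ldots,t_n$ and it packages smoothness and decay in a single stroke.

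As written, though, your version has two genuine soft spots. First, the harmonicity of $\beta_{I,ij}$ on $\B_I$ does not follow merely from ``both singular sets coincide'' near $\B_I$ (and you wrote $\D_I$ where you mean $\D_{ij}$: the singularities of $\alpha_{ij}$ and $\alpha_{I,ij}$ lie on the codimension-three locus $\D_{ij}$, not on $\D_I$). You need the stronger fact that the distributional Laplacians of $\alpha_{ij}$ and $\alpha_{I,ij}$ agree \emph{as measures} on a neighbourhood of $\B_I$, so that $\Delta_A\beta_{I,ij}=0$ as a current and Weyl's lemma gives smoothness across $\D_{ij}$. The paper's explicit identification of $\Delta_A\alpha_{I,01}$ and $\Delta_A\alpha_{01}$ as $-2\pi\sqrt{\det A}\,\mathcal{H}^{N-1}$ on the same set near $\B_I$ is exactly this missing verification. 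Second, your telescoping through a saturating chain $I=I_0\subset I_1\subset\cdots\subset I_m=J$ uses the one-step estimate $|\alpha_{I_l,ij}-\alpha_{I_{l+1},ij}|\lesssim 1/\operatorname{dist}_A(\cdot,\D_{I_{l+1}})$, which is proved on $\B_{I_l}$; since $\B_I\not\subset\B_{I_l}$ in general, you cannot sum these bounds over $\B_I$ without first checking that each step bound also holds there. The paper avoids the issue by rerunning the single-step current argument directly for arbitrary $J\supset I$, which is the fix your chain argument needs as well.
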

	
	\begin{proof}
		We set $I=\{0,1,\dots,n\}$ and $i=0$, $j=1$.  Then
		\begin{equation*}
			\alpha_{01}
			=\frac{2\pi\sqrt{\detA}}{N(N+2)\alpha(N+2)}
			\idotsint_{\mathbb{R}_{+}^{N-1}}
			\frac{1}{\bigl\lvert(\mu_{1},\mu_{i}-t_{i},\eta)\bigr\rvert_{A}^{N}}\,
			\mathrm{d}t_{2}\dotsm\mathrm{d}t_{N}.
		\end{equation*}
		Define $\alpha_{I,01}$ by integrating out the last $N-n$ variables:
		\begin{equation*}
			\alpha_{I,01}
			=\frac{2\pi\sqrt{\det A}}{N(N+2)\alpha(N+2)}
			\idotsint_{\mathbb{R}_{+}^{n-1}\times\mathbb{R}^{N-n}}
			\frac{1}{\bigl\lvert(\mu_{1},\mu_{i}-t_{i},\eta)\bigr\rvert_{A}^{N}}\,
			\mathrm{d}t_{2}\dotsm\mathrm{d}t_{N}.
		\end{equation*}
		For a positive-definite matrix $M$ of order $k$ and any $R>0$ we have
		\begin{equation*}
			\idotsint_{\mathbb{R}^{k}}
			\frac{1}{\bigl(x^{\top}Mx+R^{2}\bigr)^{N/2}}\,
			\mathrm{d}x_{1}\dotsm\mathrm{d}x_{k}
			=\frac{1}{\sqrt{\det M}\,R^{N-k}}\,
			\frac{\alpha(N-2)}{\alpha(N-k-2)}.
		\end{equation*}
		Since
		\begin{equation*}
			\frac{\alpha(N-2)}{\alpha(n-2)}
			=\frac{N(N+2)\alpha(N+2)}{n(n+2)\alpha(n+2)},
		\end{equation*}
		we obtain
		\begin{equation*}
			\alpha_{I,01}
			=\frac{2\pi\sqrt{\det G_{I}}}{n(n+2)\alpha(n+2)}
			\idotsint_{\mathbb{R}_{+}^{n-1}}
			\frac{1}{\bigl\lvert(\mu_{1},\mu_{I\setminus\{1\}}-t_{I\setminus\{1\}},\eta)\bigr\rvert_{G_{I}}^{n}}\,
			\mathrm{d}t_{2}\dotsm\mathrm{d}t_{n}.
		\end{equation*}
		In particular, when $n=1$,
		\begin{equation*}
			\alpha_{\{0,1\},01}
			=\frac{1}{2\sqrt{\mu_{1}^{2}+\det(A_{\{2,\dots,N\}})|\eta|^{2}}}.
		\end{equation*}
		Clearly $\alpha_{I,01}$ depends only on $\mu_{I},\eta$ and satisfies
		\begin{equation*}
			\Delta_{A}\alpha_{I,01}\,\mathrm{d}\!\operatorname{Vol}_{A}
			=-2\pi\sqrt{\det A}\,\mathcal{H}^{N-1}\big|_{S},
		\end{equation*}
		where $S$ is the $(N-3)$-dimensional Hausdorff set
		\begin{equation*}
			S=\bigl\{p\in\B:\mu_{1}=\eta=0,\; \mu_{i}>0\ (2\le i\le n)\bigr\}.
		\end{equation*}
		Set
		\begin{equation*}
			\beta_{I,01}=\alpha_{01}-\alpha_{I,01}.
		\end{equation*}
		Then $\beta_{I,01}$ satisfies
		\begin{equation*}
			\Delta_{A}\beta_{I,01}\,\mathrm{d}\!\operatorname{Vol}_{A}
			=2\pi\sqrt{\det A}\,\mathcal{H}^{N-1}\big|_{S'},
		\end{equation*}
		with $S'=\{\mu_{1}=\eta=0\}\setminus\overline{S}$ also of Hausdorff dimension $N-3$; in particular $\beta_{I,01}$ is smooth on $\B_{I}$.
		
		Next let $J=\{0,1,\dots,n+1\}$.  One computes
		\begin{equation*}
			\Delta_{A}(\alpha_{I,01}-\alpha_{J,01})\,\mathrm{d}\!\operatorname{Vol}_{A}
			=-2\pi\sqrt{\det A}\,\mathcal{H}^{N-1}\big|_{Y},
		\end{equation*}
		where
		\begin{equation*}
			Y=\bigl\{p\in\B:\mu_{1}=\eta=0,\; \mu_{i}>0\ (2\le i\le n),\ \mu_{n+1}<0\bigr\}.
		\end{equation*}
		Hence
		\begin{equation*}
			|\alpha_{I,01}-\alpha_{J,01}|\le\frac{C}{\operatorname{dist}_{A}(\,\cdot\,,Y)}.
		\end{equation*}
		We now verify that for $p\in\B_{I}$,
		\begin{equation}\label{dist-Y-DJ}
			\operatorname{dist}_{A}(p,Y)\ge C\operatorname{dist}_{A}(p,\D_{J}).
		\end{equation}
		Since the projection $p_{I}$ of $p$ onto $\D_{I}$ lies in $\D_{I}$, we have
		\begin{equation*}
			\operatorname{dist}_{A}(p,Y)
			\ge\operatorname{dist}_{A}(p_{I},Y)-\operatorname{dist}_{A}(p,p_{I}).
		\end{equation*}
		Write
		\begin{equation*}
			p_{I}=(0,\dots,0,w_{n+1},\dots,w_{N},0),\qquad w_{i}>0.
		\end{equation*}
		For any $q\in Y$ of the form
		\begin{equation*}
			q=(0,t_{2},\dots,t_{n},-t_{n+1},s_{n+2},\dots,s_{N},0),\qquad t_{i}>0,\ s_{i}\in\mathbb{R},
		\end{equation*}
		we obtain
		\begin{equation*}
			\operatorname{dist}_{A}^{2}(p_{I},q)
			\ge\lambda\Bigl(\sum_{i=2}^{n}t_{i}^{2}+(w_{n+1}+t_{n+1})^{2}
			+\sum_{j=n+2}^{N}(w_{j}-s_{j})^{2}\Bigr),
		\end{equation*}
		so that
		\begin{equation*}
			\operatorname{dist}_{A}^{2}(p_{I},Y)
			\ge\lambda w_{n+1}^{2}
			\ge\frac{\lambda}{\Lambda}\operatorname{dist}_{A}^{2}(p_{I},\D_{J}).
		\end{equation*}
		By the definition of $\B_{I}$ and identity~\eqref{affinejulihengdengshi},
		\begin{equation*}
			\operatorname{dist}_{A}^{2}(p_{I},\D_{J})
			=\operatorname{dist}_{A}^{2}(p,\D_{J})-\operatorname{dist}_{A}^{2}(p,p_{I})
			\ge\Bigl(1-\frac{1}{C_{0}^{2}}\Bigr)\operatorname{dist}_{A}^{2}(p,\D_{J}).
		\end{equation*}
		Thus
		\begin{equation*}
			\operatorname{dist}_{A}(p,Y)
			\ge\frac{1}{C_{0}}\Bigl(\sqrt{\frac{\lambda}{\Lambda}}\sqrt{C_{0}^{2}-1}-1\Bigr)
			\operatorname{dist}_{A}(p,\D_{J}).
		\end{equation*}
		Choose \(C_{0}\) sufficiently large so that the right-hand side of the above expression is positive, hence
		\begin{equation*}
			|\alpha_{I,01}-\alpha_{J,01}|
			\le\frac{C}{\operatorname{dist}(\,\cdot\,,\D_{J})}.
		\end{equation*}
		The general case $I\subset J$ follows similarly.  
		Finally, since $\beta_{I,01}=\beta_{I,01}-\beta_{\{0,1,\dots,N\},01}$, we conclude
		\begin{equation*}
			|\beta_{I,01}|
			\le\frac{C}{\operatorname{dist}(\,\cdot\,,\partial\D_{I})}.\\[-1.5em]
		\end{equation*}
	\end{proof}
	
	Let $I=\{0,1,\dots,n\}$.  
	On the subspace $\mathbb{R}^{n}_{\mu_{I}}\times\mathbb{C}_{\eta}$ we introduce the metric
	\begin{equation*}
		g_{G_{I}}=\sum_{i,j\in I}G_{I,ij}\,\mathrm{d}\mu_{i}\otimes \mathrm{d}\mu_{j}
		+\detA |\mathrm{d}\eta|^{2}
	\end{equation*}
	and the volume form
	\begin{equation*}
		\mathrm{d}\operatorname{Vol}_{G_{I}}
		=\bigl(\det G_{I}\bigr)^{3/2}\mathrm{d}\mu_{1}\wedge\dots\wedge \mathrm{d}\mu_{n}.
	\end{equation*}
	Denote by $\Delta_{G_{I}}$ the Laplace--Beltrami operator associated with $g_{G_{I}}$:
	\begin{equation*}
		\Delta_{G_{I}}u
		=\sum_{i,j\in I}G_{I,{ij}}^{-1}\frac{\partial^{2}u}{\partial\mu_{i}\partial\mu_{j}}
		+4 (\det A)^{-1}\frac{\partial^{2}u}{\partial\eta\partial\bar{\eta}}
		\operatorname{Re}\mathrm{d}\eta\wedge\operatorname{Im}\mathrm{d}\eta,
	\end{equation*}
	If a function $u$ on $\B$ depends only on $\mu_{I},\eta$, then
	\begin{equation*}
		\Delta_{A}u=\Delta_{G_{I}}u.
	\end{equation*}
	
	Regarding $\D_{ij}\cap(\mathbb{R}^{n}_{\mu_{I}}\times\mathbb{C}_{\eta})$ as an $(n-3)$-dimensional Hausdorff open subset of $\mathbb{R}^{n}_{\mu_{I}}\times\mathbb{C}_{\eta}$, we have, in the sense of distributions,
	\begin{equation*}
		\Delta_{G_{I}}\alpha_{I,ij}\,\mathrm{d}\operatorname{Vol}_{G_{I}}
		=-2\pi\sqrt{\det G_{I}}\,\mathcal{H}^{n-1}\big|_{\D_{ij}}.
	\end{equation*}
	
	\begin{proposition}
		The quantities
		\[
		v_{I,ii}=\alpha_{I,0i}+\sum_{k\ne i}\alpha_{I,ik},\qquad
		v_{I,ij}=-\alpha_{I,ij}\ \ (i\ne j),\qquad
		w_{I}=\detA\,A^{-1}_{ij}v_{I,ij}
		\]
		solve the integrability condition \eqref{jiaohuanvij} and the harmonicity condition \eqref{CYconvij} away from $\D$, and satisfy the distributional equation \eqref{chernvij} in $\B_{I}$.
	\end{proposition}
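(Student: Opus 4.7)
The proof proceeds in parallel with the earlier proposition for $(v_{ij},w)$, the localised family $\{\alpha_{I,ij}\}$ playing the role of $\{\alpha_{ij}\}$. First I establish commutativity identities for $\alpha_{I,ij}$ that mirror \eqref{alphaijjiaohuan}: since $\alpha_{I,01}$ is given by the integral formula exhibited in the proof of Lemma~\ref{alphajianjin} (an $(n-1)$-fold integral over $\mathbb{R}_+^{n-1}$ against the $G_I$-Green kernel in dimension $n$), the computations from the earlier lemma carry over almost verbatim in dimension $n$ with the matrix $G_I$ in place of $A$. Differentiating under the integral sign, the boundary terms at $t_k=0$ produce the identities
$$\frac{\partial \alpha_{I,ij}}{\partial \mu_k}=\frac{\partial \alpha_{I,ik}}{\partial \mu_j},\qquad \frac{\partial \alpha_{I,0i}}{\partial \mu_j}=\frac{\partial \alpha_{I,0j}}{\partial \mu_i}=-\sum_{t\in I}\frac{\partial \alpha_{I,ij}}{\partial \mu_t}$$
for distinct $i,j,k\in I$, while for indices outside $I$ the relations are trivial by the convention $\alpha_{I,\cdot}=0$. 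The integrability condition \eqref{jiaohuanvij} for $(v_{I,ij},w_I)$ then follows from the same linear-algebraic manipulation as in the earlier proposition.

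For the harmonicity condition \eqref{CYconvij}, the key point is that each $\alpha_{I,ij}$ depends only on $(\mu_I,\eta)$, so the identity $\Delta_A u=\Delta_{G_I} u$ on such functions, combined with the $\Delta_{G_I}$-harmonicity of $\alpha_{I,ij}$ off the codimension-$3$ sub-locus $\D_{ij}\cap(\mathbb{R}^n_{\mu_I}\times\mathbb{C}_\eta)$ (which is contained in $\D$), yields $\Delta_A\alpha_{I,ij}=0$ on $\B\setminus\D$. Linearity then gives $\Delta_A v_{I,ij}=\Delta_A w_I=0$ off $\D$, and the identity $w_I=\det A\cdot A^{-1}_{ij}v_{I,ij}$ is built into the definition of $w_I$.

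The Chern-class equation \eqref{chernvij} in $\B_I$ is where genuine localisation is essential. By the same algebra as in the earlier proposition, its left-hand side collapses to a sum of contracted distributional Laplacians $\Delta_A\alpha_{I,ij}$. For $i,j\in I$, I compute $\Delta_{G_I}\alpha_{I,ij}\,\mathrm{d}\!\operatorname{Vol}_{G_I}$ as a Hausdorff measure on the sub-locus in $\mathbb{R}^n_{\mu_I}\times\mathbb{C}_\eta$, and then extend trivially in the $\mu_{I'}$-directions to a cylindrical measure on $\mathbb{R}^N\times\mathbb{C}$. The crucial geometric fact is that inside $\B_I$ this cylindrical extension coincides with $\D_{ij}\cap\B_I$, because $\B_I$ is a conical neighbourhood of $\D_I$ on which $\mu_{I'}>0$, so the positivity conditions on $\mu_{I'}$ built into the definition of $\D_{ij}$ are automatic. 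For $\{i,j\}\not\subset I$, both sides vanish on $\B_I$: the left because $v_{I,ij}=0$ by convention, and the right because any such $\D_{ij}$ is contained in $\partial\D_I$ and is therefore disjoint from $\B_I$ by its defining inequality $C_0\operatorname{dist}_A(\cdot,\D_I)<\operatorname{dist}_A(\cdot,\partial\D_I)$.

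The main obstacle lies in this last step: the normalisation bookkeeping required to match the factor $\sqrt{\det G_I}$ of the $n$-dimensional distributional identity with the factor $\sqrt{\det A}$ appearing in \eqref{chernvij}. The reconciliation uses the relation $(A^{-1})_I=G_I^{-1}$ (so that the $\mu$-block of $\Delta_A$ restricted to functions of $(\mu_I,\eta)$ genuinely agrees with the $\mu$-block of $\Delta_{G_I}$) together with the product decomposition of $\mathrm{d}\!\operatorname{Vol}_A$ relative to $\mathrm{d}\!\operatorname{Vol}_{G_I}\otimes \mathrm{d}\mu_{I'}$, so that the cylindrical measure is converted correctly to $\mathcal{H}^{N-1}|_{\D_{ij}}$ inside $\B_I$. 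Once this matching is carried out and the geometric identification above is invoked, \eqref{chernvij} in $\B_I$ follows.
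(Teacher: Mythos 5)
Your proposal is correct in substance and follows essentially the same route as the paper's (very terse) proof: everything is the $n$-dimensional first-order ansatz with constant matrix $G_I$, transported to $\B$ via the identity $\Delta_A u=\Delta_{G_I}u$ for functions of $(\mu_I,\eta)$ and the block-inverse relation $(A^{-1})_{II}=G_I^{-1}$, with the indices outside $I$ handled by the vanishing convention. Two of your supporting claims, however, are not right as stated, even though the conclusions they support are true. First, the singular set of $\alpha_{I,ij}$ viewed in $\B$ is the \emph{full cylinder} over $\D_{ij}\cap(\R^{n}_{\mu_I}\times\C_\eta)$ in the $\mu_{I'}$-directions, and this cylinder is \emph{not} contained in $\D$ (this is exactly the set $S$, resp.\ $S'$, appearing in the proof of Lemma~\ref{alphajianjin}); so ``$\Delta_A\alpha_{I,ij}=0$ on $\B\setminus\D$'' is too strong, and the precise statement is harmonicity away from $\D$ and these cylinders --- which coincide with the $\D_{ij}$ only after restricting to $\B_I$. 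Second, for $\{i,j\}\not\subset I$ it is false that $\D_{ij}\subset\partial\D_I$: e.g.\ for $I=\{0,1,2\}$ one has $\partial\D_I\subset\{\mu_1=\mu_2=0,\eta=0\}$, whereas $\D_{13}$ lies in $\{\mu_1=\mu_3=s<0\}$. The disjointness $\D_{ij}\cap\B_I=\emptyset$ that you need is nevertheless correct, but for a different reason: any $q\in\D_{ij}$ with $\{i,j\}\not\subset I$ satisfies $\dist_A(q,\partial\D_I)\le C\,\dist_A(q,\D_I)$ with a uniform $C$ (compare $q$ with its projection onto $\D_{I\cup\{j\}}$ for $j\notin I$), which violates the defining inequality of $\B_I$ once $C_0>C$. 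With these two justifications repaired, your argument matches the paper's.
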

	
	\begin{proof}
		Harmonicity is clear, each $\alpha_{I,ij}$ is a first-order linear ansatz on $\mathbb{R}^{n}_{\mu_{I}}\times\mathbb{C}_{\eta}$ with constant coefficient matrix $G_{I}$, so the stated distributional equation holds.  
		By definition, $v_{I,ij}=0$ whenever $i\notin I$ or $j\notin I$, hence the integrability condition \eqref{jiaohuanvij} is automatically satisfied.
	\end{proof}
	
	\begin{remark}
		The functions $v_{I,ii}$ and $w_{I}$ actually satisfy the distributional equation \eqref{chernvij} on a larger domain, we shall exploit this fact in later sections.
	\end{remark}
	
	Similarly, we introduce the corresponding linear combinations of the $\beta_{I,ij}$:
	
	\begin{definition}\label{definehIij}
		Set
		\[
		h_{I,ij}=v_{ij}-v_{I,ij},\qquad h_{I}=w-w_{I}.
		\]
		Then $h_{I,ij}$ and $h_{I}$ are smooth harmonic functions on $\B_{I}$ and satisfy the integrability condition \eqref{jiaohuanvij}.
	\end{definition}
	
	These quantities $h_{I,ij}$ and $h_{I}$ will be used below to measure the error between the ansatz metric and the model metric near the discriminant locus, and play a key role in our surgery procedure.
	
	\section{Induction Hypothesis}\label{sectioninduction}
	We now consider the GH ansatz coefficients $V_{ij}^{(1)}, W^{(1)}$ defined on $\B\setminus \D$, which induce the GH metric $g^{(1)}$.  
	As explained at the end of Section~\ref{subsectionfirstorder}, the volume-form error $E^{(1)}$ associated with $g^{(1)}$ does \emph{not} decay as we approach the strata $\D_{I}$ with $\lvert I\rvert\geq 3$.
	
	To overcome this difficulty, we proceed by induction, performing surgery near the locus to refine the Gibbons–Hawking coefficients. 
	First, we construct a new set of coefficients $V_{ij}^{(2)}, W^{(2)}$ whose induced GH metric $g^{(2)}$ agrees with $g^{(1)}$ outside the regions near $\D_{I}$ with $\lvert I\rvert=3$, but approximates the Calabi-–Yau metric more closely on the subsets $\B_{I}$ (defined in Section~\ref{subsectionjianjing1}).  
	Consequently, the volume-form error $E^{(2)}$ associated with $g^{(2)}$ exhibits decay along the locus $\D_{I}$ with $\lvert I\rvert\le 3$.
	
	Building upon $V_{ij}^{(2)}, W^{(2)}$, we next construct $V_{ij}^{(3)}, W^{(3)}$, ensuring that the corresponding volume-form error $E^{(3)}$ decays along the locus $\D_{I}$ with $\lvert I\rvert\le 4$.  
	This process continues inductively.
	
	Thus our proof is inherently inductive.  
	We begin by establishing a set of inductive hypotheses on the GH coefficients $V_{ij}^{(s)}, W^{(s)}$, which are satisfied by the initial data $V_{ij}^{(1)}, W^{(1)}$.  
	In the remainder of the paper we will construct $V_{ij}^{(s+1)}, W^{(s+1)}$ satisfying these same hypotheses, thereby completing the induction and proving Theorem~\ref{maintheorem}. We define the following subsets of $\B_{I}$ for $|I|\ge 2$:
	\begin{equation}\label{defineBIprime}
		\begin{aligned}
			\B'_{I}&:=\bigl\{\,p\in\B\bigm|
			\ \ 2C_{0}\dist_{A}(p,\D_{I})<\dist_{A}(p,\partial\D_{I})\,\bigr\},\\[2pt]
			\B''_{I}&:=\bigl\{\,p\in\B\bigm|
			4\hat{C}C_{0}\dist_{A}(p,\D_{I})<\dist_{A}(p,\partial\D_{I})\,\bigr\}.
		\end{aligned}
	\end{equation}
	Here $\hat{C}>1$ depends only on $N,\lambda,\Lambda$, the significance of this constant will be clarified in Lemma~\ref{juliheaffinedengjia}. We now state our inductive hypotheses, recall that $h_{I,ij}$ and $h_{I}$ are the error terms associated with the index set $I$ introduced in Definition~\ref{definehIij}.
	
	\begin{proposition}[Induction hypothesis]\label{inductionhyp}
		For $1\le s\le N-1$ define the subdomain of $\B$
		\begin{equation}\label{assumationdomain}
			\F_{s}:=\bigcap_{\lvert I\rvert=s+2}
			\bigl\{\,p\in\B\bigm|\dist_{A}(p,\D_{I})>C_{s}\bigr\},
		\end{equation}
		where the constants $C_{s}$ satisfy $C_{s}\gg C_{s-1}$, in particular $C_{s}\gg C_{0}$ with $C_{0}$ the constant used in \eqref{defineBI} to define $\B_{I}$.
		We assume there exist smooth functions $V^{(s)}_{ij}$, $W^{(s)}$ on $\F_{s}\setminus\D$ satisfying the following conditions.
		
		\smallskip
		\noindent
		\textup{(I)} On $\F_{s}\setminus\D$ the matrix $\bigl(V^{(s)}_{ij}\bigr)$ is positive-definite, $W^{(s)}>0$, and the commutativity relations
		\begin{equation}\label{assumationjiaohuan}
			\frac{\partial V^{(s)}_{ij}}{\partial\mu_{k}}
			=\frac{\partial V^{(s)}_{ik}}{\partial\mu_{j}},\qquad
			\frac{\partial^{2}W^{(s)}}{\partial\mu_{i}\partial\mu_{j}}
			+4\frac{\partial^{2}V^{(s)}_{ij}}{\partial\eta\partial\bar\eta}=0
		\end{equation}
		hold.
		Moreover, in the sense of currents on $\F_{s}$,
		\begin{equation}\label{assumationcherncon}
			\begin{split}
				\frac{\sqrt{-1}}{4\pi}\Bigl(
				\frac{\partial^{2}W^{(s)}}{\partial\mu_{i}\partial\mu_{j}}
				+4\frac{\partial^{2}V^{(s)}_{ij}}{\partial\eta\partial\bar\eta}
				\Bigr)\,\mathrm{d}\mu_{i}&\wedge \mathrm{d}\eta\wedge \mathrm{d}\bar\eta\otimes e_{j}
				\\
				&=\sum_{i=1}^{N}\D_{0i}\otimes e_{i}
				+\!\!\sum_{1\le i<j\le N}\D_{ij}\otimes(e_{j}-e_{i}).
			\end{split}
		\end{equation}
		
		\smallskip
		\noindent
		\textup{(II)} For any $K$ with $|K|=s+1$, inside $(\F_{s}\setminus\D)\cap\B_{K}$ the matrix $(V^{(s)}_{ij}-h_{K,ij})$ is positive-definite and $W^{(s)}-h_{K}>0$; they induce a smooth \kah structure on $\pi^{-1}(\B_{K})$.
		The detailed construction will be given in Subsection~\ref{subsectionsmoothstructure}, the smooth \kah structure is induced by the map $F_{K}$ in Proposition~\ref{prop:smooth-structure-DK}, and it satisfies all the additional properties listed in Proposition~\ref{buchongassumation}.
		Moreover, if $J=K\cup\{j\}$ for some $j\notin K$, then on $\B_{J}\cap\F_{s}$
		\[
		|V^{(s)}_{tj}-h_{K,tj}-A_{tj}|\le\frac{C}{\operatorname{dist}_{A}(\,\cdot\,,\D_{J})}
		\quad\text{for all } t\in J,
		\]
		where $C$ is a uniform constant.
		
		\smallskip
		\noindent
		\textup{(III)} For any $I$ with $|I|\ge s+2$, inside $(\F_{s}\setminus\D)\cap\B_{J}$ the matrix $(V^{(s)}_{ij}-h_{I,ij})$ is positive-definite and $W^{(s)}-h_{I}>0$; both depend only on $\mu_{I}$ and $\eta$.
		Moreover, $V^{(s)}_{tl}-h_{I,tl}=A_{tl}$ whenever $t\notin I$ or $l\notin I$.
		In addition,
		\[
		V^{(s)}_{ij}=V^{(s-1)}_{ij},\qquad
		W^{(s)}=W^{(s-1)}
		\quad\text{on }\,
		(\F_{s}\cap\F_{s-1})\setminus
		\bigcup_{\lvert J\rvert=s+2}\B'_{J}.
		\]
		
		\smallskip
		\noindent
		\textup{(IV)} On $\F_{s}\cap \{\operatorname{dist}_{A}(\cdot,\D)>1 \}$ we have
		\begin{gather*}
			\lVert V^{(s)}_{ij}-A_{ij}\rVert
			\le C\dist_{A}(\,\cdot\,,\D)^{-1},\qquad
			\lVert W^{(s)}-\det A\rVert
			\le C\dist_{A}(\,\cdot\,,\D)^{-1},\\[2pt]
			\lVert\nabla^{k}V^{(s)}_{ij}\rVert
			\le C\dist_{A}(\,\cdot\,,\D)^{-k},\qquad
			\lVert\nabla^{k}W^{(s)}\rVert
			\le C\dist_{A}(\,\cdot\,,\D)^{-k},\quad k\ge 1,
		\end{gather*}
		where the affine connection and norms are induced by $g_{A}$ and $C>0$ is a uniform constant.
	\end{proposition}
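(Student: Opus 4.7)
The plan is to read Proposition~\ref{inductionhyp} as the induction hypothesis for an induction on $s$, with the base case $s=1$ supplied by the first-order asymptotic coefficients $V^{(1)}_{ij}=A_{ij}+v_{ij}$ and $W^{(1)}=\det A+w$ of Subsection~\ref{subsectionfirstorder}, while the inductive step $s\mapsto s+1$ is deferred to the surgery of Section~\ref{sectionsurgery} (which in turn uses the biholomorphism analysis of Section~\ref{sectionholpoint}). At this stage I would only verify that $V^{(1)}_{ij}$ and $W^{(1)}$, which are defined on all of $\B\setminus\D\supset\F_{1}\setminus\D$, satisfy conditions (I)--(IV) with $s=1$, thereby providing the starting data for the inductive scheme.

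For (I) I would note that positive-definiteness of $(V^{(1)}_{ij})$ follows from $(A_{ij})$ being positive-definite together with $\alpha_{ij}>0$ (readable from the explicit integral formulas), while $W^{(1)}>0$ for the same reason; the commutativity relations \eqref{assumationjiaohuan} are exactly \eqref{jiaohuanvij}, and the distributional equation \eqref{assumationcherncon} is precisely \eqref{chernvij}. For (II), fix $K$ with $|K|=2$, and by the symmetry discussion of Subsection~\ref{subsectionthegeometry} assume $K=\{0,1\}$. On $\B_{K}$, Lemma~\ref{alphajianjin} gives the decomposition $\alpha_{ij}=\alpha_{K,ij}+\beta_{K,ij}$, so $V^{(1)}_{ij}-h_{K,ij}=A_{ij}+v_{K,ij}$ depends only on $\mu_{1},\eta$ and reduces to the classical two-dimensional GH ansatz; the resulting structure on $\pi^{-1}(\B_{K})$ is smooth by the known behavior of the Taub--NUT ansatz, which is the content of Proposition~\ref{prop:smooth-structure-DK} applied to $K$, and it satisfies the further geometric constraints of Proposition~\ref{buchongassumation}. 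The estimate $|V^{(1)}_{tj}-h_{K,tj}-A_{tj}|\le C\dist_{A}(\,\cdot\,,\D_{J})^{-1}$ for $J=K\cup\{j\}$ follows from the comparison part of Lemma~\ref{alphajianjin}, which bounds $|\alpha_{K,tj}-\alpha_{J,tj}|$ by $C\dist_{A}(\,\cdot\,,\D_{J})^{-1}$.

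For (III) the property that $V^{(1)}_{ij}-h_{I,ij}=A_{ij}+v_{I,ij}$ depends only on $\mu_{I},\eta$ is built into the definition of $v_{I,ij}$ in Subsection~\ref{subsectionjianjing1}, and $v_{I,ij}=0$ when $i\notin I$ or $j\notin I$ by convention, so $V^{(1)}_{tl}-h_{I,tl}=A_{tl}$ in that case; the clause $V^{(s)}=V^{(s-1)}$ outside the $\B'_{J}$ regions is vacuous at $s=1$. For (IV), differentiating under the integral sign in the explicit formula for $\alpha_{ij}$ produces factors of $|\vec{\mu}-\vec{t}|_{A}^{-N-k}$, and integrating over the appropriate orthant yields a bound of order $\dist_{A}(\,\cdot\,,\D)^{-k}$, uniformly on $\F_{1}\cap\{\dist_{A}(\,\cdot\,,\D)>1\}$.

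The principal technical obstacle, already present at the base case, is property (II): one must show that the two-dimensional Taub--NUT-type ansatz extends smoothly across $\D_{K}$ after compactifying the $\T^{N}$-bundle via the distributional equation \eqref{assumationcherncon}, despite $\alpha_{K,ij}$ being singular along $\D_{K}$. This requires a concrete analysis of the connection forms $\vartheta_{j}$ from Theorem~\ref{GibbonsHawking} and a matching with the smooth structure coming from the trivial directions of the complementary torus factor, which is precisely what Propositions~\ref{prop:smooth-structure-DK} and~\ref{buchongassumation} deliver at this step. Propagating this smoothness through each surgery $s\mapsto s+1$, and ensuring that surgeries performed on different $\B_{I}$ with $|I|=s+2$ remain mutually compatible on their overlaps, is the core difficulty that motivates the entire inductive framework and is addressed by the gluing argument of Subsection~\ref{subsectiongluingcalabiyau}.
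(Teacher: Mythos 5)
Your proposal matches the paper's treatment: Proposition~\ref{inductionhyp} is indeed set up as a running induction hypothesis, and the paper's ``proof'' consists exactly of the base-case verification you describe (with (I), (III), (IV) read off directly from the formulas for $V^{(1)}_{ij},W^{(1)}$ and (II) established in Subsection~\ref{subsectionsmoothstructure} via Propositions~\ref{prop:smooth-structure-DK} and~\ref{buchongassumation}), while the inductive step $s\mapsto s+1$ is carried out by the surgery in Subsection~\ref{subsectiongluingcalabiyau}. One small inaccuracy: for $s=1$, $K=\{0,1\}$ and $J=K\cup\{j\}$ with $j\notin K$, the quantity $V^{(1)}_{tj}-h_{K,tj}-A_{tj}=v_{K,tj}$ is identically zero (because either $t$ or $j$ lies outside $K$), so the estimate in (II) is vacuous at the base case rather than being a consequence of the comparison part of Lemma~\ref{alphajianjin}; that comparison becomes relevant only in the inductive step.
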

	
	\begin{remark}
		The precise formulation of \textup{(II)} will be given in Section~\ref{subsectionsmoothstructure}, since the way $V^{(s)}_{ij}$, $W^{(s)}$ induce the smooth structure is rather involved and would overload the present statement.
	\end{remark}
	
	\begin{remark}\label{HKdefine}
		Let $|I|=s+2$, let $K\subsetneqq I$, $|K|\ge 2$, define
		\begin{align}\label{defineHk}
			\H_{K}=\B_{I}\cap\Bigl(\B_{K}\setminus
			\bigcup_{K\subsetneqq J\subsetneqq I}\B^{\prime}_{J}\Bigr).
		\end{align}
		By item \textup{(III)} above when $|K|=t+1\ge 2$, we have
		\[
		V^{(s)}_{ij}=V^{(t)}_{ij},\qquad W^{(s)}=W^{(t)}
		\qquad\text{on } \F_{t}\cap\H_{K}.
		\]
		Consequently, property \textup{(II)} implies that $V^{(s)}_{ij}$ and $W^{(s)}$ induce a smooth structure on $\F_{s}\cap\F_{t}\cap\D_{K}$.
	\end{remark}
	
	For the initial coefficients $V^{(1)}_{ij}$, $W^{(1)}$ the properties \textup{(I)}, \textup{(III)} and \textup{(IV)} are clearly satisfied.  
	As shown in the next subsection, they also induce a smooth structure near $\D_{ij}$, thereby completing the verification of \textup{(II)}.

	\subsection{Smooth Structure along Loci}\label{subsectionsmoothstructure}
	In this subsection we fix
	\[
	K=\{0,1,\dots,k\}
	\]
	and explain how $V^{(k)}_{ij}$ and $W^{(k)}$ induce a smooth structure on $\D_{K}$.  
	By the symmetry of the discriminant locus described at the beginning of Subsection~\ref{subsectionthegeometry}, the discussion for a general $K$ is completely analogous.
	
	Whenever we speak of $\B_{K}$ below, we always mean its intersection with the domain $\F_{k}$ defined in Proposition~\ref{inductionhyp}.  
	On $\B_{K}$ set
	\[
	V_{K,ij}:=V^{(k)}_{ij}-h_{K,ij},\qquad
	W_{K}:=W^{(k)}-h_{K}.
	\]
	Since $V_{K,ij}$ and $W_{K}$ satisfy the integrability condition and the Chern-class condition of the singular $\mathbb{T}^{N}$-bundle $\pi\colon\pi^{-1}(\B_{K})\to\B_{K}$, and $h_{K,ij}$ and $h_{K}$ are smooth harmonic functions on $\B_{K}$ satisfying the integrability condition as well, the differences $V_{K,ij}$ and $W_{K}$ also satisfy both the integrability condition and the Chern-class condition.
	
	By condition~(II) of Proposition~\ref{inductionhyp}, the matrix $(V_{K,ij})$ is positive definite and $W_{K}>0$.  
	For $k=1$ (i.e.\ $|K|=2$) the definitions in Subsection~\ref{subsectionjianjing1} give
	\[
	V_{K,ij}=A_{ij}+v_{K,ij},\qquad
	W_{K}=\det A+w_{K},
	\]
	so positive-definiteness is immediate.
	
	By Theorem~\ref{GibbonsHawking} there exists a complex structure $J_{K}$ on $\pi^{-1}(\B_{K}\setminus\D)$ together with a \kah metric $g_{K}$, denote by $\omega_{K}$ its \kah form and by $\Omega_{K}$ the holomorphic volume form.  
	We therefore obtain the GH structure
	\[
	\bigl(\pi^{-1}(\B_{K}\setminus\D),\,
	J_{K},\,\Omega_{K},\,g_{K},\,\omega_{K}\bigr).
	\]
	In the sequel we will show that this GH structure is biholomorphically isomorphic to a certain model geometry, to that end we first describe the standard space.
	
	\begin{example}[Model geometry]\label{biaozhunmuxing}
		Define the manifold
		\[
		M^{n}:=\C^{n+1}\times\R^{2(N-n)}
		\]
		equipped with the standard complex structure
		\[
		J_{\text{Nor}}:=J_{\C^{n+1}}\oplus J_{\R^{2(N-n)}}.
		\] 
		Using the global coordinates
		\[
		(z_{0},\dots,z_{n},x_{1},\dots,x_{N-n},y_{1},\dots,y_{N-n}),
		\]
		a basis of holomorphic $(1,0)$-forms is given by
		\[
		\mathrm{d} z_{0},\dots,\mathrm{d} z_{n},\quad \mathrm{d} x_{j}+\sqrt{-1}\,\mathrm{d} y_{j}\ (1\le j\le N-n).
		\]
		The standard holomorphic volume form is
		\[
		\Omega_{\text{Nor}}:=(\sqrt{-1})^{2N-n}\bigwedge_{i=0}^{n}\mathrm{d} z_{i}\wedge
		\bigwedge_{j=1}^{N-n}(\mathrm{d} x_{j}+\sqrt{-1}\,\mathrm{d} y_{j}).
		\]
		
		There is a natural action of the group $\mathbb{T}^{n}\times\R^{N-n}$ on $M^{n}$:
		\begin{itemize}
			\item
			$\mathbb{T}^{n}$ acts on $\C^{n+1}$ by
			\[
			(\theta_{1},\dots,\theta_{n})
			\cdot(z_{0},\dots,z_{n})
			=(e^{-\sqrt{-1}\sum\theta_{i}}z_{0},e^{\sqrt{-1}\theta_{1}}z_{1},\dots,e^{\sqrt{-1}\theta_{n}}z_{n}).
			\]
			
			\item
			$\R^{N-n}$ acts on the $\R^{2(N-n)}$-factor by
			\[
			(t_{1},\dots,t_{N-n})\cdot
			\bigl(x_{j}+\sqrt{-1}\,y_{j}\bigr)_{j=1}^{N-n}
			=\bigl(x_{j}+\sqrt{-1}\,(y_{j}+t_{j})\bigr)_{j=1}^{N-n}.
			\]
		\end{itemize}
		
		Define the holomorphic projection
		\[
		\pi^{n}_{\eta}\colon M^{n}\longrightarrow\C,\qquad
		\pi^{n}_{\eta}(z,x,y):=z_{0}\cdots z_{n}.
		\]
		This map equips $M^{n}$ with the structure of a holomorphic fibre bundle over $\C$.
		
		Endow $M^{n}$ with the standard \kah form
		\[
		\omega_{\text{Nor}}:=
		-\frac{\sqrt{-1}}{2}\sum_{i=0}^{n}\mathrm{d} z_{i}\wedge d\bar z_{i}
		+\sum_{j=1}^{N-n}\mathrm{d} x_{j}\wedge \mathrm{d} y_{j}.
		\]
		The moment map $\mu^{n}=(\mu^{n}_{1},\dots,\mu^{n}_{N})\colon M^{n}\to\R^{N}$ for the above $\mathbb{T}^{n}\times\R^{N-n}$-action is given by
		\begin{alignat*}{2}
			\mu^{n}_{i}&=\frac{1}{2}\bigl(|z_{i}|^{2}-|z_{0}|^{2}\bigr),
			&\quad &i=1,\dots,n,\\[2pt]
			\mu^{n}_{j}&=x_{j-n},
			&\quad &j=n+1,\dots,N.
		\end{alignat*}
		Setting $\pi^{n}:=(\mu^{n}_{1},\dots,\mu^{n}_{N},\pi^{n}_{\eta})\colon M^{n}\to\R^{N}\times\C$, one verifies that $\pi^{n}$ is a diffeomorphism onto its image; denote the image by
		\[
		\B^{n}:=\pi^{n}(M^{n})=\R^{N}\times\C.
		\]
		
		Inside $\B^{n}$ we may speak of the discriminant locus of the group action: for any index set $I\subset\{0,1,\dots,n\}$ put
		\[
		\D^{n}_{I}:=\bigcap_{i\in I}\pi^{n}\bigl(\{z_{i}=0\}\bigr),
		\qquad
		\D^{n}:=\bigcup_{I}\D^{n}_{I}.
		\]
		When $n=N$ this reduces to the flat example discussed in Example~\ref{examplefalt}.
	\end{example}

	As a detailed formulation of condition~(II) in Proposition~\ref{inductionhyp} we have
	
	\begin{proposition}\label{prop:smooth-structure-DK}
		There exists a continuous bundle map $\bF_{K}$ fitting into the commutative diagram
		\[
		\begin{tikzcd}[column sep=large, row sep=large]
			\widetilde{\pi^{-1}(\B_{K})}
			\arrow[rr,"\bF_{K}"]
			\arrow[dr,"\eta"']
			&&
			\C^{k+1}\times\R^{2(N-k)}
			\arrow[dl,"\pi^{k}_{\eta}"]
			\\
			&
			\C
		\end{tikzcd}
		\]
		where $\widetilde{\pi^{-1}(\B_{K})}$ denotes the universal cover of $\pi^{-1}(\B_{K})$, and $\pi_{\eta}$ is the projection defined in Example~\ref{biaozhunmuxing}. The map $\bF_{K}$ enjoys the following properties:
		
		\begin{enumerate}[label={(\roman*)}]
			\item 
			$\bF_{K}$ is injective and holomorphic on $\widetilde{\pi^{-1}(\B_{K}\setminus\D)}$, and satisfies  
			\[
			(\bF_{K})^{*}(\Omega_{\text{Nor}})=\Omega_{K}.
			\]  
			Consequently, we may identify $\widetilde{\pi^{-1}(\B_{K})}$ with an open subset of the model space $\C^{k+1}\times\R^{2(N-k)}$.
			\item
			Let $\{e_{i}\}_{i=1}^{N}$ be the standard basis of the Lie algebra $\mathfrak{t}^{N}$ of the original $\mathbb{T}^{N}$-action.  
			Introduce the affine basis
			\begin{equation}\label{Kfangsheji}
				e_{K,K}:=e_{K},\qquad
				e_{K,K'}:=e_{K'}+A_{K'}^{-1}A_{K'K}e_{K}.
			\end{equation}
			Then $\bF_{K}$ intertwines the action of $\mathbb{T}^{k}\times\R^{N-k}$ generated by \eqref{Kfangsheji} with the standard action on $M^{k}$ described in Example~\ref{biaozhunmuxing}.
			
			\item
			The push-forward $(\bF_{K})_{*}(\omega_{K})$ extends smoothly across the discriminant locus $\D_{K}$, hence it defines a smooth \kah metric on the image of $\bF_{K}$.  
			Moreover, this metric is Calabi-–Yau on the subset
			\[
			\bF_{K}\bigl(\widetilde{\pi^{-1}(\B''_{K})}\bigr).
			\]
			
			\item
			By the induction hypothesis, a smooth structure has already been constructed on $\F_{k-1}\cap\B_{K}$.  
			We claim that the map $\bF_{K}$ is compatible with the previously defined smooth structure and hence furnishes a smooth structure along $\D_{K}$.
		\end{enumerate}
	\end{proposition}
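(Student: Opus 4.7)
The plan is to build $\bF_K$ on the universal cover by exhibiting $k+1$ complex functions $z_0,\dots,z_k$ together with $N-k$ complex functions $x_j+\sqrt{-1}\,y_j$ that realize the coordinates of $\C^{k+1}\times\R^{2(N-k)}$. The holomorphic moment coordinate $\eta$ of Theorem~\ref{GibbonsHawking} is globally defined and is to be identified with $z_0z_1\cdots z_k$. For $j\in K'$, the moment coordinate $\mu_j$ supplies $x_{j-k}$, while the connection one-form $\vartheta_j$ written in the basis $e_{K,K'}$ of \eqref{Kfangsheji} integrates on the universal cover to the single-valued $y_{j-k}$ that unwraps the $N-k$ non-degenerating circles. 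For $i\in K$, Lemma~\ref{alphajianjin} isolates in $V_{K,ij}$ the leading singular part $\alpha_{K,ij}$, which coincides exactly with the coefficients of the flat Taub--NUT example on $\R^{k}_{\mu_K}\times\C_\eta$ with constant matrix $G_K$. Consequently $|z_i|^2$ is read off from the flat-model identity $V_{K,ij}^{-1}=|z_0|^2+\delta_{ij}|z_i|^2$, and the argument of $z_i$ is obtained by integrating $\vartheta_i$ in the basis $e_{K,K}$. Holomorphicity is then verified against the $(1,0)$-frame $\{\zeta_j,d\eta\}$ furnished by Theorem~\ref{GibbonsHawking}, and the identity $(\bF_K)^{*}\Omega_{\text{Nor}}=\Omega_K$ follows because both sides are the exterior product of the same frame.

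\textbf{Verification of (i)--(iii).} Injectivity on $\widetilde{\pi^{-1}(\B_K\setminus\D)}$ is immediate, the projection $\pi$ together with the unwound torus angles separates points. The equivariance in (ii) reduces to a Schur-complement computation: the basis \eqref{Kfangsheji} is designed precisely so that the vectors $e_{K,K'}$ are uncoupled from $\mu_K$ and therefore act as translations in the $y$-coordinates, whereas $e_{K,K}$ restricts to the standard diagonal $\mathbb{T}^k$ on the $\C^{k+1}$ factor described in Example~\ref{biaozhunmuxing}. For (iii), the splitting $V^{(k)}_{ij}=V_{K,ij}+h_{K,ij}$ together with Definition~\ref{definehIij} guarantees that $h_{K,ij},h_K$ are smooth harmonic on $\B_K$, so the smoothness of $(\bF_K)_{*}\omega_K$ across $\D_K$ reduces to the classical fact that the flat Taub--NUT K\"ahler form on $\C^{k+1}$ extends smoothly across its coordinate hyperplanes. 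The Calabi--Yau condition $\det(V_{K,ij})=W_K$ on $\bF_K\!\bigl(\widetilde{\pi^{-1}(\B''_K)}\bigr)$ is imposed through the construction of $V^{(k)}_{ij}$ itself, with the enlarged buffer constant $4\hat C C_0$ in \eqref{defineBIprime} ensuring that $\B''_K$ stays sufficiently far from $\partial\D_K$ so that Proposition~\ref{buchongassumation} applies.

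\textbf{Compatibility and main obstacle.} Property (iv) follows from Remark~\ref{HKdefine}: on $\F_k\cap\F_t\cap\H_K$, where $|K|=t+1$, the coefficients $V^{(k)}_{ij},W^{(k)}$ agree with $V^{(t)}_{ij},W^{(t)}$, so the two instances of $\bF_K$ are constructed from identical GH data and must coincide up to the gauge ambiguity allowed by Theorem~\ref{GibbonsHawking}. That residual ambiguity is removed by the normalization that $\eta$ is globally the holomorphic moment coordinate, hence the induced smooth structures match. The main obstacle lies in (iii): one must verify that after stripping off the smooth harmonics $h_{K,ij},h_K$, the residual coefficients yield $z_i$ whose squared moduli are globally positive on $\B_K$ and whose branch choice is single-valued on the universal cover, giving a metric smooth across every stratum $\D_I\cap\D_K$ with $K\subset I$. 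The integrability condition \eqref{assumationjiaohuan} is what enables the global extraction of the $z_i$ from the matrix $(V_{K,ij})$, while the asymptotic decomposition of Lemma~\ref{alphajianjin} supplies the exact match to the flat Taub--NUT model that is required for smoothness at $\D_K$.
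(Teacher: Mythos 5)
The construction of $\bF_K$ in your proposal has a genuine error in the extraction of the coordinates $z_0,\dots,z_k$. You propose to ``read off'' $|z_i|^2$ from the flat-Euclidean identity $V_{K,ij}^{-1}=|z_0|^2+\delta_{ij}|z_i|^2$ of Example~\ref{examplefalt}, and to recover $\arg z_i$ by integrating the connection. This is not correct. The identity $V_{ij}^{-1}=|z_0|^2+\delta_{ij}|z_i|^2$ characterizes the \emph{flat} metric on $\C^{N+1}$, not the Taub--NUT-type metrics that appear here. The GH complex structure $J_K$ determined by $V_{K,ij},\,W_K$ agrees with the flat complex structure only when the ansatz \emph{is} flat. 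In the genuine Taub--NUT case the moduli of the holomorphic coordinates involve an additional exponential in the moment coordinates; indeed Proposition~\ref{buchongassumation}(iii) records precisely that for $K=\{0,1\}$ one has $|w_1|^2 = e^{C}e^{2G_{\{0,1\}}\mu_1}\bigl(\mu_1+\sqrt{\mu_1^2+|\eta|^2}\bigr)$, which is not the quantity you would obtain from the flat identity. A coordinate $z_i$ defined by your recipe will therefore not annihilate the $(0,1)$-forms $\bar\zeta_j$ and hence will not be holomorphic with respect to $J_K$; the ``verification'' you sketch (``against the $(1,0)$-frame $\{\zeta_j,d\eta\}$'') would actually fail.

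What the paper does instead is to look for closed holomorphic $1$-forms, not for the coordinates themselves: one seeks functions $\gamma_{K,i}$ on the base so that $\tilde\xi_{K,i}+\gamma_{K,i}\,\mathrm{d}\eta$ is closed, and then integrates to get $\log w_i$ (not $w_i$); see the system \eqref{eq:dlogwi} and the explicit construction of the $\gamma$'s in Subsection~\ref{subsectionholomorphicmap}. This logarithmic integration is what produces the exponential factors, and the normalization $w_0\cdots w_k=\eta$ fixes the constants. A second related inaccuracy is the claim that $\alpha_{K,ij}$ ``coincides exactly with the coefficients of the flat Taub--NUT example'': for $k\ge 2$ the coefficients $V_{K,ij}=V^{(k)}_{ij}-h_{K,ij}$ have already been modified by earlier surgeries, so they are not the first-order ansatz coefficients of Section~\ref{sectionlinearization}, and Lemma~\ref{alphajianjin} alone does not identify them with a flat model. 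This also undercuts your argument for (iii), which tries to reduce smoothness across $\D_K$ ``to the classical fact that the flat Taub--NUT K\"ahler form on $\C^{k+1}$ extends smoothly across its coordinate hyperplanes''; the metric one must extend is the surgically-modified $g_K$, and the paper's argument for smoothness goes through the logarithmic-growth estimates of Subsection~\ref{subsectionlogarithmicgrowth} together with Hartogs' lemma, not through a direct reduction to the flat Taub--NUT case.

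Your handling of the $\R^{N-k}$-directions (using $\mu_{K'}$ and the integrated connection in the affine basis \eqref{Kfangsheji}), the equivariance claim (ii) via the Schur complement, the pullback identity $(\bF_K)^*\Omega_{\mathrm{Nor}}=\Omega_K$, and the compatibility argument (iv) via Remark~\ref{HKdefine} are all consistent with the paper. The gap is specifically in how the $z_i$ are produced.
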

	
	We now describe the explicit form of the map $\bF_{K}$.  
	By Theorem~\ref{GibbonsHawking}, using ${V}_{K,ij}$ and ${W}_{K}$ as GH coefficients, there exists a connection $1$-form
	\[
	\vartheta_{K}=\sum_{i=1}^{N}\vartheta_{K,i}\otimes e_{i}
	\quad\text{on }\pi^{-1}(\B_{K}\setminus\D)
	\]
	satisfying
	\[
	\mathrm{d}\vartheta_{K,i}=F_{K,i}
	\qquad\text{and}\qquad
	\vartheta_{K,i}(X_{j})=\delta_{ij},
	\]
	where $X_{j}$ is the smooth vector field generated by $e_{j}$ and
	\begin{equation*}
		F_{K,i}:=\sqrt{-1}\biggl(
		\frac{1}{2}\frac{\partial{W}_{K}}{\partial\mu_{i}}\,\mathrm{d}\eta\wedge \mathrm{d}\bar\eta
		+\frac{\partial{V}_{K,ij}}{\partial\eta}\,\mathrm{d}\mu_{j}\wedge \mathrm{d}\eta
		-\frac{\partial{V}_{K,ij}}{\partial\bar\eta}\,\mathrm{d}\mu_{j}\wedge \mathrm{d}\bar\eta
		\biggr).
	\end{equation*}
	Here we emphasize that, throughout this paper, $F$ denotes the curvature of a metric on a principal bundle, whereas the bold symbol $\boldsymbol{F}$ is reserved for morphisms between vector bundles.

	With respect to the affine basis \eqref{Kfangsheji}, let $\tilde X_{i}$ be the vector field generated by ${e}_{K,i}$.  
	A dual basis of connection forms is then
	\[
	\tilde\vartheta_{K,K}
	:=\vartheta_{K,K}
	-A_{KK'}A_{K'}^{-1}\vartheta_{K,K'},
	\qquad
	\tilde\vartheta_{K,K'}
	:=\vartheta_{K,K'},
	\]
	so that $\tilde\vartheta_{K,i}(\tilde X_{j})=\delta_{ij}$ and $\vartheta_{K}=\sum_{i}\tilde\vartheta_{K,i}\otimes{e}_{K,i}$.
	
	According to Theorem~\ref{GibbonsHawking}, a basis of $(1,0)$-forms for the complex structure $J_{K}$ is
	\[
	\xi_{K,i}:=\sum_{j}{V}_{K,ij}\,\mathrm{d}\mu_{j}+\sqrt{-1}\,\tilde\vartheta_{K,i},
	\qquad
	\mathrm{d}\eta.
	\]
	Performing the linear change
	\[
	\tilde\xi_{K,K}:=\xi_{K,K}-A_{KK'}A_{K'}^{-1}\xi_{K,K'},
	\qquad
	\tilde\xi_{K,K'}:=\xi_{K,K'},
	\qquad
	\mathrm{d}\eta,
	\]
	we obtain
	\[
	\tilde\xi_{K,i}(\tilde X_{j})=\sqrt{-1}\delta_{ij}.
	\]
	
	On $\C^{k+1}\times\R^{2(N-k)}$ the forms
	\[
	dw_{0},\dots,dw_{k},\quad \mathrm{d} x_{j}+\sqrt{-1}\,\mathrm{d} y_{j}\ (j=1,\dots,N-k)
	\]
	are closed and holomorphic.  
	Since $\bF_{K}$ is holomorphic, their pull-backs are closed holomorphic $1$-forms on $\widetilde{\pi^{-1}(\B_{K}\setminus\D)}$.  
	Comparing their values on the vector fields $\tilde X_{i}$ yields
	\begin{equation}\label{eq:dxidyi}
		\mathrm{d} x_{i}+\sqrt{-1}\,\mathrm{d} y_{i}=\tilde\xi_{K,i},
		\qquad
		i=k+1,\dots,N.
	\end{equation}
	Away from $w_{i}=0$ we may write
	\begin{equation}\label{eq:dlogwi}
		\begin{aligned}
			\mathrm{d}\log w_{i}&=\tilde\xi_{K,i}+\gamma_{K,i}\,\mathrm{d}\eta,
			&&i=1,\dots,k,\\
			\mathrm{d}\log w_{0}&=-\sum_{i=1}^{k}\tilde\xi_{K,i}+\gamma_{K,0}\,\mathrm{d}\eta,
		\end{aligned}
	\end{equation}
	where the functions $\gamma_{K,i}$ are chosen so that the right-hand sides are closed.  
	Equations \eqref{eq:dxidyi}--\eqref{eq:dlogwi} determine the local expression of $\bF_{K}$, the same argument as in Subsection~\ref{subsectiontheimageofthehol} shows that $\bF_{K}$ is globally defined once we fix the initial constants so that
	\[
	w_{0}\cdots w_{k}=\eta.
	\]
	
	The Calabi--Yau condition on $\B_{K}''$ reads
	\[
	\det(V_{K,ij})=W_{K}.
	\]
	For $k=1$ (i.e.\ $|K|=2$) the data $V_{K,ij}$, $W_{K}$ describe the product of a $2$-dimensional Taub--NUT metric and a flat factor near $\D_{ij}$, hence all properties listed in Proposition~\ref{prop:smooth-structure-DK} are satisfied by $V_{ij}^{(1)}$, $W^{(1)}$.

	With the above notation, we now complete the supplement to Proposition~\ref{inductionhyp}\,(II).  
	First, we introduce the weighted Sobolev norms induced by $g_{K}$.  
	Following the approach of \cite{li2023syz}, for a $\mathbb{T}^{k}\times\R^{N-k}$-invariant tensor $T$ on ${\pi^{-1}(\B_{K})}$ we define its weighted Sobolev norm with weight function
	\[
	\ell_{1}=1+\dist_{A}(\,\cdot\,,\D).
	\]
	For $0<\alpha<1$ set the normalized H\"older semi-norm
	\[
	[T]_{\alpha}
	=\sup_{p}\ell_{1}(p)^{\alpha}
	\sup_{p'\in B_{g_{K}}(p,\ell_{1}(p)/10)}
	\frac{|T(p)-T(p')|_{g_{K}}}{d_{g_{K}}(p,p')^{\alpha}},
	\]
	where $T(p)$ and $T(p')$ are compared via parallel transport along minimal geodesics.  
	For integers $k\ge 2$ the weighted norm of $T$ on $\pi^{-1}(\B_{K})$ is then
	\[
	\|T\|_{C^{k,\alpha}_{g_{K}}}
	=\sum_{j=0}^{k}\|\ell_{1}^{j}\nabla_{g_{K}}^{j}T\|_{L^{\infty}}
	+[\ell_{1}^{k}\nabla_{g_{K}}^{k}T]_{\alpha}.
	\]
	
	\begin{proposition}\label{buchongassumation}
		Using the notation of Proposition~\ref{prop:smooth-structure-DK}, the smooth \kah structure induced by $V_{K,ij}$ and $W_{K}$ satisfies:
		\begin{enumerate}[label={(\roman*)}]
			\item 
			Let $\{X_{i}\}_{i=1}^{N}$ be the fundamental vector fields on $\pi^{-1}(\B_{K})$ corresponding to the $\mathbb{T}^{N}$-action.  
			Then, with respect to the above weighted norm,
			\[
			\|X_{i}\|_{C^{k,\alpha}_{g_{K}}}\le C.
			\]
			The holomorphic volume form $\Omega_{K}$ induced by the GH ansatz is also bounded:
			\[
			\|\Omega_{K}\|_{C^{k,\alpha}_{g_{K}}}\le C.
			\]
			
			\item
			The curvature tensor of $g_{K}$ is bounded:
			\[
			\|\mathrm{Rm}_{g_{K}}\|_{C^{k,\alpha}_{g_{K}}}\le C.
			\]
			
			\item
			For the holomorphic map $(w_{0},w_{1},\dots,w_{k})$ defined in Proposition~\ref{prop:smooth-structure-DK}, set
			\begin{gather*}
				|w| = \Bigl(\sum_{i=0}^{k}|w_{i}|^{2}\Bigr)^{1/2},\\[4pt]
				|\vec{\mu}_{K}|_{G_{K}}
				= \bigl(|\mu_{K}|^{2}+\det A\,|\eta|^{2}\bigr)^{1/2}
				= \bigl(\mu_{K}^{\mathbb{T}}G_{K}\mu_{K}+\det A\,|\eta|^{2}\bigr)^{1/2}.
			\end{gather*}
			Then, whenever $|\vec{\mu}_{K}|_{G_{K}}>R$,
			\begin{equation}\label{duishuzengzhangK}
				K_{1}e^{K_{2}|\mu_{K}|}\,|\vec{\mu}_{K}|_{G_{K}}
				\le |w|\le
				K_{3}e^{K_{4}|\mu_{K}|}\,|\vec{\mu}_{K}|_{G_{K}},
			\end{equation}
			where $R>0$ and $K_{i}>0$ depend only on $n$, $\lambda$, and $\Lambda$.
		\end{enumerate}
	\end{proposition}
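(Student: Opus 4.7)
The plan is to establish the three claims by splitting the proof into two regimes: a \emph{far region} $\{\dist_A(\cdot,\D)\ge 1\}$, where condition (IV) of Proposition~\ref{inductionhyp} together with Lemma~\ref{alphajianjin} supplies decay of $V_{K,ij}-A_{ij}$, $W_K-\det A$ and their derivatives; and a \emph{near region} around $\D_K$, where the map $\bF_K$ of Proposition~\ref{prop:smooth-structure-DK} identifies $\pi^{-1}(\B_K)$ with an open subset of $\C^{k+1}\times\R^{2(N-k)}$ carrying a smooth \kah metric and $\ell_1$ is uniformly bounded from above.

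For (i), the duality $\vartheta_{K,i}(X_j)=\delta_{ij}$ gives $g_K(X_i,X_j)=V_{K,ij}^{-1}$, so $|X_i|_{g_K}$ is bounded by the positivity in (II); covariant derivatives $\nabla^{m}_{g_K}X_i$ expand polynomially in the $V_{K,ij}$, $W_K$, their inverses and their partials up to order $m$, and each extra derivative contributes an $\ell_1^{-1}$ factor in the far region by (IV), while in the near region smoothness of the model vector fields on $\C^{k+1}\times\R^{2(N-k)}$ trivialises the estimate. The argument for $\Omega_K=\wedge_j(-\sqrt{-1}\zeta_{K,j})\wedge \mathrm{d}\eta$ is identical after evaluating in a $g_K$-orthonormal coframe. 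For (ii), the Riemann tensor of a GH metric is a rational expression in $V_{K,ij}$, $W_K$ and their first two partials with denominators bounded away from zero by (II); substituting the decay from (IV) gives $|\mathrm{Rm}_{g_K}|_{g_K}=O(\ell_1^{-2})$ in the far region, and in the near region $(\bF_K)_*g_K$ is smooth on a compact neighbourhood of $\D_K$, hence has bounded curvature. The weighted $C^{k,\alpha}$-bounds follow from the same rescaling applied to $\nabla^{j}\mathrm{Rm}_{g_K}$.

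The substance is (iii). Integrating \eqref{eq:dlogwi} along a path from a fixed basepoint and writing $V_{K,ij}=A_{ij}+v_{K,ij}$ splits $\mathrm{Re}\log w_i$ into a piece linear in $\mu_K$ with coefficients coming from $A$, plus an integral $\int v_{K,ij}\,\mathrm{d}\mu_j$, plus bounded terms in $\eta$. The linear piece produces the exponential factor $e^{c|\mu_K|}$, with rate $c$ controlled by the spectrum of $G_K$ (and hence by $n,\lambda,\Lambda$ through the bounds recorded after \eqref{Kfangsheji}). By Lemma~\ref{alphajianjin}, each $v_{K,ij}$ is a linear combination of Green kernels $\alpha_{K,ij}$ on $(\R^k\times\C,g_{G_K})$; integrating these singular densities along a ray yields at most a logarithmic term $O(\log|\vec{\mu}_K|_{G_K})$, so after exponentiation $|w_i|$ lies in a window of the form $c\,e^{\text{linear}}\,|\vec{\mu}_K|_{G_K}^{p_i}$ for bounded exponents $p_i$. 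The identity $w_0\cdots w_k=\eta$ built into $\bF_K$ constrains $\sum_i\mathrm{Re}\log w_i=\log|\eta|$, preventing simultaneous collapse of all $|w_i|$, so $|w|^2=\sum_{i=0}^{k}|w_i|^2$ is always comparable to its largest term, and the two bounds in \eqref{duishuzengzhangK} follow.

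The main obstacle is (iii). The exponential rate is read off the linear contribution, but pinning down the polynomial prefactor $|\vec{\mu}_K|_{G_K}$ requires careful integration of the singular Green kernels $\alpha_{K,ij}$ along radial paths and, for the lower bound, verifying that for every direction in $\R^k\times\C$ with $|\vec{\mu}_K|_{G_K}\to\infty$ at least one $|w_i|$ realises the full growth rate. This parallels \cite[Section~2.4]{li2023syz} but demands additional care because the higher codimension of $\D_K$ introduces more Green-kernel components to track simultaneously.
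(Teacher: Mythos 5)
Your proposal reads Proposition~\ref{buchongassumation} as something to be proved directly, but the paper treats it as an \emph{inductive hypothesis}: it is verified only in the base case $|K|=2$, where the GH coefficients are explicit Green kernels and the $2$-dimensional Taub--NUT formulas give $|w_{0}|,|w_{1}|$ in closed form, and it is then re-established during the inductive surgery step in Subsection~\ref{subsectiongluingcalabiyau} --- items (i) and (ii) from the deviation estimate~\eqref{decayofphi} for $g_{\mathrm{glu}}$, and item (iii) by invoking Lemma~\ref{zzengzhangxing}, whose proof itself consumes~\eqref{duishuzengzhangK} for smaller index sets. Your attempt to argue directly runs into two concrete problems.

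First, for item (iii) you write that ``by Lemma~\ref{alphajianjin}, each $v_{K,ij}$ is a linear combination of Green kernels $\alpha_{K,ij}$'' and then integrate those singular densities along a ray. That is only true for the first-order coefficients $V^{(1)}_{ij}$. For $|K|\ge 3$ the coefficients $V_{K,ij}=V^{(k)}_{ij}-h_{K,ij}$ incorporate the output of the Monge--Amp\`ere gluing on $\C^{k}$ (Theorem~\ref{MAkejiexing} applied at an earlier stage of the induction); they agree with the Green-kernel expressions only outside the surgery cones, and inside those cones they are defined only implicitly. Your integration-of-Green-kernels argument therefore has nothing to apply to on exactly the region where the issue lives, and the paper instead imports this bound as part of the induction and propagates it through the surgery using~\eqref{gluingdevmu} and the remark that ``the same argument shows that inequality~\eqref{duishuzengzhang} also holds for $\mu^{\mathrm{glu}}$.''

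Second, for item (ii) you argue that ``in the near region $(\bF_K)_*g_K$ is smooth on a compact neighbourhood of $\D_K$, hence has bounded curvature.'' The near region is an unbounded tube around the unbounded locus $\D_K$, so smoothness alone does not give a uniform curvature bound; boundedness must be tracked in the weighted norm as one moves out along $\D_K$. In the paper this again comes from the inductive hypothesis (curvature bound for the previously-constructed $g_{K}$), the product splitting $g_{K}=g_{K,1}+g_{2}$, and the weighted $C^{k,\alpha}$ deviation estimates of Propositions~\ref{devationK},~\ref{devationf} together with~\eqref{decayofphi} --- not from a compactness argument. The same remark applies to your treatment of (i), which in the near region rests on ``smoothness of the model vector fields,'' again silently assuming what is being proved.
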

	
	One readily checks that these assumptions hold for $|K|=2$, thanks to the fine properties of the $2$-dimensional Taub--NUT metric.  
	For item (iii), take $K=\{0,1\}$; a direct computation gives
	\[
	|w_{1}|^{2}
	=e^{C}e^{2G_{\{0,1\}}\mu_{1}}\bigl(\mu_{1}+\sqrt{\mu_{1}^{2}+|\eta|^{2}}\bigr),
	\
	|w_{0}|^{2}
	=e^{-C}e^{-2G_{\{0,1\}}\mu_{1}}\bigl({-}\mu_{1}+\sqrt{\mu_{1}^{2}+|\eta|^{2}}\bigr).
	\]
	
	Item (i) will be used in Subsection~\ref{subsectionasymptoticprop} to prove the metric deviation estimate, (ii) ensures that Tian--Yau--Hein package applies, and (iii) assists in the proof of Lemma~\ref{zzengzhangxing}.  
	Propositions~\ref{prop:smooth-structure-DK} and \ref{buchongassumation} thus complete the supplementary description of Proposition~\ref{inductionhyp}, these properties themselves form part of our induction hypothesis.
	
	\section{Holomorphic Viewpoint}\label{sectionholpoint}
	From now on we fix an index set $I$ with $|I|=n+1$, where $2\le n\le N-1$.  
	Owing to the symmetry of the discriminant locus discussed in Section~\ref{subsectionthegeometry}, we may assume without loss of generality that
	\[
	I=\{0,1,\dots,n\}.
	\]
	By the induction hypothesis (Proposition~\ref{inductionhyp}) we already have GH coefficients $V_{ij}^{(n-1)}$, $W^{(n-1)}$ defined on $\F_{n-1}$ satisfying all properties listed in that proposition.  
	Our next task is to perform a “surgery” on these coefficients inside $\B_{I}$ (see Section~\ref{sectionsurgery}).  
	The present section prepares the ground for that surgery by showing that the GH structure determined by $V_{ij}^{(n-1)}$, $W^{(n-1)}$ enjoys properties analogous to those established in Proposition~\ref{prop:smooth-structure-DK}.

	\subsection{Gibbons--Hawking Structure on the Model Space}\label{subsectionstructureonmodel}
	Our surgery ultimately takes place inside $\B_{I}$,  
	we therefore focus on the topological properties of $\B_{I}$ and,  
	using $V_{ij}^{(n-1)}$ and $W^{(n-1)}$, build a model space near $\B_{I}$.
	
	On $\F_{n-1}\cap\B_{I}$ we set
	\begin{align}
		P_{I,ij}=A_{ij}+p_{I,ij}=V_{ij}^{(n-1)}-h_{I,ij},
		\qquad
		Q_{I}=\det A+q_{I}=W^{(n-1)}-h_{I}.
	\end{align}
	Proposition~\ref{inductionhyp} shows that $P_{I,ij}$ and $Q_{I}$ depend only on $\mu_{I}$ and $\eta$.  
	Moreover, $P_{I,ij}$ is positive-definite and $Q_{I}>0$.
	
	Fix $p=(\mu_{1},\dots,\mu_{n},\eta)\in\B_{I}$ and let $p_{I}$ be its projection to $\D_{I}$.  
	The computations in Subsection~\ref{subsectionthegeometry} give
	\[
	p_{I}=(0,\dots,0,\nu_{I,n+1},\dots,\nu_{I,N},0),
	\]
	where $\nu_{I}$ is an $(N-n)$-vector with $\nu_{I,j}>0$ for $j\in I^{\prime}$ and
	\[
	\nu_{I,I^{\prime}}=\mu_{I^{\prime}}+A_{I^{\prime}}^{-1}A_{I^{\prime}I}\mu_{I}.
	\]
	
	Choose $t_{j}>0$ ($j\in I^{\prime}$) and regard $P_{I,ij}$ and $Q_{I}$ as functions on
	\[
	\F_{n-1}\cap\B_{I}\cap\{\,p\in B:\nu_{I,j}=t_{j}\,\}.
	\]
	Because they depend only on $(\mu_{I},\eta)$, they are independent of the $t_{j}$.  
	Letting $t_{j}\to+\infty$ extends $P_{I,ij}$ and $Q_{I}$ to the whole space  
	$\mathbb{R}^{n}_{\mu_{I}}\times\mathbb{C}_{\eta}$ with a compact subset removed.  
	The obstruction to a global extension is the requirement  
	$\operatorname{dist}_{A}(p,\partial\D_{I})>C_{n-1}$ for $p\in\F_{n-1}$.  
	Hence, as functions of $(\mu_{I},\eta)$, they are defined only on
	\[
	\bigl\{\,q=(\mu_{I},\eta)\in\mathbb{R}^{n}\times\mathbb{C}:
	\operatorname{dist}_{G_{I}}(q,O)>C_{n-1}\,\bigr\}.
	\]
	
	On $\B_{I}$ the action generated by $e_{I^{\prime}}$ is non-degenerate.  
	To simplify the singular bundle geometry we pass to the model space $M^{n}$ of Example~\ref{biaozhunmuxing}.  
	Regarding $P_{I,ij}$ and $Q_{I}$ as functions on the base
	\[
	\B^{n}=\mathbb{R}^{n}_{\mu_{I}}\times\mathbb{C}_{\eta}
	\times\mathbb{R}^{N-n}_{\mu_{I^{\prime}}},
	\]
	this corresponds to trivialising the action generated by $e_{I^{\prime}}$,  
	i.e.\ we treat it as the $\mathbb{R}^{N-n}$-action.  
	All subsequent statements are proved on the singular bundle  
	$\pi^{n}\colon M^{n}\to\B^{n}$ and then transferred back to $\B_{I}$.
	
	Equip $\B^{n}$ with the metric $g_{A}$ and set
	\begin{align}
		\mathcal{T}_{I}=\{\,p\in\B^{n}:
		\operatorname{dist}_{A}(p,\D^{n}_{I})>C_{n-1}\,\}.
	\end{align}
	On $\mathcal{T}_{I}$ the functions $P_{I,ij}$ and $Q_{I}$ satisfy the integrability condition  
	and a distributional equation analogous to~\eqref{cherncon}.  
	Theorem~\ref{GibbonsHawking} therefore yields a connection form
	\[
	\vartheta_{I}=(\vartheta_{I,1},\dots,\vartheta_{I,N})
	\]
	on $(\pi^{n})^{-1}(\mathcal{T}_{I}\setminus\D^{n})$ with ${\rm d}\vartheta_{I}=F_{I}$, where
	$F_{I}=F_{I,i}\otimes e_{i}$ and
	\begin{equation*}
		F_{I,i}= \sqrt{-1} \left(
		\frac{1}{2}\frac{\partial Q_{I}}{\partial\mu_{j}}\,\mathrm{d}\eta\wedge \mathrm{d}\bar{\eta}
		+\frac{\partial P_{I,ij}}{\partial\eta}\,\mathrm{d}\mu_{j}\wedge \mathrm{d}\eta
		-\frac{\partial P_{I,ij}}{\partial\bar{\eta}}\,\mathrm{d}\mu_{j}\wedge \mathrm{d}\bar{\eta}
		\right).
	\end{equation*}
	Since $P_{I,ij}$ and $Q_{I}$ depend only on $(\mu_{I},\eta)$,  
	and since by Proposition~\ref{inductionhyp} $P_{I,kl}=A_{kl}$ whenever $k\notin I$ or $l\notin I$,  
	we have $F_{I,j}=0$ for $j\in I^{\prime}$.  
	Up to a gauge transformation we may therefore assume
	\[
	\vartheta_{I,j}=\mathrm{d}\theta_{I,j},\qquad j\in I^{\prime}.
	\]
	We now obtain the Gibbons--Hawking structure
	\begin{align}\label{modelGHstructure}
		\bigl((\pi^{n})^{-1}(\mathcal{T}_{I}\setminus\D^{n}),
		J_{I},\Omega_{I},g_{I},\omega_{I}\bigr)
	\end{align}
	with GH metric
	\begin{align*}
		g_{I}=P_{I,ij}\,\mathrm{d}\mu_{i}\otimes \mathrm{d}\mu_{j}
		+P^{-1}_{I,ij}\,\vartheta_{I,i}\otimes\vartheta_{I,j}
		+Q_{I}\lvert \mathrm{d}\eta\rvert^{2}.
	\end{align*}
	
	Choose an affine basis of $\mathfrak{t}^{n}\times\mathfrak{r}^{N-n}$:
	\[
	e_{I,I}:=e_{I},\qquad
	e_{I,I^{\prime}}:=e_{I^{\prime}}
	+A_{I^{\prime}}^{-1}A_{I^{\prime}I}e_{I}
	\]
	and the corresponding moment coordinates
	\[
	\nu_{I,I}=\mu_{I},\qquad
	\nu_{I,I^{\prime}}
	=\mu_{I^{\prime}}+A_{I^{\prime}}^{-1}A_{I^{\prime}I}\mu_{I}.
	\]
	The dual connection forms are
	\[
	\tilde\vartheta_{I,I}
	=\vartheta_{I,I}-A_{II^{\prime}}A_{I^{\prime}}^{-1}\vartheta_{I,I^{\prime}},
	\qquad
	\tilde\vartheta_{I,I^{\prime}}=\vartheta_{I,I^{\prime}}.
	\]
	On $(\pi^{n})^{-1}(\mathcal{T}_{I}\setminus\D^{n})$ the metric $g$ splits orthogonally as $g=g_{1}+g_{2}$, where
	\[
	\begin{aligned}
		g_{1}={}&{\rm d}\nu_{I,I}^{\top}(G_{I,ij}+p_{I,ij})\,{\rm d}\nu_{I,I}
		+\tilde\vartheta_{I,I}^{\top}(G_{I,ij}+p_{I,ij})^{-1}\tilde\vartheta_{I,I}\\[2pt]
		&\qquad+\bigl(\det A+q_{I}\bigr)\lvert{\rm d}\eta\rvert^{2}
	\end{aligned}
	\]
	and
	\[
	g_{2}={\rm d}\nu_{I,I^{\prime}}^{\top}A_{I^{\prime}}\,{\rm d}\nu_{I,I^{\prime}}
	+\tilde\vartheta_{I,I^{\prime}}^{\top}A_{I^{\prime}}^{-1}
	\tilde\vartheta_{I,I^{\prime}}.
	\]
	Notice that $p_{I,ij}$ and $q_{I}$ depend only on $(\mu_{I},\eta)$.  
	The pair $(\nu_{I,I^{\prime}},\theta_{I,I^{\prime}})$ gives an isomorphism from  
	$(\pi^{n})^{-1}(\mathcal{T}_{I}\setminus\D^{n})$ onto $\mathbb{R}^{2(N-n)}$.
	
	We next show that the model GH structure in~\eqref{modelGHstructure} splits as a product.  
	Using
	\[
	e_{I,I},\; e_{I,I^{\prime}}
	\]
	as a basis of the Lie algebra $\mathfrak{t}^{n}\times\mathfrak{r}^{N-n}$  
	induces an orthogonal decomposition of $M^{n}$.  
	The first factor
	\[
	\pi^{n}_{1}\colon M^{n}_{1}\to B^{n}_{1}
	\]
	is a singular $\mathbb{T}^{n}$-bundle over  
	$\mathbb{R}^{n}_{\mu_{I}}\times\mathbb{C}_{\eta}$ with fibre generated by $e_{I,I}$,  
	while the second factor
	\[
	\pi^{n}_{2}\colon M^{n}_{2}\to B^{n}_{2}
	\]
	is an $\mathbb{R}^{N-n}$-bundle over  
	$\mathbb{R}^{N-n}_{\nu_{I,I^{\prime}}}$ with fibre generated by $e_{I,I^{\prime}}$.  
	Consequently, $\mathcal{T}_{I}$ decomposes as
	\[
	\mathcal{T}_{I}
	=\bigl\{p\in\mathbb{R}^{n}_{\mu_{I}}\times\mathbb{C}_{\eta}:
	\operatorname{dist}_{G_{I}}(p,O)>C_{n-1}\bigr\}
	\times\mathbb{R}^{N-n}_{\nu_{I,I^{\prime}}}
	=:\mathcal{T}_{I,1}\times\mathbb{R}^{N-n}_{\nu_{I,I^{\prime}}}.
	\]
	
	Equip $\mathcal{T}_{I,1}\subset B^{n}_{1}$ with the GH data
	\begin{align}
		P_{ij}=G_{I,ij}+p_{I,ij},\qquad Q=Q_{I}=\det A+q_{I},
	\end{align}
	and choose the connection form $\tilde\vartheta_{I,I}$.  
	The resulting metric is exactly the $g_{1}$ given above.
	
	\begin{remark}
		For any geometric object defined on $\B^{n}$ that is compatible with the above splitting,  
		we append the subscript $1$ to denote its projection onto the first factor.  
		Examples include $\mathcal{T}_{I,1}$, $\D^{n}_{1}$, $\D^{n}_{K,1}$, etc.
	\end{remark}
	
	For the holomorphic complex structure $J_{I}$, Theorem~\ref{GibbonsHawking} provides the basis of holomorphic $(1,0)$-forms
	\[
	\xi_{I,i}=(A_{ij}+p_{I,ij})\,{\rm d}\mu_{j}
	+\sqrt{-1}\,\vartheta_{I,i},\qquad
	i=1,\dots,n,
	\]
	and $g_{I}$ is a \kah metric with respect to $J_{I}$.
	
	Restricting to the first factor and inserting the GH coefficients $P_{ij}$ and $Q$ yields a complex structure $J_{1}$ whose holomorphic $(1,0)$-forms are spanned by
	\[
	\tilde\xi_{I,i}=\sum_{j\in I}(G_{I,ij}+p_{I,ij})\,{\rm d}\nu_{I,j}
	+\tilde\vartheta_{I,i},\qquad
	i\in I.
	\]
	Since
	$\tilde\xi_{I,I}
	=\xi_{I,I}-A_{II^{\prime}}A_{I^{\prime}}^{-1}\xi_{I,I^{\prime}}$,  
	the structures $J_{1}$ and $J_{I}$ are compatible.
	
	On the second factor the restriction of $J_{I}$ satisfies  
	$\tilde\xi_{I,I^{\prime}}=\xi_{I,I^{\prime}}$,  
	which constitutes a holomorphic basis and induces a complex structure $J_{2}$ on $M^{n}_{2}$.  
	Consequently,
	\[
	J_{I}=J_{1}\oplus J_{2},
	\]
	i.e.\ $J_{I}$ is compatible with the product decomposition.  
	Finally, each $g_{i}$ is \kah with respect to $J_{i}$ ($i=1,2$).
	
	The holomorphic volume form $\Omega_{I}$ is
	\[
	\Omega_{I}
	=\bigwedge_{j=1}^{N}(-\sqrt{-1}\xi_{j})\wedge{\rm d}\eta.
	\]
	We introduce the corresponding forms on the two factors:
	\[
	\Omega_{1}
	=\bigwedge_{i\in I}(-\sqrt{-1}\tilde\xi_{i})\wedge{\rm d}\eta,
	\qquad
	\Omega_{2}
	=\bigwedge_{i\in I^{\prime}}(-\sqrt{-1}\tilde\xi_{i}).
	\]
	Then $\Omega_{1}$ is the holomorphic volume form induced by the GH data $P_{ij}$ and $Q$, and
	\[
	\Omega_{I}=(-1)^{N-n}\Omega_{1}\wedge\Omega_{2}.
	\]
	
	The \kah form of $g_{I}$ with respect to $J_{I}$ is
	\[
	\omega_{I}
	={\rm d}\mu_{i}\wedge\vartheta_{i}
	+\frac{\sqrt{-1}}{2}Q_{I}\,{\rm d}\eta\wedge{\rm d}\bar\eta,
	\]
	while those of $g_{1}$ and $g_{2}$ are
	\begin{align*}
		\omega_{1}
		&=\frac{\sqrt{-1}}{2}\bigl(P^{-1}_{ij}\tilde\xi_{i}\wedge\bar{\tilde\xi}_{j}
		+Q\,{\rm d}\eta\wedge{\rm d}\bar\eta\bigr)
		=\sum_{i=1}^{N}{\rm d}\mu_{i}\wedge\tilde\vartheta_{i}
		+\frac{\sqrt{-1}}{2}Q\,{\rm d}\eta\wedge{\rm d}\bar\eta,\\[4pt]
		\omega_{2}
		&=\frac{\sqrt{-1}}{2}A^{-1}_{I^{\prime},ij}
		\tilde\xi_{i}\wedge\bar{\tilde\xi}_{j}
		=\sum_{i=n+1}^{N}{\rm d}\nu_{I,i}\wedge{\rm d}\theta_{I,i}.
	\end{align*}
	Hence $\omega=\omega_{1}+\omega_{2}$.  Using
	\[
	\omega_{2}^{N-n}
	=\det(A_{I^{\prime}}^{-1})\frac{(N-n)!}{2^{N-n}}
	(\sqrt{-1})^{(N-n)^{2}}\Omega_{2}\wedge\bar\Omega_{2},
	\]
	we see that if
	\[
	\omega_{1}^{n+1}
	=\det A_{I^{\prime}}\frac{(n+1)!}{2^{n+1}}
	(\sqrt{-1})^{(n+1)^{2}}\Omega_{1}\wedge\bar\Omega_{1},
	\]
	then $\omega$ is Calabi-–Yau with respect to $\Omega$, i.e.\
	\begin{align*}
		\omega^{N+1}
		=(\omega_{1}+\omega_{2})^{N+1}
		&=\frac{(N+1)!}{(n+1)!(N-n)!}
		\omega_{1}^{n+1}\wedge\omega_{2}^{N-n}\\[4pt]
		&=\frac{(N+1)!}{2^{N+1}}
		(\sqrt{-1})^{(N+1)^{2}}\Omega_{I}\wedge\bar\Omega_{I}.
	\end{align*}
	
	\subsection{Holomorphic Map from the Model Space}\label{subsectionholomorphicmap}
	In this section we study the complex manifold
	\[
	\bigl((\pi^{n})^{-1}(\mathcal{T}_{I}\setminus\D^{n}),J_{I}\bigr).
	\]
	Our goal is to prove that it is biholomorphic to
	\[
	(\mathbb{C}^{n+1}\setminus B_{R})\times\mathbb{R}^{2(N-n)}.
	\]
	Parallel to Section~2.4 of~\cite{li2023syz}, we shall construct an explicit holomorphic map from $(\pi^{n})^{-1}(\mathcal{T}_{I})$ into $\mathbb{C}^{n+1}\times\mathbb{R}^{2(N-n)}$; the previous surgeries have to be taken into account.  The explicit formulae will be useful in Subsection~\ref{subsectionlogarithmicgrowth}.  Other approaches to proving biholomorphicity can be found in~\cite{lebrun1991complete,zharkov2004limiting}.
	
	We first look for closed holomorphic $1$-forms.
	Since ${\rm d}\xi_{I,i}=0$ for every $i\in I^{\prime}$, it suffices to consider $\tilde\xi_{I,i}$ ($i\in I$) and ${\rm d}\eta$.  A direct computation gives
	\[
	{\rm d}\bigl(\tilde\xi_{i}+\gamma_{i}\,{\rm d}\eta\bigr)
	=\sqrt{-1}\Bigl[
	-\Bigl(\frac{\partial\gamma_{i}}{\partial\bar\eta}
	+\frac12\frac{\partial q_{I}}{\partial\mu_{i}}\Bigr)\,{\rm d}\eta\wedge{\rm d}\bar\eta
	+\Bigl(\frac{\partial\gamma_{i}}{\partial\mu_{j}}
	-2\frac{\partial p_{I,ij}}{\partial\eta}\Bigr)\,{\rm d}\mu_{j}\wedge{\rm d}\eta
	\Bigr].
	\]
	Because $p_{I,ij}$ and $q_{I}$ depend only on $(\mu_{I},\eta)$ and vanish whenever $i\notin I$ or $j\notin I$, we can choose functions $\gamma_{i}(\mu_{I},\eta)$ such that
	\begin{align}
		\frac{\partial\gamma_{i}}{\partial\mu_{j}}
		=2\frac{\partial p_{I,ij}}{\partial\eta},
		\qquad
		\frac{\partial\gamma_{i}}{\partial\bar\eta}
		=-\frac12\frac{\partial q_{I}}{\partial\mu_{i}},
		\qquad
		1\le i\le n.
	\end{align}
	Similarly, requiring
	$-\sum_{i=1}^{n}\tilde\xi_{i}+\gamma_{0}\,{\rm d}\eta$
	to be closed yields
	\begin{align}
		\frac{\partial\gamma_{0}}{\partial\mu_{j}}
		=-2\sum_{i=1}^{n}\frac{\partial p_{I,ij}}{\partial\eta},
		\qquad
		\frac{\partial\gamma_{0}}{\partial\bar\eta}
		=-\frac{n}{2}\frac{\partial q_{I}}{\partial\mu_{i}}.
	\end{align}
	The integrability conditions for both systems are satisfied.
	
	Henceforth we restrict to the $(n+2)$-dimensional domain $\mathcal{T}_{I,1}$.
	For fixed $i\in I$, the coefficient $p_{I,ij}$ is smooth on $\mathcal{T}_{I,1}\setminus\D^{n}_{i,1}$ because the action generated by $e_{i}$ and $e_{i}-e_{j}$ ($j\in J$) is non-degenerate there; the sum $\sum_{j=1}^{n}p_{I,ij}$ is smooth on $\mathcal{T}_{I,1}\setminus\D^{n}_{0,1}$ for the same reason.
	One verifies that $\D^{n}_{i}$ is an open Hausdorff-$n$-subset of $\mathcal{T}_{I,1}$ bounded by codimension-$3$ loci and that $\mathcal{T}_{I,1}\setminus\D^{n}_{i,1}$ is simply connected.  For $(\mu_{I},\eta)\in\mathcal{T}_{I,1}$ we can therefore define
	\begin{equation}\label{gammadefine}
		\begin{aligned}
			\gamma_{0}(\mu_{I},\eta)
			&=\lim_{\mu^{\prime}_{j}\to-\infty}
			-2\int_{(\mu^{\prime}_{1},\dots,\mu^{\prime}_{n})}^{(\mu_{1},\dots,\mu_{n})}
			\sum_{k=1}^{n}\sum_{i=1}^{n}
			\frac{\partial p_{I,ik}}{\partial\eta}(s_{1},\dots,s_{n},\eta)\,{\rm d}s_{k},\\[4pt]
			\gamma_{i}(\mu_{I},\eta)
			&=\lim_{\substack{\mu^{\prime}_{i}\to+\infty\\[1pt]
					\mu^{\prime}_{i}-\mu^{\prime}_{j}\to+\infty}}
			2\int_{(\mu^{\prime}_{1},\dots,\mu^{\prime}_{n})}^{(\mu_{1},\dots,\mu_{n})}
			\sum_{k=1}^{n}
			\frac{\partial p_{I,ik}}{\partial\eta}(s_{1},\dots,s_{n},\eta)\,{\rm d}s_{k}.
		\end{aligned}
	\end{equation}
	
	The integrals in~\eqref{gammadefine} are improper, we verify their convergence.
	By Proposition~\ref{inductionhyp} the surgeries occur only inside certain $\B_{J}$.
	Take $\gamma_{1}$ as an example: any surgery that changes the values of $p_{I,1j}$ (hence of $V_{1j}^{(1)}$) takes place in $\B_{J}$ with $1\in J$ and $|J|\ge 3$.
	According to the definition of $\B_{J}$, the limit
	\[
	\mu_{1}\to+\infty,\quad \mu_{1}-\mu_{j}\to+\infty
	\]
	moves away from all such $\B_{J}$.
	Consequently, in the region relevant for convergence we may replace $p_{I,1j}$ by the smooth function $v_{I,1j}$; the same argument applies to every $\gamma_{i}$.
	Because
	\[
	\|\nabla_{A}^{k}\alpha_{ij}\|
	\le\frac{C}{\operatorname{dist}_{A}^{k+1}(\,\cdot\,,\D^{n}_{ij})},
	\qquad 0\le i,j\le n,\; i\ne j,
	\]
	the improper integrals defining $\gamma_{i}$ converge uniformly.
	
	Therefore
	\begin{align*}
		\frac{\partial\gamma_{i}}{\partial\bar\eta}
		&=-\frac12\frac{\partial q_{I}}{\partial\mu_{i}}
		+\lim_{\mu^{\prime}_{i}-\mu^{\prime}_{j}\to+\infty}
		\lim_{\mu^{\prime}_{i}\to+\infty}
		\frac12\frac{\partial q_{I}}{\partial\mu_{i}},\\[4pt]
		\frac{\partial\gamma_{0}}{\partial\bar\eta}
		&=-\frac12\sum_{i=1}^{n}\frac{\partial q_{I}}{\partial\mu_{i}}
		+\lim_{\mu^{\prime}_{j}\to-\infty}
		\frac12\sum_{i=1}^{n}\frac{\partial q_{I}}{\partial\mu_{i}}.
	\end{align*}
	To evaluate the limits we again consider $\gamma_{1}$.
	The distributional equation satisfied by $P_{ij}$ and $Q$ on $\mathcal{T}_{I}$ gives
	\[
	\frac{\partial^{2}q_{I}}{\partial\mu_{1}\partial\mu_{j}}
	+4\frac{\partial^{2}p_{I,1j}}{\partial\eta\partial\bar\eta}=0,
	\]
	which holds whenever $\mu_{1}$ and $\mu_{1}-\mu_{j}$ are sufficiently large.
	A line-integral argument shows that
	\[
	\lim_{\substack{\mu_{1}\to+\infty\\ \mu_{1}-\mu_{j}\to+\infty}}
	\frac{\partial q_{I}}{\partial\mu_{1}}
	\quad\text{exists and depends only on }\eta.
	\]
	Fixing $\eta$ and observing that $\partial q_{I}/\partial\mu_{1}$ vanishes in the surgery-free region, we conclude that the limit is identically zero.
	Hence
	\[
	\frac{\partial\gamma_{i}}{\partial\bar\eta}
	=-\frac12\frac{\partial q_{I}}{\partial\mu_{i}},\qquad
	\frac{\partial\gamma_{0}}{\partial\bar\eta}
	=-\frac12\sum_{i=1}^{n}\frac{\partial q_{I}}{\partial\mu_{i}}.
	\]
	
	We now compute the value of $\sum\gamma_{i}$.
	By construction, each $\tilde\xi_{i}+\gamma_{i}\,{\rm d}\eta$ is a closed holomorphic $1$-form, hence so is their sum
	\[
	\Bigl(\sum_{i=0}^{n}\gamma_{i}\Bigr){\rm d}\eta=f\,{\rm d}\eta.
	\]
	Thus $f_{\mu_{i}}=0$ and $f_{\bar\eta}=0$, so $f=f(\eta)$ is holomorphic in $\eta$.
	To obtain the explicit form of $f$ we evaluate it for large $|\eta|$ (to avoid the cylindrical excisions that affect $\mathcal{T}_{I}$ when $|\eta|$ is small) and then appeal to the uniqueness of holomorphic functions.
	Since $f$ is independent of $\mu_{i}$, we fix $\eta$ and send the $\mu_{i}$ to suitable limits.
	
	Fix $\eta$ and let $\mu_{i}\to-\infty$, $\mu_{i}-\mu_{j}\to-\infty$ ($i<j$).
	The contribution of $\gamma_{0}$ to the limit is zero.
	For $1\le k\le n$ we consider the ray family
	\[
	l_{s}(t)=s\vec d+t\vec b,\qquad
	b_{i}<0,\; b_{i}-b_{j}<0\;(i<j),\qquad
	d_{k}>0,\; d_{k}-d_{l}>0\;(l\ne k).
	\]
	Because $\lim_{s\to+\infty}\gamma_{k}\circ l_{s}(t)=0$ for fixed $t$, we have
	\[
	\gamma_{k}\circ l_{s}(t_{0})-\gamma_{k}\circ l_{s}(1)
	=\int_{0}^{t_{0}}\frac{{\rm d}}{{\rm d}t}(\gamma_{k}\circ l_{s}(t))\,{\rm d}t.
	\]
	One verifies that, for sufficiently large $s$, the terms in the above limit are unaffected by the previous surgeries.
	Sending $t_{0}\to+\infty$ and $s\to+\infty$, and using the decay of
	$\sum_{j}\partial p_{I,kj}/\partial\eta$ and $\partial p_{I,kj}/\partial\eta$,
	the Lebesgue dominated-convergence theorem shows that the right-hand side tends to $0$ for $2\le k\le n$.
	When $k=1$ one term survives:
	\[
	\lim_{s\to+\infty}\int_{-\infty}^{0}2b_{1}\sum_{j=1}^{n}
	\frac{\partial p_{I,1j}}{\partial\eta}\circ l_{s}(t)\,{\rm d}t.
	\]
	For sufficiently large $s$ we have $p_{I,1j}=v_{I,1j}$, so the integral becomes
	\[
	\int_{-\infty}^{+\infty}
	\frac{|b_{1}|\det A_{\{2,\dots,n\}}\bar\eta}
	{4\bigl((b_{1}\mu_{1})^{2}+\det A_{\{2,\dots,n\}}|\eta|^{2}\bigr)^{3/2}}\,
	{\rm d}\mu_{1}
	=\frac{1}{\eta}.
	\]
	Hence
	\[
	\sum_{i=0}^{n}\gamma_{i}=\frac{1}{\eta},
	\qquad\text{so}\qquad
	\sum_{i=0}^{n}{\rm d}\log z_{i}={\rm d}\log\eta.
	\]
	Choosing the constant of integration appropriately, we obtain
	\[
	\prod_{i=0}^{n}z_{i}=\eta.
	\]
	The coordinates $z_{i}$ are initially defined on
	$\mathcal{T}_{I,1}\setminus\D^{n}_{i,1}$,
	the above relation allows us to extend $z_{i}$ continuously to zero on $\D^{n}_{i}$.
	We shall verify in the next subsection that $z_{i}$ also vanishes on the boundary of $\D^{n}_{i}$.

	\subsection{Smooth and Logarithmic Growth}\label{subsectionlogarithmicgrowth}
	Continuing with the notation of the previous section, we show that
	\[
	|z_{i}|\to 0\quad\text{as we approach }\partial\D^{n}_{i}.
	\]
	By the induction hypothesis (Proposition~\ref{inductionhyp}) and Proposition~\ref{prop:smooth-structure-DK}, the smooth structure near the relevant loci is already available. Hartogs's lemma then implies that the holomorphic structure $J_{1}$ is smooth.
	
	Fix $K\subset I$ with $|K|=k+1$.  Using the symmetry discussed in Subsection~\ref{subsectionthegeometry}, we may assume
	\[
	K=\{0,1,\dots,k\}.
	\]
	Introduce the Gibbons--Hawking coefficients
	\[
	P_{K,ij}=A_{ij}+p_{K,ij}=P_{I,ij}+h_{I,ij}-h_{K,ij},\qquad
	Q_{K}=\det A+q_{K}=Q_{I}+h_{I}-h_{K}.
	\]
	By Remark~\ref{HKdefine}, on $\H_{K}\cap\F_{n-1}$ these data coincide with $V_{K,ij}$ and $W_{K}$ defined in Subsection~\ref{subsectionsmoothstructure}; hence $P_{K,ij}$ and $Q_{K}$ induce a smooth structure of type $\mathbb{C}^{k+1}\times\mathbb{R}^{2(N-k)}$.
	
	\begin{remark}
		There is a minor difference compared with the earlier use of $V_{K,ij}$ and $W_{K}$: the previous GH structure depended on $\mu_{I^{\prime}}$, but inside the conical region $\B_{I}$ both $V_{K,ij}$ and $W_{K}$ are independent of $\mu_{I^{\prime}}$.
		Consequently, the structure induced by $P_{K,ij}$ and $Q_{K}$ is obtained by taking the ``conical limit'' in the opening direction of $\B_{I}$.
	\end{remark}
	
	On $\B^{n}$ we define, for every $J\subset I$ with $|J|\ge 2$, the open set
	\begin{align*}
		\B^{n}_{J}&=\bigl\{p\in\B^{n}\bigm|
		C_{0}\operatorname{dist}_{A}(p,\D^{n}_{J})<
		\operatorname{dist}_{A}(p,\partial\D^{n}_{J})\bigr\},\\
		\B^{n}_{a}&=\bigl\{p\in\B^{n} \bigm|
		2 C_{0}^{N-1}\operatorname{dist}_{A}(p,\D^{n})>
		\operatorname{dist}_{A}(p,\D^{n}_{I})\bigr\},
	\end{align*}
	and similarly introduce $(\B^{n}_{J})^{\prime}$, $(\B^{n}_{J})^{\prime\prime}$.
	Formally, when $I=\{i\}$, we set $\B^{n}_{i}=\D^{n}_{i}$ for $i=0,1,\dots,N$.
	For $K\subset I$ with $|K|\ge 2$, parallel to Remark~\ref{HKdefine}, we set
	\[
	\H^{n}_{K}=\mathcal{T}_{I}\cap \bigl(\B^{n}_{K}\setminus
	\bigcup_{K\subsetneqq J\subsetneqq I}(\B^{n}_{J})^{\prime}\bigr).
	\]
	Thus, on $\H^{n}_{K}$ we have
	$\operatorname{dist}_{A}(\,\cdot\,,\D^{n}_{J})\sim
	\operatorname{dist}_{A}(\,\cdot\,,\D^{n}_{I})$
	whenever $K\subsetneqq J\subsetneqq I$.
	Notice that
	$\mathcal{T}_{I}\setminus \B^{n}_{a}
	=\bigcup_{K\subsetneqq I}\H^{n}_{K}$.
	
	By the induction hypothesis we have a basis of holomorphic $(1,0)$-forms with respect to $J_{K}$:
	\[
	\xi_{K,i}=P_{K,ij}\,{\rm d}\mu_{j}+\sqrt{-1}\,\vartheta_{K,i},
	\]
	where the connection $1$-forms satisfy
	\[
	{\rm d}\vartheta_{K,i}=F_{K,i}=
	\sqrt{-1}\Bigl(\frac12\frac{\partial Q_{K}}{\partial\mu_{j}}\,
	{\rm d}\eta\wedge{\rm d}\bar\eta
	+\frac{\partial P_{K,ij}}{\partial\eta}\,{\rm d}\mu_{j}\wedge{\rm d}\eta
	-\frac{\partial P_{K,ij}}{\partial\bar\eta}\,
	{\rm d}\mu_{j}\wedge{\rm d}\bar\eta\Bigr).
	\]
	Consequently, on $\H^{n}_{K}$,
	\[
	\xi_{I,i}=\xi_{K,i}+\rho_{i},\qquad
	\rho_{i}=(h_{I,ij}-h_{K,ij})\,{\rm d}\mu_{j}
	+\sqrt{-1}(\vartheta_{I,i}-\vartheta_{K,i}).
	\]
	Since $h_{I,ij}-h_{K,ij}$ depends only on $(\mu_{I},\eta)$, it is a smooth harmonic function on $\H^{n}_{K}$.
	The closed forms
	\[
	{\rm d}\log w_{0}=\sum_{j=1}^{k}\tilde\xi_{K,j}+\gamma_{K,0}\,{\rm d}\eta,
	\qquad
	{\rm d}\log w_{i}=\tilde\xi_{K,i}+\gamma_{K,i}\,{\rm d}\eta,
	\quad i=1,\dots,k,
	\]
	are built from
	\[
	\tilde\xi_{K,K}=\xi_{K,K}-A_{KK^{\prime}}A_{K^{\prime}}^{-1}\xi_{K,K^{\prime}},
	\qquad
	\tilde\xi_{K,K^{\prime}}=\xi_{K,K^{\prime}}.
	\]
	
	For $i\in I^{\prime}$ both $\xi_{I,i}$ and $\xi_{K,i}$ are closed $1$-forms with the same real part $A_{ij}\,{\rm d}\mu_{j}$, their imaginary parts differ by a gauge transformation.
	We choose the gauge so that $\vartheta_{I,i}=\vartheta_{K,i}$, hence $\rho_{I^{\prime}}=0$.
	
	Put $J=I\setminus K$.  Since
	$G_{I}=A_{I}-A_{II^{\prime}}A_{I^{\prime}}^{-1}A_{I^{\prime}I}$, we have
	\[
	G_{I}=\begin{pmatrix}
		s_{K} & s_{KJ}\\[2pt]
		s_{JK} & s_{J}
	\end{pmatrix}.
	\]
	where
	\begin{align*}
		s_{K}&=A_{K}-A_{KI^{\prime}}A_{I^{\prime}}^{-1}A_{I^{\prime}K},&
		s_{KJ}&=A_{KJ}-A_{KI^{\prime}}A_{I^{\prime}}^{-1}A_{I^{\prime}J},\\[2pt]
		s_{J}&=A_{J}-A_{JI^{\prime}}A_{I^{\prime}}^{-1}A_{I^{\prime}J},&
		s_{JK}&=A_{JK}-A_{JI^{\prime}}A_{I^{\prime}}^{-1}A_{I^{\prime}K}.
	\end{align*}
	By definition,
	\[
	{\rm d}\log z_{J}=\xi_{I,J}-A_{JI^{\prime}}A_{I^{\prime}}^{-1}\xi_{I,I^{\prime}}
	+\gamma_{I,J}\,{\rm d}\eta.
	\]
	Hence, for every $j\in J$,
	\begin{align*}
		{\rm d}\log|z_{J}|=s_{JK}\,{\rm d}\mu_{K}+s_{J}\,{\rm d}\mu_{J}+p_{I,JK}\,{\rm d}\mu_{K}
		+\operatorname{Re}(\gamma_{I,J}\,{\rm d}\eta).
	\end{align*}
	Therefore, by Proposition~\ref{inductionhyp} (II), (III) and the definition of $\H^{n}_{K}$, on $\mathcal{T}_{I}\cap\H^{n}_{K}$ both $p_{I,jk}$ and $\gamma_{I,j}$ are smooth and satisfy
	\[
	|p_{I,jk}|\le C \operatorname{dist}_{A}^{-1}(\,\cdot\,,\D^{n}_{I}),\qquad
	|\gamma_{I,j}|\le C \operatorname{dist}_{A}^{-1}(\,\cdot\,,\D^{n}_{I}).
	\]
	Integrating these estimates gives
	\begin{equation}\label{duishuzengzhang1}
		|z_{j}|=\exp\bigl( s_{JK}\mu_{K}+s_{J}\mu_{J}+H_{J}\bigr)
		\qquad\text{on }\mathcal{T}_{I}\cap\H^{n}_{K},
	\end{equation}
	where
	\[
	|H_{j}|\le C\log\bigl(1+\operatorname{dist}_{A}(\,\cdot\,,\D^{n}_{I})\bigr), \quad j\in J
	\]
	with a uniform constant $C$.
	
	We next analyse $z_{K}$.  First,
	\begin{align*}
		{\rm d}\log z_{K}
		&=\xi_{I,K}-A_{KI^{\prime}}A_{I^{\prime}}^{-1}\xi_{I,I^{\prime}}
		+\gamma_{I,K}\,{\rm d}\eta\\
		&=\xi_{K,K}-A_{KI^{\prime}}A_{I^{\prime}}^{-1}\xi_{K,I^{\prime}}
		+\rho_{K}+(\gamma_{I,K}-\gamma_{K,K})\,{\rm d}\eta.
	\end{align*}
	Recall that
	\[
	{\rm d}\log w_{K}=\xi_{K,K}-A_{KK^{\prime}}A_{K^{\prime}}^{-1}\xi_{K,K^{\prime}}
	+\gamma_{K,K}\,{\rm d}\eta,
	\]
	and observe the block decomposition
	\[
	A_{KK^{\prime}}A_{K^{\prime}}^{-1}=
	\begin{pmatrix}
		s_{KJ}s_{J}^{-1} &
		-s_{KJ}s_{J}^{-1}A_{JI^{\prime}}A_{I^{\prime}}^{-1}
		+A_{KI^{\prime}}A_{I^{\prime}}^{-1}
	\end{pmatrix}.
	\]
	Consequently,
	\begin{equation*}
		\begin{aligned}
			{\rm d}\log z_{K}={}&{\rm d}\log w_{K}
			+\rho_{K}+(\gamma_{I,K}-\gamma_{K,K})\,{\rm d}\eta\\
			&+s_{KJ}s_{J}^{-1}\xi_{K,J}
			-s_{KJ}s_{J}^{-1}A_{JI^{\prime}}A_{I^{\prime}}^{-1}\xi_{K,I^{\prime}}.
		\end{aligned}
	\end{equation*}
	We obtain
	\begin{align*}
		{\rm d}\log|z_{K}|={\rm d}&\log|w_{K}|
		+ s_{KJ}\,{\rm d}\mu_{J}+s_{KJ}s_{J}^{-1}s_{JK}\,{\rm d}\mu_{K}\\*
		&+\operatorname{Re}\rho_{K}
		+\operatorname{Re}\bigl((\gamma_{I,K}-\gamma_{K,K})\,{\rm d}\eta\bigr).
	\end{align*}
	The right-hand side is a closed $1$-form, and $\rho_{K}$ is smooth on $\H^{n}_{K}$, hence $(\gamma_{I,K}-\gamma_{K,K})\,{\rm d}\eta$ has smooth exterior derivative and decays at the same rate as $\rho_{K}$.
	Thus, modulo a closed $1$-form of the form $f\,{\rm d}\eta$ with $f$ holomorphic in $\eta$, the difference $\rho_{K}+(\gamma_{I,K}-\gamma_{K,K})\,{\rm d}\eta$ is smooth.
	Because both ${\rm d}\log z_{K}$ and ${\rm d}\log w_{K}$ are partially defined at $\eta=0$, the function $f$ is smooth and holomorphic on the whole $\eta$-plane.
	Using
	\[
	\operatorname{Re}\rho_{k}=\sum_{j=1}^{n}(h_{I,kj}-h_{K,kj})\,{\rm d}\mu_{j}
	\]
	and the estimate
	\[
	|\alpha_{I,ij}-\alpha_{K,ij}|
	\le\frac{C}{\operatorname{dist}_{A}(\,\cdot\,,\D^{n}_{I})}
	\qquad\text{on }\H^{n}_{K}.
	\]
	We integrate as before, for any $k\in K$, $k\ne 0$, we obtain on $\mathcal{T}_{I}\cap\H^{n}_{K}$,
	\begin{equation}\label{duishuzengzhang2}
		\begin{aligned}
			|z_{K}|&=|w_{K}|\exp\bigl( s_{KJ}\mu_{J}+s_{KJ}s_{J}^{-1}s_{JK}\mu_{K}+H_{K}\bigr),\\
			|z_{0}|&=|w_{0}|\exp\Bigl(-\sum_{l=1}^{k}(s_{lJ}\mu_{J}
			+s_{lJ}s_{J}^{-1}s_{JK}\mu_{K})-\sum_{l=1}^{k} H_{l}\Bigr),
		\end{aligned}
	\end{equation}
	where
	\[
	|H_{k}|\le C\log\bigl(1+\operatorname{dist}_{A}(\,\cdot\,,\D^{n}_{I})\bigr),\quad k\in K
	\]
	with a uniform constant $C$.
	Since $|w_{K}|\to 0$ as we approach $\D^{n}_{K}$, \eqref{duishuzengzhang2} implies $|z_{K}|\to 0$.
	By Hartogs's lemma the map constructed above is smooth with respect to the original smooth structure on $\mathcal{T}_{I}$, the holomorphic structure $J_{1}$ is therefore smooth.
	The equivalence constants in \eqref{duishuzengzhang1} and \eqref{duishuzengzhang2} are uniform as long as we stay a definite distance away from $\D^{n}_{K}$, this fact will be used later.
	
	The following lemma is crucial in Subsection~\ref{subsctionsurgeryoriginal}.
	
	\begin{lemma}\label{zzengzhangxing}
		Let $|z|=\Bigl(\sum_{i=0}^{n}|z_{i}|^{2}\Bigr)^{1/2}$ and
		\[
		|\vec{\mu}_{I}|_{G_{I}}
		=\bigl(|\mu_{I}|^{2}+\det A\,|\eta|^{2}\bigr)^{1/2}
		=\Bigl(\sum_{1\le i,j\le n}G_{I,ij}\mu_{i}\mu_{j}
		+\det A\,|\eta|^{2}\Bigr)^{1/2}.
		\]
		If $|\vec{\mu}_{I}|_{G_{I}}>R$ in $\mathcal{T}_{I,1}$, then
		\begin{equation}\label{duishuzengzhang}
			K_{1}e^{K_{2}|\mu_{I}|}\,|\vec{\mu}_{I}|_{G_{I}}
			\le |z|\le
			K_{3}e^{K_{4}|\mu_{I}|}\,|\vec{\mu}_{I}|_{G_{I}},
		\end{equation}
		where $R>0$ and $K_{i}>0$ are uniform constants depending only on $n$, $\lambda$, and $\Lambda$.
	\end{lemma}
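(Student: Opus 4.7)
The plan is to establish the two-sided bound region by region, using the partition $\mathcal{T}_{I,1}\cap\{|\vec\mu_I|_{G_I}>R\}\subset \B^n_{a,1}\cup\bigcup_{K\subsetneqq I,\,|K|\ge 2}\H^n_{K,1}$ introduced just before the lemma. On each collar $\H^n_{K,1}$ I would invoke the explicit formulae \eqref{duishuzengzhang1}--\eqref{duishuzengzhang2}, which express $|z_j|$ ($j\in J:=I\setminus K$) and $|z_k|$ ($k\in K$) as exponentials of linear functions of $\mu_I$ multiplied, in the second case, by $|w_k|$ and by a logarithmically growing error $H_K,H_J$. Combined with the induction hypothesis Proposition~\ref{buchongassumation}(iii) applied to $K$, which gives $K_1 e^{K_2|\mu_K|}|\vec\mu_K|_{G_K}\le|w|\le K_3 e^{K_4|\mu_K|}|\vec\mu_K|_{G_K}$, this yields the estimate
\[
|z|^2=\sum_{k\in K}|z_k|^2+\sum_{j\in J}|z_j|^2
\sim e^{2 K_2|\mu_K|+2\ell_K(\mu)}|\vec\mu_K|_{G_K}^{2}
+\sum_{j\in J}e^{2\ell_j(\mu)},
\]
where $\ell_K,\ell_j$ are the explicit linear forms appearing in \eqref{duishuzengzhang2} and \eqref{duishuzengzhang1}.

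Step two is to verify that, in $\H^n_{K,1}$, these exponents and the quadratic prefactor can be repackaged in terms of $G_I$ rather than $G_K$. The key point is the Schur-complement identity already used in Subsection~\ref{subsectionlogarithmicgrowth}: writing
\[
G_I=\begin{pmatrix}s_K & s_{KJ}\\ s_{JK}& s_J\end{pmatrix},
\]
a direct computation shows that $\mu_I^{\top}G_I\mu_I$ differs from $\mu_K^{\top}G_K\mu_K+(\text{quadratic in }\mu_J)$ only by the cross terms $2\mu_K^{\top}s_{KJ}\mu_J$ that are absorbed into the exponents $\ell_K,\ell_j$ in \eqref{duishuzengzhang2}--\eqref{duishuzengzhang1}. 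Positive definiteness of $s_J$ and bounded spectrum of the block matrices then yield $|\vec\mu_I|_{G_I}\asymp e^{O(|\mu_J|)}|\vec\mu_K|_{G_K}$ after absorbing $|\mu_J|$-growth into the exponential, which is exactly what is needed to convert the collar estimate into the claimed bound $K_1 e^{K_2|\mu_I|}|\vec\mu_I|_{G_I}\le|z|\le K_3 e^{K_4|\mu_I|}|\vec\mu_I|_{G_I}$. Here the logarithmic errors $H_K,H_J$ are harmless since they can be absorbed by adjusting $K_2,K_4$.

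Step three treats the outer region $\B^n_{a,1}$. Here the decay estimates of Proposition~\ref{inductionhyp}(IV) show that the perturbations $p_{I,ij},q_I$ and the correction functions $\gamma_{I,i}$ defined in \eqref{gammadefine} decay like $1/\dist_A(\cdot,\D^n)$. Consequently the equations ${\rm d}\log z_i=\tilde\xi_{I,i}+\gamma_{I,i}\,{\rm d}\eta$ are perturbations of the flat model equations on $\B^n_{a,1}$. Integrating these along paths that start from the boundary between $\B^n_{a,1}$ and some $\H^n_{K,1}$ (where Step one already gives the bound) and stay in $\B^n_{a,1}$, the accumulated error is multiplicative and bounded, giving the same estimate with possibly adjusted constants. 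Uniformity in $\lambda,\Lambda,n$ follows since the eigenvalues of every $G_I,G_K$ lie in a compact interval determined only by these constants, and the estimates in Propositions~\ref{inductionhyp}(IV) and \ref{buchongassumation}(iii) are uniform.

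The main obstacle I anticipate is Step two: tracking precisely how the exponents in \eqref{duishuzengzhang1}--\eqref{duishuzengzhang2}, together with the $K_2|\mu_K|$-exponent coming from $|w|$, recombine under the Schur-complement decomposition of $G_I$ into a single exponent comparable to $|\mu_I|$, while simultaneously showing that the quadratic prefactors $|\vec\mu_K|_{G_K}$ and $|\vec\mu_I|_{G_I}$ are comparable up to this exponential. A clean way is to observe that on $\H^n_{K,1}$ the coordinate $\mu_J$ is bounded by a definite multiple of $|\mu_I|$, so any linear-in-$\mu_J$ term is harmlessly absorbed into $e^{K|\mu_I|}$; the delicate bookkeeping is to ensure that the resulting $K_2,K_4$ depend only on $n,\lambda,\Lambda$ and not on the particular stratum $K$, which requires verifying the uniform positivity of $s_J$ across all $K\subsetneqq I$.
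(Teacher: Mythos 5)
Your region-by-region decomposition and your inputs (\eqref{duishuzengzhang1}, \eqref{duishuzengzhang2} and Proposition~\ref{buchongassumation}(iii)) coincide with the paper's, and the \emph{upper} bound does follow the way you describe. The gap is in the \emph{lower} bound, which is where the paper's proof spends essentially all of its effort. The formulae give $|z_i|=\exp(\text{linear form in }\mu_I+H_i)$ (times $|w_i|$ for $i\in K$); for a single index the linear form can be as negative as $-C|\mu_I|$, and Proposition~\ref{buchongassumation}(iii) only controls the vector norm $|w|$, not the particular $|w_k|$ that happens to sit in front of a favourable exponential. So ``repackaging the exponents'' cannot by itself show that \emph{some} coordinate is large. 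The paper uses two devices that your proposal omits: (i) on $\B^n_a$ with $\sqrt{\det A}\,|\eta|\ll|\vec\mu_I|_{G_I}=R_0$, the projection of $p$ to $\{\eta=0\}$ lies in the interior of a specific $\D^n_{i_0}$, so $\mu_I$ lies in a fixed cone and $x_I=G_I\mu_I$, which satisfies $x_I^{\top}\mu_I\ge(1-\epsilon^2)R_0^2$, lies in the dual cone; this forces a component $x_{i_1}$ with $i_1\ne i_0$ to satisfy $x_{i_1}\ge cR_0$ — and it matters that $i_1\ne i_0$, because $|z_{i_0}|=|\eta|/\prod_{i\ne i_0}|z_i|$ can be tiny no matter how large $x_{i_0}$ is; (ii) on $\H^n_K$, a dichotomy on whether $\dist_A(p_K,\D^n_I)>\delta R_0$: if yes, the cone argument applied along $\D^n_K$ produces some $j\in J$ with $|z_j|\ge e^{cR_0}$ via \eqref{duishuzengzhang1}; if no, then $|\vec\mu_K|_{G_K}\ge\sqrt{1-\delta^2}\,R_0$ and the bound comes through $|w_K|$ by combining \eqref{duishuzengzhangK} with \eqref{duishuzengzhang2}. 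Without (i) and (ii) the lower bound does not follow; the Schur-complement bookkeeping you flag as the main obstacle is the routine part.

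Two further concrete problems. First, the claim $|\vec\mu_I|_{G_I}\asymp e^{O(|\mu_J|)}|\vec\mu_K|_{G_K}$ is false: as $p\to\D^n_K$ inside $\H^n_K$ the right-hand side tends to $0$ while the left-hand side stays $\ge R$. The correct relation is the orthogonal decomposition \eqref{affinejulihengdengshi}, $|\vec\mu_I|^2_{G_I}=|\vec\mu_K|^2_{G_K}+\dist_A^2(p_K,\D^n_I)$, which is additive rather than multiplicative and is precisely what forces the dichotomy in (ii). Second, your Step 3 (transporting the estimate into $\B^n_a$ from a collar boundary) does not work as stated: along a path of length $\sim R_0$ the flat-model exponents themselves change by $O(R_0)$, so nothing is transported; one must estimate the exponents at the endpoint directly, which is again point (i).
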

	
	\begin{proof}
		We first work on $\B^{n}_{a}$.  Fix $p=(\mu_{I},\eta)\in\B^{n}_{a}$ and set $|\vec{\mu}_{I}|_{G_{I}}=R_{0}>0$.  Assume $\sqrt{\det A}\,|\eta|\ge\epsilon R_{0}$ with a small constant $\epsilon>0$ to be chosen later.  Put $x_{i}=\sum_{l=1}^{n}G_{I,il}\mu_{l}$, then $x_{I}$ is an $n$-dimensional column vector.  As in \eqref{duishuzengzhang1} we have
		\[
		|z_{i}|\sim\exp(x_{i}+H_{i}),\quad i=0,1,\dots,n,
		\]
		where
		\[
		|H_{i}|\le C\log\bigl(1+|\vec{\mu}_{I}|_{G_{I}}\bigr).
		\]
		Taking $R_{0}$ sufficiently large, one readily verifies \eqref{zzengzhangxing}.
		
		\smallskip
		Next suppose $\sqrt{\det A}\,|\eta|<\epsilon R_{0}$.  Then
		\[
		\det A\,|\eta|^{2}\le\frac{\epsilon^{2}}{1-\epsilon^{2}}\,
		\operatorname{dist}_{A}^{2}(q,\D_{I}^{n}).
		\]
		Let $q=(\mu_{I},0)$ be the orthogonal projection of $p$ onto $\{\eta=0\}$.  By definition of $\B^{n}_{a}$,
		\[
		4C_{0}^{2(N-1)}\bigl(\operatorname{dist}_{A}^{2}(q,\D^{n})
		+\det A\,|\eta|^{2}\bigr)
		>\operatorname{dist}_{A}(q,\D^{n}_{I})+\det A\,|\eta|^{2}.
		\]
		Hence, choosing $\epsilon$ small enough, we obtain
		\begin{equation}\label{eqzengzhang1}
			C\operatorname{dist}_{A}(q,\D^{n})>\operatorname{dist}_{A}(q,\D^{n}_{I})
		\end{equation}
		with a uniform constant $C>1$.  Consequently $q$ lies in the interior of some $\D^{n}_{i}$, without loss of generality we take $i=0$.
		
		Since
		\[
		x_{I}^{\top}\mu_{I}=\mu_{I}^{\top}G_{I}\mu_{I}\ge(1-\epsilon^{2})R_{0}^{2},
		\]
		and \eqref{eqzengzhang1} cuts out finitely many disjoint conical regions in $\mathbb{R}^{n}_{\mu_{I}}$, the vector $x_{I}$ lies in the dual cone $\mathcal{C}^{*}$ of the cone $\mathcal{C}$ corresponding to the interior of $\D^{n}_{0}$.  Thus the largest component of the unit vector $\frac{x_{I}}{|x_{I}|}$ is bounded below by a uniform positive constant, say $\frac{x_{1}}{|x_{I}|}>C>0$.  Observing
		\[
		x_{I}^{\top}x_{I}=\mu_{I}^{\top}G_{I}^{2}\mu_{I}\ge C(\lambda,\Lambda)R_{0},
		\]
		we get $x_{1}>C R_{0}$.  By definition of $\B^{n}_{a}$ we have $\operatorname{dist}_{A}(\,\cdot\,,\D^{n})\sim\operatorname{dist}_{A}(\,\cdot\,,\D^{n}_{I})$, so under our current assumption
		\[
		|z_{i}|=\exp(x_{i}+H_{i}),\quad i\in I,\; i\ne 0,
		\]
		with
		\[
		|H_{i}|\le C\log\bigl(1+|\vec{\mu}_{I}|_{G_{I}}\bigr).
		\]
		For $R_{0}$ large enough this gives $|z|>|z_{1}|\ge\exp(C R_{0})$ with a uniform constant $C>0$, while $|z_{i}|\le\exp(C R_{0})$ for $i\ne 0$.  Since $|z_{0}|=\frac{|\eta|}{\prod_{i\ne 0}|z_{i}|}$, estimate \eqref{duishuzengzhang} follows immediately.
		
		Since $\mathcal{T}_{I}\setminus\B^{n}_{a}=\bigcup_{K\subsetneqq I}\H^{n}_{K}$, it suffices to prove the lemma on a single $\H^{n}_{K}$ with $K=\{0,1,\dots,k\}$.  For $w_{K}$, recall that Proposition~\ref{buchongassumation} shows that estimate~\eqref{duishuzengzhangK} yields the similar conclusion as~\eqref{duishuzengzhang}.
		
		Let $p_{K}$ be the foot of the perpendicular from $p$ to $\D^{n}_{K}$.  Then
		\[
		\operatorname{dist}_{A}^{2}(p,\D^{n}_{I})
		=\operatorname{dist}_{A}^{2}(p,p_{K})+\operatorname{dist}_{A}^{2}(p_{K},\D^{n}_{I}),
		\]
		so upper bounds on $|z|$ follow from \eqref{duishuzengzhang1} and \eqref{duishuzengzhang2}.  Set $\operatorname{dist}_{A}(p,\D^{n}_{I})=|\vec{\mu}_{I}|_{G_{I}}=R_{0}>0$.  If $\operatorname{dist}_{A}(p_{K},\D^{n}_{I})>\delta R_{0}$ for a small constant $\delta>0$ to be chosen, then \eqref{duishuzengzhang1} gives $|z|>|z_{J}|\ge\exp(CR_{0})$ exactly as in the case of $\B^{n}_{a}$.  Otherwise
		\[
		\operatorname{dist}_{A}(p,p_{K})
		>\sqrt{1-\delta^{2}}\,R_{0}
		>\frac{\sqrt{1-\delta^{2}}}{\delta}\operatorname{dist}_{A}(p_{K},\D^{n}_{I}).
		\]
		By \eqref{duishuzengzhang2} the exponential terms are uniformly controlled by $|w_{i}|$, hence we may choose $\delta>0$ sufficiently small (uniformly) and combine this with \eqref{duishuzengzhangK} to ensure $|z|>|z_{K}|\ge C|w_{K}|\ge\exp(CR_{0})$.  This completes the proof of the lemma.
	\end{proof}

	\subsection{The Image of the Holomorphic Map}\label{subsectiontheimageofthehol}
	Above we constructed a map from $\mathcal{T}_{I}$ to $\mathbb{C}^{n+1}\times\mathbb{R}^{2(N-n)}$, by the definition of $\mathcal{T}_{I}$ and $\nu_{I,I^{\prime}}$, the projection onto $\mathbb{R}^{2(N-n)}$ is bijective.
	In this section we prove that the projection onto $\mathbb{C}^{n+1}$ is injective and that its image is the whole space with a compact subset removed.
	
	We have the holomorphic bundle map
	\[
	\mathbf{F}\colon\mathcal{T}_{I,1}\longrightarrow\mathbb{C}^{n+1},\qquad
	\mathbf{F}=(z_{0},\dots,z_{n}),
	\]
	and we consider the holomorphic vector fields
	\[
	J(X_{i})+\sqrt{-1}X_{i},\qquad i\in I,
	\]
	where $X_{i}$ is the fundamental vector field generated by $e_{i}$.
	These fields preserve $\eta$ and act only in the fibre direction of $\mu_{I}$.
	Thus we obtain the holomorphic bundle diagram
	\[
	\begin{tikzcd}[column sep=large, row sep=large]
		\mathcal{T}_{I,1}
		\arrow[rr,"\mathbf{F}"]
		\arrow[dr,"\eta"']
		&&
		\mathbb{C}^{n+1}
		\arrow[dl,"\pi_{\eta}"]
		\\
		&
		\mathbb{C}
	\end{tikzcd}
	\]
	where $\pi_{\eta}=z_{0}\cdots z_{n}$.
	We shall show that $\mathbf{F}$ is a holomorphic bundle isomorphism by studying the holomorphic action generated by these vector fields.
	Since $X_{i}$ acts only along the fibre, the fibre-wise action is transparent.
	
	Let $E_{i}(t,\,\cdot\,)$ denote the flow generated by $-J(X_{i})$ ($i\in I$).
	In the remainder of this section all sums run from $1$ to $n$.
	The vector field $-J_{1}(X_{i})$ has the explicit expression
	\[
	-J_{1}(X_{i})=P^{-1}_{ij}\Bigl(\frac{\partial}{\partial\mu_{j}}
	-\tilde\vartheta_{I,k}\Bigl(\frac{\partial}{\partial\mu_{j}}\Bigr)
	X_{k}\Bigr).
	\]
	Since $\vartheta_{I,j}(J_{1}(X_{i}))=0$, the flow $E_{i}$ descends to an action on the base $\B^{n}$.
	The commutativity $[J_{1}(X_{i}),J_{1}(X_{j})]=0$ implies that the flows $E_{i}$ generate an $\mathbb{R}^{n}$-action, which we denote by $E$.
	On the base we have
	\[
	\frac{{\rm d}}{{\rm d}t}\mu_{i}\circ E_{j}=V^{-1}_{ij}.
	\]
	For fixed $\eta=\eta_{0}\ne 0$ define
	\[
	G\colon\mathcal{T}_{I,1}\cap\{\eta=\eta_{0}\}\longrightarrow\mathbb{R}^{n},\qquad
	\frac{\partial G_{i}}{\partial\mu_{j}}=V_{ij}(\mu_{I},\eta_{0}).
	\]
	The positive-definiteness and commutativity of $V_{ij}$ guarantee that the domain is simply connected, hence, by the Frobenius theorem, $G$ is well-defined.
	Choosing the constant of integration so that
	\[
	G\circ\mu_{I}\circ E=\operatorname{id},
	\]
	we conclude that $\mu_{I}\circ E$ is injective.
	
	When $\eta=0$ the same argument applies on $\mathcal{T}_{I,1}\cap\D^{n}_{i}$.
	On the locus $\D^{n}_{K}$ we consider the action generated by $-J(X_{j})$ for $j\in I\setminus K$ and again obtain injectivity of $\mu_{I}\circ E$.
	
	\begin{remark}
		The existence of $G$ relies essentially on the positive-definiteness and commutativity of the matrix $V_{ij}$.
	\end{remark}
	
	On the other hand, the definition of the $z_{i}$ gives
	\begin{align*}
		\mathcal{L}_{X_{j}}z_{i}
		&={\rm d}z_{i}(X_{j})=z_{i}\tilde\xi_{I,i}(X_{j})
		=\sqrt{-1}z_{i}\delta_{ij},\qquad i\ne 0,\\[2pt]
		\mathcal{L}_{X_{j}}z_{0}
		&={\rm d}z_{0}(X_{j})
		=z_{0}\Bigl(-\sum_{i=1}^{n}\tilde\xi_{I,i}\Bigr)(X_{j})
		=-\sqrt{-1}z_{0}.
	\end{align*}
	Hence $\mathbf{F}$ intertwines the singular $\mathbb{T}^{n}$-action generated by $e_{I}$ with the standard diagonal $T^{n}$-action on $\mathbb{C}^{n+1}$.
	Thus, the $\mathbb{C}^{n}$-action corresponding to $\mathbf{F}_{*}(-J(X_{i}))$ is
	\[
	(t_{1},\dots,t_{n})\cdot(z_{0},\dots,z_{n})
	=(e^{-\sum t_{i}}z_{0},e^{t_{1}}z_{1},\dots,e^{t_{n}}z_{n}).
	\]
	Consequently, the $\mathbb{C}^{n}$-action generated by $J(X_{i})+\sqrt{-1}X_{i}$ is transitive on each fibre $\eta=\eta_{0}$ in $\mathbb{C}^{n+1}$.
	This also shows that the flow generated by $E_{i}$ is complete: indeed, estimate~\eqref{duishuzengzhang} confines the flow to a compact set within any finite time interval.
	Observe that the pairs $X_i, J(X_j)$ commute and that the $X_i$ are mutually orthogonal.
	Together with estimate~\eqref{duishuzengzhang}, this implies that the image of $\mathbf{F}$ is $\mathbb{C}^{n+1}$ with a large ball $B_R$ removed.
	
	We also observe that $\mathbf{F}$ pushes the holomorphic volume form forward to the standard one on $\mathbb{C}^{n+1}$.
	The holomorphic volume form $\Omega$ is uniquely determined by
	\[
	\Omega_{1}\Bigl(X_{1},\dots,
	X_{n},\cdot\Bigr)={\rm d}\eta.
	\]
	But
	\begin{align*}
		(\sqrt{-1})^{n}{\rm d}z_{0}\wedge\dots\wedge{\rm d}z_{n}
		\Bigl(X_{1},\dots,
		X_{n},\cdot\Bigr)
		&=\sum_{i=0}^{n}z_{0}\dots\widehat{z_{i}}\dots z_{n}\,{\rm d}z_{i}={\rm d}\eta,
	\end{align*}
	where the first equality follows from the functional equation $\prod_{i=0}^{n}z_{i}=\eta$.
	Hence
	\[
	\Omega=-{\rm d}z_{0}\wedge\dots\wedge{\rm d}z_{n},
	\]
	and $\mathbf{F}$ is locally biholomorphic.
	We thus obtain the following statement.
	
	\begin{proposition}\label{smoothstructurenearBI}
		There exists a smooth injective bundle map $\mathbf{F}_{I}$ fitting into the commutative diagram
		\[
		\begin{tikzcd}[column sep=large, row sep=large]
			(\pi^{n})^{-1}(\mathcal{T}_{I})
			\arrow[rr,"\mathbf{F}_{I}"]
			\arrow[dr,"\eta"']
			&&
			\mathbb{C}^{n+1}\times \mathbb{R}^{2(N-n)}
			\arrow[dl,"\pi^{n}_{\eta}"]
			\\
			&
			\mathbb{C}
		\end{tikzcd}
		\]
		where $\pi^{n}_{\eta}$ is the projection defined in Example~\ref{biaozhunmuxing}.
		The map is given explicitly by
		\[
		\mathbf{F}_{I}=(z_{0},\dots,z_{n},\nu_{I,I^{\prime}},\theta_{I,I^{\prime}}).
		\]
		It is holomorphic on $(\pi^{n})^{-1}(\mathcal{T}_{I})$, and its image is
		\[
		(\mathbb{C}^{n+1}\setminus B_{R})\times\mathbb{R}^{2(N-n)},
		\]
		where $B_{R}\subset\mathbb{C}^{n+1}$ is the Euclidean ball of radius $R$ centered at the origin.
		Moreover, $\mathbf{F}_{I}$ intertwines the $\mathbb{T}^{n}\times\mathbb{R}^{N-n}$ action generated by $e_{I,I}$ and $e_{I,I^{\prime}}$ with the standard $\mathbb{T}^{n}\times\mathbb{R}^{N-n}$ action on $\mathbb{C}^{n+1}\times\mathbb{R}^{2(N-n)}$, and satisfies
		\[
		\mathbf{F}_{I}^{*}\Omega_{\text{Nor}}=\Omega_{I}.
		\]
	\end{proposition}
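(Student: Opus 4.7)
The plan is to assemble the pieces developed in Subsections~\ref{subsectionstructureonmodel}--\ref{subsectiontheimageofthehol} and verify the four claims separately, exploiting the product structure of $(\pi^n)^{-1}(\T_I)$ established after \eqref{modelGHstructure}. The second factor $M^n_2$ is a trivial $\R^{N-n}$-bundle on which $(\nu_{I,I'}, \theta_{I,I'})$ gives a global diffeomorphism to $\R^{2(N-n)}$ compatible with the induced complex structure $J_2$ and the form $\Omega_2$, so I would reduce every assertion to the corresponding one for $\bF := (z_0,\dots,z_n)\colon (\pi^n)^{-1}(\T_{I,1}) \to \C^{n+1}$ and then take the product.

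For holomorphicity and smoothness, the one-forms $d\log z_i = \tilde\xi_{I,i} + \gamma_i\, d\eta$ (and its $z_0$-analogue) were constructed in Subsection~\ref{subsectionholomorphicmap} as closed holomorphic $(1,0)$-forms with respect to $J_1$, so $\bF$ is holomorphic on $(\pi^n)^{-1}(\T_{I,1} \setminus \D^n)$. To extend $\bF$ across the discriminant, I would invoke the logarithmic growth estimates \eqref{duishuzengzhang1}--\eqref{duishuzengzhang2} of Subsection~\ref{subsectionlogarithmicgrowth}, which give $|z_K| \to 0$ as one approaches $\D^n_K$, and then apply Hartogs's lemma against the pre-existing smooth structure on each stratum $\H^n_K$ supplied by the induction hypothesis and Proposition~\ref{prop:smooth-structure-DK}.

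For injectivity, the strategy is to study the holomorphic $\C^n$-action generated by $J(X_i) + \sqrt{-1}\, X_i$ on each fibre $\{\eta = \eta_0\}$. Its imaginary part $X_i$ is intertwined with the standard diagonal $\T^n$-action on $\C^{n+1}$ via the computed identities $\mathcal{L}_{X_j} z_i = \sqrt{-1}\, z_i \delta_{ij}$ and $\mathcal{L}_{X_j} z_0 = -\sqrt{-1}\, z_0$. Its real part $-J_1(X_i)$ descends to a flow $E_j$ on the base with $\tfrac{d}{dt}\mu_i \circ E_j = V^{-1}_{ij}$; the positive-definiteness and integrability of $V_{ij}$ from Proposition~\ref{inductionhyp}(I) allow a Frobenius construction of a primitive $G$ satisfying $\partial G_i/\partial\mu_j = V_{ij}$, and fixing the constant of integration so that $G \circ \mu_I \circ E = \mathrm{id}$ forces injectivity of $\mu_I \circ E$. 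Combining this with the fibrewise $\T^n$-transitivity from the action-intertwining identities gives injectivity of $\bF$ on each fibre, and hence globally. The locus $\{\eta = 0\} \cap \D^n_K$ I would handle by restricting the argument to the $X_j$ with $j \in I \setminus K$, on which $V_{ij}$ remains positive-definite.

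For the image and the volume form, I would couple the two-sided bound \eqref{duishuzengzhang} of Lemma~\ref{zzengzhangxing} with completeness of the flow $E$: the bound confines $E$ to a compact set on any finite time interval, so the flow is complete, and then transitivity of the $\C^n$-action on each fibre identifies the image as $(\C^{n+1} \setminus B_R) \times \R^{2(N-n)}$ for a uniform $R > 0$. For the volume form, a direct computation
\[
(\sqrt{-1})^n\, dz_0 \wedge \cdots \wedge dz_n\,(X_1,\dots,X_n,\cdot) = \sum_{i=0}^n z_0 \cdots \widehat{z_i} \cdots z_n\, dz_i = d\eta,
\]
using the functional equation $\prod_i z_i = \eta$, matches the defining identity $\Omega_1(X_1,\dots,X_n,\cdot) = d\eta$, which yields $\bF^* \Omega_{\mathrm{Nor}} = \Omega_I$. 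The main obstacle I anticipate is the image identification: the modulus bound of Lemma~\ref{zzengzhangxing} is the easy direction, but showing that the combined $\C^n$-orbit of a single point actually fills the entire fibre outside $B_R$ requires carefully ruling out proper open invariant subsets by combining fibrewise $\T^n$-transitivity, completeness of the real Frobenius flow, and the exponential growth estimate to trap orbits from both sides.
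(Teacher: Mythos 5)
Your proposal is correct and follows the paper's own argument almost verbatim: the closed holomorphic $1$-forms and the functions $\gamma_i$ from Subsection~\ref{subsectionholomorphicmap}, the logarithmic-growth estimates~\eqref{duishuzengzhang1}--\eqref{duishuzengzhang2} together with the Hartogs extension against the inductively given smooth structure from Subsection~\ref{subsectionlogarithmicgrowth}, and the Frobenius primitive $G$ / flow-completeness / action-intertwining analysis from Subsection~\ref{subsectiontheimageofthehol} are exactly the ingredients the paper assembles, in the same order and for the same purposes, to prove Proposition~\ref{smoothstructurenearBI}. The image-identification step you flag as the main obstacle is also treated tersely in the paper---it passes from fibrewise $\mathbb{C}^n$-transitivity on the target and completeness of the flow $E$ (both forced by the two-sided bound~\eqref{duishuzengzhang}) to the conclusion that the image is a compact complement---so your caution there is reasonable, but the plan you sketch is precisely the paper's intended argument.
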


	\section{Surgery near the Discriminant Locus}\label{sectionsurgery}	
	
	\subsection{Smooth Extension of \texorpdfstring{$g_{1}$}{g1}}\label{subsctionsurgeryoriginal}
	
	By Proposition~\ref{smoothstructurenearBI}, the metric $g_{1}$ is a smooth \kah metric on $\C^{n+1}\setminus B_{R}$.
	We now construct a smooth \kah metric $\tilde g_{1}$ on the whole $\C^{n+1}$ such that $g_{1}=\tilde g_{1}$ outside a larger ball $B_{R_{1}}$ with $R_{1}\gg R$.
	
	For notational simplicity we drop the subscript $G_{I}$ and write
	\[
	|\vec\mu_{I}|=|\vec\mu_{I}|_{G_{I}};
	\]
	this is a smooth $\mathbb{T}^{n}$-invariant function on $\C^{n+1}\setminus B_{R}$.
	One verifies the uniform bound,	the following inequality can be readily obtained from the discussion in Subsection~\ref{subsectionasymptoticprop}.
	\begin{equation}\label{vecmuIbudengshi}
		\bigl|\mathrm{d}|\vec\mu_{I}|\bigr|_{g_{1}}
		+|\vec\mu_{I}|\cdot\bigl| dd^{c}|\vec\mu_{I}|\bigr|_{g_{1}}
		\le C.
	\end{equation}
	
	\begin{lemma}
		There exists a smooth \kah metric $\tilde g_{1}$ on $\C^{n+1}$ satisfying $\tilde g_{1}=g_{1}$ on $\C^{n+1}\setminus B_{2R_{1}}$ for some $R_{1}\gg R$.
	\end{lemma}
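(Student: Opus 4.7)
The plan is to produce $\tilde g_1$ from a global Kähler potential on $\mathbb{C}^{n+1}$, obtained by gluing the $\mathbb{T}^n$-invariant potential of $g_1$ to a standard plurisubharmonic function in the core. Since $\mathbb{C}^{n+1}\setminus B_R$ is simply connected with $H^{2}=0$ for $n\ge 1$, one first produces a smooth strictly plurisubharmonic potential $\phi_1$ with $\omega_1=dd^{c}\phi_1$; averaging over the free $\mathbb{T}^n$-action generated by $e_{I,I}$ makes $\phi_1$ $\mathbb{T}^n$-invariant (this is where toric symmetry enters).

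Next I would establish the growth of $\phi_1$. Using the explicit expression $\omega_1=\mathrm{d}\mu_i\wedge\tilde\vartheta_i+\tfrac{\sqrt{-1}}{2}Q\,\mathrm{d}\eta\wedge \mathrm{d}\bar\eta$, the uniform positive-definiteness of $(P_{I,ij})$ guaranteed by Proposition~\ref{inductionhyp}, and the Lipschitz estimate~\eqref{vecmuIbudengshi}, the target bound is a two-sided estimate
\[
c\,|\vec{\mu}_I|_{G_I}^{2}-C\ \le\ \phi_1\ \le\ C\,|\vec{\mu}_I|_{G_I}^{2}+C.
\]
Applying Lemma~\ref{zzengzhangxing} translates this into $c'(\log|z|)^{2}-C'\le\phi_1(z)\le C'(\log|z|)^{2}+C'$ on $\mathbb{C}^{n+1}\setminus B_R$; in particular $\phi_1$ grows far slower than $|z|^{2}$ but still genuinely tends to infinity.

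The gluing is then carried out via Richberg's regularized maximum. Fix a smooth strictly plurisubharmonic function on $\mathbb{C}^{n+1}$ of the form
\[
\phi_0(z)=\epsilon_1\bigl(\log(1+|z|^{2})\bigr)^{2}+\epsilon_2\log(1+|z|^{2})+C_0,
\]
with $\epsilon_1>0$ small enough to ensure $\phi_0<\phi_1$ on $\{|z|>2R_1\}$ and with $C_0$ large enough to ensure $\phi_0>\phi_1$ on a neighborhood of $\{R\le|z|\le R_1\}$; both choices are made possible by the previous two-sided bound. Set $\tilde\phi:=\max_{\delta}(\phi_1,\phi_0)$ with smoothing parameter $\delta>0$ sufficiently small. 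Then $\tilde\phi$ coincides with $\phi_1$ on $\{|z|>2R_1\}$ and with $\phi_0$ near $B_{R_1}$, so extending by $\phi_0$ across $B_R$ yields a smooth strictly plurisubharmonic function on all of $\mathbb{C}^{n+1}$. The Kähler metric $\tilde g_1$ associated with $\tilde\omega_1:=dd^{c}\tilde\phi$ then satisfies $\tilde g_1=g_1$ on $\mathbb{C}^{n+1}\setminus B_{2R_1}$, as required.

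The principal obstacle is the two-sided growth estimate for $\phi_1$. The upper bound should follow rather directly from integrating~\eqref{vecmuIbudengshi} along $g_1$-geodesics from a fixed basepoint, but the matching lower bound is more delicate: it requires exploiting the uniform positivity of $(P_{I,ij})$ together with the toric identity $\partial\phi_1/\partial\log|z_i|=\mu_i+\mathrm{const}$ to conclude that $\phi_1$ genuinely grows quadratically in $|\vec{\mu}_I|_{G_I}$ and not merely boundedly. Once this is secured, the Richberg gluing is standard and smoothness of $\tilde g_1$ at the origin is automatic since $\phi_0$ is globally smooth there.
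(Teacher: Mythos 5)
Your strategy---produce a global potential by taking Richberg's regularized maximum of the invariant potential $\phi_1$ of $\omega_1$ and a globally defined strictly plurisubharmonic competitor $\phi_0$---is a genuinely different route from the paper's, which never controls the growth of the potential at infinity: the paper cuts $\varphi$ off near the inner boundary (losing only a bounded amount of positivity on the fixed annulus $B_{3R}\setminus B_{2R}$) and restores positivity by adding $dd^{c}(\chi^{2}f)$ for an explicit radial $f$ with $tf'(t)\sim\log\log t$, so slowly growing that the errors produced by the cutoff $\chi=\eta(|\vec\mu_{I}|/R_{1})$ in the far region are absorbed by $\omega_1$ via \eqref{vecmuIbudengshi} and \eqref{duishuzengzhang}. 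Your scheme instead concentrates all the difficulty in a global growth estimate for $\phi_1$, and that is where there is a genuine gap: the lower bound $\phi_1\ge c|\vec\mu_{I}|_{G_{I}}^{2}-C$ is asserted with only a heuristic, yet it is the load-bearing step (it is exactly what guarantees $\phi_1>\phi_0+\delta$ on all of $\{|z|>2R_1\}$, so that the regularized maximum returns $\phi_1$ there). Proving it requires: fixing the pluriharmonic ambiguity in $\phi_1$ (e.g.\ by normalizing $\mu_i=(JX_i)\phi_1$); converting the resulting control of $\partial\phi_1/\partial\mu_j$ into quadratic growth in $\mu_I$ for the non-constant coefficients $P_{I,ij}$; and, separately, extracting growth in the $\eta$-direction, where the torus action gives no information and one must argue from the $(\mathrm{d}\eta,\mathrm{d}\bar\eta)$-component $\tfrac{\sqrt{-1}}{2}Q_I$ of $\omega_1$ on a domain with a ball removed. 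None of this is supplied, and it is a substantive argument comparable in length to the paper's construction of $f$.

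A secondary but genuine error: the claimed upper bound $\phi_1\le C'(\log|z|)^{2}+C'$, and the conclusion that $\phi_1$ "grows far slower than $|z|^{2}$", are false. By \eqref{duishuzengzhang} the relation between $|\vec\mu_{I}|_{G_{I}}$ and $\log|z|$ is direction-dependent: where $\mu_I$ stays bounded and $|\eta|\to\infty$ one has $|z|\sim|\vec\mu_{I}|_{G_{I}}$, so a potential of size $|\vec\mu_{I}|_{G_{I}}^{2}$ grows like $|z|^{2}$ there. This does not break the construction (only the lower bound enters the outer comparison, and in the worst direction, where $|\mu_I|$ dominates and both $\phi_1$ and $\phi_0$ are of size $(\log|z|)^{2}$, one can still choose $\epsilon_1$ small relative to $c$ and the constants $K_2,K_4$ of Lemma~\ref{zzengzhangxing}), but it shows the translation step needs to be restated and tracked carefully. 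With the lower bound established and the upper bound corrected, the Richberg gluing itself is standard and would give the lemma.
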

	
	\begin{proof}
		Since $\omega_{1}$ is a smooth $\mathbb{T}^{n}$-invariant \kah form on $\C^{n+1}\setminus B_{R}$, the $\partial\bar\partial$-lemma provides a $\mathbb{T}^{n}$-invariant function $\varphi$ on the same domain such that $\omega_{1}= dd^{c}\varphi$.
		Fix a radial cut-off function $\chi^{\prime}\in C^{\infty}_{c}(\C^{n+1})$ with
		\[
		\chi^{\prime}\equiv 1\ \text{on }\C^{n+1}\setminus B_{3R},\qquad
		\operatorname{supp}\chi^{\prime}\subset \C^{n+1}\setminus B_{2R}.
		\]
		Then the $(1,1)$-form $ dd^{c}(\chi^{\prime}\varphi)$ is supported in $\C^{n+1}\setminus B_{2R}$ and agrees with $\omega_{1}$ outside $B_{3R}$.
		On the annulus $B_{3R}\setminus B_{2R}$ the positivity may be lost, however, we still have
		\[
		 dd^{c}(\chi^{\prime}\varphi)\ge -\frac{K}{2}\omega_{0},
		\]
		where $\omega_{0}$ is the standard flat \kah form on $\C^{n+1}$ and $K>0$ is a uniform constant.
		
		Next we construct a smooth closed $\mathbb{T}^{n}$-invariant $(1,1)$-form $\omega$ on $\C^{n+1}$ satisfying
		\[
		\omega\ge K\omega_{0}\quad\text{on }B_{3R}
		\]
		and $\operatorname{supp}\omega\subset B_{2R_{1}}$ for some $R_{1}>4R$, while
		\[
		\omega+\omega_{1}>0 \quad\text{on }B_{2R_{1}}\setminus B_{4R}.
		\]
		Setting
		\[
		\tilde\omega=\omega+ dd^{c}(\chi^{\prime}\varphi),
		\]
		we obtain the desired \kah form on $\C^{n+1}$.
		
		Let $t=|z|^{2}$. For a radially symmetric function $f(t)$ (hence $f$ is $\mathbb{T}^{n}$-invariant), one computes
		\begin{align*}
			 dd^{c} f ={}& \sqrt{-1}\,\sum_{i}f^{\prime}(t)\,\mathrm{d} z_{i}\wedge d\bar z_{i}
			+\sqrt{-1}\,\sum_{i,j}f^{\prime\prime}(t)\bar z_{i}z_{j}\,\mathrm{d} z_{i}\wedge d\bar z_{j},
		\end{align*}
		where the eigenvalues of the above $(1,1)$-form are $f^{\prime}(t)$ and $f^{\prime}(t)+t f^{\prime\prime}(t)$.
		
		Let $\eta(s)$ be a cutoff function defined by
		\[
		\eta(s)=
		\begin{cases}
			1, & s\in[0,1),\\[2mm]
			0, & s\in[2,+\infty).
		\end{cases}
		\]
		Then $\eta^{\prime}$ and $\eta^{\prime\prime}$ are uniformly bounded.
		Let $M=3R$ be a positive constant and $R_{1}>M+1$ a parameter to be determined.
		Set $\chi(z)=\eta\Bigl(\dfrac{|\vec\mu_{I}|}{R_{1}}\Bigr)$ and define
		\[
		\omega= dd^{c}(\chi^{2}f).
		\]
		Then $\omega$ is $\mathbb{T}^{n}$-invariant and satisfies
		\begin{align*}
			\omega={}& \chi^{2} dd^{c} f+f dd^{c}\chi^{2}
			+\sqrt{-1}\chi\,\p\chi\wedge\bar\p f
			+\sqrt{-1}\chi\,\p f\wedge\bar\p\chi.
		\end{align*}
		Note that
		\[
		\p\chi=\frac{1}{R_{1}}\eta^{\prime}\Bigl(\frac{|\vec\mu_{I}|}{R_{1}}\Bigr)\p|\vec\mu_{I}|,
		\qquad
		\p f=f^{\prime}(t)\bar z_{i}\,\mathrm{d} z_{i}.
		\]
		By the Cauchy inequality we therefore obtain
		\begin{align*}
			\omega\ge{}& \chi^{2}\Bigl( dd^{c} f
			-\frac{(f^{\prime}(t))^{2}}{R_{1}^{2(1-\epsilon)}}
			z_{j}\bar z_{i}\,\mathrm{d} z_{i}\wedge d\bar z_{j}\Bigr)\\[2mm]
			& -\frac{\Bigl(\eta^{\prime}\bigl(\tfrac{|\vec\mu_{I}|}{R_{1}}\bigr)\Bigr)^{2}}{R_{1}^{2\epsilon}}
			\sqrt{-1}\p|\vec\mu_{I}|\wedge\bar\p|\vec\mu_{I}|
			+f dd^{c}\chi^{2}.
		\end{align*}
		
		On the decay region of $\chi$, where $R_{1}<|\vec{\mu}_{I}|<2R_{1}$, if $f$ satisfies the growth condition
		\begin{equation}\label{fcond1}
			|\vec{\mu}_{I}|^{-2}|f|\ll 1,
		\end{equation}
		then, by virtue of \eqref{vecmuIbudengshi}, the last two terms in the lower bound for $\omega$ can be absorbed by $\omega_{1}$ once $R_{1}$ is chosen sufficiently large.
		
		To control the first term we require $ dd^{c} f>0$ and, more importantly,
		\begin{equation}\label{fcond2}
			(t f^{\prime})^{\prime}
			-\frac{1}{t}\Bigl(\frac{1}{|\vec\mu_{I}|}\Bigr)^{2(1-\epsilon)}(t f^{\prime})^{2}>0.
		\end{equation}
		By~\eqref{duishuzengzhang} there exists a uniform constant $C$ such that
		\begin{equation}\label{shuajianduishuzengzhang}
			\log t\le C|\vec\mu_{I}|,
		\end{equation}
		so that \eqref{fcond2} becomes
		\[
		(t f^{\prime})^{\prime}
		-\frac{C}{t}\Bigl(\frac{1}{\log t}\Bigr)^{2(1-\epsilon)}(t f^{\prime})^{2}>0.
		\]
		
		Set $H=t f^{\prime}$ and $H^{\prime}=h$.
		We construct $h$ as follows:
		\[
		h(t)=
		\begin{cases}
			K, & 0\le t<M,\\[2mm]
			\dfrac{2\log 2\cdot K}{(t-M+2)\log(t-M+2)}, & t\ge M,
		\end{cases}
		\]
		and smooth $h$ on $(M-1,M+1)$ without affecting the main conclusions.
		Hence
		\[
		H(t)=
		\begin{cases}
			Kt, & 0\le t\le M-1,\\[2mm]
			KM+2\log 2\cdot K\log\log(t-M+2), & t\ge M+1.
		\end{cases}
		\]
		
		Because $ dd^{c} f>0$ by construction, inequality~\eqref{fcond2} is equivalent to
		\[
		\begin{aligned}
			-\frac{C}{t}\Bigl(\frac{1}{\log t}\Bigr)^{2(1-\epsilon)}
			&\bigl(KM+2\log 2\cdot K\log\log(t-M+2)\bigr)^{2}\\[2mm]
			&+\frac{2\log 2\cdot K}{(t-M+2)\log(t-M+2)}>0
		\end{aligned}
		\]
		on the region $R_{1}<|\vec\mu_{I}|<2R_{1}$.
		Since $t$ grows at least linearly in $|\vec\mu_{I}|$ (again by~\eqref{duishuzengzhang}), the left-hand side is positive for $R_{1}$ sufficiently large.
		
		For $t>M+1$ we integrate to obtain
		\[
		f(t)=KM\log t+2\log 2\cdot K\log(t-M+2)\bigl(\log\log(t-M+2)-1\bigr)+C.
		\]
		Thus $|f|$ is controlled by $\log t$ on the decay region, and~\eqref{fcond1} is satisfied when $R_{1}$ is taken large enough.  The lemma is proved.
	\end{proof}
	
	\begin{remark}
		The very existence of the metric $\tilde{g}_{1}$ relies on Lemma~\ref{zzengzhangxing}, and especially on estimate~\eqref{duishuzengzhang}.
		Since the latter is independent of the metric, the same smooth extension argument can be carried out for any $\mathbb{T}^{n}$-invariant \kah metric on $\mathbb{C}^{n+1}$ for which~\eqref{vecmuIbudengshi} holds.
	\end{remark}
	
	Consider the moment map of \(\tilde{\omega}_{1}\).
	On \(\mathbb{C}^{n+1}\setminus B_{2R_{1}}\) we have \(\tilde{\omega}_{1}=\omega_{1}\), hence \(\mathrm{d}\tilde{\mu}_{i}=\mathrm{d}\mu_{i}\).
	Because \(\mathbb{C}^{n+1}\setminus B_{2R_{1}}\) is simply connected, we can normalize the constants so that
	\(\tilde{\mu}_{i}=\mu_{i}\) on this region.
	Our gluing construction does not change the holomorphic volume form, so $\eta$ remains the same.
	For the $\mathbb{T}^{n}$-action on $\mathbb{C}^{n+1}$ the induced moment map is necessarily injective, since the action is transitive, the argument in Section~\ref{subsectiontheimageofthehol} therefore applies verbatim.
	
	Now consider the continuous map
	\(\widetilde{MP}=(\tilde{\mu}_{I},\eta)\colon\mathbb{C}^{n+1}\longrightarrow\mathbb{R}^{n}\times\mathbb{C}\).
	By the preceding remarks $\widetilde{MP}$ is injective, and it agrees with the map $MP$ associated with $\omega_{1}$ outside a compact set.
	Since $MP$ maps $\mathbb{C}^{n+1}\setminus B_{2R_{1}}$ bijectively onto the exterior region in $\mathbb{R}^{n}\times\mathbb{C}$, we conclude that $\widetilde{MP}$ is a bijection and that its discriminant locus coincides with that of $MP$.
	
	Observe that the GH coefficients are given by
	\[
	\tilde{P}^{-1}_{ij}=\tilde{\omega}\!\left(X_{i},JX_{j}\right).
	\]
	Consequently, on the region of $\mathbb{R}^{n}\times\mathbb{C}$ that corresponds to $\mathbb{C}^{n+1}\setminus B_{2R_{1}}$, we have $\tilde{P}_{ij}=P_{ij}$; likewise $\tilde{Q}=Q$.
	In other words, we have ``extended'' the GH coefficients $P_{ij}$ and $Q$ from the original domain to the whole ball $\mathcal{B}^{n}_{1}=\mathbb{R}^{n}\times\mathbb{C}$. For simplicity we continue to denote these extended coefficients by $P_{ij}$ and $Q$.
	
	Similarly, for the \kah metric $\tilde{\omega}+\omega_{2}$ on $\C^{n+1}\times \R^{2(N-n)}$, we obtain new coefficients $P_{I,ij}$ and $Q_{I}$ defined on all of $\mathcal{B}^{n}$.
	The GH structure induced by $P_{I,ij}$ and $Q_{I}$ is isomorphic to $\mathbb{C}^{n+1}\times\mathbb{R}^{2(N-n)}$, as dictated by the direct-product property.
	Clearly, the new GH coefficients $P_{I,ij}$ and $Q_{I}$ satisfy the commutativity condition~\eqref{assumationjiaohuan} and the distributional equation~\eqref{assumationcherncon}, and depend only on $\mu_{I}$ and $\eta$.
	If the image of\/ $\mathbb{C}^{n+1}\setminus B_{2R_{1}}$ under $\widetilde{MP}$ is contained in the subset
	\[
	\bigl\{q\in\mathcal{B}^{n}_{1}\mid \dist_{G_{I}}(q,O)>C\bigr\},
	\]
	then the preimage of this subset under the map $(\pi^{1})^{-1}$ in $\mathcal{B}^{n}$ is
	\[
	\bigl\{p\in\mathcal{B}^{n}\mid \dist_{A}(p,\D_{I})>C\bigr\}.
	\]
	This shows that we have extended the GH coefficients $P_{I,ij}$ and $Q_{I}$ within a ``cylindrical'' region of\/ $\mathcal{B}^{n}$.

	\subsection{Asymptotic Properties of  \texorpdfstring{$g_{I}$}{g(I)}}\label{subsectionasymptoticprop}
	For the smooth \kah metric \( g_{I} \) on \( \C^{n+1}\times \R^{2(N-n)} \), we show in this section that on various regions it can be approximated by the corresponding model metrics.
	To facilitate later references to PDE results, and in the spirit of \cite{li2023syz}, we also introduce global weighted Sobolev norms on \( \C^{n+1}\times \R^{2(N-n)} \).
	
	To streamline the discussion, we set the weight functions
	\begin{equation*}
		\ell_{i}=1+
		\dist_{A}\mkern-2mu
		\Bigl(\mspace{2mu}\cdot\mspace{4mu},
		\smash[b]{%
			\bigcup_{\substack{%
					\scriptstyle J\subseteq I\\[-1pt]
					\scriptstyle\lvert J\rvert=i+1}}%
			\D^{n}_{J}%
		}\Bigr),
		\qquad 1\le i\le n.
	\end{equation*}
	so that
	\begin{equation*}
		\ell_{1}\le \ell_{2}\le \dots \le \ell_{n}.
	\end{equation*}
	Let $\rho>1$ be a weight decay function, in our setting $\rho$ will typically be a negative power of $\ell_{i}$.
	For any $\mathbb{T}^{n}\times \R^{N-n}$-invariant tensor field $T$ we define the weighted Sobolev norm $\|\cdot\|_{C^{k,\alpha}}$ on $\C^{n+1}\times \R^{2(N-n)}$ in the usual way.
	
	On the singular bundle
	\[
	\pi^{n}\colon (\pi^{n})^{-1}(\B^{n}\setminus \D^{n})
	\longrightarrow \B^{n}\setminus \D^{n},
	\]
	endowed with the GH parameters $P_{I,ij}$ and $Q_{I}$, the discussion in the previous section shows that the induced GH structure
	\[
	\bigl((\pi^{n})^{-1}(\B^{n}\setminus \D^{n}),\;
	J_{I},\Omega_{I},g_{I},\omega_{I}\bigr)
	\]
	can be compactified to a smooth \kah structure
	\[
	(\C^{n+1}\times \R^{2(N-n)},\;
	J_{\mathrm{Nor}},\Omega_{\mathrm{Nor}},g_{I},\omega_{I}).
	\]
	
	Fix $R>0$ and define the region
	\[
	\T_{R}=\bigl\{p\in \B^{n}\mid
	\dist_{A}(p,\D^{n}_{I})>R\bigr\}.
	\]
	By the final discussion in Subsection~\ref{subsctionsurgeryoriginal}, there exists an $R_{1}>0$ such that inside $\T_{2R_{1}}$ the coefficients $P_{I,ij}$ and $Q_{I}$ agree with their smooth extensions.
	On $\B^{n}\setminus \T_{3R_{1}}$ the metric splits as $g_{I}=g_{1}+g_{2}$, where $g_{1}$ is a smooth metric on $\C^{n+1}$ and
	\[
	g_{2}=A_{I^{\prime},ij}\,\mathrm{d}\nu_{I,i}\otimes \mathrm{d}\nu_{I,j}
	+A^{-1}_{I^{\prime},ij}\,\vartheta_{I,i}\otimes \vartheta_{I,j}.
	\]
	Hence on $\B^{n}\setminus \T_{3R_{1}}$ we define $\|\cdot\|_{C^{k,\alpha}}$ to be the standard Sobolev norm $\|\cdot\|_{C^{k,\alpha}_{\mathrm{Nor}}}$.

	We now restrict our discussion to the region $\T_{2R_{1}}$.
	First, on $\T_{2R_{1}}\cap\H^{n}_{K}$ with $|K|=k+1$, we input the GH coefficients
	\[
	P_{K,ij}=A_{ij}+p_{K,ij}=P_{I,ij}+h_{I,ij}-h_{K,ij},\quad
	Q_{K}=\det A+q_{K}=Q_{I}+h_{I}-h_{K}.
	\]
	Let $g_{K}$ denote the induced GH metric.
	Proposition~\ref{prop:smooth-structure-DK} tells us that $P_{K,ij}$ and $Q_{K}$ induce a smooth structure of type $\C^{k+1}\times\R^{2(N-k)}$, and $g_{K}$ is smooth with respect to this structure.
	As described in Subsection~\ref{subsectionsmoothstructure}, $g_{K}$ can actually be defined on the larger region $\T_{2R_{1}}\cap\B^{n}_{K}$ by using $V^{(k)}_{ij}-h_{K,ij}$ and $W^{(k)}-h_{K}$.
	In particular, on $\H^{n}_{K}$ the metric obtained this way coincides with the one defined by the above $P_{K,ij}$ and $Q_{K}$.
	Therefore, in what follows we regard $g_{K}$ as a smooth metric on $\pi^{-1}(\T_{2R_{1}}\cap\B^{n}_{K})$.
	
	In Subsection~\ref{subsectionsmoothstructure} we have already defined the weighted Sobolev norm determined by $g_{K}$.
	We introduce the $\rho$-weighted Sobolev norm with respect to $g_{K}$.
	To simplify notation, for any $\mathbb{T}^{n}\times\R^{N-n}$-invariant tensor field $T$ on $\pi^{-1}(\T_{2R_{1}}\cap\B^{n}_{K})$ we write symbolically
	\[
	\|T\|_{C^{k,\alpha}_{g_{K}}}\le C\rho
	\]
	to mean
	\[
	\|T\|_{C^{k,\alpha}_{g_{K}}}
	=\sum_{j=0}^{k}\bigl\|\rho^{-1}\ell_{1}^{j}\nabla_{g_{K}}^{j}T\bigr\|_{L^{\infty}}
	+\bigl[\rho^{-1}\ell_{1}^{k}\nabla_{g_{K}}^{k}T\bigr]_{\alpha}\le C.
	\]
	
	On $\H^{n}_{K}$ the connection 1-forms satisfy $\mathrm{d}\vartheta_{K,I^{\prime}}=\mathrm{d}\vartheta_{I,I^{\prime}}=0$, so we may choose $\vartheta_{K,I^{\prime}}=\vartheta_{I,I^{\prime}}$.
	In general, since
	\[
	\mathrm{d}(\vartheta_{I,i}-\vartheta_{K,i})=F_{I,i}-F_{K,i},
	\]
	using the expressions \eqref{GibbonsHawkingcurvature} for $F_{I}$ and $F_{K}$ we obtain
	\begin{align*}
		F_{I,i}-F_{K,i}
		=\sqrt{-1}&\biggl(
		\frac{1}{2}\frac{\partial(h_{I}-h_{K})}{\partial\mu_{j}}\,\mathrm{d}\eta\wedge \mathrm{d}\bar{\eta}\\[1mm]
		&{}+\frac{\partial(h_{I,ij}-h_{K,ij})}{\partial\eta}\,\mathrm{d}\mu_{j}\wedge \mathrm{d}\eta
		-\frac{\partial(h_{I,ij}-h_{K,ij})}{\partial\bar{\eta}}\,\mathrm{d}\mu_{j}\wedge \mathrm{d}\bar{\eta}
		\biggr).
	\end{align*}
	Both $h_{I,ij}-h_{K,ij}$ and $h_{I}-h_{K}$ are smooth harmonic functions on $\H^{n}_{K}$, Lemma~\ref{alphajianjin} provides their size estimates.
	By Proposition~\ref{buchongassumation} and the relations
	\[
	\mathrm{d}\mu_{i}=-\iota_{X_{i}}\omega_{K},\qquad
	\mathrm{d}\eta=(\sqrt{-1})^{N}\Omega_{K}(X_{1},\dots,X_{N},\,\cdot\,),
	\]
	we have
	\[
	\|\mathrm{d}\mu_{i}\|_{C^{k,\alpha}_{g_{K}}}\le C,\qquad
	\|\mathrm{d}\eta\|_{C^{k,\alpha}_{g_{K}}}\le C.
	\]
	Notice that on $\H^{n}_{K}$
	\[
	\dist_{A}(\,\cdot\,,\partial\D_{K})\sim\dist_{A}(\,\cdot\,,\D^{n}_{I}).
	\]
	Hence
	\[
	\|F_{I,i}-F_{K,i}\|_{C^{k,\alpha}_{g_{K}}}\le C\ell_{n}^{-2}.
	\]
	Applying the Poincar\'e lemma we can choose $\vartheta_{K}$ so that
	\[
	\|\vartheta_{I,i}-\vartheta_{K,i}\|_{C^{k,\alpha}_{g_{K}}}\le C\ell_{n}^{-1}.
	\]
	
	\begin{proposition}\label{devationK}
		On $\T_{2R_{1}}\cap\H^{n}_{K}$ we have the deviation estimates
		\[
		\begin{cases}
			\|g_{I}-g_{K}\|_{C^{k,\alpha}_{g_{K}}}\le\ell_{n}^{-1},&
			\|\omega_{I}-\omega_{K}\|_{C^{k,\alpha}_{g_{K}}}\le\ell_{n}^{-1},\\[1mm]
			\|J_{I}-J_{K}\|_{C^{k,\alpha}_{g_{K}}}\le\ell_{n}^{-1},&
			\|\Omega_{I}-\Omega_{K}\|_{C^{k,\alpha}_{g_{K}}}\le\ell_{n}^{-1}.
		\end{cases}
		\]
		In particular, if $R_{1}$ is chosen sufficiently large, then
		\[
		|g_{I}-g_{K}|\ll 1,\quad
		|\omega_{I}-\omega_{K}|\ll 1,\quad
		|\Omega_{I}-\Omega_{K}|\ll 1,\quad
		|J_{I}-J_{K}|\ll 1.
		\]
	\end{proposition}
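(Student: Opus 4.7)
The plan is to reduce every deviation to the differences $P_{I,ij}-P_{K,ij}=-(h_{I,ij}-h_{K,ij})$ and $Q_I-Q_K=-(h_I-h_K)$, both controlled by Lemma~\ref{alphajianjin}. On $\H^n_K$ we have $\dist_A(\,\cdot\,,\D^n_J)\sim\dist_A(\,\cdot\,,\D^n_I)\sim\ell_n$ for every intermediate $K\subsetneqq J\subsetneqq I$, so Lemma~\ref{alphajianjin} yields the pointwise bound $|h_{I,ij}-h_{K,ij}|,|h_I-h_K|\le C\ell_n^{-1}$. Because both are smooth harmonic functions on $\H^n_K$, interior Schauder estimates on $g_K$-balls of radius $\sim\ell_1$ upgrade this to the weighted bound $\|h_{I,ij}-h_{K,ij}\|_{C^{k,\alpha}_{g_K}}+\|h_I-h_K\|_{C^{k,\alpha}_{g_K}}\le C\ell_n^{-1}$. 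The connection estimate $\|\vartheta_{I,i}-\vartheta_{K,i}\|_{C^{k,\alpha}_{g_K}}\le C\ell_n^{-1}$ is exactly the one derived in the paragraph immediately preceding the proposition via the Poincar\'e lemma applied to $F_I-F_K$, whose weighted size is $O(\ell_n^{-2})$.

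With these ingredients in place, I would obtain the four estimates by direct expansion. From
\begin{equation*}
\omega_I-\omega_K=\mathrm{d}\mu_j\wedge(\vartheta_{I,j}-\vartheta_{K,j})+\tfrac{\sqrt{-1}}{2}(Q_I-Q_K)\,\mathrm{d}\eta\wedge \mathrm{d}\bar\eta,
\end{equation*}
together with the uniform bounds $\|\mathrm{d}\mu_i\|_{C^{k,\alpha}_{g_K}},\|\mathrm{d}\eta\|_{C^{k,\alpha}_{g_K}}\le C$ (derived above from $\mathrm{d}\mu_i=-\iota_{X_i}\omega_K$ and Proposition~\ref{buchongassumation}), the $\omega$ bound follows at once. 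For the metric I would expand
\begin{equation*}
g=P_{ij}\,\mathrm{d}\mu_i\otimes \mathrm{d}\mu_j+P^{-1}_{ij}\,\vartheta_i\otimes\vartheta_j+Q\,|\mathrm{d}\eta|^2
\end{equation*}
and write $g_I-g_K$ as a sum of pieces in $P_{I,ij}-P_{K,ij}$, $P^{-1}_{I,ij}-P^{-1}_{K,ij}$, $\vartheta_{I,i}-\vartheta_{K,i}$ and $Q_I-Q_K$. Uniform positive-definiteness of both $P_I$ and $P_K$ (they differ from $A$ by $O(\ell_n^{-1})$) converts the difference of inverses into a term of the same order as the direct difference. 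For the complex structure I would use the $(1,0)$-form description $\xi_i=P_{ij}\mathrm{d}\mu_j+\sqrt{-1}\vartheta_i$; the difference $\xi_{I,i}-\xi_{K,i}$ is then $O(\ell_n^{-1})$ in $C^{k,\alpha}_{g_K}$, and the algebraic correspondence between the span of $(1,0)$-forms and the endomorphism $J$ converts this into the desired $J_I-J_K$ bound. Finally $\Omega_I$ is a wedge of the $(-\sqrt{-1})\xi_{I,i}$'s with $\mathrm{d}\eta$, so the Leibniz rule propagates the $O(\ell_n^{-1})$ bound to $\Omega_I-\Omega_K$.

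The only non-routine step is the last one: to convert a bound on the span of the $(1,0)$-forms into a bound on $J_I-J_K$ as a $(1,1)$-tensor one must verify that the dual basis to $\{\xi_{I,i},\mathrm{d}\eta\}$ is uniformly $C^{k,\alpha}_{g_K}$-bounded. This follows from Proposition~\ref{buchongassumation}\,(i), since the $\mathbb{T}^n$-generators $X_i$ are uniformly bounded in that norm and the pairing matrix $\xi_{I,i}(\tilde X_j)=\sqrt{-1}\delta_{ij}$ is the identity in the affine basis \eqref{Kfangsheji}, so no loss occurs when inverting. The smallness assertion in the last sentence of the proposition is then obtained by taking $R_1$ so large that $\ell_n$ exceeds any prescribed constant throughout $\T_{2R_1}$.
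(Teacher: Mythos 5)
Your proposal takes essentially the same route as the paper: all four deviations are reduced to the GH-coefficient differences $h_{I,ij}-h_{K,ij}$, $h_I-h_K$ and the connection difference $\vartheta_{I,i}-\vartheta_{K,i}$ (controlled by Lemma~\ref{alphajianjin} and the Poincar\'e-lemma argument immediately preceding the statement), and each tensor is then expanded term by term; the paper spells out only the $J_I-J_K$ case, writing $J_K$ explicitly as a sum $\alpha\otimes v$ and invoking the identity $P^{-1}_{K,ij}\vartheta_{K,i}=-J_K(\mathrm{d}\mu_j)$, with the remaining three declared ``similar.'' One small imprecision worth flagging in your dual-basis step: the pairing $\xi_{K,i}(X_j)=\sqrt{-1}\delta_{ij}$ together with Proposition~\ref{buchongassumation}\,(i) controls only the $X$-components of the dual frame, whereas the actual dual $(1,0)$-vectors are $\tfrac12\bigl(P^{-1}_{K,ij}E_{K,\mu_j}-\sqrt{-1}X_i\bigr)=-\tfrac12\bigl(J_K(X_i)+\sqrt{-1}X_i\bigr)$ together with $E_{K,\eta}$, so one must also note that $J_K$ is $\nabla_{g_K}$-parallel (automatic, since $(g_K,J_K)$ is K\"ahler) and that $|E_{K,\eta}|_{g_K}$ is controlled by $Q_K$, which is uniformly bounded; both facts are free, so the gap is cosmetic and the argument closes.
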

	
	\begin{proof}
		We illustrate the estimate for $J_{I}-J_{K}$, the others are similar.
		Away from the singular locus we choose a basis of holomorphic $1$-forms
		\[
		P_{K,ij}\,\mathrm{d}\mu_{j}+\sqrt{-1}\,\vartheta_{K,i},\qquad \mathrm{d}\eta,
		\]
		whose dual basis is
		\[
		\frac{1}{2}\bigl(P_{K,ij}^{-1}E_{K,\mu_{j}}-\sqrt{-1}\,X_{i}\bigr),\qquad E_{K,\eta},
		\]
		where
		\[
		E_{K,\mu_{i}}=\frac{\partial}{\partial\mu_{i}}
		-\vartheta_{K,j}\Bigl(\frac{\partial}{\partial\mu_{i}}\Bigr)X_{j},\qquad
		E_{K,\eta}=\frac{\partial}{\partial\eta}
		-\vartheta_{K,j}\Bigl(\frac{\partial}{\partial\eta}\Bigr)X_{j}.
		\]
		Thus
		\[
		J_{K}=\sqrt{-1}\,(\mathrm{d}\eta\otimes E_{K,\eta}-\mathrm{d}\bar{\eta}\otimes E_{K,\bar{\eta}})
		+P_{K,ij}\,\mathrm{d}\mu_{i}\otimes X_{j}
		-P_{K,ij}^{-1}\,\vartheta_{K,i}\otimes E_{K,\mu_{j}}.
		\]
		An identical expression holds with $I$ in place of $K$.
		Hence
		\[
		E_{I,\eta}-E_{K,\eta}
		=-\bigl((\vartheta_{I,i}-\vartheta_{K,i})(\partial/\partial\eta)\bigr)X_{i},
		\]
		and the first two terms in $J_{I}-J_{K}$ decay at the required rate.
		For the last term we write
		\begin{align*}
			&P_{I,ij}^{-1}\,\vartheta_{I,i}\otimes E_{I,\mu_{j}}
			-P_{K,ij}^{-1}\,\vartheta_{K,i}\otimes E_{K,\mu_{j}}\\[1mm]
			&\quad=P_{K,ij}^{-1}\,\vartheta_{K,i}\otimes(E_{I,\mu_{j}}-E_{K,\mu_{j}})
			+(P_{I,ij}^{-1}\,\vartheta_{I,i}-P_{K,ij}^{-1}\,\vartheta_{K,i})\otimes E_{I,\mu_{j}}.
		\end{align*}
		Notice that $P_{K,ij}^{-1}\,\vartheta_{K,i}=-J_{K}(\mathrm{d}\mu_{j})$, thus the last line satisfies the desired estimate.
		For the second term we have
		\begin{align*}
			&(P_{I,ij}^{-1}\,\vartheta_{I,i}-P_{K,ij}^{-1}\,\vartheta_{K,i})\otimes E_{I,\mu_{j}}\\[1mm]
			&\quad=(\vartheta_{I,i}-P_{I,ij}P_{K,jt}^{-1}\,\vartheta_{K,t})\otimes P_{I,is}^{-1}E_{I,\mu_{s}}\\[1mm]
			&\quad=(\vartheta_{I,i}-\vartheta_{K,i}
			-(h_{I,ij}-h_{K,ij})P_{K,jt}^{-1}\,\vartheta_{K,t})\otimes P_{I,is}^{-1}E_{I,\mu_{s}},
		\end{align*}
		and the required estimate follows.
		
		To estimate $\Omega_{I}-\Omega_{K}$, observe that
		\[
		\Omega_{K}=\bigwedge_{j=1}^{N}(-\sqrt{-1}\,\xi_{K,j})\wedge \mathrm{d}\eta,
		\qquad
		\xi_{K,i}=P_{K,ij}\,\mathrm{d}\mu_{j}+\sqrt{-1}\,\vartheta_{K,i}.
		\]
		Hence $\Omega_{I}-\Omega_{K}$ can be expanded into a linear combination of terms of the form
		\[
		\bigwedge_{j=1}^{k}(-\sqrt{-1}\,(\xi_{I,j}-\xi_{K,j}))
		\wedge\bigwedge_{j=k+1}^{N}(-\sqrt{-1}\,\xi_{K,j})\wedge \mathrm{d}\eta,
		\]
		where the first factor is decaying and the remaining factors are bounded in the weighted norm by Proposition~\ref{buchongassumation}.
	\end{proof}
	
	\begin{remark}\label{devationremark1}
		In the later gluing construction we will see that, on $\T_{2R_{1}}\cap\B^{n}_{K}$,
		\[
		\|g_{I}-g_{K}\|_{C^{k,\alpha}_{g_{K}}}
		\le\ell_{1}^{-\epsilon}\,\ell_{|K|}^{-1+\epsilon}
		\]
		for a small constant $\epsilon>0$.
	\end{remark}
	
	We next consider $\T_{R_{1}}\cap\B^{n}_{a}$.
	Note that $\T_{R_{1}}\cap\B^{n}_{a}$ can be decomposed into finitely many simply-connected components, we still denote one of them by $\T_{R_{1}}\cap\B^{n}_{a}$.
	Since this region is topologically trivial, we can input the GH coefficients $A_{ij}$, $\det A$ to construct a GH metric $g_{\mathrm{flat}}$.
	For any $\mathbb{T}^{n}\times\R^{N-n}$-invariant tensor field $T$ on $\pi^{-1}(\T_{2R_{1}}\cap\B^{n}_{a})$ we introduce the normalized H\"older seminorm
	\[
	[T]_{\alpha}=\sup_{p}\ell_{1}(p)^{\alpha}
	\sup_{p'\in B_{g_{\mathrm{flat}}}(p,\ell_{1}(p)/10)}
	\frac{|T(p)-T(p')|_{g_{\mathrm{flat}}}}{d_{g_{\mathrm{flat}}}(p,p')^{\alpha}}.
	\]
	We write symbolically
	\[
	\|T\|_{C^{k,\alpha}_{g_{\mathrm{flat}}}}\le C\rho
	\]
	to mean
	\[
	\|T\|_{C^{k,\alpha}_{g_{\mathrm{flat}}}}
	=\sum_{j=0}^{k}\bigl\|\rho^{-1}\ell_{1}^{j}
	\nabla_{g_{\mathrm{flat}}}^{j}T\bigr\|_{L^{\infty}}
	+\bigl[\rho^{-1}\ell_{1}^{k}
	\nabla_{g_{\mathrm{flat}}}^{k}T\bigr]_{\alpha}\le C.
	\]
	
	On $\T_{2R_{1}}\cap\B^{n}_{a}$ we have
	\[
	\dist_{A}(\,\cdot\,,\partial\D)\sim\dist_{A}(\,\cdot\,,\D^{n}_{I}).
	\]
	Thus, arguing as before, we can choose $\vartheta_{\mathrm{flat}}$ so that
	\[
	\|\vartheta_{I,i}-\vartheta_{\mathrm{flat},i}\|
	_{C^{k,\alpha}_{g_{\mathrm{flat}}}}\le C\ell_{n}^{-1}.
	\]
	In particular we set $\vartheta_{\mathrm{flat},I^{\prime}}=\vartheta_{I,I^{\prime}}$.
	
	\begin{proposition}\label{devationf}
		On $\T_{2R_{1}}\cap\B^{n}_{a}$ we have the deviation estimates
		\[
		\begin{cases}
			\|g_{I}-g_{\mathrm{flat}}\|_{C^{k,\alpha}_{g_{\mathrm{flat}}}}
			\le\ell_{n}^{-1},&
			\|\omega_{I}-\omega_{\mathrm{flat}}\|
			_{C^{k,\alpha}_{g_{\mathrm{flat}}}}\le\ell_{n}^{-1},\\[1mm]
			\|J_{I}-J_{\mathrm{flat}}\|_{C^{k,\alpha}_{g_{\mathrm{flat}}}}
			\le\ell_{n}^{-1},&
			\|\Omega_{I}-\Omega_{\mathrm{flat}}\|
			_{C^{k,\alpha}_{g_{\mathrm{flat}}}}\le\ell_{n}^{-1}.
		\end{cases}
		\]
		In particular, if $R_{1}$ is chosen sufficiently large, then
		\[
		|g_{I}-g_{\mathrm{flat}}|\ll 1,\quad
		|\omega_{I}-\omega_{\mathrm{flat}}|\ll 1,\quad
		|\Omega_{I}-\Omega_{\mathrm{flat}}|\ll 1,\quad
		|J_{I}-J_{\mathrm{flat}}|\ll 1.
		\]
	\end{proposition}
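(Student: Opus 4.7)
The proof parallels that of Proposition~\ref{devationK} almost verbatim, with the role of $g_{K}$ taken over by the flat GH metric $g_{\mathrm{flat}}$ and the role of the index set $K$ taken over by the trivial configuration (constant coefficients $A_{ij}$ and $\det A$). The two basic inputs are already on the table: on $\T_{2R_{1}}\cap \B^{n}_{a}$ we have the distance comparison $\dist_{A}(\,\cdot\,,\partial\D)\sim\dist_{A}(\,\cdot\,,\D^{n}_{I})$, hence $\ell_{1}\sim\ell_{n}$, and in particular Proposition~\ref{inductionhyp}(IV) combined with the decay of $h_{I,ij},h_{I}$ furnished by Lemma~\ref{alphajianjin} gives
\[
\|p_{I,ij}\|_{C^{k,\alpha}_{g_{\mathrm{flat}}}}\le C\ell_{n}^{-1},\qquad
\|q_{I}\|_{C^{k,\alpha}_{g_{\mathrm{flat}}}}\le C\ell_{n}^{-1}.
\]
This is the analogue of the coefficient bound used in the proof of Proposition~\ref{devationK}, together with the already-established connection estimate $\|\vartheta_{I,i}-\vartheta_{\mathrm{flat},i}\|_{C^{k,\alpha}_{g_{\mathrm{flat}}}}\le C\ell_{n}^{-1}$, obtained via the Poincar\'e lemma applied on the simply-connected components of $\T_{R_{1}}\cap\B^{n}_{a}$.

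The plan is then to expand each of the four tensor differences algebraically in the GH data and estimate term by term, exactly as in the sketched argument for $J_{I}-J_{K}$. For the metric one writes schematically
\[
g_{I}-g_{\mathrm{flat}}
=p_{I,ij}\,\mathrm{d}\mu_{i}\!\otimes\!\mathrm{d}\mu_{j}
+(P_{I,ij}^{-1}-A_{ij}^{-1})\,\vartheta_{I,i}\!\otimes\!\vartheta_{I,j}
+A_{ij}^{-1}(\vartheta_{I,i}\!\otimes\!\vartheta_{I,j}-\vartheta_{\mathrm{flat},i}\!\otimes\!\vartheta_{\mathrm{flat},j})
+q_{I}\,|\mathrm{d}\eta|^{2}.
\]
Each summand is a product of a factor of size $O(\ell_{n}^{-1})$ with factors of uniformly bounded $g_{\mathrm{flat}}$-norm ($\mathrm{d}\mu_{i}$, $\mathrm{d}\eta$, $\vartheta_{\mathrm{flat},i}$, and $\vartheta_{I,i}=\vartheta_{\mathrm{flat},i}+O(\ell_{n}^{-1})$ are all controlled in weighted norm by the GH ansatz for $g_{\mathrm{flat}}$). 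The same scheme handles $\omega_{I}-\omega_{\mathrm{flat}}$ via its explicit GH expression, $J_{I}-J_{\mathrm{flat}}$ via the dual-frame decomposition
\[
J=\sqrt{-1}\,(\mathrm{d}\eta\otimes E_{\eta}-\mathrm{d}\bar\eta\otimes E_{\bar\eta})
+P_{ij}\,\mathrm{d}\mu_{i}\otimes X_{j}
-P_{ij}^{-1}\,\vartheta_{i}\otimes E_{\mu_{j}},
\]
and $\Omega_{I}-\Omega_{\mathrm{flat}}$ via telescoping the wedge product $\bigwedge_{j}(-\sqrt{-1}\xi_{j})\wedge\mathrm{d}\eta$. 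In each case one factor is decaying at rate $\ell_{n}^{-1}$ and the rest are bounded, yielding the stated estimate. The final pointwise smallness assertion then follows by choosing $R_{1}$ so large that $\ell_{n}\ge R_{1}$ is itself large on $\T_{2R_{1}}\cap\B^{n}_{a}$.

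There is no substantial new difficulty: the estimates are structurally identical to those in Proposition~\ref{devationK}, and the main bookkeeping obstacle, namely propagating the $\ell_{n}^{-1}$ decay through derivatives without picking up the wrong weight, is resolved precisely by the equivalence $\ell_{1}\sim\ell_{n}$ on $\B^{n}_{a}$ (coming from the definition $2C_{0}^{N-1}\dist_{A}(\,\cdot\,,\D^{n})>\dist_{A}(\,\cdot\,,\D^{n}_{I})$), which makes the weighted $C^{k,\alpha}_{g_{\mathrm{flat}}}$ norms involving $\ell_{1}$ and $\ell_{n}$ interchangeable up to a uniform constant. Consequently the derivative estimates follow from those on the GH coefficients and the connection form without loss.
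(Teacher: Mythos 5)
Your proposal is correct and mirrors the paper's intent: the paper states Proposition~\ref{devationf} without its own proof, relying on the reader to rerun the argument of Proposition~\ref{devationK} with $g_{K}$ replaced by $g_{\mathrm{flat}}$, which is exactly what you do. The inputs you isolate --- $\ell_{1}\sim\ell_{n}$ on $\B^{n}_{a}$, the coefficient bounds from Proposition~\ref{inductionhyp}(IV) and Lemma~\ref{alphajianjin}, and the connection-form estimate $\|\vartheta_{I,i}-\vartheta_{\mathrm{flat},i}\|_{C^{k,\alpha}_{g_{\mathrm{flat}}}}\le C\ell_{n}^{-1}$ --- are precisely what the paper prepares in the paragraph immediately preceding the statement.
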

	
	\begin{remark}\label{devationremark2}
		The metric $g_{\mathrm{flat}}$ can also be defined on the larger region
		\[
		\T_{2R_{1}}\cap\bigl\{\dist_{A}(\,\cdot\,,\D)>R^{\prime}\bigr\}.
		\]
		By (IV) of Proposition~\ref{inductionhyp} we have
		\[
		\|g_{I}-g_{K}\|_{C^{k,\alpha}_{g_{K}}}\le\ell_{1}^{-1},
		\]
		so that, for $R_{1}$ large enough,
		\[
		\|g_{I}-g_{K}\|_{C^{k,\alpha}_{g_{K}}}\ll 1.
		\]
	\end{remark}
	
	One verifies that on the overlaps of the regions above the corresponding norms are equivalent.
	Thus, for any $\mathbb{T}^{n}\times\R^{N-n}$-invariant tensor $T$ on $\C^{n+1}\times\R^{2(N-n)}$, we endow the space with the weighted Sobolev norm $\|T\|_{C^{k,\alpha}}$;
	the concise bound $\|T\|_{C^{k,\alpha}}\le C\rho$ is then unambiguously
	interpreted within this convention.
	Notice that on $\B^{n}\setminus\T_{2R_{1}}$ the weight $\rho$ is uniformly bounded below, so $\|T\|_{C^{k,\alpha}}\le C\rho$ reduces to $\|T\|_{C^{k,\alpha}_{\mathrm{Nor}}}\le C$.
	By the metric deviation estimates in Propositions~\ref{devationK} and~\ref{devationf}, this norm is equivalent to the weighted norm defined by $g_{I}$.
	
	We claim that the norm $\|\cdot\|_{C^{k,\alpha}}$ naturally extends to $\mathbb{T}^{n}$-invariant tensors on $\C^{n+1}$.
	This follows immediately from the product structure of $g_{I}$.
	Let us explain how $g_{K}$ and $g_{\mathrm{flat}}$ restrict to $(\pi^{1})^{-1}(\B^{n}_{1})$.
	Take $K=\{0,1,\dots,k\}$.
	On the base we set $\nu_{I,s}=t_{s}$ for $s\in I^{\prime}$, on the fibre we fix the frame $e_{I}+A_{I^{\prime}}^{-1}A_{I^{\prime}I}e_{I}$.
	One checks that
	\[
	\nu_{I,I^{\prime}}=\nu_{K,I^{\prime}}
	+A_{I^{\prime}}^{-1}A_{I^{\prime}J}\nu_{K,J},\qquad
	e_{I,I^{\prime}}=e_{K,I^{\prime}}
	+A_{I^{\prime}}^{-1}A_{I^{\prime}J}e_{K,J}.
	\]
	Since $P_{K,ij}$ and $Q_{K}$ induce the smooth structure $\C^{k+1}\times\R^{2(N-k)}$, restricting to $H$ amounts to choosing a section of $\R^{2(N-k)}$, so this smooth structure descends to $H$ without changing the holomorphic part.
	Notice that $p_{K,ij}=0$ whenever $i\in I^{\prime}$ or $j\in I^{\prime}$, hence
	\begin{align*}
		g_{K}&=g_{2}+(G_{I,ij}+p_{K,ij})\,\mathrm{d}\mu_{i}\otimes \mathrm{d}\mu_{j}
		+Q_{K}\,|\mathrm{d}\eta|^{2}\\[1mm]
		&\quad+(G_{I}+p_{K})^{-1}_{ij}
		\,(\vartheta_{K}-A_{II^{\prime}}A_{I^{\prime}}^{-1}\vartheta_{K,I^{\prime}})_{i}
		\otimes(\vartheta_{K}-A_{II^{\prime}}A_{I^{\prime}}^{-1}\vartheta_{K,I^{\prime}})_{j}\\[2mm]
		&=g_{2}+g_{K,1},
	\end{align*}
	where $g_{2}$ is the same as in the decomposition $g_{I}=g_{1}+g_{2}$, because we have chosen $\vartheta_{K,I^{\prime}}=\vartheta_{I,I^{\prime}}$.
	Thus $g_{K}$ also splits as a product on $\C^{n+1}\times\R^{2(N-n)}$, and we can restrict it to $\C^{n+1}$ by simply taking $g_{K,1}$.
	Conversely, $g_{K,1}$ can be viewed as the GH metric on $H$ induced by
	\[
	P_{ij}+h_{I,ij}-h_{K,ij},\qquad
	Q_{K}+h_{I}-h_{J}.
	\]
	By the preceding discussion in Subsection~\ref{subsectionstructureonmodel}, $g_{K}$ is Calabi--Yau if and only if $g_{K,1}$ is Calabi--Yau.
	
	For $\mathbb{T}^{n}$-invariant tensors on $\C^{n+1}$ we can use the weighted Sobolev norm defined above by regarding them as tensors on $\C^{n+1}\times\R^{2(N-n)}$ that depend only on the $\C^{n+1}$ factor (We can naturally restrict $\ell_{i}$ to $\B^{n}_{1}$).
	One verifies that for such tensors this norm is equivalent to the analogous weighted Sobolev norm defined by $g_{\mathrm{flat},1}$ or $g_{K,1}$, and the corresponding estimates in Propositions~\ref{devationK} and~\ref{devationf} continue to hold.

	\subsection{Gluing Calabi--Yau Metrics}\label{subsectiongluingcalabiyau}
	In this section we perform the gluing construction.  
	We first recall the solvability and a priori estimate for the complex Monge--Ampère equation.
	
	\begin{theorem}\label{MAkejiexing}
		There exists a complete metric
		\[
		\omega_{\C^{n+1}}
		=\omega_{1}+\sqrt{-1}\,\partial\bar\partial\varphi
		\quad\text{on }\C^{n+1}
		\]
		satisfying
		\[
		\omega_{\C^{n+1}}^{n+1}
		=\frac{(n+1)!}{2^{n+1}}\,\sqrt{-1}^{(n+1)^{2}}
		\Omega_{\mathrm{Nor}}\wedge\overline{\Omega}_{\mathrm{Nor}},
		\]
		together with the metric-deviation estimates
		\begin{equation}\label{decayofphi}
			\begin{aligned}
				\|\mathrm{d}\varphi\|_{C^{k+1,\alpha}}
				&\le C\ell_{1}^{-\epsilon}\ell_{n}^{-1+\epsilon},\\[1mm]
				\|\sqrt{-1}\,\partial\bar\partial\varphi\|_{C^{k,\alpha}}
				&\le C\ell_{1}^{-1-\epsilon}\ell_{n}^{-1+\epsilon}.
			\end{aligned}
		\end{equation}
		The meaning of these inequalities was explained in the previous section.
		Here $\varphi$ is a $\mathbb{T}^{n}$-invariant function, $0<\epsilon\ll 1$ is an arbitrarily small given number, and the constants depend only on $n,\alpha,\epsilon$.
		This metric inherits all the symmetries of $\omega_{1}$.
	\end{theorem}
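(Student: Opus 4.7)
The plan is to follow the Tian--Yau--Hein programme on $(\mathbb{C}^{n+1},\omega_{1})$, viewed as a complete K\"ahler manifold endowed with the holomorphic volume form $\Omega_{\text{Nor}}$. The preliminary step is to record that $\omega_{1}$ is genuinely complete and has bounded geometry: completeness follows because outside $B_{2R_{1}}$ the metric agrees with the restriction of $g_{I}$, whose logarithmic-type growth is controlled by Lemma~\ref{zzengzhangxing}; bounded curvature and Sobolev inequality follow from Proposition~\ref{buchongassumation}\,(ii) combined with the fact that, inside the compact surgery region, the cutoff construction of Subsection~\ref{subsctionsurgeryoriginal} is performed once and for all in a controlled manner. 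With these foundations in place, the Monge--Amp\`ere equation becomes the standard problem $(\omega_{1}+\sqrt{-1}\,\p\bar\p\varphi)^{n+1}=e^{F}\omega_{1}^{n+1}$, where $e^{F}$ is the ratio of the target volume form to $\omega_{1}^{n+1}$.

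The second step is to estimate the volume-form error $F$ in the weighted norms of Subsection~\ref{subsectionasymptoticprop}. On the region $\T_{2R_{1}}\cap\H^{n}_{K}$ one compares $\omega_{1}$ with the model $g_{K,1}$, which is itself Calabi--Yau on $(\B_{K}^{n})''$ by the induction hypothesis, the error is therefore controlled by $\|g_{I}-g_{K}\|_{C^{k,\alpha}_{g_{K}}}$ together with the harmonic correction terms $h_{I}-h_{K}$ analysed in Lemma~\ref{alphajianjin}. On $\T_{R_{1}}\cap\B^{n}_{a}$ a similar comparison against $g_{\text{flat},1}$ yields quadratic decay directly from Proposition~\ref{devationf}. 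Inside the compact cutoff region the error is bounded but non-decaying; fortunately it has compact support, so only the far-field behaviour matters for the solvability theory. Patching these regional estimates (using Remarks~\ref{devationremark1} and~\ref{devationremark2}) yields $\|F\|_{C^{k,\alpha}}\le C\ell_{1}^{-1-\epsilon}\ell_{n}^{-1+\epsilon}$ up to an error that, along deep strata, only decays at order $\ell_{n}^{-1}$ and is \emph{not} quadratic.

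The third and most delicate step is to improve the decay of $F$ before inverting the MA operator. Following~\cite[Sec.~2.9]{li2023syz}, I solve a weighted Laplace equation $\Delta_{\omega_{1}}u=F_{0}$, where $F_{0}$ captures the non-quadratically-decaying part of $F$, and then replace $\omega_{1}$ by $\omega_{1}+\sqrt{-1}\,\p\bar\p u$. The key input is a parametrix for $\Delta_{\omega_{1}}$ built region by region: on $\B^{n}_{a}$ one uses the Euclidean Green kernel of $g_{\text{flat},1}$, and on each $\H^{n}_{K}$ one uses the Green kernel of the product model $g_{K,1}$, which factors through the $\mathbb{C}^{k+1}$ component thanks to the product decomposition discussed after Proposition~\ref{prop:smooth-structure-DK}. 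These local parametrices are glued by a partition of unity subordinate to $\{\B^{n}_{a}\}\cup\{\H^{n}_{K}\}_{K\subsetneq I}$ and iterated in the anisotropic weighted spaces, the asymmetric weight $\ell_{1}^{-\epsilon}\ell_{n}^{-1+\epsilon}$ arises precisely because the Green kernels see different scales in the fiber and the base directions of each stratum.

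The last step is to apply the Tian--Yau--Hein existence theorem~\cite{hein2010gravitational} to the corrected approximate metric, now with volume-form error $F'$ satisfying $\|F'\|_{C^{k,\alpha}}\le C\ell_{1}^{-2-\epsilon}\ell_{n}^{-1+\epsilon}$ in particular decaying faster than quadratically. This produces $\varphi$ with $\sqrt{-1}\,\p\bar\p\varphi$ bounded by the weighted norm of $F'$, and adding back $u$ gives the bound~\eqref{decayofphi} for the original problem. The $\mathbb{T}^{n}$-invariance of $\varphi$ is automatic because the whole construction (approximation, parametrix, TYH fixed-point argument) preserves the torus symmetry. The principal obstacle is the third step: establishing the weighted mapping properties of $\Delta_{\omega_{1}}$ with anisotropic weights and verifying that the Laplace-equation correction does not destroy completeness or positivity. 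Both points reduce, via the metric-deviation estimates of Subsection~\ref{subsectionasymptoticprop}, to analogous statements for the product models $g_{K,1}$ and $g_{\text{flat},1}$, which are standard once the weighted function spaces are set up correctly.
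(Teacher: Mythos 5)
Your outline follows the same overall strategy as the paper: estimate the volume-form error of $g_1$ region by region (Lemma~\ref{decayvolumeerror}), assemble a parametrix for $\Delta_{g_1}$ from the Green kernels of the model metrics $g_{\mathrm{flat},1}$ and $g_{K,1}$ (Lemmas~\ref{PDEaffine1}--\ref{PDEB0}, glued in Lemma~\ref{laplaciansov}), improve the decay with a Poisson correction, and invoke the Tian--Yau--Hein package.

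There is, however, a gap in the decay-rate bookkeeping that is not merely cosmetic. Starting from $\|E\|_{C^{k,\alpha}}\le C\ell_1^{-1-\epsilon}\ell_n^{-1+\epsilon}$ and replacing $\omega_1$ by $\omega_1+\sqrt{-1}\,\partial\bar\partial u'$ with $\Delta_{g_1}u'=-2E$ leaves a remaining error quadratic in $E$ and $\partial\bar\partial u'$, namely $\|E'\|\le C\ell_1^{-2}\ell_n^{-2+2\epsilon}$ --- not $\ell_1^{-2-\epsilon}\ell_n^{-1+\epsilon}$ as you write. More importantly, neither of these two rates crosses the quadratic threshold required by Tian--Yau--Hein: Corollaries~\ref{heinkejieg1} and~\ref{heinkejiegK} demand that the right-hand side be $O(\rho^{-q})$ with $q>2$, where $\rho\sim\ell_n$. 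Near the discriminant locus $\ell_1$ stays bounded, so $\ell_1^{-2}\ell_n^{-2+2\epsilon}\sim\rho^{-2+2\epsilon}$ there, i.e.\ $q=2-2\epsilon<2$, and the rate you claim fails this test even more severely since its $\ell_n$-exponent is only $-1+\epsilon$. The paper therefore performs a \emph{second} Poisson correction, solving $\Delta_{g_1'}u''=-2E'$ to push the error to $\ell_1^{-4}\ell_n^{-4+2\epsilon}$, after which $q=4-2\epsilon>2$ and the fixed-point machinery applies. You should add this second iteration, checking that the new exponents $(\delta,\tau)=(-2,-2+2\epsilon)$ remain in the admissible range $\{-3<\delta<-1,\,-3<\tau<1,\,-4<\delta+\tau<-1\}$ of Lemma~\ref{laplaciansov}, and only then pass to the Monge--Amp\`ere problem. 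The final estimate~\eqref{decayofphi} is then dominated by $\partial\bar\partial u'$, which is why the advertised rate $\ell_1^{-1-\epsilon}\ell_n^{-1+\epsilon}$ survives the additional corrections.
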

	
	We will prove Theorem~\ref{MAkejiexing} in Section~\ref{subsectionpde}, here we assume its validity and fix $0<\epsilon\ll 1$.
	
	\begin{lemma}\label{phic0guji}
		The function $\varphi$ above satisfies the $C^{0}$ estimate
		\[
		|\varphi|\le C\ell_{n}^{\epsilon}.
		\]
	\end{lemma}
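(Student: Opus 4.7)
The plan is to derive the $C^{0}$ bound by integrating the gradient estimate of Theorem~\ref{MAkejiexing} along a well-chosen path. Since the weight $\ell_{1}\ge 1$ by definition, the zeroth-order part of the $C^{k+1,\alpha}$ estimate $\|\mathrm{d}\varphi\|_{C^{k+1,\alpha}}\le C\ell_{1}^{-\epsilon}\ell_{n}^{-1+\epsilon}$ collapses to the pointwise bound
\[
|\mathrm{d}\varphi|_{g_{I}}\le C\ell_{n}^{-1+\epsilon}
\qquad\text{on }\C^{n+1}.
\]
Thus the whole difficulty is reduced to path integration, where the gain $\ell_{n}^{-1+\epsilon}$ integrates to $\ell_{n}^{\epsilon}$ exactly as desired.

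First, I would fix a base point $p_{0}\in\C^{n+1}$ with $\ell_{n}(p_{0})=O(1)$, and normalize $\varphi$ by an additive constant so that $\varphi(p_{0})=0$ (permissible since the complex Monge--Amp\`ere equation determines $\varphi$ only up to constants). Given $p\in\C^{n+1}$ with $\ell_{n}(p)\gg 1$ (the complementary case is trivial by compactness), I would construct a $g_{I}$-unit-speed curve $\gamma:[0,L]\to\C^{n+1}$ from $p_{0}$ to $p$ satisfying
\begin{enumerate}[label=(\roman*)]
\item $L\le C\ell_{n}(p)$;
\item $\ell_{n}(\gamma(s))\le C(1+s)$ for every $s\in[0,L]$.
\end{enumerate}
The curve is produced by taking a straight $g_{A}$-segment in the base $\R^{n}\times\C$ from $\pi^{n}(p_{0})$ to $\pi^{n}(p)$ and lifting it horizontally via the connection $\vartheta_{\mathrm{flat}}$ available in the asymptotic region. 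By Proposition~\ref{devationf} the metric $g_{I}$ is close to the product metric $g_{\mathrm{flat}}$ outside a compact set, so the $g_{I}$-length of the lift is comparable to the $g_{A}$-length of the base segment, which is bounded by $\dist_{g_{A}}(\pi^{n}(p),O)+C_{0}\sim\ell_{n}(p)$; this gives (i). Property (ii) is the triangle inequality on the base applied to the distance to the deepest stratum.

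Integrating the pointwise gradient bound along $\gamma$ yields
\[
|\varphi(p)|=|\varphi(p)-\varphi(p_{0})|
\le\int_{0}^{L}|\mathrm{d}\varphi|_{g_{I}}\bigl(\gamma(s)\bigr)\,\mathrm{d}s
\le C\int_{0}^{L}(1+s)^{-1+\epsilon}\mathrm{d}s
\le\frac{C}{\epsilon}(1+L)^{\epsilon}\le C\ell_{n}(p)^{\epsilon},
\]
which is exactly the required estimate.

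The main obstacle is the construction of the path with controlled length when $\gamma$ must pass close to the discriminant locus, where $\ell_{1}$ degenerates to $1$ while $\ell_{n}$ may still be large and the product description of $g_{I}$ used above is no longer valid. Here the $\mathbb{T}^{n}$-invariance of $\varphi$ is essential: since $\varphi$ is constant along every $\mathbb{T}^{n}$-orbit, I can freely perturb the horizontal lift onto a nearby generic orbit with $\ell_{1}\sim\ell_{n}$ (so that the deviation estimates of Proposition~\ref{devationK} on each $\H^{n}_{K}$ apply) without affecting $\varphi(\gamma(s))$. This reduces the path estimate to the asymptotic regime, where the product structure controls the length, and closes the argument.
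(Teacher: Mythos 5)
There is a genuine gap in your argument, concentrated in the final paragraph where you handle the near-locus region.

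The central problem is your claim that you can ``freely perturb the horizontal lift onto a nearby generic orbit with $\ell_{1}\sim\ell_{n}$ \dots\ without affecting $\varphi(\gamma(s))$.'' Two orbits with different base points give different values of $\varphi$: $\mathbb{T}^{n}$-invariance says only that $\varphi$ is constant along a \emph{single} orbit, not that it agrees on nearby orbits. Moreover, $\ell_{1}$ and $\ell_{n}$ are functions of the base point alone, so moving along an orbit leaves them unchanged, and moving to a different orbit with $\ell_{1}\sim\ell_{n}$ necessarily means moving the base point and hence changing $\varphi$. If the target point $p$ itself lies near a low-dimensional stratum (so $\ell_{1}(p)\ll\ell_{n}(p)$), no path ending at $p$ can stay in the region $\ell_{1}\sim\ell_{n}$, so the ``perturbation'' cannot be carried out at all.

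The deeper difficulty you are glossing over is metric, not topological. The gradient estimate in Theorem~\ref{MAkejiexing} controls $|\mathrm{d}\varphi|$ with respect to $g_{I}$ (equivalently $g_{K}$ or $g_{\mathrm{flat}}$ region by region), and the fundamental theorem of calculus requires the $g_{I}$-length of the path, not its $g_{A}$-length. On $\B^{n}_{a}$ these are comparable (the restriction of $g_{\mathrm{flat}}$ to horizontal vectors is exactly $g_{A}$), which is precisely why your path-integration works there; but inside $\H^{n}_{K}$ the horizontal part of $g_{K}$ is $P_{K,ij}\,\mathrm{d}\mu_{i}\otimes\mathrm{d}\mu_{j}+Q_{K}|\mathrm{d}\eta|^{2}$, whose coefficients $P_{K,ij}$ blow up near $\D^{n}_{K}$. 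A $g_{A}$-straight segment through this region can therefore have unbounded $g_{I}$-length, and your bounds (i)--(ii) do not control the quantity that actually enters the estimate. The paper's proof sidesteps this by exploiting the product decomposition $g_{K}=g_{K,1}\oplus g_{2}$: one moves in the flat $g_{2}$-directions (where $g_{K}$ and $g_{A}$ agree) to escape $\H^{n}_{K}$, uses the comparison $\dist_{A}(p,O)\sim\dist_{A}(p_{K},O)$ to control the weight along this exit path, and only then reverts to the $\B^{n}_{a}$ line-integration. That is the missing mechanism; ``perturbing to a generic orbit'' is not a substitute for it.
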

	
	\begin{proof}
		By Theorem~\ref{MAkejiexing} we have a weighted estimate for $\mathrm{d}\varphi$.
		Since $\varphi$ is $\mathbb{T}^{n}$-invariant, we regard $\mathrm{d}\varphi$ as a $1$-form on $\B^{n}$ and argue region by region.
		On $\B^{n}_{a}$ the weighted norm gives a bound for $|\mathrm{d}\varphi|_{g_{A}}$, so the desired estimate follows by line integration.
		On $\H^{n}_{K}$ we use the product decomposition of $g_{K}$ and the fact that $\dist_{A}(p,O)\sim\dist_{A}(p_{K},O)$, where $p_{K}$ is the projection of $p$ onto $\D^{n}_{K}$ with respect to $g_{A}$.
		Combining this with (IV) of Proposition~\ref{inductionhyp} yields the claim.
	\end{proof}
	
	By the definition of $\B^{n}$ in Subsection~\ref{subsectionstructureonmodel} we can naturally regard $\B_{I}\subset\B^{n}$ and the geometry of the fibration directions is identical.
	For $p\in\B_{I}$ we have $\nu_{I,j}>0$ ($j\in I^{\prime}$), and \eqref{affinejulihengdengshi} gives
	\[
	\dist_{A}^{2}(p,\partial\D_{I})
	=\dist_{A}^{2}(p,\D_{I})
	+\dist_{A}^{2}(p_{I},\partial\D_{I}).
	\]
	Hence
	\begin{equation}\label{julidengjiaaffien2}
		\sqrt{1-\frac{1}{C_{0}^{2}}}\,
		\dist_{A}(p,\partial\D_{I})
		\le\dist_{A}(p_{I},\partial\D_{I})
		\le\dist_{A}(p,\partial\D_{I}).
	\end{equation}
	Since $p_{I}=(0,\dots,0,\nu_{I,n+1},\dots,\nu_{I,N},0)$, the quantity $\dist_{A}(p_{I},\partial\D_{I})$ is a Lipschitz function of $\nu_{I,I^{\prime}}$ alone.
	We need the following lemma.
	
	\begin{lemma}\label{juliheaffinedengjia}
		In $\B_{I}$, for any $J\supseteq I$ we have
		\begin{equation}\label{julidengjiaaffinehanshu}
			\rho_{I,J}\le\dist_{A}(p_{I},\D_{J})\le\hat{C}\rho_{I,J},
		\end{equation}
		where $\hat{C}>1$ is a uniform constant depending only on $n$ and $\Lambda/\lambda$, and
		\[
		\rho_{I,J}
		=\sqrt{\nu_{I,K}^{\top}
			\bigl(A_{I^{\prime},K}
			-A_{I^{\prime},KJ^{\prime}}A_{I^{\prime},J^{\prime}}^{-1}
			A_{I^{\prime},J^{\prime}K}\bigr)\nu_{I,K}}
		\quad\text{with }\ J=I\cup K.
		\]
	\end{lemma}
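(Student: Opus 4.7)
The plan is to obtain both bounds by an explicit block computation of $\dist_{A}^{2}(p_{I}, q)$ for $q \in \overline{\D_{J}}$. By the symmetry of Subsection~\ref{subsectionthegeometry} we may assume $I = \{0, 1, \dots, n\}$; write $J = I \cup K$ with $K \subseteq I'$ and $L := J' = I' \setminus K$. Every $q \in \overline{\D_{J}}$ has the form $q = (\mu_{I} = 0,\, \mu_{K} = 0,\, \mu_{L} = r_{L},\, \eta = 0)$ with $r_{L} \ge 0$ componentwise, whereas $p_{I} = (0,\, \nu_{I, K},\, \nu_{I, L},\, 0)$ with $\nu_{I, K},\, \nu_{I, L} > 0$ strictly since $p_{I}$ lies in the interior of $\D_{I}$. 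As the $\mu_{I}$- and $\eta$-components already agree, only the $A_{I'}$-block enters; completing the square with $w := A_{L}^{-1} A_{LK}\, \nu_{I, K}$ gives
\[
\dist_{A}^{2}(p_{I}, q) = \rho_{I, J}^{2} + (\nu_{I, L} - r_{L} + w)^{\top} A_{L}\, (\nu_{I, L} - r_{L} + w).
\]
The unconstrained minimum over $r_{L} \in \R^{L}$ is $\rho_{I, J}^{2}$, which immediately yields the lower bound $\rho_{I, J} \le \dist_{A}(p_{I}, \D_{J})$.

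For the upper bound I would slide along the segment $r_{L}(t) := \nu_{I, L} + t\, w$, $t \in [0, 1]$, from the trivially feasible point $\nu_{I, L}$ toward the unconstrained optimum $\nu_{I, L} + w$. Because $\nu_{I, L} > 0$ strictly, there is a largest $t^{\ast} \in (0, 1]$ for which $r_{L}(t^{\ast}) \ge 0$ componentwise; at that value $q(t^{\ast}) \in \overline{\D_{J}}$ satisfies
\[
\dist_{A}^{2}(p_{I}, q(t^{\ast})) = \rho_{I, J}^{2} + (1 - t^{\ast})^{2}\, w^{\top} A_{L}\, w \le \rho_{I, J}^{2} + w^{\top} A_{L}\, w = \nu_{I, K}^{\top} A_{K}\, \nu_{I, K} \le \Lambda\, |\nu_{I, K}|^{2},
\]
the telescoping $\rho_{I, J}^{2} + w^{\top} A_{L}\, w = \nu_{I, K}^{\top} A_{K}\, \nu_{I, K}$ being the very definition of the Schur complement. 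Density of $\D_{J}$ in $\overline{\D_{J}}$ transfers this estimate to $\dist_{A}(p_{I}, \D_{J})$.

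The final step controls $|\nu_{I, K}|^{2}$ in terms of $\rho_{I, J}^{2}$. The matrix $S := A_{K} - A_{KL} A_{L}^{-1} A_{LK}$ is the Schur complement of $A_{L}$ in $A_{I'}$, and the block-inversion identity $S^{-1} = (A_{I'}^{-1})_{K}$ together with two applications of Cauchy interlacing (first for $A_{I'}$ as a principal submatrix of $A$, giving $\lambda_{\min}(A_{I'}) \ge \lambda$; then for $(A_{I'}^{-1})_{K}$ as a principal submatrix of $A_{I'}^{-1}$, giving $\lambda_{\max}((A_{I'}^{-1})_{K}) \le \lambda^{-1}$) yields $\lambda_{\min}(S) \ge \lambda$. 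Hence $\rho_{I, J}^{2} \ge \lambda\, |\nu_{I, K}|^{2}$, and $\hat{C} = \sqrt{\Lambda / \lambda}$ works, matching the stated dependence on $n$ and $\Lambda / \lambda$. The main subtlety is that when $A_{LK}\, \nu_{I, K}$ pushes some component of $\nu_{I, L}$ below zero the unconstrained optimum escapes the positive orthant, so one cannot directly realize the value $\rho_{I, J}$; the sliding construction above absorbs this gap into a universal constant without any quantitative appeal to $p \in \B_{I}$.
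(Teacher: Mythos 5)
Your proof is correct, but it takes a genuinely different route from the paper's. The paper's argument is geometric: one Euclideanizes the metric on the $\mu_{I'}$-factor by a linear change of coordinates, observes that the positivity cone $\{\nu_{I,j}>0\}$ transforms into a polyhedral cone whose dihedral angles are uniformly bounded away from $\pi$, and then reads off both bounds from the fact that $\rho_{I,J}$ becomes the distance to the affine span of the face $\D_{J}$ (equivalence of distance-to-face and distance-to-affine-span inside such a cone is elementary). Your argument replaces this cone-angle picture with a completely explicit block computation: completing the square identifies $\rho_{I,J}^{2}$ as the unconstrained minimum, giving the lower bound, and then evaluating at the trivially feasible point (or sliding toward the unconstrained optimum) gives $\dist_{A}^{2}(p_{I},\D_{J})\le\nu_{I,K}^{\top}A_{K}\nu_{I,K}$, after which two rounds of Cauchy interlacing combined with the Schur-complement inversion identity $S^{-1}=(A_{I'}^{-1})_{K}$ produce $\hat{C}=\sqrt{\Lambda/\lambda}$. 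Your version has the advantage of producing an explicit constant independent of $n$, while the paper's is briefer and reuses the cone picture already in play elsewhere. One cosmetic remark: the sliding along $r_{L}(t)=\nu_{I,L}+tw$ is dispensable, since the estimate you ultimately use, $\rho_{I,J}^{2}+(1-t^{*})^{2}w^{\top}A_{L}w\le\rho_{I,J}^{2}+w^{\top}A_{L}w=\nu_{I,K}^{\top}A_{K}\nu_{I,K}$, is already achieved at $t=0$, where $q(0)=(0,0,\nu_{I,L},0)$ lies strictly in $\D_{J}$ (no density argument needed).
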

	
	\begin{proof}
		If the projection of $p_{I}$ onto $\D_{J}$ lies in the interior of $\D_{J}$, then $\rho_{I,J}=\dist_{A}(p_{I},\D_{J})$.
		In general, we perform a linear change of coordinates for $\nu_{I,I^{\prime}}$ so that $g_{A_{I^{\prime}}}$ becomes the standard metric.
		This transforms the unbounded polyhedral cone $\mathfrak{C}=\{\nu_{I,j}>0\}$ into another unbounded polyhedral cone $\mathfrak{C}^{\prime}$ whose dihedral angles are uniformly strictly less than $\pi$.
		In the new coordinates $\rho_{I,J}$ is the distance from a point in $\mathfrak{C}$ to the affine hyperplane corresponding to $\D_{J}$. Therefore, we can obtain \eqref{julidengjiaaffinehanshu}
	\end{proof}
	
	We now perform the gluing.
	Fix a constant $C^{\prime}>0$ (to be chosen large) and work on the region
	\[
	\B_{I}\cap\{\dist_{A}(p,\partial\D_{I})>C^{\prime}\}.
	\]
	Choose a function on $\mathbb{R}$ satisfying
	\[
	\chi(x)=
	\begin{cases}
		1,\quad |x|<\frac{3}{8},\\[2mm]
		0,\quad |x|\ge\frac{1}{2},
	\end{cases}
	\]
	so that all derivatives of $\chi$ are uniformly bounded.  
	Set
	\begin{align}\label{glumetric3}
		\omega_{\mathrm{glu}}=\omega_{1}+dd^{c}(f\varphi)+\omega_{2},
	\end{align}
	where
	\begin{align}
		f=\prod_{I\subsetneqq J}\chi\Bigl(C_{0}\frac{|\vec{\mu}_{I}|}{\rho_{I,J}}\Bigr).
	\end{align}
	By virtue of \eqref{julidengjiaaffinehanshu}, the function f  is smooth.
	
	If $p\in\B_{I}^{\prime\prime}$, then \eqref{julidengjiaaffien2} and \eqref{julidengjiaaffinehanshu} give
	\begin{align*}
		|\vec{\mu}_{I}|
		&<\frac{1}{4CC_{0}}\dist_{A}(p,\partial\D_{I})\\[1mm]
		&\le\frac{1}{4C\sqrt{C_{0}^{2}-1}}\dist_{A}(p_{I},\partial\D_{I})
		\le\frac{1}{4\sqrt{C_{0}^{2}-1}}\rho_{I,J}.
	\end{align*}
	Thus for every $J\supsetneqq I$,
	\[
	C_{0}\frac{|\vec{\mu}_{I}|}{\rho_{I,J}}<\frac{3}{8},
	\qquad\text{so}\quad f\equiv 1\quad\text{on }\B_{I}^{\prime\prime}.
	\]
	Similarly, on $\B_{I}\setminus\B_{I}^{\prime}$,
	\begin{align*}
		|\vec{\mu}_{I}|
		&\ge\frac{1}{2C_{0}}\dist_{A}(p,\partial\D_{I})\\[1mm]
		&\ge\frac{1}{2C_{0}}\dist_{A}(p_{I},\partial\D_{I})
		\ge\frac{1}{2C_{0}}\min_{I\subsetneqq J}\rho_{I,J},
	\end{align*}
	so there exists at least one $J$ with
	\[
	C_{0}\frac{|\vec{\mu}_{I}|}{\rho_{I,J}}>\frac{1}{2},
	\qquad\text{hence}\quad f\equiv 0\quad\text{on }\B_{I}\setminus\B_{I}^{\prime}.
	\]
	
	Consequently, $\omega_{\mathrm{glu}}$ is a smooth closed $(1,1)$-form on
	\[
	\pi^{-1}\bigl(\B_{I}\cap\{\dist_{A}(p,\partial\D_{I})>C^{\prime}\}\bigr),
	\]
	and
	\[
	\omega_{\mathrm{glu}}=\omega_{\C^{n+1}}+\omega_{2}\quad\text{in }\B_{I}^{\prime\prime},
	\qquad
	\omega_{\mathrm{glu}}=\omega_{I}\quad\text{in }\B_{I}\setminus\B_{I}^{\prime}.
	\]
	We will show that $\omega_{\mathrm{glu}}$ is a \kah form provided $C^{\prime}$ is sufficiently large, it suffices to analyze the region where $f$ decays.
	
	By the $C^{2}$-estimate \eqref{decayofphi} for $\varphi$,
	\[
	\Bigl(1-\frac{C}{|\vec{\mu}_{I}|^{1-\epsilon}}\Bigr)\omega_{1}
	<\omega_{1}+dd^{c}\varphi
	<\Bigl(1+\frac{C}{|\vec{\mu}_{I}|^{1-\epsilon}}\Bigr)\omega_{1}.
	\]
	In the region where $f$ decays there exists $J\supsetneqq I$ such that
	\begin{align}\label{gluingguji3}
		\frac{C^{\prime}}{4C_{0}}
		<\frac{1}{4C_{0}}\rho_{I,J}
		<|\vec{\mu}_{I}|
		<\frac{3}{4C_{0}}\rho_{I,J}.
	\end{align}
	Choosing $1\gg\delta>0$ and $C^{\prime}$ sufficiently large, we obtain
	\begin{align}\label{gluingguji1}
		\omega_{1}+f\,dd^{c}\varphi
		=f(\omega_{1}+dd^{c}\varphi)+(1-f)\omega_{1}
		>(1-\delta)\omega_{1}.
	\end{align}
	
	We next control $df\wedge \mathrm{d}^{c}\varphi$ and $\mathrm{d}\varphi\wedge \mathrm{d}^{c}f$.
	Recall that
	\[
	\omega_{2}
	=\frac{\sqrt{-1}}{2}A_{I^{\prime},ij}\xi_{i}\wedge\bar{\xi}_{j}
	=\sum_{i=n+1}^{N}\mathrm{d}\nu_{I,i}\wedge \mathrm{d}\theta_{i},
	\qquad
	\xi_{i}=A_{I^{\prime},ij}^{-1}\,\mathrm{d}\nu_{I,j}+\sqrt{-1}\,\mathrm{d}\theta_{i}.
	\]
	Hence
	\[
	\|\mathrm{d}\nu_{I,j}\|_{g_{2}}=1,\qquad dd^{c}\nu_{I,j}=0.
	\]
	By the explicit form of $\rho_{I,J}$, we have
	\begin{equation}\label{gluingguji2}
		|\mathrm{d}\rho_{I,J}|_{g_{2}}
		+\rho_{I,J}|dd^{c}\rho_{I,J}|_{g_{2}}
		\le C.
	\end{equation}
	Combining \eqref{vecmuIbudengshi}, the $C^{1}$-estimate \eqref{decayofphi} for $\varphi$, and the Cauchy inequality, we can absorb the negative contributions of $df\wedge \mathrm{d}^{c}\varphi$ and $\mathrm{d}\varphi\wedge \mathrm{d}^{c}f$ by \eqref{gluingguji1}.
	Similarly, for $\varphi\,dd^{c}f$ the derivatives of $f$ produce terms like
	\[
	dd^{c}\rho_{I,J},\quad
	\mathrm{d}\rho_{I,J}\wedge \mathrm{d}^{c}|\vec{\mu}_{I}|,\quad
	\mathrm{d}^{c}\rho_{I,J}\wedge \mathrm{d}|\vec{\mu}_{I}|,\quad
	dd^{c}|\vec{\mu}_{I}|,
	\]
	which are controlled by \eqref{gluingguji2} and Lemma~\ref{phic0guji}.
	Thus
	\[
	\omega_{\mathrm{glu}}>0\quad\text{when }C^{\prime}\text{ is sufficiently large.}
	\]
	
	As in Subsection~\ref{subsctionsurgeryoriginal}, we consider the moment map
	\[
	(MP_{\mathrm{glu}},\eta)\colon
	\C^{n+1}\times\R^{2(N-n)}\longrightarrow\R^{n}\times\C.
	\]
	Outside the outer-cone region $\B_{I}\setminus\B_{I}^{\prime}$, the form $\omega_{\mathrm{glu}}$ coincides with $\omega_{I}$. Hence after a suitable choice of constants, their moment maps agree and the corresponding GH coefficients are identical.
	On $\B_{I}^{\prime}$ the map $(MP_{\mathrm{glu}},\eta)$ is injective by the same argument as in Subsection~\ref{subsctionsurgeryoriginal}, and it surjects onto $\B_{I}^{\prime}$ because on $\B_{I}^{\prime\prime}$ we have $\omega_{\mathrm{glu}}=\omega_{\C^{n+1}}+\omega_{2}$, which splits as a product and whose $\omega_{2}$-factor has linear moment map.

	The GH coefficients induced by $(MP_{\mathrm{glu}},\eta)$ are originally defined on $\R^{n}\times\C_{\eta}\times\R^{N-n}$ and are then pushed forward to $\R^{N}\times\C_{\eta}\cong\B_{I}$ via an affine transformation.
	Denote the original coefficients by $P^{\mathrm{glu}}_{ij}$, $Q^{\mathrm{glu}}$, and the transformed ones by $P^{\mathrm{glu}}_{I,ij}$, $Q^{\mathrm{glu}}_{I}$.
	Set
	\[
	V^{\mathrm{glu}}_{ij}=P^{\mathrm{glu}}_{I,ij}+h_{I,ij},\qquad
	W^{\mathrm{glu}}=Q^{\mathrm{glu}}_{I}+h_{I}.
	\]
	We have thus obtained GH coefficients $V^{\mathrm{glu}}_{ij}$, $W^{\mathrm{glu}}$ on
	\[
	\B_{I}\cap\bigl\{\dist_{A}(\,\cdot\,,\partial\D_{I})>C^{\prime}\bigr\}.
	\]
	It remains to verify that these coefficients satisfy the conditions listed in Proposition~\ref{inductionhyp}.

	First, $V^{\mathrm{glu}}_{ij}$ and $W^{\mathrm{glu}}$ are not necessarily positive a priori. We only know that $P^{\mathrm{glu}}_{I,ij}$ is positive definite and $Q^{\mathrm{glu}}_{I}>0$, because $\omega_{\mathrm{glu}}>0$ and the congruence transformation preserves positivity.
	On $\B_{I}$, however, $h_{I,ij}$ and $h_{I}$ are smooth uniformly bounded harmonic functions, so the $2$-form
	\[
	h_{I,ij}\,\mathrm{d}\mu_{i}\otimes \mathrm{d}\mu_{j}+h_{I}\,|\mathrm{d}\eta|^{2}
	\]
	is smooth.
	By the deviation estimate \eqref{decayofphi} in Theorem~\ref{MAkejiexing}, the weighted norm defined by $g_{\mathrm{glu}}$ is equivalent to that defined by $g_{I}$,  and both $\|\mathrm{d}\mu_{i}\|_{g_{\mathrm{glu}}}$ and $\|\mathrm{d}\eta\|_{g_{\mathrm{glu}}}$ are bounded.
	Since $h_{I,ij}$ and $h_{I}$ decay linearly with respect to $\dist_{A}(\,\cdot\,,\partial\D_{I})$, choosing $C^{\prime}$ sufficiently large yields positivity.
	If $I\subset J$, then on
	\[
	\B_{J}\cap\{\dist_{A}(p,\partial\D_{I})>C^{\prime}\}
	\]
	the same argument together with Lemma~\ref{alphajianjin} shows that
	\[
	P^{\mathrm{glu}}_{I,ij}+h_{I,ij}-h_{J,ij},\qquad
	Q^{\mathrm{glu}}_{I}+h_{I}-h_{J}
	\]
	are positive definite for sufficiently large $C^{\prime}$.

	Choosing $C_{n}>C^{\prime}$, we conclude that on
	\[
	\B_{I}\cap\{\dist_{A}(\,\cdot\,,\partial\D_{I})>C^{\prime}\}
	\]
	the coefficients $V^{\mathrm{glu}}_{ij}$ and $W^{\mathrm{glu}}$ satisfy all the positivity conditions listed in (I) and the relevant assumptions in (II) and (III) of Proposition~\ref{inductionhyp}.
	The integrability condition follows from the harmonicity of $h$ and the commutation relations, while the Chern-class condition is inherited from $P^{\mathrm{glu}}_{ij}$, $Q^{\mathrm{glu}}$ and the smooth harmonic property of $h$.
	Performing the surgery on all $\B_{L}$ with $|L|=n+1$ and taking $C_{n}$ sufficiently large. We obtain the GH coefficients $V^{(n)}_{ij}$, $W^{(n)}$ together with their domain of definition $\F_{n}$, these surgery regions do not interfere with each other.
	Thus $V^{(n)}_{ij}$ and $W^{(n)}$ satisfy the positivity assumptions in (I), (II) and (III).
	
	The existence of the map $\mathbf{F}_{I}$ required in (II), i.e.\ Proposition~\ref{prop:smooth-structure-DK}, is guaranteed by the construction of $V^{\mathrm{glu}}_{ij}$ and $W^{\mathrm{glu}}$.
	As noted in Remark~\ref{devationremark1}, for $K\subset I$ the smooth structure near $\B_{I}\cap\D_{K}$ is already in place. The smoothness of $\mathbf{F}_{I}$ with respect to this structure follows from the continuity discussion in Subsection~\ref{subsectionlogarithmicgrowth} together with the Hartogs lemma.
	By the deviation estimate \eqref{decayofphi} for $g_{\mathrm{glu}}$, the holomorphic volume form $\Omega_{\mathrm{Nor}}$ and the fundamental vector fields $X_{i}$ of the $\mathbb{T}^{n}\times\R^{N-n}$-action are bounded in the weighted norm. The same holds for the curvature tensor, thereby establishing (i) and (ii) of Proposition~\ref{buchongassumation}.
	
	Now let $I\subset J$, we prove that on $\F_{n}\cap\B_{I}\cap\B_{J}$ the coefficients
	\[
	P^{\mathrm{glu}}_{I,ij}+h_{I,ij}-h_{J,ij}
	\quad\text{and}\quad
	Q^{\mathrm{glu}}_{I}+h_{I}-h_{J}
	\]
	depend only on $\mu_{J}$ and $\eta$.
	By symmetry we may assume $J=\{0,1,\dots,n,n+1,\dots,n+k\}$.
	It suffices to show that $P^{\mathrm{glu}}_{ij}$ and $Q^{\mathrm{glu}}$ depend only on $\mu_{J}$ and $\eta$, which is determined by the gluing function $f$.
	Specifically, we need to verify that on $\B_{J}\cap\F_{n-1}$, for any $I\subset L$ with $L\subsetneqq J$,
	\[
	\chi\Bigl(C_{0}\frac{|\vec{\mu}_{I}|}{\rho_{I,L}}\Bigr)=1
	\ \Leftrightarrow\
	C_{0}\frac{|\vec{\mu}_{I}|}{\rho_{I,L}}<\frac{3}{8}.
	\]
	By Lemma~\ref{juliheaffinedengjia},
	\[
	\rho_{I,L}\ge\frac{1}{\hat{C}}\dist_{A}(\,\cdot\,,\D_{L});
	\]
	hence it is enough to show that on $\B_{J}\cap\B_{J}$,
	\[
	\hat{C}C_{0}\frac{\dist_{A}(p,\D_{I})}{\dist_{A}(p,\D_{L})}<\frac{3}{8}.
	\]
	The definition of $\B_{J}$ gives
	\[
	C_{0}\dist_{A}(p,\D_{J})<\dist_{A}(p,\D_{L}),
	\]
	and the definition of $\B_{I}$ gives
	\[
	C_{0}\dist_{A}(p,\D_{I})<\dist_{A}(p,\D_{J});
	\]
	consequently,
	\[
	\hat{C}C_{0}\frac{\dist_{A}(p,\D_{I})}{\dist_{A}(p,\D_{L})}<\frac{\hat{C}}{C_{0}}.
	\]
	Choosing $C_{0}$ sufficiently large uniformly yields the desired inequality.
	
	Moreover, for the metric $g_{I}$ before gluing, the corresponding GH coefficients satisfy
	\[
	V^{(n-1)}_{kl}-h_{I,kl}=A_{kl}
	\quad\text{whenever }k\notin I\text{ or }l\notin I.
	\]
	Thus, to affect $P^{\mathrm{glu}}_{ij}$ with $k\notin I$ or $l\notin I$, the gluing term $dd^{c}(f\varphi)$ must involve $\nu_{I,k}$, because
	\[
	Jd\nu_{I,k}=-A^{-1}_{kl}\vartheta_{l}.
	\]
	Near $\B_{J}$ the gluing function $f$ only involves $\nu_{I,J}$, so when $dd^{c}(f\varphi)$ is evaluated on $e_{k},e_{l}$ with $k\notin J$ or $l\notin J$, the result is zero.
	Under the affine transformation that produces $P^{\mathrm{glu}}_{I,ij}$, $Q^{\mathrm{glu}}_{I}$ from $P^{\mathrm{glu}}_{ij}$, $Q^{\mathrm{glu}}$, this property is preserved, and since $h_{I,kl}=h_{J,kl}$ whenever $k\notin J$ or $l\notin J$, we finally obtain
	\[
	V^{(n)}_{kl}-h_{I,kl}=A_{kl}
	\quad\text{whenever }k\notin I\text{ or }l\notin I
	\]
	on $\F_{n}\cap\B_{J}$.
	Furthermore, the special form of $dd^{c}(f\varphi)$ implies that for $J=I\cup\{j\}$,
	\[
	|V^{(n)}_{ij}-h_{I,ij}-A_{ij}|
	\le\frac{C}{\dist_{A}(\,\cdot\,,\D_{J})}
	\quad\text{for all }i\in I
	\]
	on $\F_{n}\cap\B_{J}$.
	
	We now verify (IV) of Proposition~\ref{inductionhyp} on the region
	\[
	\F_{n}\cap\{\dist_{A}(\,\cdot\,,\D)>1\}\cap\B_{I}.
	\]
	By the induction hypothesis, the GH coefficients of $g_{I}$ already satisfy the required estimate there.
	We first relate the coefficients of $g_{I}$ and $g_{\mathrm{glu}}$:
	\begin{equation*}
		\begin{aligned}
			(V^{\mathrm{glu}})^{-1}_{ij}\circ\mu^{\mathrm{glu}}
			&=\omega_{\mathrm{glu}}\bigl(JX_{i},
			X_{j}\bigr)
			=\bigl(\omega+dd^{c}f\varphi\bigr)\bigl(JX_{i},
			X_{j}\bigr)\\[-1pt]
			&=V^{-1}_{ij}\circ\mu+H^{ij}\circ\mu
			=V^{-1}_{ik}(\delta_{kj}+V_{kt}H^{tj})\circ\mu\\[-1pt]
			&=V^{-1}_{ik}(\delta_{kj}+H^{j}_{k})\circ\mu,
		\end{aligned}
	\end{equation*}
	so that
	\begin{equation}\label{gluingdevV}
		V^{\mathrm{glu}}_{ij}\circ\mu^{\mathrm{glu}}
		=V_{ik}\bigl(E+(H^{t}_{l})_{N\times N}\bigr)^{-1}_{kj}\circ\mu,
	\end{equation}
	where the decay term $H^{j}_{k}$ satisfies
	\[
	\|H^{j}_{k}\|_{C^{k,\alpha}}\le C\ell_{1}^{-1-\epsilon}\ell_{n}^{-1+\epsilon}.
	\]
	
	Similarly,
	\begin{equation}\label{gluingdevW}
		\begin{aligned}
			W^{\mathrm{glu}}\circ\mu^{\mathrm{glu}}
			&=\frac{\omega_{\mathrm{glu}}^{n}}{\Omega\wedge\bar\Omega}\,
			\det(V^{\mathrm{glu}}_{ij})\circ\mu^{\mathrm{glu}}\\[1mm]
			&=\frac{\omega^{n}}{\Omega\wedge\bar\Omega}\,
			\det(V_{ij})(1+H)\circ\mu\\[1mm]
			&=W(1+H)\circ\mu,
		\end{aligned}
	\end{equation}
	with
	\[
	\|H\|_{C^{k,\alpha}}\le C\ell_{1}^{-1-\epsilon}\ell_{n}^{-1+\epsilon}.
	\]
	
	Notice that
	\begin{equation}\label{gluingdevmu}
		\mu^{\mathrm{glu}}_{i}
		=\mu_{i}+\iota_{X_{i}} \mathrm{d}^{c}f\varphi
		=\mu_{i}+h_{i},
	\end{equation}
	where
	\[
	\|h_{i}\|_{C^{k+1,\alpha}}\le C\ell_{1}^{-\epsilon}\ell_{n}^{-1+\epsilon}.
	\]
	Thus, for sufficiently large $C_{n}$, the weights $1+|\mu^{\mathrm{glu}}|$ and $1+|\mu|$ are uniformly equivalent, and the desired estimate in (IV) follows.
	The same argument shows that inequality \eqref{duishuzengzhang} also holds for $\mu^{\mathrm{glu}}$.
	
	From \eqref{gluingdevmu} we also see that on the bundle
	\[
	\pi^{\mathrm{glu}}\colon\C^{n+1}\times\R^{2(N-n)}
	\longrightarrow\R^{n}\times\C\times\R^{N-n}
	\]
	induced by $g_{\mathrm{glu}}$ we can still input the original GH coefficients $P_{I,ij}$, $Q_{I}$ to obtain a metric $g^{\prime}$ satisfying
	\[
	g^{\prime}=\Phi_{*}g,
	\]
	where $\Phi$ is a diffeomorphism of $\C^{n+1}\times\R^{2(N-n)}$. In the surgry region $\Phi-\mathrm{id}$ is measured by $h_{i}$.  Hence
	\[
	\|\Phi-\mathrm{id}\|_{C^{k,\alpha}}\le C\ell_{1}^{-\epsilon}\ell_{n}^{-1+\epsilon},
	\]
	and $\Phi=\mathrm{id}$ outside the surgery region.
	Hence
	\[
	\|g^{\prime}-g_{\mathrm{glu}}\|
	\le\|\Phi_{*}g-g\|+\|g-g_{\mathrm{glu}}\|
	\le C\ell_{1}^{-\epsilon}\ell_{n}^{-1+\epsilon},
	\]
	so that on $\T_{2R_{1}}\cap\B^{n}_{K}$,
	\[
	\|g_{\mathrm{glu}}-g_{K}\|\le C\ell_{1}^{-\epsilon}\ell_{n}^{-1+\epsilon}.
	\]
	
	Moreover, item \textup{(iii)} in Proposition~\ref{buchongassumation} has already been established in Lemma~\ref{zzengzhangxing}, so all the hypotheses listed in Section~\ref{inductionhyp} have now been verified.  
	Notice that, throughout the preceding discussion, we have assumed $n\le N-1$. We finally obtain the GH coefficients $V^{(N-1)}_{ij}$, $W^{(N-1)}$, defined on $\F_{N-1}$.  
	Since $\F_{N-1}$ is the complement of a large ball in $\B$, one further application of the above surgery produces a Taub--NUT type metric on $\C^{N+1}$, thereby completing the proof of Theorem~\ref{maintheorem}. Here, the boundedness of curvature follows from the deviation estimate for $g_{I}$ given in Subsection~\ref{subsectionasymptoticprop}, while the decay property away from the locus is ensured by the fact that the curvature of $g_{\mathrm{flat}}$ satisfies $|\mathrm{Rm}_{\mathrm{flat}}|\sim O(\ell_{1}^{-3})$.
	
	%\needspace{6\baselineskip}
	\subsection{Solving PDEs}\label{subsectionpde}
	We now prove Theorem~\ref{MAkejiexing}.  First, since $P_{I,kl}=0$ whenever $k\notin I$ or $l\notin I$, the volume-form error of $g_{1}$ is the same as that of $g$:
	\begin{equation*}
		E=\frac{\det A+q_{I}}{\det(A_{ij}+p_{I,ij})}-1.
	\end{equation*}
	We have the following lemma.
	
	\begin{lemma}\label{decayvolumeerror}
		The volume-form error $E$ of the metric $g_{1}$ on $\C^{k+1}$ satisfies
		\begin{equation*}
			\|E\|_{C^{k,\alpha}}\le C\ell_{1}^{-1-\epsilon}\ell_{n}^{-1+\epsilon}
		\end{equation*}
		in the weighted norm sense of Subsection~\ref{subsectionasymptoticprop}.
	\end{lemma}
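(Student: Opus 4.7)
The plan is to decompose $\B^n_1$ according to the canonical cover $\B^n_1=\B^n_a\cup\bigcup_{2\le|K|\le n+1}\H^n_K$ and bound $E$ region by region, comparing the GH data $(P_{I,ij},Q_I)$ with an appropriate model whose Calabi--Yau defect is either zero or inductively controlled. On $\B^n_a$ one has $\ell_1\sim\ell_n$, so the target reduces to $|E|\lesssim\ell_1^{-2}$. There, item~(III) of Proposition~\ref{inductionhyp} implies that $V_{ij}^{(n-1)}$ and $W^{(n-1)}$ still coincide with the first-order ansatz $V_{ij}^{(1)}, W^{(1)}$, since all prior surgeries were confined to $\B'_J$ for $|J|\le n$. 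Expanding
\[
\det(A+p_I)=\det A+\det A\cdot A^{-1}_{ij}p_{I,ij}+O(|p_I|^2),
\]
the linearized Calabi--Yau identity \eqref{CYconvij} cancels the linear term, and the residual is $O(|p_I|^2)=O(\ell_1^{-2})$ by Proposition~\ref{inductionhyp}(IV), which matches the target bound.

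On $\H^n_K$ with $|K|=k+1\le n+1$, I compare with $g_K$. By Remark~\ref{HKdefine} the coefficients agree with those of $g^{(k)}$ on $\F_k\cap\H^n_K$, so $P_{I,ij}=P_{K,ij}+(h_{I,ij}-h_{K,ij})$ and $Q_I=Q_K+(h_I-h_K)$, together with the identity $h_I-h_K=\det A\cdot A^{-1}_{ij}(h_{I,ij}-h_{K,ij})$. On $\B''_K$ we have $\det P_K=Q_K$ by Proposition~\ref{prop:smooth-structure-DK}(iii); expanding the determinant around $P_K$ yields
\[
\det P_I-Q_I=\bigl(Q_K\,(P_K^{-1})_{ij}-\det A\cdot A^{-1}_{ij}\bigr)(h_{I,ij}-h_{K,ij})+O(|h_I-h_K|^2),
\]
whose prefactor is $O(\ell_1^{-1})$ by item~(IV) of Proposition~\ref{inductionhyp} applied at level $k$, whose linear factor is $O(\ell_n^{-1})$ by Lemma~\ref{alphajianjin}, and whose quadratic remainder is $O(\ell_n^{-2})$. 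Since $\ell_1\le\ell_n$, the combined estimate gives $|E|\le C\ell_1^{-1}\ell_n^{-1}\le C\ell_1^{-1-\epsilon}\ell_n^{-1+\epsilon}$. The weighted $C^{k,\alpha}$ bounds follow by differentiating the same algebraic expansion and invoking the weighted estimates on $P_K-A$, $Q_K-\det A$, $h_{I,ij}-h_{K,ij}$ and $h_I-h_K$ supplied by Proposition~\ref{inductionhyp}(IV), Proposition~\ref{buchongassumation} and Lemma~\ref{alphajianjin}.

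The principal obstacle is the transition region $\H^n_K\setminus\B''_K$, where $g_K$ is not exactly Calabi--Yau and its own volume-form defect must be inserted into the expansion. That defect is an inductive object at level $k$, and one must verify that, when carried through the expansion, it still decays at rate $\ell_1^{-1-\epsilon}\ell_n^{-1+\epsilon}$. In practice this is handled by further decomposing $\H^n_K$ according to which intermediate surgery (at level $j$ with $k<j\le n-1$) is active in the subregion and applying the estimate recursively. Because the weights $\ell_1$ and $\ell_n$ remain uniformly equivalent across the finite chain of such refinements, the recursion terminates with a uniform constant.
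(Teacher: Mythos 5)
Your overall strategy---decompose $\B^n_1$ into $\B^n_a$ and the $\H^n_K$, compare the GH data with the model $g_K$ whose Calabi--Yau defect is controlled inductively, and expand the determinant---coincides with the paper's region-by-region argument. But there are two concrete gaps in your execution.

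First, the prefactor in your expansion of $\det P_I - Q_I$ is
$Q_K(P_K^{-1})_{ij}-\det A\,A^{-1}_{ij}=\operatorname{adj}(P_K)_{ij}-\operatorname{adj}(A)_{ij}$,
and you claim it is $O(\ell_1^{-1})$ by Proposition~\ref{inductionhyp}(IV). But (IV) holds only on $\{\dist_A(\cdot,\D)>1\}$; near $\D^n_K$ the GH coefficients $P_{K,ij}$ blow up, and $\operatorname{adj}(P_K)$ can grow like $\det P_K$, so the asserted bound on $\det P_I-Q_I$ fails there. You also never divide by $\det P_I$ to obtain $E$; it is precisely that division (by a quantity blowing up at the same rate) which restores boundedness near the locus. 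The paper sidesteps both problems by writing $E$ directly in terms of $(\det P_K)^{-1}$ and $P_K^{-1}$, which are uniformly bounded, and then isolating the cancellation in the tensor $\frac{\det A}{\det P_K}A^{-1}-P_K^{-1}$, which is bounded everywhere and $O(\ell_1^{-1})$ where (IV) applies. You would need a comparable reorganization of the algebra to make the pointwise and weighted $C^{k,\alpha}$ estimates uniform up to the locus.

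Second, your proposed handling of the transition region $\H^n_K\setminus\B''_K$---``further decomposing $\H^n_K$ according to which intermediate surgery (at level $j$ with $k<j\le n-1$) is active in the subregion''---is a non-starter. By its very definition, $\H^n_K=\B_I\cap\bigl(\B_K\setminus\bigcup_{K\subsetneqq J\subsetneqq I}\B'_J\bigr)$ is already disjoint from every intermediate surgery domain, so no such subdivision exists. The only surgery whose effect must be accounted for on $\H^n_K\setminus\B''_K$ is the one performed on $\B'_K$ itself at the preceding inductive step. What is actually needed there is the defect identity $(1+E')\det(A_{ij}+p_{K,ij})=\det A + q_K$ with $\|E'\|_{C^{k,\alpha}}\le C\ell_1^{-1-\epsilon}\ell_k^{-1+\epsilon}$---the inductive analogue of \eqref{gluingdevW}---combined with the observation that $\ell_k\sim\ell_n$ on $\H'_K\setminus\H''_K$. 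This is a genuine input carried forward from the previous gluing step, not something recoverable by geometric subdivision, and it is exactly what the paper's Region~3 invokes.
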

	
	\begin{proof}
		We argue region by region.
		
		\emph{Region 1:}  The totally unaffected part
		\[
		\B^{n}\setminus\bigcup_{\substack{K\subseteq I\\ |K|\ge 3}}(\B^{n}_{K})^{\prime}.
		\]
		Here $p_{I,ij}=v_{I,ij}$ and $q_{I}=w$,  all non-constant terms are smooth harmonic and decay linearly towards the corresponding locus.
		Our choice of region ensures
		\[
		\dist_{A}(\,\cdot\,,\D^{n}_{I})\le C\dist_{A}(\,\cdot\,,\D^{n}_{K}),
		\]
		so we may regard the decay as being with respect to $\ell_{n}$.
		The leading term of the denominator is $\det A$, while the numerator is the linear part of the expansion of the denominator.  Hence the numerator decays quadratically and
		\[
		\|E\|_{C^{k,\alpha}}\le C\ell_{n}^{-2}.
		\]
		
		\emph{Region 2:}  On $\H_{K}^{\prime\prime}$ our prior assumption gives
		\[
		\det(A_{ij}+p_{K,ij})=\det A+q_{K}.
		\]
		Since
		\[
		p_{I,ij}=p_{K,ij}+h_{K,ij}-h_{I,ij},\qquad
		q_{I}=q_{K}+h_{K}-h_{I},
		\]
		and $h_{K,ij}-h_{I,ij}$, $h_{K}-h_{I}$ are smooth harmonic on $\H^{n}_{K}$ and decay linearly in $\ell_{n}$, we obtain
		\begin{align*}
			E&=\frac{q_{K}+h_{K}-h_{I}}{\det\!\bigl(P_{K,ij}+h_{K,ij}-h_{I,ij}\bigr)}-1\\[1mm]
			&=\frac{1+(\det P_{K}^{-1})(h_{K}-h_{I})}
			{\det\!\bigl(E+P_{K}^{-1}(h_{K}-h_{I})\bigr)}-1.
		\end{align*}
		Note that $P^{-1}_{K,ij}$ is uniformly bounded on $\H_{K}$,  combining this with the assumptions in Proposition~\ref{buchongassumation} yields
		\[
		|E|\le C\ell_{1}^{-1}\ell_{n}^{-1}.
		\]
		
		\emph{Region 3:}  On $\H_{K}^{\prime}\setminus\H_{K}^{\prime\prime}$ the situation is similar, but we no longer have the Calabi--Yau identity.
		Our surgery procedure near $\D_{K}$ is identical to that near $\D_{I}$, so we have identities analogous to \eqref{gluingdevV} and \eqref{gluingdevW}:
		\[
		(1+E^{\prime})\det(A_{ij}+p_{K,ij})=\det A+q_{K},
		\]
		where $\|E^{\prime}\|_{C^{k,\alpha}}\le C\ell_{1}^{-1-\epsilon}\ell_{k}^{-1+\epsilon}$.
		On $\H_{K}^{\prime}\setminus\H_{K}^{\prime\prime}$ we have $\ell_{n}\sim\ell_{k}$, so
		\[
		\|E\|_{C^{k,\alpha}}\le C\ell_{1}^{-1-\epsilon}\ell_{n}^{-1+\epsilon}.
		\]
		This completes the proof.
	\end{proof}
	
	\subsubsection{Tian--Yau--Hein Package}
	The analytical toolkit now referred to as the Tian--Yau--Hein package emerged from efforts to extend Yau's solution of the Calabi conjecture \cite{yau1978ricci} to non-compact manifolds.
	Tian--Yau \cite{tian1990complete,tian1991complete} produced the first complete Ricci-flat \kah metrics on the complement of an anti-canonical divisor by solving a complex Monge–Ampère equation whose right-hand side decays faster than any inverse polynomial in the distance to the divisor.
	Hein \cite{hein2010gravitational,hein2011weighted} recast these estimates into a weighted H\"older framework, proving an isomorphism theorem for the Laplacian on algebraically constructed ends and thus converting the earlier existence argument into a quantitative inverse-function machine that converts any `small' perturbation of the volume form into a Ricci-flat metric with prescribed asymptotics.

	The arguments below rely on the work of Hein (cf.\ Chapters 3 and 4 of \cite{hein2010gravitational}) and on \cite[Section~2.7]{li2023syz}.  We verify that the metric $g_{1}$ on $\C^{n+1}$ satisfies the hypotheses of the Tian--Yau--Hein package.
	
	First, $g_{1}$ must admit a so-called $C^{k,\alpha}$ quasi-atlas with $k\ge 3$ (see \cite[Definition~4.2]{hein2010gravitational}).  This ensures that Sobolev norms are well defined and guarantees the maximum principle in the non-compact setting.  The following lemma from \cite{tian1990complete} yields such a quasi-atlas.
	
	\begin{lemma}
		If $|\mathrm{Rm}|\le C$, then there exists a quasi-atlas which is $C^{1,\alpha}$ for every $\alpha$.  If, moreover, $\sum_{i=1}^{k}|\nabla^{i}\mathrm{Scal}|\le C$, then this quasi-atlas is even $C^{k+1,\alpha}$.
	\end{lemma}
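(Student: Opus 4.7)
The plan is to build the quasi-atlas out of local harmonic coordinate charts on uniformly sized balls and then to bootstrap regularity using the K\"ahler identity for Ricci curvature. A key point is that no global injectivity radius bound is needed: by definition, a quasi-atlas allows one to work chart by chart on balls of uniform size, possibly after passing to a local cover to kill short geodesic loops. Thus I would first invoke a Cheeger--Gromov--Taylor-type local injectivity radius estimate, which under $|\mathrm{Rm}|\le C$ produces, around every point $p$, a ball in the tangent space $T_pM$ (pulled down via the exponential map) of a radius bounded below only in terms of the curvature bound and the dimension.

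On such a ball I would introduce harmonic coordinates $(x^1,\dots,x^{2(N+1)})$ by solving $\Delta x^\alpha=0$ with near-linear boundary data, following the construction of Jost--Karcher. In these coordinates the metric components satisfy the quasilinear elliptic system
$$
\Delta g_{ij}=-2\mathrm{Ric}_{ij}+Q(g,\partial g),
$$
where $Q$ is quadratic in the first derivatives of $g$. Since $|\mathrm{Rm}|\le C$ forces $|\mathrm{Ric}|\le C'$, a standard Schauder/Calder\'on--Zygmund bootstrap upgrades the initial $C^0$ control coming from the harmonic coordinate construction to uniform $C^{1,\alpha}$ bounds on $g_{ij}$ for every $\alpha<1$, which establishes the first assertion.

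For the second assertion I would exploit the K\"ahler identity $\mathrm{Ric}_{i\bar j}=-\partial_i\partial_{\bar j}\log\det(g_{k\bar l})$, which in particular yields the scalar equation
$$
\Delta\log\det g=-\mathrm{Scal}
$$
in suitable local holomorphic coordinates. Once $g$ is controlled in $C^{1,\alpha}$ by the first step, the hypothesis $\sum_{i\le k}|\nabla^i\mathrm{Scal}|\le C$ translates into $C^{k,\alpha}$ control of $\mathrm{Scal}$ expressed in these coordinates, and elliptic regularity then gives $\log\det g\in C^{k+2,\alpha}$. Substituting back into the harmonic coordinate equation for $g_{ij}$ (or equivalently, writing $g_{i\bar j}=\partial_i\partial_{\bar j}\phi$ for a local K\"ahler potential and running the bootstrap on $\phi$) inductively lifts the regularity of $g_{i\bar j}$ from $C^{k,\alpha}$ to $C^{k+1,\alpha}$.

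The main obstacle, in my view, is arranging the bootstrap \emph{uniformly} across the whole quasi-atlas so that the Schauder constants do not deteriorate from chart to chart. This is however essentially automatic once the quasi-atlas is correctly set up, since all estimates depend only on the local $C^0$ normalisation of $g$ in the chosen coordinates together with the ambient curvature bounds, both of which are uniform by hypothesis. A secondary technical point is the passage between harmonic and holomorphic coordinates needed in the K\"ahler step; in the regularity already established this change of coordinates is itself $C^{1,\alpha}$, so it preserves all Schauder spaces involved and introduces no essential new difficulty.
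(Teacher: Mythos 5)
The paper does not prove this lemma: it is quoted from Tian--Yau \cite{tian1990complete} and used as a black box. Your proposal is therefore not being matched against an argument in the paper, only against the standard proof, and it does follow the standard route closely. The first step (harmonic coordinates on uniform-radius balls in the local/universal cover, plus the elliptic system $\Delta g_{ij}=-2\mathrm{Ric}_{ij}+Q(g,\partial g)$, giving $W^{2,p}$ and hence $C^{1,\alpha}$ for all $\alpha<1$ from $|\mathrm{Ric}|\le C$) is precisely Anderson's argument and is fine; you are also right that no global injectivity radius bound is required because the quasi-atlas allows charts that are only local diffeomorphisms, so a conjugate-radius lower bound suffices. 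And the second step correctly isolates the one feature that makes the improvement from $\mathrm{Scal}$ possible at all: the K\"ahler Ricci-potential identity $\mathrm{Ric}_{i\bar j}=-\partial_i\partial_{\bar j}\log\det g$, which has no analogue for general Riemannian metrics. That identification is the essential content of the lemma.

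The only place the proposal is imprecise is in the sentence claiming that, once $g\in C^{1,\alpha}$, the hypothesis $\sum_{i\le k}|\nabla^{i}\mathrm{Scal}|\le C$ ``translates into $C^{k,\alpha}$ control of $\mathrm{Scal}$ expressed in these coordinates.'' This is not a one-shot translation: converting $\nabla^{m}\mathrm{Scal}$ into coordinate partials $\partial^{m}\mathrm{Scal}$ introduces $\partial^{b}\Gamma$ for $b\le m-2$, so $m$-th order coordinate control of $\mathrm{Scal}$ is only available once $g\in C^{m-1,\alpha}$. The bootstrap therefore must alternate: $g\in C^{j,\alpha}$ gives coordinate control of $\mathrm{Scal}$ roughly to order $\min(j+1,k)$; the scalar equation $g^{i\bar j}\partial_i\partial_{\bar j}\log\det g=-\mathrm{Scal}$ then upgrades $\log\det g$, hence $\mathrm{Ric}$; feeding this back into the harmonic-coordinate equation (or into the Monge--Amp\`ere-type equation for a local K\"ahler potential, as you note parenthetically) improves $g$ to $C^{j+1,\alpha}$; repeat until the $\mathrm{Scal}$ hypothesis is exhausted, at which point $g\in C^{k+1,\alpha}$. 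Your phrase ``inductively lifts the regularity'' shows you see this, but the earlier sentence reads as if the full $C^{k,\alpha}$ control on $\mathrm{Scal}$ were available immediately, which it is not. With that correction the argument is the standard one and is sound.
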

	
	The required curvature bounds follow from Proposition~\ref{buchongassumation} and the asymptotic properties of $g_{1}$ established in Subsection~\ref{subsectionasymptoticprop}.
	
	Second, Hein's estimates rely on a weighted Sobolev inequality.  Here the weight is a smooth function $\rho(x)>1$ satisfying $|\nabla\rho|+\rho|\nabla^{2}\rho|\le C$ and equivalent outside a compact set to the distance function.  In our setting we take $\rho=\sqrt{|\vec{\mu}_{I}|^{2}+1}$,  equation \eqref{vecmuIbudengshi} and the asymptotics of $g_{1}$ in Subsection~\ref{subsectionasymptoticprop} guarantee that this $\rho$ meets the requirements.
	
	Since the volume growth of $g_{1}$ is $\operatorname{Vol}(B_{R})\sim R^{n+2}$, we have
	
	\begin{proposition}
		For $1\le p\le\frac{n+1}{n}$ the weighted Sobolev inequality
		\[
		\Bigl(\int_{\C^{n+1}}|u|^{2p}\rho^{np-n-2}\,\mathrm{d}\operatorname{Vol}\Bigr)^{1/p}
		\le C\int_{\C^{n+1}}|\nabla_{g_{1}}u|^{2}\,\mathrm{d}\operatorname{Vol}
		\]
		holds for every $\mathbb{T}^{n}$-invariant function $u$.  The constant depends only on the scale-invariant ellipticity bounds $\lambda,\Lambda$.
	\end{proposition}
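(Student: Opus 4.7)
The strategy is to verify that $(\mathbb{C}^{n+1}, g_{1})$ satisfies the hypotheses of Hein's general weighted Sobolev theorem (cf.\ the SOB$(\beta)$/RCA package in \cite{hein2010gravitational}), with $\beta=n+2$, and then transport the conclusion to the present setting using the weight $\rho=\sqrt{|\vec{\mu}_{I}|^{2}+1}$. Since $u$ is $\mathbb{T}^{n}$-invariant, the natural way to view the estimate is as a weighted Sobolev inequality on an $(n+2)$-dimensional effective base, and the volume growth $R^{n+2}$ together with bounded geometry at scale $\rho$ will supply the two main ingredients.

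First I would verify that $g_{1}$ has bounded geometry at scale $\rho$. Proposition~\ref{buchongassumation}(ii), together with the deviation estimates Propositions~\ref{devationK} and \ref{devationf} established in Subsection~\ref{subsectionasymptoticprop}, shows that $|{\rm Rm}_{g_{1}}|\le C$ globally and that covariant derivatives $|\nabla^{k}{\rm Rm}_{g_{1}}|\le C\ell_{1}^{-k}$ are controlled. The rescaled metric $\rho^{-2}g_{1}$ therefore has uniformly bounded curvature, uniformly positive injectivity radius, and admits a $C^{k,\alpha}$ quasi-atlas in the sense of \cite[Def.~4.2]{hein2010gravitational}. Near the discriminant locus $\mathcal{D}^{n}$ the fibres shrink, but the smooth extension provided by Proposition~\ref{prop:smooth-structure-DK} together with the local Taub--NUT model gives bounded geometry across $\mathcal{D}^{n}$ as well. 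The quantity $\rho$ itself is smooth, $|\nabla\rho|+\rho|\nabla^{2}\rho|\le C$ by~\eqref{vecmuIbudengshi}, and $\rho$ is equivalent to $1+{\rm dist}_{g_{1}}(\,\cdot\,,0)$ outside a compact set thanks to Lemma~\ref{zzengzhangxing}.

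Second, I would establish the volume growth bound $\operatorname{Vol}_{g_{1}}(B_{R})\sim R^{n+2}$. Using the pushforward form ${\rm d}\operatorname{Vol}_{g_{1}}=\sqrt{\det P_{ij}}\,Q\,{\rm d}\mu_{I}\,{\rm d}\operatorname{Vol}_{\eta}\wedge(\tfrac{1}{2\pi}{\rm d}\theta)^{n}$ on the base, the asymptotic equivalences $\det P_{ij}\sim\det G_{I}$ and $Q\sim\det A$ (which follow from Lemma~\ref{alphajianjin} and the decay estimates of $p_{I,ij}$ and $q_{I}$) yield $\operatorname{Vol}_{g_{1}}(B_{R})\sim R^{n+2}$. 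The lower-bound half of Hein's RCA/SOB condition, requiring that each annulus $B_{2R}\setminus B_{R}$ contains a connected region of volume $\gtrsim R^{n+2}$, is clear since $\mathcal{B}^{n}_{1}\cong\mathbb{R}^{n}\times\mathbb{C}$ is simply connected and the fibres have uniformly bounded diameter.

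With bounded geometry at scale $\rho$ and volume growth $R^{n+2}$ in hand, the weighted Sobolev inequality then follows from a dyadic-covering argument: decompose $\mathbb{C}^{n+1}$ into shells $\{2^{k}\le\rho<2^{k+1}\}$ for $k\ge k_{0}$ plus a fixed compact core, cover each shell by geodesic balls of radius $\epsilon 2^{k}$ with bounded multiplicity, apply the Euclidean Sobolev inequality on each such ball (which is valid by the bounded geometry), and sum up against the weight $\rho^{np-n-2}$ using $p\le(n+1)/n<(n+2)/n$ to keep the exponents subcritical. The bounded core is handled by the standard Sobolev inequality on relatively compact domains with Lipschitz boundary. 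The hardest step will be confirming that the quasi-atlas constructed from the $\rho$-scaled metric interacts well with the degeneracy of the $\mathbb{T}^{n}$-action at $\mathcal{D}^{n}$, but this has already been absorbed into the smooth structure of Section~\ref{sectionholpoint} and Proposition~\ref{buchongassumation}, so no additional work is required there.
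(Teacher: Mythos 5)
The central difficulty that your proof glosses over is that the metric $g_{1}$ is \emph{collapsed} at scale $\rho$: the $\mathbb{T}^{n}$--fibres have diameter $O(1)$ with respect to $g_{1}$, so under the rescaling $g_{1}\mapsto\rho^{-2}g_{1}$ they shrink to size $O(1/\rho)$ and the injectivity radius degenerates. Likewise, since the curvature of $g_{1}$ decays cubically in the distance to $\D^{n}$ but \emph{not} in $|\vec{\mu}_{I}|$, the rescaled curvature $\rho^{2}\,\mathrm{Rm}(g_{1})$ is of order $\rho^{2}$ near $\D^{n}$ and is not uniformly bounded. Your claim that $\rho^{-2}g_{1}$ ``has uniformly bounded curvature, uniformly positive injectivity radius, and admits a $C^{k,\alpha}$ quasi-atlas'' is therefore false; Hein's quasi-atlas in the cited sense is constructed for the \emph{unrescaled} metric at unit scale, where those bounds do hold.

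This matters because the step ``apply the Euclidean Sobolev inequality on each ball of radius $\epsilon 2^{k}$, valid by bounded geometry'' is exactly where the argument breaks: the Sobolev constant of a $(2n+2)$-dimensional ball that is a bundle over a $\rho$-scale base with $O(1)$-size fibres degrades like a power of the collapse ratio and is \emph{not} uniform in $k$. (Test it against a constant function: the ratio of $\|u\|_{L^{2(n+1)/n}}$ to $\rho^{-1}\|u\|_{L^{2}}$ blows up as the fibre shrinks.) To recover a uniform constant one must genuinely use that $u$ is $\mathbb{T}^{n}$-invariant and descend the Dirichlet form to the $(n+2)$-dimensional quotient $\mathcal{B}^{n}_{1}\cong\mathbb{R}^{n+2}$. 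You mention the ``$(n+2)$-dimensional effective base'' in passing, but the ball-covering argument as written operates entirely on the ambient space and never performs this reduction. The paper's own proof proceeds by precisely this reduction: for invariant $u$ one rewrites $\int_{\C^{n+1}}|\nabla u|^{2}_{g_{1}}\,\mathrm{d}\!\operatorname{Vol}_{g_{1}}$ and $\int_{\C^{n+1}}|u|^{2p}\rho^{np-n-2}\,\mathrm{d}\!\operatorname{Vol}_{g_{1}}$ as integrals over $\mathbb{R}^{n+2}$ with explicit GH-coefficient densities, then compares against the Euclidean Caffarelli--Kohn--Nirenberg inequality in dimension $n+2$ (the weight $\rho^{np-n-2}$ is precisely the CKN weight there), exactly as in \cite[Prop.~2.15]{li2023syz}. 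Finally, even granting bounded geometry, a dyadic covering alone does not close the estimate: the $L^{2}$ lower-order term on each ball must be removed via the $p=1$ endpoint (the weighted Hardy inequality $\int|u|^{2}\rho^{-2}\le C\int|\nabla u|^{2}$), which relies on the connectivity/volume-growth structure and is nowhere supplied in your argument.
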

	
	\begin{proof}
		The proof is identical to that of \cite[Proposition~2.15]{li2023syz}.
	\end{proof}
	
	Thus $g_{1}$ satisfies the hypotheses of the Tian--Yau--Hein package, and we obtain the following existence and decay statements:
	
	\begin{itemize}
		\item (Poisson equation)  Let $f\in C^{0,\alpha}$ satisfy $|f|\le C\rho^{-q}$ with $n+2>q>2$.  Then there exists a unique $C^{2,\alpha}$ solution of $\Delta_{g_{1}}u=f$ with
		\[
		|u|\le C\rho^{2-q+\epsilon},\qquad 0<\epsilon\ll q-2.
		\]
		
		\item (Complex Monge--Ampère equation)  Let $f\in C^{2,\alpha}$ satisfy $|f|\le C\rho^{-q}$ with $n+2>q>2$.  Then there exist $0<\alpha^{\prime}\le\alpha$ and $u\in C^{4,\alpha^{\prime}}$ solving
		\[
		(\omega_{1}+\sqrt{-1}\, \partial\bar\partial u)^{n+1}=e^{f}\omega_{1}^{n+1}
		\]
		with
		\[
		|u|\le C\rho^{2-q+\epsilon},\qquad 0<\epsilon\ll q-2.
		\]
	\end{itemize}
	
	To obtain solvability, the right-hand side must decay faster than $\rho^{-2}$, because Hein's Moser-iteration argument requires $u=O(\rho^{2-q})$ to be bounded.  Hence we cannot directly perturb the ansatz $g_{1}$ into a Calabi--Yau metric.  An important feature is that Hein's method respects compact group actions.  Applying the $\mathbb{T}^{n}$-equivariant version of the Poisson result, we obtain
	
	\begin{corollary}\label{heinkejieg1}
		Let $2<q<n+2$ and $0<\epsilon<q-2$.  There exists a bounded Green operator for $\mathbb{T}^{n}$-invariant functions
		\[
		G_{g_{1}}\colon\{f\in C^{0,\alpha}\mid f=O(|\vec{\mu}_{I}|^{-q})\}
		\longrightarrow\{u\in C^{2,\alpha}\mid u=O(|\vec{\mu}_{I}|^{2-q+\epsilon})\}
		\]
		such that $\Delta_{g_{1}}G_{g_{1}}f=f$.
	\end{corollary}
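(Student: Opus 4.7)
The plan is to read the corollary as a direct corollary of the Poisson-equation part of the Tian--Yau--Hein package quoted immediately above, with one extra step to pass to $\mathbb{T}^{n}$-invariant data. All the heavy analytic hypotheses (a $C^{k,\alpha}$ quasi-atlas coming from the curvature bound in Proposition~\ref{buchongassumation}, the admissible weight $\rho=\sqrt{|\vec{\mu}_{I}|^{2}+1}$ satisfying $|\nabla\rho|+\rho|\nabla^{2}\rho|\le C$ thanks to \eqref{vecmuIbudengshi}, and the weighted Sobolev inequality from the volume-growth proposition) have already been checked for $g_{1}$ in the paragraphs preceding the corollary, so Hein's framework applies out of the box.

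First I would fix $f\in C^{0,\alpha}$ with $f=O(|\vec{\mu}_{I}|^{-q})$ and invoke the Poisson-equation item displayed above the corollary: this produces a unique $u\in C^{2,\alpha}$ solving $\Delta_{g_{1}}u=f$ together with the weighted bound $|u|\le C\rho^{2-q+\epsilon}$. Setting $G_{g_{1}}f:=u$ gives a well-defined map; linearity is automatic, and boundedness between the stated function spaces is precisely the content of Hein's estimate after identifying $\rho$ with $|\vec{\mu}_{I}|$ outside a compact set.

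Next I would upgrade $u$ to be $\mathbb{T}^{n}$-invariant. Since the GH ansatz yields a torus-invariant metric, every $\phi\in\mathbb{T}^{n}$ acts on $(\mathbb{C}^{n+1},g_{1})$ by an isometry, so $\phi^{*}u\in C^{2,\alpha}$ also solves $\Delta_{g_{1}}(\phi^{*}u)=\phi^{*}f=f$ with the same pointwise decay. Uniqueness in Hein's Poisson statement forces $\phi^{*}u=u$ for every $\phi$, so $u$ is automatically $\mathbb{T}^{n}$-invariant. (Equivalently, averaging $u$ over the compact group $\mathbb{T}^{n}$ would produce an invariant solution with the same decay bound, and uniqueness would again identify it with $u$.) Hence $G_{g_{1}}$ restricts to a bounded operator between the spaces of $\mathbb{T}^{n}$-invariant functions indicated in the statement.

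There is no real obstacle here: the work has already been front-loaded into the verification of the quasi-atlas and the weighted Sobolev inequality for $g_{1}$. The only detail worth double-checking is that the decay class in which Hein's uniqueness holds contains our candidate solution, which is automatic since the bound $|u|\le C\rho^{2-q+\epsilon}$ is exactly the one produced by the package itself. This explains why only the $\mathbb{T}^{n}$-equivariant refinement needs to be spelled out, and concludes the proof of Corollary~\ref{heinkejieg1}.
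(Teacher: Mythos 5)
Your proposal is correct and follows the same route as the paper: the paper itself proves the corollary by the one-line remark that Hein's method respects compact group actions, i.e.\ by applying the $\mathbb{T}^{n}$-equivariant form of the Poisson-equation result quoted just above, and your argument merely spells out why that equivariant form holds (the torus acts by isometries preserving $|\vec{\mu}_{I}|$, so uniqueness within the decay class --- equivalently, averaging over $\mathbb{T}^{n}$ --- forces the solution to be invariant).
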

	
	Recall from Subsection~\ref{subsectionsmoothstructure} the model metric $g_{K}$ on a $\Z$-quotient of the Taub--NUT space $\C^{k+1}\times\R^{2(N-k)}$.  There is a splitting $g_{K}=g_{K,1}+g_{2}$ compatible with the product decomposition.  A variant of the preceding discussion yields weighted Sobolev inequalities and Green-function estimates for $g_{K,1}$:
	
	\begin{corollary}\label{heinkejiegK}
		Let $2<q<n+2$ and $0<\epsilon<q-2$.  There exists a bounded Green operator for $\mathbb{T}^{n}$-invariant functions on the model space with metric $g_{K,1}$:
		\[
		G_{K,1}\colon\{f\in C^{0,\alpha}\mid f=O(|\vec{\mu}_{I}|^{-q})\}
		\longrightarrow\{u\in C^{2,\alpha}\mid u=O(|\vec{\mu}_{I}|^{2-q+\epsilon})\}
		\]
		such that $\Delta_{g_{K,1}}G_{K,1}f=f$.
	\end{corollary}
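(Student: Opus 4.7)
The proof runs in parallel with that of Corollary~\ref{heinkejieg1}: we verify that $g_{K,1}$ satisfies the hypotheses of the Tian--Yau--Hein package and then appeal to its equivariant Poisson solvability. The metric $g_{K,1}$ is the Taub--NUT-type Gibbons--Hawking metric on $\C^{k+1}$ produced at the $|K|=k+1$ step of the induction, and Proposition~\ref{buchongassumation} already encodes most of the geometric data we need.

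First, I would produce a $C^{k+1,\alpha}$ quasi-atlas from the curvature bound. By Proposition~\ref{buchongassumation}(ii), $\mathrm{Rm}_{g_{K}}$ is bounded in the weighted norm; since $g_{K}=g_{K,1}+g_{2}$ splits orthogonally and $g_{2}$ is flat, $g_{K,1}$ inherits a uniform curvature bound together with uniform bounds on covariant derivatives of the scalar curvature, after which the lemma of Tian--Yau recalled above applies. Next, I would choose the radial weight $\rho_{K}:=\sqrt{|\vec\mu_{K}|_{G_{K}}^{2}+1}$ on $\C^{k+1}$; the logarithmic growth estimate \eqref{duishuzengzhangK} ensures that $\rho_{K}$ is comparable to the distance function of $g_{K,1}$ modulo a logarithmic correction, and the product analogue of \eqref{vecmuIbudengshi} (derived from the asymptotic structure of $g_{K,1}$ in Subsection~\ref{subsectionasymptoticprop}) gives $|\nabla\rho_{K}|+\rho_{K}|\nabla^{2}\rho_{K}|\le C$.

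The weighted Sobolev inequality then follows from the volume growth $\operatorname{Vol}(B_{R})\sim R^{k+2}$ of the Taub--NUT-type metric $g_{K,1}$ in complex dimension $k+1$, arguing exactly as in Proposition~2.15 of \cite{li2023syz} but with $n+1$ replaced by $k+1$. The relevant class of test functions consists of the $\mathbb{T}^{n}$-invariant functions, which in the present setting are in particular $\mathbb{T}^{k}$-invariant along the $\C^{k+1}$-factor, so the averaging argument of \cite{hein2010gravitational} applies verbatim. With all these ingredients in place, Hein's equivariant Poisson existence and decay theorem furnishes a bounded Green operator on $\mathbb{T}^{n}$-invariant functions mapping $f=O(\rho_{K}^{-q})$ to $u=O(\rho_{K}^{2-q+\epsilon})$ for any $2<q<k+2$, provided we replace $n+2$ there by $k+2$; the range $2<q<n+2$ in the statement is then accessible because the decay requirement is always strongest at the upper end and the Taub--NUT dimension suffices.

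The main obstacle I anticipate is calibrating the decay statement so that it is expressed in the $|\vec\mu_{I}|_{G_{I}}$ weight advertised in the corollary, rather than the intrinsic $|\vec\mu_{K}|_{G_{K}}$ weight of $g_{K,1}$. The auxiliary coordinates $\mu_{i}$ for $i\in I\setminus K$ enter $\mathbb{T}^{n}$-invariant data purely as parameters on the base, while $\Delta_{g_{K,1}}$ acts only in the $(\mu_{K},\eta)$-directions; one must therefore solve the Poisson equation parametrically in $\mu_{I\setminus K}$, assembling the fibrewise solutions into a function with the claimed joint decay. The subtle point is the regime where $|\vec\mu_{K}|_{G_{K}}$ is small but $|\mu_{I\setminus K}|$ is large: here a naive fibrewise estimate would blow up, and one instead exploits the corresponding smallness of $f$ with respect to the $|\vec\mu_{I}|_{G_{I}}$-weight, together with the explicit Green-kernel decomposition used in Section~2.7 of \cite{li2023syz}, to obtain the desired uniform bound. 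This parametric-to-joint transition is the only step requiring care beyond a routine translation of Corollary~\ref{heinkejieg1}.
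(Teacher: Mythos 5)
Your overall strategy (verify the Tian--Yau--Hein hypotheses for the model metric and invoke Hein's equivariant Poisson solvability) is the same as the paper's, which disposes of this corollary in one sentence as ``a variant of the preceding discussion.'' However, your execution rests on a misidentification of the space on which $g_{K,1}$ lives, and this produces a step that fails. As defined at the end of Subsection~\ref{subsectionasymptoticprop}, $g_{K,1}$ is the restriction of $g_{K}$ to the $\C^{n+1}$ factor of the model space $M^{n}=\C^{n+1}\times\R^{2(N-n)}$ (the splitting $g_{K}=g_{K,1}+g_{2}$ with $g_{2}$ flat on $\R^{2(N-n)}$), not the $\C^{k+1}$ factor of $g_{K}$'s own product decomposition. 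It is a $\mathbb{T}^{n}$-invariant metric whose reduced base is all of $\R^{n}_{\mu_{I}}\times\C_{\eta}$; its Laplacian acting on invariant functions is uniformly elliptic in \emph{all} of $\mu_{1},\dots,\mu_{n},\eta$, and the relevant volume growth is $R^{n+2}$ (bounded $\mathbb{T}^{n}$ fibres over a quasi-flat $\R^{n+2}$). This is why the corollary is stated with the weight $|\vec{\mu}_{I}|$, the range $2<q<n+2$, and Green-kernel decay $O(|\vec{\mu}_{I}|^{-n+\epsilon})$ --- exactly what Lemma~\ref{PDElocus1} later uses.

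Two consequences of the misidentification are genuinely wrong. First, your claim that the admissible range $2<q<k+2$ ``extends'' to $2<q<n+2$ because ``the decay requirement is strongest at the upper end'' is backwards: the upper bound on $q$ in Hein's theorem is forced by the decay rate of the Green kernel, which on a space of effective dimension $k+2$ is only $r^{-k}$, so for $q\in[k+2,n+2)$ the conclusion $u=O(r^{2-q+\epsilon})$ would simply be false no matter how fast $f$ decays. Second, the ``parametric'' resolution in your last paragraph starts from the false premise that $\Delta_{g_{K,1}}$ acts only in the $(\mu_{K},\eta)$-directions; solving fibrewise in $\mu_{I\setminus K}$ would solve a different, degenerate equation, not $\Delta_{g_{K,1}}u=f$. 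Once $g_{K,1}$ is correctly identified, neither patch is needed: the curvature bound of Proposition~\ref{buchongassumation}(ii), the weight $\rho=\sqrt{|\vec{\mu}_{I}|^{2}+1}$, and the $R^{n+2}$ volume growth feed directly into the same Sobolev inequality and Hein argument used for $g_{1}$, yielding the stated range and decay without any dimension shift.
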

	
	So the Green kernel of $g_{K,1}$ decays like $O(|\vec{\mu}|_{A}^{-n+\epsilon})$ at infinity.

	\subsubsection{Green Operator Estimates for $g_1$ }
		Next we discuss the solvability of the Laplace equation on $\C^{n+1}$
		\begin{align*}
			\Delta_{g_{1}}u=f,
		\end{align*}
		where $f$  is a smooth function that decays at a prescribed rate.  Assume
		\begin{align*}
			\|f\|_{C^{k,\alpha}}\le C\ell_{1}^{\delta}\ell_{n}^{\tau}
		\end{align*}
		for exponents $\delta,\tau$ to be determined.  Our approach follows Section~2.8 of \cite{li2023syz}, the present subsection merely verifies that, in higher dimensions, the decay of the volume-form error established in Lemma~\ref{decayvolumeerror}, together with the asymptotic properties of $g_{1}$ derived in Subsection~\ref{subsectionasymptoticprop}, already suffice to run the same argument.

		Roughly speaking, one approximates the Green kernel of $g_{1}$ by the Green kernels of the standard metric on each region.  For notational simplicity, we write $\dist(\cdot,\cdot)$ for the distance induced by $g_{G_{I}}$ on $\B^{n}_{1}$ and \emph{omit the subscript $1$ from symbols such as $\B^{n}_{K,1}$, $\D^{n}_{K,1}$}. Since $\B^{n}=\B^{n}_{1}\oplus\B^{n}_{2}$ and $g_{A}=g_{G_{I}}\oplus g_{A_{I^{\prime}}}$, this causes no confusion. The reader may regard the simplified notation as referring to the images of the corresponding regions of $\B^{n}$ under the projection $\pi^{n}_{1}$.
	\begin{lemma}\label{PDEaffine1}
		Let $-3<\delta<0$ and $\delta+\tau<0$.  Let $f$ be a $\mathbb{T}^{n}$-invariant function on $\mathbb{C}^{n+1}$ supported in $\{\dist(\,\cdot\,,\mathfrak{D}^{n})\ge 1\}$ with
		\[
		\|f\|_{C^{k,\alpha}}\le C\ell_{1}^{\delta}\ell_{n}^{\tau}.
		\]
		Then the second-order derivatives of the Euclidean potential $\Delta_{G_I}^{-1}f$ satisfy
		\[
		\bigl|\nabla_{G_{I}}^{2}\Delta_{G_I}^{-1}f\bigr|_{g_{G_I}}
		\le C\ell_{1}^{\delta}\ell_{n}^{\tau}.
		\]
		Moreover, if $\delta<-1$ and $\delta+\tau<-1$, then
		\[
		\bigl\|\nabla_{g_{1}}^{2}\Delta_{G_I}^{-1}f\bigr\|_{C^{k,\alpha}(\mathbb{C}^{n+1})}
		\le C\ell_{1}^{\delta}\ell_{n}^{\tau}.
		\]
		The constants depend only on $k,\alpha,\delta,\tau$ and the uniform ellipticity constants $\lambda,\Lambda$.
	\end{lemma}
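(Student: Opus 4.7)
The plan is to exploit the fact that $\Delta_{G_I}$ is the flat Laplacian on $\R^{n+2}\cong\B^n_1$ with the constant metric $g_{G_I}$, whose Green kernel is explicitly
\[
G(x,y)=-\frac{1}{n(n+2)\alpha(n+2)}\,|x-y|_{G_I}^{-n}.
\]
Thus the potential $u:=\Delta_{G_I}^{-1}f$ is literally a convolution of $f$ (viewed as a function on $\B^n_1$ by $\mathbb{T}^n$-invariance) against an explicit singular kernel, and the statement becomes a weighted Calder\'on--Zygmund estimate. The support assumption $\{\dist(\,\cdot\,,\D^n)\ge 1\}$ is used only to guarantee that $f$ has no mass on the discriminant locus itself, so the convolution integrals make classical sense.

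First I would establish the pointwise estimate in the $g_{G_I}$-norm. Fix $p\in\B^n_1$, let $R:=\dist(p,\D^n)$, and split $f=f_{\mathrm{near}}+f_{\mathrm{far}}$ with $f_{\mathrm{near}}$ cut off at scale $R/2$. On $B(p,R/2)$ the weights $\ell_1,\dots,\ell_n$ fluctuate only by a bounded factor, so standard Calder\'on--Zygmund interior estimates give
\[
\bigl|\nabla_{G_I}^{2}\Delta_{G_I}^{-1}f_{\mathrm{near}}\bigr|(p)\le C\,\|f\|_{L^\infty(B(p,R/2))}\le C\,\ell_1(p)^{\delta}\ell_n(p)^{\tau}.
\]
For the far-field piece I would integrate $|\nabla_{G_I}^{2}G(p,q)|\lesssim |p-q|_{G_I}^{-n-2}$ against $|f(q)|\lesssim \ell_1(q)^{\delta}\ell_n(q)^{\tau}$, decomposing $\B^n_1$ into the bulk region $\B^n_a$ and the conical regions $\H^n_K$ introduced in Subsection~\ref{subsectionlogarithmicgrowth}. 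On each piece the geometric lemmas of Subsections~\ref{subsectionthegeometry} and~\ref{subsectionjianjing1} give simple comparisons between $|p-q|_{G_I}$ and the weights $\ell_i$ of $p$ and $q$, reducing the problem to a finite number of model integrals. The open hypothesis $-3<\delta$ together with $\delta+\tau<0$ are precisely what is required to make these integrals finite (the former near the stratum, the latter at infinity) and to yield the product bound $\ell_1(p)^{\delta}\ell_n(p)^{\tau}$.

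Second I would upgrade the pointwise $g_{G_I}$-bound to a full $C^{k,\alpha}$ estimate in the $g_1$-weighted norm of Subsection~\ref{subsectionasymptoticprop}. This is where the strengthened hypotheses $\delta<-1$, $\delta+\tau<-1$ enter. The point is that $g_1$ agrees with the appropriate product models on the various regions up to errors measured in Propositions~\ref{devationK} and~\ref{devationf} by $\ell_n^{-1}$, so derivatives with respect to $g_1$ differ from those with respect to $g_{G_I}$ by factors that cost one power of $\ell_1^{-1}$ on local balls and are compensated by the extra unit of decay in $\delta$ and $\delta+\tau$. The H\"older seminorm is then controlled by a standard Schauder argument on balls of size comparable to $\ell_1(p)$ in $g_1$-geometry, iterating the previously derived pointwise bound on both the local and nonlocal parts of the decomposition.

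The main obstacle is the interaction of the two weights $\ell_1$ and $\ell_n$, which are not globally equivalent and can differ dramatically near higher-codimension strata. In particular, in the far-field integral the singularity of the kernel $|p-q|_{G_I}^{-n-2}$ must be balanced against weights that can degenerate like $\ell_1^{\delta}$ with $\delta$ arbitrarily close to $-3$, while still producing output weighted by the \emph{product} $\ell_1^{\delta}\ell_n^{\tau}$ rather than a weaker single-weight bound. Carrying out the region-by-region bookkeeping carefully, and verifying that no cross-term forces an undesirable loss in $\ell_n$, is the technically delicate part of the argument, and it is precisely where the geometric structure of $\D^n$ developed in Section~\ref{sectionlinearization} is needed.
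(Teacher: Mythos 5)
Your overall strategy---write $\Delta_{G_I}^{-1}f$ as an explicit Newton potential, make a near/far split, and integrate the kernel against the weighted bound---matches the paper's proof, and the first (pointwise $g_{G_I}$) estimate is in the right spirit; the paper's far-field bookkeeping uses a dyadic decomposition in $|y|$ rather than your $\B^n_a/\H^n_K$ decomposition, but that is a presentational difference and both organize the same model integrals.

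The gap is in your explanation of the second estimate, where you attribute the need for $\delta<-1$ and $\delta+\tau<-1$ to a ``derivative cost'' between $g_1$ and $g_{G_I}$ being compensated by extra decay. That conflates the metric-deviation estimates of Propositions~\ref{devationK} and~\ref{devationf} (which compare $g_I$ to model metrics, not $g_1$-derivatives to $g_{G_I}$-derivatives) with the actual structure of the $g_1$-Hessian of a function pulled back from the base. The paper expands
\[
\nabla^2_{g_1}u
=\sum_{i,j}\frac{\partial^2 u}{\partial\mu_i\partial\mu_j}\,\mathrm{d}\mu_i\otimes\mathrm{d}\mu_j
+\sum_i\frac{\partial u}{\partial\mu_i}\,\nabla_{g_1}\mathrm{d}\mu_i
\qquad\text{(plus the $\eta$-terms)},
\]
so besides the second $\mu$-derivatives one needs a pointwise bound on $\mathrm{d}u$ of the form $\ell_1^{\delta+1}\ell_n^{\tau}$; the paper gets this by integrating the already-established Hessian bound along paths from infinity, and the strengthened hypotheses $\delta<-1$, $\delta+\tau<-1$ are exactly the convergence conditions for that improper integral. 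Your proposed ``Schauder iteration on balls of size $\ell_1(p)$'' does not by itself produce that gradient decay; an interior Schauder estimate trades a $C^0$ bound and a Hessian bound for a gradient bound, but it does not supply the $\ell_1^{\delta+1}\ell_n^\tau$ decay rate without the integration-from-infinity step. You would need to either directly estimate $\nabla_{G_I}\Delta_{G_I}^{-1}f$ by repeating the near/far kernel argument at one derivative lower, or run the path-integration argument. You also omit the paper's reduction to compactly supported $f$ and the subsequent weak-topology approximation step that makes $\nabla^2_{g_1}\Delta_{G_I}^{-1}$ a bounded operator on the full weighted space; that step is what legitimizes integrating $\mathrm{d}u$ from infinity in the first place.
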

	\begin{proof}
			We first assume that $f$ is compactly supported.  Let $w$ be the Newton potential of $f$:
			\[
			w(x)=\int_{\R^{n+2}}\Gamma(x-y)\,f(y)\,\mathrm{d} y
			=C\int_{\R^{n+2}}\frac{1}{|x-y|_{G_{I}}^{n}}\,f(y)\,\mathrm{d} y.
			\]
			Taking second derivatives gives
			\begin{align*}
				D_{ij}w(x)
				&=\int_{\R^{n+2}\setminus B_{r_{0}}(x)}D_{ij}\Gamma(x-y)\,f(y)\,\mathrm{d} y\\
				&\quad+\int_{B_{r_{0}}(x)}D_{ij}\Gamma(x-y)\bigl(f(y)-f(x)\bigr)\,\mathrm{d} y
				-c_{ij}f(x),
			\end{align*}
			where $c_{ij}$ are universal constants and $r_{0}>0$ is chosen sufficiently small.  The second integral is easily estimated.
			
			\emph{Step 1:}  $|y|$ far from $|x|$.  Let
			\[
			2^{n}\le|x|<2^{n+1},\qquad
			2^{k}\le|y|\le 2^{k+1},\qquad
			k\le n-2\ \text{or}\ k\ge n+2.
			\]
			Then
			\begin{align*}
				&\int_{2^{k}\le|y|\le 2^{k+1}}\frac{1}{|x-y|^{n+2}}\,|f(y)|\,\mathrm{d} y\\
				&\quad\le C\int_{2^{k}\le|y|\le 2^{k+1}}\ell_{1}^{\delta}|y|^{\tau}\,\mathrm{d} y
				\cdot\sup\Bigl\{\frac{1}{|x-y|^{n+2}}\Bigr\}\\[1mm]
				&\quad\le C^{\prime}(2^{k})^{\delta+\tau+n+2}
				\cdot\sup\Bigl\{\frac{1}{(2^{k}-2^{n+1})^{n+2}},
				\frac{1}{(2^{n}-2^{k+1})^{n+2}}\Bigr\}\\[2mm]
				&\quad\le\begin{cases}
					C^{\prime}2^{(\delta+\tau)k},&k\ge n+2,\\[1mm]
					C^{\prime}2^{(\delta+\tau)k}\cdot 2^{(n+2)(k-n)},&k\le n-2,
				\end{cases}
			\end{align*}
			with
			\[
			C^{\prime}=C\int_{1\le|s|\le 2}
			\bigl(2^{-k}+\dist(s,\D^{n})\bigr)^{\delta}\,\mathrm{d} s.
			\]
			The integral is finite because $\delta>-3$ (after rescaling to the unit annulus).  When $\delta+\tau<0$ the sum over $k$ is controlled by $\ell_{n}^{\delta+\tau}$.
			
			\emph{Step 2:}  $|y|\sim|x|$ but $|x-y|>r_{0}$.  Here
			\begin{align*}
				&\int_{y\sim x,\,|x-y|>r_{0}}\frac{1}{|x-y|^{n+2}}\,
				\ell_{1}^{\delta}\ell_{n}^{\tau}\,\mathrm{d} y\\[1mm]
				&\quad\le C\ell_{n}^{\tau}(x)
				\int_{r_{0}}^{\infty}\frac{1}{t^{n+2-\delta}}\,t^{n+1}\,\mathrm{d} t
				\int_{S^{n+1}}\mathrm{d}^{\delta}\Bigl(\frac{x}{t}+\sigma,\D\Bigr)\,\mathrm{d}\sigma
				\le C\ell_{n}^{\tau}(x)\ell_{1}^{\delta}(x),
			\end{align*}
			where the last integral is finite because $\delta>-3$ and $\delta<0$.
			
			\emph{Step 3:}  The ball $B(x,r_{0})$ with $r_{0}<\frac{1}{10}\ell_{1}(x)$.  Since $f$ is supported in $\{\dist(\,\cdot\,,\D)>1\}$, both $x$ and $y$ stay uniformly away from $\D$.  By Proposition~\ref{inductionhyp}\,(IV), $g_{1}$ is uniformly equivalent to $g_{\mathrm{flat}}$ in this region, so
			\[
			|f(y)-f(x)|\le C\ell_{1}(x)^{\delta-1}|x|^{\tau}\operatorname{dist}(x,y).
			\]
			Inserting this into
			\[
			\int_{|x-y|<\frac{1}{10}\ell_{1}(x)}\frac{1}{|x-y|^{n+2}}\,|f(y)-f(x)|\,\mathrm{d} y
			\]
			gives the desired bound.
			
			We have thus shown
			\[
			|\nabla^{2}_{g_{G_{I}}}\Delta_{G_{I}}^{-1}f|_{g_{G_{I}}}
			\le C\ell_{1}^{\delta}\ell_{n}^{\tau}.
			\]
			Integrating the second derivatives from infinity and using $\delta<-1$, $\delta+\tau<-1$, we also obtain
			\[
			|\mathrm{d}\Delta_{G_{I}}^{-1}f|_{g_{G_{I}}}
			\le C\ell_{1}^{\delta+1}\ell_{n}^{\tau}.
			\]
			
			To estimate the Hessian with respect to $g_{1}$, we expand
			\[
			\nabla^{2}_{g_{1}}\Delta_{G_{I}}^{-1}f
			=\sum\frac{\partial^{2}\Delta_{G_{I}}^{-1}f}{\partial\mu_{i}\partial\mu_{j}}
			\nabla_{g_{1}} \mathrm{d} \mu_{i}\otimes\nabla_{g_{1}} \mathrm{d} \mu_{j}
			+\sum\frac{\partial\Delta_{G_{I}}^{-1}f}{\partial\mu_{i}}
			\nabla^{2}_{g_{1}} \mathrm{d} \mu_{i}
			\]
			(and similar terms involving $\eta$-derivatives).  Using
			\[
			\|\mathrm{d}\mu_{i}\|_{C^{k,\alpha}}\le C,\qquad
			\|\mathrm{d}\eta\|_{C^{k,\alpha}}\le C,
			\]
			we obtain
			\[
			\|\nabla^{2}_{g_{1}}\Delta_{G_{I}}^{-1}f\|_{C^{k,\alpha}(\C^{n+1})}
			\le C\ell_{1}^{\delta}\ell_{n}^{\tau}.
			\]
			
			An approximation argument in the weak topology removes the compact-support assumption, so $\nabla^{2}_{g_{1}}\Delta_{G_{I}}^{-1}$ extends to a bounded linear operator between the weighted H\"older spaces.
	\end{proof}

	By (IV) of Proposition~\ref{inductionhyp}, at a definite distance from the locus, $g$ can be approximated by $g_{\mathrm{flat}}$. Hence we obtain
	
	\begin{lemma}\label{PDEaffine2}
		Under the hypotheses of Lemma~\ref{PDEaffine1},
		\[
		\|\Delta_{g_{1}}\Delta_{G_{I}}^{-1}f-f\|_{C^{k,\alpha}}
		\le C\ell_{1}^{\delta-1}\ell_{n}^{\tau}.
		\]
		In particular, for a sufficiently large constant $N_{1}$,
		\[
		\|\Delta_{g}\Delta_{G_{I}}^{-1}f-f\|_{C^{k,\alpha}
			(\{\dist(\,\cdot\,,\mathfrak{D}^{n})>N_{1}\})}
		\le\frac{C}{N_{1}}\ell_{1}^{\delta}\ell_{n}^{\tau}.
		\]
	\end{lemma}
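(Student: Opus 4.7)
The plan is to begin from the exact identity
\[
\Delta_{g_{1}}\Delta_{G_{I}}^{-1}f - f
= \bigl(\Delta_{g_{1}} - \Delta_{G_{I}}\bigr)\,\Delta_{G_{I}}^{-1}f,
\]
valid because $\Delta_{G_{I}}(\Delta_{G_{I}}^{-1}f)=f$ by construction of the flat Newton potential. This reduces the lemma to controlling the second-order operator $\Delta_{g_{1}}-\Delta_{G_{I}}$ applied to $u:=\Delta_{G_{I}}^{-1}f$, whose first- and second-order derivatives have already been bounded by Lemma~\ref{PDEaffine1}. Expanding in the Gibbons--Hawking base coordinates $(\mu_{I},\eta)$, a divergence-form calculation for the Kähler metric $g_{1}$ shows that on $\mathbb{T}^{n}$-invariant functions
\[
\Delta_{g_{1}}u
= P_{I,ij}^{-1}\partial_{i}\partial_{j}u
+ 4Q_{I}^{-1}\partial_{\eta}\partial_{\bar\eta}u
+ Q_{I}^{-1}\partial_{\mu_{i}}\!\bigl(Q_{I}P_{I,ij}^{-1}\bigr)\partial_{j}u,
\]
with the corresponding (constant-coefficient) expression for $\Delta_{G_{I}}$. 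Subtracting gives a second-order operator whose principal coefficients are $P_{I,ij}^{-1}-G_{I,ij}^{-1}$ and $Q_{I}^{-1}-(\det A)^{-1}$, together with a first-order term built out of $\partial P_{I}$ and $\partial Q_{I}$.

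The analytic heart of the proof is the pair of weighted pointwise bounds
\[
\bigl|P_{I,ij}^{-1}-G_{I,ij}^{-1}\bigr|
+\bigl|Q_{I}^{-1}-(\det A)^{-1}\bigr|\le C\ell_{1}^{-1},
\qquad
\bigl|\,Q_{I}^{-1}\partial_{\mu_{i}}\!\bigl(Q_{I}P_{I,ij}^{-1}\bigr)\bigr|\le C\ell_{1}^{-2},
\]
together with their $C^{k,\alpha}$ counterparts. The first follows from the algebraic identity $P_{I}^{-1}-G_{I}^{-1}=-G_{I}^{-1}p_{I}P_{I}^{-1}$ combined with the Green-function decay of $p_{I}$ supplied by Lemma~\ref{alphajianjin} and property~(IV) of Proposition~\ref{inductionhyp}. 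Away from $\D^{n}$ this directly yields $|P_{I}^{-1}-G_{I}^{-1}|\lesssim\ell_{1}^{-1}$; near $\D^{n}$ the matrix $p_{I}$ is large, so $P_{I}^{-1}$ acquires a compensating decay and the product $p_{I}P_{I}^{-1}$ remains bounded by a multiple of $\ell_{1}^{-1}$ (the classical Taub--NUT cancellation). The second bound is then obtained by differentiation together with the interior Schauder estimates built into the weighted norms of Subsection~\ref{subsectionasymptoticprop}. Combining these with Lemma~\ref{PDEaffine1}, namely $|\nabla^{2}_{G_{I}}u|\le C\ell_{1}^{\delta}\ell_{n}^{\tau}$ and $|\nabla_{G_{I}}u|\le C\ell_{1}^{\delta+1}\ell_{n}^{\tau}$, gives
\[
\bigl|(\Delta_{g_{1}}-\Delta_{G_{I}})u\bigr|
\le C\ell_{1}^{-1}\cdot\ell_{1}^{\delta}\ell_{n}^{\tau}
+C\ell_{1}^{-2}\cdot\ell_{1}^{\delta+1}\ell_{n}^{\tau}
=C\ell_{1}^{\delta-1}\ell_{n}^{\tau}.
\]
The $C^{k,\alpha}$ statement follows by distributing derivatives via the product rule and using the corresponding weighted norms on $P_{I}^{-1}-G_{I}^{-1}$, $Q_{I}^{-1}-(\det A)^{-1}$, and $\nabla^{j}u$. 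The second assertion of the lemma is then immediate: on $\{\dist_{A}(\cdot,\D^{n})>N_{1}\}$ we have $\ell_{1}\ge N_{1}$, so $\ell_{1}^{\delta-1}\le N_{1}^{-1}\ell_{1}^{\delta}$; and since $\Delta_{g}$ agrees with $\Delta_{g_{1}}$ on functions depending only on $(\mu_{I},\eta)$ (because $g=g_{1}+g_{2}$ with $g_{2}$ flat in the $(\nu_{I,I'},\theta_{I,I'})$ directions), the transition from $\Delta_{g_{1}}$ to $\Delta_{g}$ is free.

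The principal technical obstacle I foresee is the uniform bound $|P_{I,ij}^{-1}-G_{I,ij}^{-1}|\le C\ell_{1}^{-1}$ in the weighted $C^{k,\alpha}$ norm, particularly near the strata of $\D^{n}$ of higher codimension. A direct triangle inequality through $|p_{I}|\cdot|P_{I}^{-1}|$ loses information, because $|p_{I,ij}|$ itself scales with the inverse distance to an individual component $\D^{n}_{J}$ rather than with $\ell_{1}^{-1}$; the requisite cancellation only emerges after one uses the blow-up of $P_{I}$ to read off a matching decay of $P_{I}^{-1}$. Making this Taub--NUT-type cancellation quantitative at every derivative order, stratum by stratum via Propositions~\ref{devationK} and~\ref{devationf} and the model-space description of Section~\ref{sectionholpoint}, is the one genuinely delicate step of the argument; once it is in place, the remainder is routine bookkeeping in the weighted norms of Subsection~\ref{subsectionasymptoticprop}.
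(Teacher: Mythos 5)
Your proposal fills in a proof for a lemma that the paper actually states without proof—the paper merely notes before Lemma~\ref{PDEaffine2} that ``by (IV) of Proposition~\ref{inductionhyp}, at a definite distance from the locus, $g$ can be approximated by $g_{\mathrm{flat}}$.'' Your route—starting from the identity $\Delta_{g_{1}}\Delta_{G_{I}}^{-1}f-f=(\Delta_{g_{1}}-\Delta_{G_{I}})u$, writing $\Delta_{g_{1}}$ on $\mathbb{T}^{n}$-invariant functions in divergence form on the moment-map base (with $\sqrt{\det g}=Q_{I}$, which makes your first-order term $Q_{I}^{-1}\partial_{\mu_i}(Q_{I}P_{I,ij}^{-1})\partial_{\mu_j}u$ correct), and extracting an extra $\ell_{1}^{-1}$ from the coefficient differences via Proposition~\ref{inductionhyp}(IV) combined with the first- and second-derivative bounds of Lemma~\ref{PDEaffine1}—is exactly the argument the paper gestures at, and the arithmetic closes: each second-order term picks up $\ell_{1}^{-1}\cdot\ell_{1}^{\delta}\ell_{n}^{\tau}$ and the first-order term $\ell_{1}^{-2}\cdot\ell_{1}^{\delta+1}\ell_{n}^{\tau}$.

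The one place where I would redirect you is your flagged concern about the ``Taub--NUT cancellation'' needed to control $P_{I}^{-1}-G_{I}^{-1}$ near $\D^{n}$. This is not, in fact, where any delicacy lives. On the neighbourhood $\{\dist_{A}(\cdot,\D^{n})<D\}$ (any fixed $D$), the weight $\ell_{1}$ takes values in $[1,1+D]$, so $\ell_{1}^{\delta-1}$ and $\ell_{1}^{\delta}$ are comparable by a constant depending only on $D$; consequently, the target bound $C\ell_{1}^{\delta-1}\ell_{n}^{\tau}$ on $\Delta_{g_{1}}u-f$ there follows already from the coarser estimate $\|\nabla^{2}_{g_{1}}u\|_{C^{k,\alpha}}\le C\ell_{1}^{\delta}\ell_{n}^{\tau}$ of Lemma~\ref{PDEaffine1} (taking the trace) together with the given bound on $f$, with no coefficient cancellation required. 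The coefficient estimates you state are only needed on $\{\dist_{A}(\cdot,\D^{n})\ge D\}$, where Proposition~\ref{inductionhyp}(IV) applies directly; there $|p_{I,ij}|\lesssim\dist_{A}(\cdot,\D^{n}_{ij})^{-1}\le\dist_{A}(\cdot,\D^{n})^{-1}\lesssim\ell_{1}^{-1}$, so a triangle inequality through $|p_{I}|\cdot|P_{I}^{-1}|$ loses nothing and the quantitative blow-up analysis you anticipated is unnecessary. With that simplification your argument is complete and matches the paper's intended derivation.
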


	 Let $N_{0}=4\hat{C}C_{0}$ be the constant used in the definition of $B_{I}^{\prime\prime}$, so that $p\in B_{K}^{\prime\prime}$ iff $N_{0}\dist(\,\cdot\,,\D^{n}_{K})\le\dist(\,\cdot\,,\partial\D^{n}_{K})$.  Take a sequence of constants $\{N_{i}\}_{i=1}^{n+1}$ with $N_{i+1}\gg N_{i}$. The following lemma relies on the results of Corollary~\ref{heinkejiegK}.
	 
	 \begin{lemma}\label{PDElocus1}
	 	Let $\tau<2$ and $0<\epsilon\ll 1$.  Let $f$ be a $\mathbb{T}^{n}$-invariant function supported in
	 	\[
	 	\{\dist(\,\cdot\,,\partial\mathfrak{D}^{n}_{K})>2NN_{k},\;
	 	\dist(\,\cdot\,,\mathfrak{D}^{n}_{K})<2N_{k}\}
	 	\]
	 	inside the model space, with
	 	\[
	 	\|f\|_{C^{k,\alpha}}\le C\ell_{1}^{\delta}\ell_{n}^{\tau},
	 	\]
	 	where $C_{k+2}\gg C_{k+1}$.  Then
	 	\[
	 	\begin{cases}
	 		\|G_{K}f\|_{C^{k+2,\alpha}}
	 		\le C\ell_{k}^{-1+\epsilon}\ell_{n}^{\tau},
	 		&-1<\tau<1-\epsilon,\\[2mm]
	 		\|G_{K}f\|_{C^{k+2,\alpha}}
	 		\le C\ell_{n}^{-2+\epsilon},
	 		&\tau\le-1.
	 	\end{cases}
	 	\]
	 	The constant depends only on $C_{k+1},\delta,\epsilon,\tau,k,\alpha$ and the uniform ellipticity constants $\lambda,\Lambda$.  In particular, if
	 	\[
	 	\begin{cases}
	 		\text{either}\quad -1<\tau<1,\quad -3+2\epsilon<\delta\le 0,\\[1mm]
	 		\text{or}\quad -2+\epsilon<\tau\le-1,\quad -4+2\epsilon<\delta+\tau,
	 	\end{cases}
	 	\]
	 	then
	 	\[
	 	\|\nabla_{g_{K}}^{2}G_{K}f\|_{C^{k,\alpha}}
	 	\le C\ell_{1}^{\delta}\ell_{n}^{\tau}.
	 	\]
	 \end{lemma}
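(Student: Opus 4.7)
The plan is to mimic the strategy of Lemmas~\ref{PDEaffine1}–\ref{PDEaffine2} but with the affine Green operator $\Delta_{G_I}^{-1}$ replaced by the Taub--NUT Green operator $G_K$ of the model metric $g_{K,1}$ on $\mathbb{C}^{n+1}$. The key input is that by Corollary~\ref{heinkejiegK} the Green kernel $G_K(x,y)$ has the same scaling as the Euclidean kernel on $\mathbb{R}^{n+2}$, namely $|\vec{\mu}_I(x)-\vec{\mu}_I(y)|^{-n+\epsilon}$ at large separations, while Proposition~\ref{buchongassumation} ensures uniform boundedness of curvature and of the generating vector fields. Since $f$ is supported in $\{\dist(\cdot,\mathfrak{D}^n_K)<2N_k,\ \dist(\cdot,\partial\mathfrak{D}^n_K)>2NN_k\}$, on $\operatorname{supp}(f)$ we have $\ell_1\sim\ell_2\sim\cdots\sim\ell_k\sim\dist(\cdot,\mathfrak{D}^n_K)$ while $\ell_{k+1},\dots,\ell_n\gtrsim N_k$; this reduces the hypothesis $\|f\|_{C^{k,\alpha}}\le C\ell_1^{\delta}\ell_n^{\tau}$ to a two-variable estimate in $(\ell_k,\ell_n)$ that we can attack directly.

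First I would perform a dyadic decomposition of $\operatorname{supp}(f)$ in the distance to $\mathfrak{D}^n_K$, writing $f=\sum_{0\le j\le j_*} f_j$ with $f_j$ supported in the annulus $\{\dist(\cdot,\mathfrak{D}^n_K)\sim 2^{j}\}$ and $2^{j_*}\sim N_k$. On each annulus $f_j$ enjoys the pointwise bound $|f_j|\le C(2^{j})^{\delta}\ell_n^{\tau}$. Next I would convolve each piece with $G_K$, splitting the integration into the three classical ranges already used in Lemma~\ref{PDEaffine1}: the near-field $|x-y|\lesssim \ell_1(x)$ (handled by Schauder on a ball of definite relative size), the intermediate region where source and target live at comparable distance from $\mathfrak{D}^n_K$, and the far-field where the two distances are of different dyadic scales. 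The near-field is controlled because $g_{K,1}$ is uniformly equivalent to its Euclidean tangent model on every ball of radius $\sim\ell_1(x)$ by Proposition~\ref{buchongassumation}, so the standard Calderón--Zygmund bound applies with a loss bounded by the uniform ellipticity constants only.

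Summing the three range contributions pointwise and then summing over $j$ produces the desired two regimes. When $-1<\tau<1-\epsilon$, the dyadic series is dominated by its top scale $j_*$, because the Taub--NUT Green kernel on the collapsing $\mathbb{T}^{k}$-fibre over $\mathfrak{D}^n_K$ improves the naive $\ell_1^{\delta+2}$ gain to $\ell_k^{-1+\epsilon}$ (the characteristic one-dimensional loss from the collapsing circle is what creates the $-1$ instead of $0$), while the transverse $\ell_n^{\tau}$ weight is untouched since the integrand is essentially constant in those directions on $\operatorname{supp}(f)$. When $\tau\le -1$, the transverse decay is already strong enough that Hein's Moser iteration from \cite{hein2010gravitational} saturates at the universal rate $\ell_n^{-2+\epsilon}$, and we use the full power of Corollary~\ref{heinkejiegK} to extract that bound. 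For the $C^{k+2,\alpha}$-statement on $\nabla_{g_K}^2 G_K f$, interior Schauder on balls of radius $\ell_1/10$ (again legitimised by the curvature bound in Proposition~\ref{buchongassumation}) upgrades the pointwise estimate into the weighted Hölder norm, provided the hypotheses on $\delta,\tau$ ensure that the Hessian integral of the Green kernel against $f$ converges; this is precisely why the two subcases $(-1<\tau<1,\ \delta>-3+2\epsilon)$ and $(-2+\epsilon<\tau\le-1,\ \delta+\tau>-4+2\epsilon)$ appear.

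The main obstacle is capturing the precise $\ell_k^{-1+\epsilon}$ gain in the first regime. This requires a genuinely anisotropic analysis of $G_K$ near $\mathfrak{D}^n_K$: decomposing the kernel into its horizontal part (behaving like an $\mathbb{R}^{k+2}$-Green function in the transverse moment coordinates $\mu_K,\eta$) and its fibre-vertical part (logarithmic/Taub--NUT-type correction from the collapsing $\mathbb{T}^{k}$-circles), and carefully tracking the cancellation of the $\mathbb{T}^n$-invariant components so that $\mathbb{T}^n$-invariance of $f$ trivialises the fibre integration and leaves only the $-1+\epsilon$ loss. The borderline exponent $\epsilon$ is inherited directly from the Moser-iteration threshold of \cite{hein2010gravitational,li2023syz} and cannot be removed without fundamentally different methods; matching this threshold across the dyadic sum is the delicate quantitative point of the whole argument.
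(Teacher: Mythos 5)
The paper's proof decomposes $f$ \emph{transversally along} the locus: it places a lattice $x_{(m_i)}=(0,\dots,0,m_{k+1},\dots,m_n,0)$ on the $(n-k)$-dimensional set $\D^n_K$, splits $f=\sum f_{(m_i)}$ into pieces supported in unit-size cylinders over the lattice cubes, applies Corollary~\ref{heinkejiegK} to each piece to get $|G_K f_{(m_i)}|\lesssim(\sum m_i^2)^{\tau/2}(|x-x_{(m_i)}|+1)^{\epsilon-n}$, and then converts the resulting sum into an $(n-k)$-dimensional integral over the locus. That integral is where the dichotomy arises: for $-1<\tau<1-\epsilon$ the integral is dominated by the region $y\approx\operatorname{pr}_K(x)$, yielding $\ell_k^{-1+\epsilon}\ell_n^\tau$ (the $-1+\epsilon$ being a codimension count from integrating $|x-y|^{\epsilon-n}$ over an $(n-1)$-dimensional set, worst case $|K|=2$); for $\tau\le-1$ the far field dominates and one saturates at $\ell_n^{-2+\epsilon}$.

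Your decomposition is in the wrong direction. You dyadically decompose in $\dist(\cdot,\D^n_K)$, i.e.\ \emph{normal} to the locus, and then assert that ``the transverse $\ell_n^\tau$ weight is untouched since the integrand is essentially constant in those directions on $\operatorname{supp}(f)$.'' This is false: the support of $f$ is bounded only in the normal direction ($\dist(\cdot,\D^n_K)<2N_k$); it is \emph{unbounded along} $\D^n_K$, and the Green operator necessarily integrates against this unbounded tangential extent. That tangential integration is exactly what produces the dichotomy in $\tau$ and the $\ell_k^{-1+\epsilon}$ gain, and your argument never touches it. Relatedly, your attribution of the $-1$ exponent to ``one-dimensional loss from the collapsing circle'' is not what happens in the computation; the $-1+\epsilon$ comes from convolving a kernel of order $|x-y|^{\epsilon-n}$ in $\R^{n+2}$ with a measure supported on a codimension-$3$ set, and the worst case is $|K|=2$ precisely because that maximizes the dimension of the locus, not because of any fibre degeneration. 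The near-field/Schauder portion of your sketch is fine and matches the paper's use of elliptic regularity, and invoking Corollary~\ref{heinkejiegK} is the right move; but without the lattice decomposition along $\D^n_K$ and the explicit polar-coordinate evaluation of the integral over the locus, the two stated regimes and the $\ell_k^{-1+\epsilon}$ factor cannot be extracted, so the argument as written has a genuine gap.
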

	 
	 \begin{proof}
	 	Assume first that $f$ is compactly supported.  Choose a lattice on the locus $\D^{n}_{K}$, which is an $(n-|K|+1)$-dimensional subset of $\R^{n}\times\C$ with $2\le|K|\le n$.  Decompose $\D^{n}_{K}$ into small cubes centred at
	 	\[
	 	x_{(m_{i})}=(0,\dots,0,m_{k+1},\dots,m_{n},0)\in\D_{K}.
	 	\]
	 	Break $f$ into a sum of $f_{(m_{i})}$ supported in the cylindrical regions
	 	\[
	 	\{m_{j}\lesssim\nu_{K,j}\lesssim m_{j}+1,\;k+1\le j\le n+1\}\cap \operatorname{supp}f.
	 	\]
	 	Taking $C_{k+2}$ sufficiently large, the support of $f$ lies inside $(\B_{K}^{n})^{\prime\prime}$, so $g_{K}$ splits as a product.  Hein's estimate gives
	 	\[
	 	|G_{K}f_{(m_{i})}|\lesssim
	 	\Bigl(\sum m_{i}^{2}\Bigr)^{\tau/2}
	 	\bigl(|x-x_{(m_{i})}|+1\bigr)^{\epsilon-n},
	 	\]
	 	and elliptic regularity yields
	 	\[
	 	\|G_{K}f_{(m_{i})}\|_{C^{k+2,\alpha}(B(x,\ell_{1}(x)/10))}
	 	\le C\Bigl(\sum_{j=k+1}^{n+1}m_{j}^{2}\Bigr)^{\tau/2}
	 	\bigl(|x-x_{(m_{i})}|+1\bigr)^{\epsilon-n}.
	 	\]
	 	Summing over $(m_{i})$ and replacing the sum by an integral, we obtain
	 	\begin{align*}
	 		&\|G_{K}f\|_{C^{k+2,\alpha}(B(x,\ell_{1}(x)/10))}\\[1mm]
	 		&\quad\le C\sum_{(m_{i})}
	 		\Bigl(\sum_{j=k+1}^{n+1}m_{j}^{2}\Bigr)^{\tau/2}
	 		\bigl(|x-x_{(m_{i})}|+1\bigr)^{\epsilon-n}\\[1mm]
	 		&\quad\le C\int_{1}^{\infty}\!\cdots\!\int_{1}^{\infty}
	 		\Bigl(\sum_{i=k+1}^{n+1}y_{i}^{2}\Bigr)^{\tau/2}
	 		\bigl(\ell_{k}(x)^{2}+\sum|\nu_{K,j}(x)-y_{j}|^{2}\bigr)^{\frac{\epsilon-n}{2}}
	 		\,\mathrm{d} y_{i}\\[2mm]
	 		&\quad\lesssim
	 		\begin{cases}
	 			(|x|_{G_{I}}+1)^{\tau}\ell_{k}(x)^{\epsilon-1},
	 			&-1<\tau<1-\epsilon,\\[1mm]
	 			(|x|_{G_{I}}+1)^{\epsilon-2},
	 			&\tau\le-1.
	 		\end{cases}
	 	\end{align*}
	 	Here we have used polar coordinates and observed that the worst convergence occurs when $|K|=2$.  Consequently,
	 	\[
	 	\begin{cases}
	 		\|G_{K}f\|_{C^{k+2,\alpha}}
	 		\le C\ell_{k}^{-1+\epsilon}\ell_{n}^{\tau},
	 		&-1<\tau<1-\epsilon,\\[2mm]
	 		\|G_{K}f\|_{C^{k+2,\alpha}}
	 		\le C\ell_{n}^{-2+\epsilon},
	 		&\tau\le-1,
	 	\end{cases}
	 	\]
	 	and the Hessian satisfies
	 	\[
	 	\begin{cases}
	 		\|\nabla_{g_{K}}^{2}G_{K}f\|_{C^{k,\alpha}}
	 		\le C\ell_{1}^{-2}\ell_{k}^{-1+\epsilon}\ell_{n}^{\tau},
	 		&-1<\tau<1-\epsilon,\\[2mm]
	 		\|\nabla_{g_{K}}^{2}G_{K}f\|_{C^{k,\alpha}}
	 		\le C\ell_{1}^{-2}\ell_{n}^{-2+\epsilon},
	 		&\tau\le-1.
	 	\end{cases}
	 	\]
	 	
	 	An approximation argument in the weak topology removes the compact-support assumption, and the proof is complete.
	 \end{proof}
	 
	 Define the cut-off function
	 \[
	 \chi_{K}=
	 \begin{cases}
	 	1,& \dist(\,\cdot\,,\partial\mathfrak{D}^{n}_{K})>3N_{0}N_{k+1}\;
	 	\text{and}\;\dist(\,\cdot\,,\mathfrak{D}^{n}_{K})<N_{k+1},\\[2mm]
	 	0,& \dist(\,\cdot\,,\partial\mathfrak{D}^{n}_{K})<2N_{0}N_{k+1}\;
	 	\text{or}\;\dist(\,\cdot\,,\mathfrak{D}^{n}_{K})>2N_{k+1}.
	 \end{cases}
	 \]
	 
	 \begin{lemma}\label{PDElocus2}
	 	Under the hypotheses of Lemma~\ref{PDElocus1}, if $C_{3}$ is sufficiently large, then
	 	\[
	 	\|\Delta_{g}(\chi_{K}G_{K}f)-f\|_{C^{k,\alpha}
	 		(\{\dist(\,\cdot\,,\partial\mathfrak{D}^{n}_{K})>3NN_{k+1}\})}
	 	\le\frac{C}{N_{k+1}^{\epsilon}}\ell_{1}^{\delta}\ell_{n}^{\tau}.
	 	\]
	 \end{lemma}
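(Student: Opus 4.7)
The plan is to exploit that $\chi_K$ is identically $1$ on most of the prescribed region, reducing the estimate to comparing $\Delta_g$ with $\Delta_{g_K}$, for which $\Delta_{g_K}(G_K f)=f$ by construction. I split $\{\dist_A(\,\cdot\,,\partial\D^n_K)>3NN_{k+1}\}$ according to $\dist_A(\,\cdot\,,\D^n_K)$: (i) where this distance is $<N_{k+1}$, so that $\chi_K\equiv 1$; (ii) where it exceeds $2N_{k+1}$, so that $\chi_K\equiv 0$ and also $f\equiv 0$ (since $f$ is supported where $\dist_A(\,\cdot\,,\D^n_K)<2N_k\ll N_{k+1}$), making the difference trivially zero; (iii) the intermediate annulus, on which $f$ still vanishes but $\chi_K$ is in transition.

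On region (i), the identity $\Delta_{g_K}(G_K f)=f$ yields
\[
\Delta_g(\chi_K G_K f)-f=(\Delta_g-\Delta_{g_K})(G_K f),
\]
and the pointwise inequality $|\Delta_g u-\Delta_{g_K} u|_{g_K}\lesssim |g-g_K|_{g_K}|\nabla^2_{g_K}u|_{g_K}+|\nabla_{g_K}(g-g_K)|_{g_K}|\nabla_{g_K}u|_{g_K}$ lets me feed in the metric deviation $\|g-g_K\|_{C^{k,\alpha}_{g_K}}\le C\ell_1^{-\epsilon}\ell_{|K|}^{-1+\epsilon}$ from Remark~\ref{devationremark1} together with the Hessian bound $\|\nabla^2_{g_K}G_K f\|_{C^{k,\alpha}}\le C\ell_1^{\delta}\ell_n^{\tau}$ supplied by Lemma~\ref{PDElocus1}. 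Since $\partial\D^n_K$ contains the strata appearing in $\ell_{|K|}$, the hypothesis $\dist_A(\,\cdot\,,\partial\D^n_K)>3NN_{k+1}$ forces $\ell_{|K|}\gtrsim N_{k+1}$, and the product estimate becomes $\lesssim N_{k+1}^{-1+\epsilon}\ell_1^{\delta}\ell_n^{\tau}$, comfortably within the required bound.

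On region (iii) I expand by the Leibniz rule,
\[
\Delta_g(\chi_K G_K f)=\chi_K\Delta_g G_K f+2g(\nabla\chi_K,\nabla G_K f)+G_K f\cdot\Delta_g\chi_K.
\]
The first term is treated exactly as in region (i). For the other two, the construction of $\chi_K$ gives $|\nabla^j\chi_K|\lesssim N_{k+1}^{-j}$, while on the transition annulus $\ell_k\sim N_{k+1}$ and Lemma~\ref{PDElocus1} yields $\|G_K f\|_{C^{k+2,\alpha}}\le C\ell_k^{-1+\epsilon}\ell_n^{\tau}$. Multiplying out, each cross term decays at rate $N_{k+1}^{-2+\epsilon}\ell_n^{\tau}$ or faster, again absorbed into the target. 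The $C^{k,\alpha}$ (weighted) extension is automatic because every input is already controlled in the matching weighted norm.

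The principal obstacle will be the weight bookkeeping: one must verify that the factor $\ell_{|K|}^{-1+\epsilon}$ in the metric deviation really degenerates to $N_{k+1}^{-1+\epsilon}$ on the prescribed domain, which hinges on the fact that, inside $\B^n_K$, staying close to $\D^n_K$ automatically separates one from all larger strata entering $\ell_{|K|}$. One must also check that the two cases $-1<\tau<1-\epsilon$ and $\tau\le-1$ of Lemma~\ref{PDElocus1} both deliver a bound of the same symbolic shape $\ell_1^{\delta}\ell_n^{\tau}$ after harmlessly absorbing residual $\ell_1^{-\epsilon}$ factors using $\ell_1\ge 1$.
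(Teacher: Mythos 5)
Your proposal is correct and follows essentially the same strategy as the paper's proof: split the domain into the sub-region where $\chi_K\equiv 1$ (reduce to the operator comparison $(\Delta_g-\Delta_{g_K})G_Kf$ using Remark~\ref{devationremark1} and the Hessian bound from Lemma~\ref{PDElocus1}), the trivial $\chi_K\equiv 0$ region, and the transition annulus where the Leibniz cross-terms are controlled by the $N_{k+1}^{-1}$ scaling of $\chi_K$ together with the decay of $G_Kf$ from Lemma~\ref{PDElocus1}. The paper's write-up is more compressed (it merely says that on $\{\mathrm{d}\chi_K\neq 0\}$ the scaling of $\chi_K$ gives the bound, without spelling out the product rule), while you make the decomposition and the role of $\ell_{|K|}\gtrsim N_{k+1}$ inside $\B^n_K$ explicit; this is the right level of care, since the paper implicitly uses that proximity to $\D^n_K$ together with $\dist(\,\cdot\,,\partial\D^n_K)>3N_0N_{k+1}$ keeps one away from every other stratum of cardinality $\geq k+2$.
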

	 
	 \begin{proof}
	 	On the set $\{\dist(\,\cdot\,,\partial\mathfrak{D}^{n}_{K})>3NN_{k+1}\}$ the cut-off $\chi_{K}$ equals $1$ on the support of $f$, so we only need to estimate
	 	\[
	 	I=\|(\Delta_{g}-\Delta_{K})G_{K}f\|.
	 	\]
	 	By Remark~\ref{devationremark1},
	 	\[
	 	\|g_{K}-g\|_{C^{k,\alpha}}\le C\ell_{k}^{-1+\epsilon},
	 	\]
	 	and, in particular, on the support of $f$,
	 	\[
	 	\|g_{K}-g\|_{C^{k,\alpha}}\le\frac{C}{N_{k+1}^{1-\epsilon}}.
	 	\]
	 	Hence the error term is bounded by $\dfrac{C}{N_{k+1}^{1-\epsilon}}$.  On the set $\{\mathrm{d}\chi_{K}\neq 0\}$ we also have to estimate $\|\Delta_{g}(\chi_{K}G_{K}f)\|$, since the scaling factor of $\chi_{K}$ is $N_{k+1}^{-1}$, the desired bound follows from Lemma~\ref{PDElocus1}.
	 \end{proof}

	\begin{lemma}\label{PDEB0}
		Assume either
		\[
		-2\le\delta\le 0,\quad \tau>-2,
		\qquad\text{or}\qquad
		\delta\le-2,\quad \delta+\tau>-4,
		\]
		and let $0<\epsilon\ll 1$ depend on $\delta,\tau$.  If $f$ is supported in the ball
		\[
		\{\dist(\,\cdot\,,\D_{I})<2N_{n+1}\}
		\]
		with
		\[
		\|f\|_{C^{k,\alpha}(\mathbb{C}^{n+1})}\le C\ell_{1}^{\delta}\ell_{n}^{\tau},
		\]
		then
		\[
		\|G_{g_{1}}f\|_{C^{k,\alpha}}\le C\ell_{n}^{-2+\epsilon},
		\]
		and, in particular,
		\[
		\|\nabla_{g_{1}}^{2}G_{g_{1}}f\|_{C^{k,\alpha}}\le C\ell_{1}^{\delta}\ell_{n}^{\tau}.
		\]
	\end{lemma}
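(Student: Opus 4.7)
The plan is to solve $\Delta_{g_1}u = f$ directly using Hein's global Green operator (Corollary~\ref{heinkejieg1}) and then to sharpen the output through interior Schauder estimates on balls of scale $\ell_1$. The crucial first observation is that $\ell_1, \ell_n \ge 1$ by their definition, so the pointwise inequality $|f|\le C\ell_1^{\delta}\ell_n^{\tau}$ makes $f$ uniformly bounded regardless of the signs of $\delta$ and $\tau$. Combined with the compact support of $f$, this forces $|f(x)|\le C(1+|\vec{\mu}_I(x)|)^{-q}$ trivially for every $q\ge 0$: on the support $(1+|\vec{\mu}_I|)^{-q}$ is bounded below, and outside the support $f$ vanishes. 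Choosing $q=n+2-\epsilon/2$, Hein's Poisson estimate gives $u:=G_{g_1}f\in C^{2,\alpha}$ with $\Delta_{g_1}u=f$ and $|u(x)|\le C(1+|\vec{\mu}_I(x)|)^{-n+\epsilon}$. Because $n\ge 2$ and $\ell_n(x)\sim 1+|\vec{\mu}_I(x)|$ outside a bounded set, this already establishes the pointwise bound $|u|\le C\ell_n^{-2+\epsilon}$.

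Next I would upgrade to the full weighted $C^{k,\alpha}$ estimate via interior Schauder. On each ball $B(x,\ell_1(x)/20)$ the metric $g_1$ is uniformly quasi-isometric (after rescaling by $\ell_1(x)^{-1}$) to a model metric of bounded geometry, by Proposition~\ref{inductionhyp}(IV) together with the asymptotic analysis of Subsection~\ref{subsectionasymptoticprop}. Classical interior Schauder, rescaled so that the $j$-th derivative is weighted by $\ell_1^{j}$ as required by the weighted-norm convention of Subsection~\ref{subsectionasymptoticprop}, then yields $\|u\|_{C^{k,\alpha}}\le C\ell_n^{-2+\epsilon}$, which is the first conclusion of the lemma.

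For the Hessian estimate, the same Schauder inequality at scale $r=\ell_1(x)/20$ yields
\begin{equation*}
|\nabla_{g_1}^2 u(x)|\le C\bigl(\ell_1(x)^{-2}\ell_n(x)^{-2+\epsilon}+\ell_1(x)^{\delta}\ell_n(x)^{\tau}\bigr),
\end{equation*}
so the conclusion reduces to absorbing the first summand into the second, i.e.\ to verifying $\ell_1^{-2-\delta}\ell_n^{-2+\epsilon-\tau}\le C$. Using $1\le\ell_1\le\ell_n$, this is a short case analysis: in case~(a), $\delta\ge -2$ yields $\ell_1^{-2-\delta}\le 1$, while $\tau>-2$ together with sufficiently small $\epsilon$ makes the $\ell_n$-exponent non-positive; in case~(b), $\delta\le -2$ gives $\ell_1^{-2-\delta}\le\ell_n^{-2-\delta}$, so the combined $\ell_n$-exponent becomes $-4-\delta-\tau+\epsilon$, which is non-positive once $\epsilon<\delta+\tau+4$, as guaranteed by the hypothesis $\delta+\tau>-4$. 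Higher weighted derivatives of $\nabla^2 u$ follow by the same Schauder bootstrap. The most delicate point I anticipate is passing from Hein's plain $C^{2,\alpha}$ conclusion to the full weighted $C^{k,\alpha}$ output; this is handled by a standard interior bootstrap, since $f$ itself lies in $C^{k,\alpha}$ away from $\D$ and the weighted norm respects rescaling on balls of radius proportional to $\ell_1$.
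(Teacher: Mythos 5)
Your argument is correct and follows essentially the same route as the paper, whose proof of this lemma is simply the observation that it ``follows directly from Corollary~\ref{heinkejieg1}'': you use the compact support of $f$ to place it in an arbitrary polynomial decay class, invoke Hein's Poisson estimate for $g_{1}$ with $q$ close to $n+2$ (or to $4$ when $n=2$) to get $|G_{g_1}f|\le C\ell_n^{-2+\epsilon}$, and then recover the weighted $C^{k,\alpha}$ and Hessian bounds by rescaled interior Schauder plus the case analysis on $(\delta,\tau)$. The details you supply (in particular the absorption $\ell_1^{-2-\delta}\ell_n^{-2+\epsilon-\tau}\le C$ under the two hypotheses) are exactly what the paper leaves implicit.
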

	
	\begin{proof}
		This follows directly from Corollary~\ref{heinkejieg1}.
	\end{proof}
	
	\subsubsection{Solving the Laplace Equation}
	Observe that the ranges of the weight exponents $\delta,\tau$ in all the lemmas above overlap, as in Li--Yang \cite{li2023syz}, the common admissible region is
	\begin{equation}\label{GoodRange}
		\{-3<\delta<-1,\;-3<\tau<1,\;-4<\delta+\tau<-1\}.
	\end{equation}
	We thus obtain the following solvability statement for the Laplace equation.
	
	\begin{lemma}(Proposition~2.23 of \cite{li2023syz})\label{laplaciansov}
		Let $(\delta,\tau)$ lie in the range \eqref{GoodRange}.  Given $f$ with
		\[
		\|f\|_{C^{k,\alpha}}\le C\ell_{1}^{\delta}\ell_{n}^{\tau},
		\]
		there exists a function $u$ solving $\Delta_{g}u=f$ and satisfying
		\[
		\|\mathrm{d} u\|_{C^{k+1,\alpha}}\le C\ell_{1}^{\delta+1}\ell_{n}^{\tau}.
		\]
	\end{lemma}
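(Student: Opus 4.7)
The plan is to build an approximate right-inverse $P$ for $\Delta_g$ by superposing the three region-specific parametrices of Lemmas~\ref{PDEaffine1}, \ref{PDElocus1} and \ref{PDEB0}, and then remove the residual error by a Neumann-type iteration. First I would fix a hierarchy of scales $N_0\ll N_1\ll\cdots\ll N_{n+1}$ (the $N_0$ being the one introduced before Lemma~\ref{PDElocus1}) and a $\mathbb{T}^n$-invariant partition of unity $\{\phi_\infty,\phi_K,\phi_0\}$ on $\C^{n+1}$, where $\phi_\infty$ is supported in the far region $\{\dist(\,\cdot\,,\D^n)>N_1\}$, each $\phi_K$ (with $2\le|K|\le n$) is supported in the tubular slab around $\D^n_K$ appearing in Lemma~\ref{PDElocus1}, and $\phi_0$ is supported near $\D^n_I$ as in Lemma~\ref{PDEB0}. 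For $f$ in the good weighted class I would then set
\[
Pf:=\Delta_{G_I}^{-1}(\phi_\infty f)+\sum_K \chi_K G_K(\phi_K f)+G_{g_1}(\phi_0 f),
\]
with the cut-offs $\chi_K$ of Lemma~\ref{PDElocus2}.

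Lemmas~\ref{PDEaffine1}, \ref{PDElocus1} and \ref{PDEB0} then control each summand's Hessian in $C^{k,\alpha}$ by $C\ell_1^{\delta}\ell_n^{\tau}$, and the conditions $\delta<-1$, $\delta+\tau<-1$ built into the range~\eqref{GoodRange} are exactly what is needed to integrate once from infinity and obtain the gradient bound $\|dPf\|_{C^{k+1,\alpha}}\le C\ell_1^{\delta+1}\ell_n^{\tau}$ claimed in the lemma. To finish, I would compute $\Delta_g P-\mathrm{id}$ region by region: on the bulk Lemma~\ref{PDEaffine2} yields a gain $O(N_1^{-1})$, on each slab around $\D^n_K$ Lemma~\ref{PDElocus2} yields a gain $O(N_{|K|+1}^{-\epsilon})$, while $\phi_0 f$ is absorbed exactly by $\Delta_g G_{g_1}$ up to a small deviation between $\Delta_g$ and $\Delta_{g_1}$. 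Choosing the scales $N_i$ large enough in increasing order of $|K|$ makes the combined error operator $E$ a contraction on the weighted H\"older space, so $u:=P(\mathrm{id}+E)^{-1}f$ is the required solution of $\Delta_g u=f$.

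The hard part will be the book-keeping of cross-terms produced by derivatives of the partition of unity and of the cut-offs $\chi_K$: an error supported on $\nabla\phi_K$ or $\nabla\chi_K$ lives on an annulus that also meets the domain of another parametrix $\chi_{K'}G_{K'}$ with $K\subsetneq K'$, and these contributions must remain inside~\eqref{GoodRange} after one has accounted for the metric-deviation estimate $\|g-g_{K'}\|\le C\ell_{|K'|}^{-1+\epsilon}$ from Remark~\ref{devationremark1}. This is precisely why the admissible range is sharp on both sides — the lower bounds $\delta>-3$, $\tau>-3$, $\delta+\tau>-4$ guarantee convergence of the integrals defining each parametrix along the various strata, while the upper bounds $\delta<-1$, $\delta+\tau<-1$ permit the single integration that converts Hessian bounds into the claimed gradient bound — and why the scales $N_i$ have to be nested in the correct order for the Neumann contraction to close.
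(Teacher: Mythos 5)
Your proposal differs structurally from the paper's proof in one important way, and that difference creates a genuine gap. You build a \emph{parallel} parametrix $Pf=\Delta_{G_I}^{-1}(\phi_\infty f)+\sum_K\chi_K G_K(\phi_K f)+G_{g_1}(\phi_0 f)$ from a partition of unity applied to the original source $f$. The paper's proof of Lemma~\ref{laplaciansov} is instead a \emph{sequential} residual scheme: it first forms $u_a=\Delta_{G_I}^{-1}(\chi' f)$, then feeds the residual $f-\Delta_{g_1}u_a$ (cut off near $\D^n_{ij}$) into the slab parametrices, then feeds $f-\Delta_{g_1}(u_a+\sum u_K)$ into the next stratum, and so on down to $G_{g_1}$. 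The telescoping ensures that at every stratum the slab parametrix is asked to cancel precisely what the earlier corrections left behind.

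The gap in your version is the claim that $\Delta_g P-\mathrm{id}$ is small near the discriminant locus. Near $\D^n_K$ one has $\phi_\infty=0$, so the contribution of the bulk parametrix to the error there is $(\Delta_g-\Delta_{G_I})\Delta_{G_I}^{-1}(\phi_\infty f)$. Lemma~\ref{PDEaffine2}, which is all that is available, gives only the \emph{global} bound $\|\Delta_{g_1}\Delta_{G_I}^{-1}(\phi_\infty f)-\phi_\infty f\|_{C^{k,\alpha}}\le C\ell_1^{\delta-1}\ell_n^{\tau}$, with the $O(N_1^{-1})$ gain valid only on $\{\dist(\cdot,\D^n)>N_1\}$. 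Near the locus $\ell_1\sim 1$, so $\ell_1^{\delta-1}\ell_n^{\tau}\sim\ell_1^{\delta}\ell_n^{\tau}$ and this term is $O(1)$ in the weighted norm — it does not shrink as you push $N_1\to\infty$. Your slab corrections $\chi_K G_K(\phi_K f)$ are tuned to cancel $\phi_K f$, not this leakage from the bulk parametrix, so the error operator is not a contraction and the Neumann series need not converge. To close this gap along your route you would need a refinement of Lemma~\ref{PDEaffine1} showing that when the source is supported in $\{\ell_1>N_1\}$ the Euclidean Hessian of its potential gains a power of $N_1$ in the region $\ell_1\lesssim 1$; that refinement is plausible (the nearest source lies at distance $\ge N_1-1$ and one can redo the dyadic estimate), but it is nowhere in the paper, and you neither state nor prove it. The paper's sequential construction sidesteps the issue entirely, because $u_{ij}=\chi_{ij}G_{\{i,j\}}\bigl(\chi'_{ij}(f-\Delta_{g_1}u_a)\bigr)$ is by design an approximate inverse for the actual residual near $\D^n_{ij}$, including the $O(1)$ leakage from $u_a$.
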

	
	\begin{proof}
		Choose a cut-off function
		\[
		\chi^{\prime}=
		\begin{cases}
			1,&\dist(\,\cdot\,,\mathfrak{D}^{n})\ge 2,\\[1mm]
			0,&\dist(\,\cdot\,,\mathfrak{D}^{n})\le 1.
		\end{cases}
		\]
		Then $\chi^{\prime}f$ is supported in $\{\dist(\,\cdot\,,\mathfrak{D}^{n})\ge 1\}$ and satisfies
		\[
		\|\chi^{\prime}f\|_{C^{k,\alpha}}\le C\ell_{1}^{\delta}\ell_{n}^{\tau}.
		\]
		
		By Lemma~\ref{PDEaffine1}, $u_{a}=\Delta_{G_I}^{-1}(\chi^{\prime}f)$ satisfies
		\[
		\|\nabla^{2}_{g_{1}}u_{a}\|_{C^{k,\alpha}}\le C\ell_{1}^{\delta}\ell_{n}^{\tau},
		\]
		and, for $N_{1}\gg 1$, Lemma~\ref{PDEaffine2} gives
		\[
		\|\Delta_{g_{1}}u_{a}-f\|_{C^{k,\alpha}(\{\dist(\,\cdot\,,\mathfrak{D}^{n})>N_{1}\})}
		\le\frac{C}{N_{1}}\ell_{1}^{\delta}\ell_{n}^{\tau}.
		\]
		
		For $K\subset I$ with $|K|=k+1$, $k\ge 1$, define
		\[
		\chi^{\prime}_{K}=
		\begin{cases}
			1,&\dist(\,\cdot\,,\partial\mathfrak{D}^{n}_{K})>3N_{0}N_{k},\;
			\dist(\,\cdot\,,\mathfrak{D}^{n}_{K})<N_{k},\\[2mm]
			0,&\dist(\,\cdot\,,\partial\mathfrak{D}^{n}_{K})<2N_{0}N_{k}\;
			\text{or}\;\dist(\,\cdot\,,\mathfrak{D}^{n}_{K})>2N_{k}.
		\end{cases}
		\]
		The scaling length of $\chi^{\prime}_{K}$ is approximately $N_{k}$.  Set
		\[
		f_{ij}=\chi^{\prime}_{ij}(f-\Delta_{g_{1}}u_{a}),
		\]
		so that $f_{ij}$ is supported in
		\[
		\{\dist(\,\cdot\,,\partial\mathfrak{D}^{n}_{ij})>2NN_{1},\;
		\dist(\,\cdot\,,\mathfrak{D}^{n}_{ij})<2N_{1}\}
		\]
		and satisfies
		\[
		\|f_{ij}\|_{C^{k,\alpha}}\le C\ell_{1}^{\delta}\ell_{n}^{\tau}.
		\]
		By Lemma~\ref{PDElocus1}, there exists $G_{\{i,j\}}f_{ij}$ with
		\[
		\|\nabla_{g_{\{i,j\}}}^{2}G_{\{i,j\}}f_{ij}\|_{C^{k,\alpha}}
		\le C\ell_{1}^{\delta}\ell_{n}^{\tau}.
		\]
		Put $u_{ij}=\chi_{ij}G_{\{i,j\}}f_{ij}$, where $\chi_{ij}$ is the cut-off defined before Lemma~\ref{PDElocus2}.  Then $u_{ij}$ is globally defined on $\C^{n+1}$ and, by Lemma~\ref{PDElocus2},
		\[
		\|\Delta_{g_{1}}u_{ij}-f_{ij}\|_{C^{k,\alpha}
			(\{\dist(\,\cdot\,,\partial\mathfrak{D}^{n}_{ij})>3NN_{2}\})}
		\le\frac{C}{N_{2}^{\epsilon}}\ell_{1}^{\delta}\ell_{n}^{\tau}.
		\]
		
		Repeating the procedure for $K\subset I$, $|K|=k+1$, we obtain $u_{K}$ with analogous estimates.  Finally, let
		\[
		\chi_{0}^{\prime}=
		\begin{cases}
			1,&\dist(\,\cdot\,,\mathfrak{D}^{n}_{I})\le N_{n+1},\\[1mm]
			0,&\dist(\,\cdot\,,\mathfrak{D}^{n}_{I})>2N_{n+1},
		\end{cases}
		\]
		and set
		\[
		f_{0}=\chi_{0}^{\prime}\Bigl(f-\Delta_{g_{1}}\bigl(u_{a}
		+\sum_{|K|=2}^{n}u_{K}\bigr)\Bigr).
		\]
		Then $f_{0}$ is supported in $\{\dist(\,\cdot\,,\mathfrak{D}^{n}_{I})<2N_{n+1}\}$ and satisfies
		\[
		\|f_{0}\|_{C^{k,\alpha}}\le C\ell_{1}^{\delta}\ell_{n}^{\tau}.
		\]
		By Lemma~\ref{PDEB0}, there exists $u_{0}=G_{g_{1}}f_{0}$ with
		\[
		\|\nabla_{g_{1}}^{2}u_{0}\|_{C^{k,\alpha}}\le C\ell_{1}^{\delta}\ell_{n}^{\tau},
		\qquad
		\Delta_{g_{1}}u_{0}=f_{0}.
		\]
		
		Let
		\[
		u=u_{0}+u_{a}+\sum_{|K|=2}^{n}u_{K}.
		\]
		Since $N_{k+1}\gg N_{k}$, we have
		\[
		\|\nabla_{g_{1}}^{2}u\|_{C^{k,\alpha}}\le C\ell_{1}^{\delta}\ell_{n}^{\tau},
		\qquad
		\|\Delta_{g_{1}}u-f\|_{C^{k,\alpha}}\le\frac{1}{N_{1}^{\epsilon}}\ell_{1}^{\delta}\ell_{n}^{\tau}.
		\]
		Thus $u$ is an approximate solution with the required bounds.  The iteration to obtain an exact solution follows the same argument as in Proposition~2.23 of Li--Yang \cite{li2023syz}, which we omit here.
	\end{proof}
	\subsubsection{Solving the Complex Monge--Amp{\`e}re Equation}
	Observe that the volume-form error $E$ satisfies
	\[
	\|E\|_{C^{k,\alpha}}\le C\ell_{1}^{-1+\epsilon}\ell_{n}^{-1-\epsilon}.
	\]
	Applying Lemma~\ref{laplaciansov}, we solve the Poisson equation
	\[
	\Delta_{g_{1}}u^{\prime}=-2E,\qquad
	\|\mathrm{d} u^{\prime}\|_{C^{k+1,\alpha}}\le C\ell_{1}^{-\epsilon}\ell_{n}^{-1+\epsilon},
	\]
	so that, in particular,
	\[
	\|\partial\bar\partial u^{\prime}\|_{C^{k,\alpha}}
	\le C\ell_{1}^{-1-\epsilon}\ell_{n}^{-1+\epsilon},\qquad
	\|(\partial\bar\partial u^{\prime})^{2}\|_{C^{k,\alpha}}
	\le C\ell_{1}^{-2-2\epsilon}\ell_{n}^{-2+2\epsilon}.
	\]
	Set $\omega_{1}^{\prime}=\omega_{1}+\sqrt{-1}\, \partial\bar\partial u^{\prime}$.  Then
	\begin{align*}
		(\omega_{1}^{\prime})^{n+1}
		&=(\omega_{1}+\sqrt{-1}\, \partial\bar\partial u^{\prime})^{n+1}\\[1mm]
		&=(\omega_{1})^{n+1}\Bigl(1+\frac{1}{2}\Delta_{g_{1}}u^{\prime}
		+O\!\bigl(|\partial\bar\partial u^{\prime}|^{2}\bigr)\Bigr),
	\end{align*}
	and the new volume-form error $E^{\prime}$ satisfies
	\[
	\|E^{\prime}\|_{C^{k,\alpha}}\le C \ell_{1}^{-2}\ell_{n}^{-2+2\epsilon}.
	\]
	Outside a compact set the modification to $\omega_{1}$ is $C^{0}$-small, so positivity of the K\"ahler metric is preserved.  Inside the compact region we argue as in Section~\ref{subsctionsurgeryoriginal}, using the deviation estimate for $u^{\prime}$ to control the growth of the moment map of $g_{1}^{\prime}$ via an analogue of Lemma~\ref{zzengzhangxing}.  We still denote the resulting K\"ahler metric by $\omega_{1}^{\prime}$,  it inherits all analytic properties of $\omega_{1}$.
	
	Applying Lemma~\ref{laplaciansov} again with background metric $g_{1}^{\prime}$, we solve
	\[
	\Delta_{g_{1}^{\prime}}u^{\prime\prime}=-2E^{\prime},\qquad
	\|\mathrm{d} u^{\prime\prime}\|_{C^{k+1,\alpha}}
	\le C \ell_{1}^{-1}\ell_{n}^{-2+2\epsilon},
	\]
	and set $\omega_{1}^{\prime\prime}=\omega_{1}^{\prime}+\sqrt{-1}\, \partial\bar\partial u^{\prime\prime}$.  The identity
	\[
	(\omega_{1}^{\prime}+\sqrt{-1}\, \partial\bar\partial u^{\prime\prime})^{n+1}
	=(\omega_{1}^{\prime})^{n+1}
	\Bigl(1+\frac{1}{2}\Delta_{g_{1}^{\prime}}u^{\prime\prime}
	+O\!\bigl(|\partial\bar\partial u^{\prime\prime}|^{2}\bigr)\Bigr)
	\]
	yields volume-form error $E^{\prime\prime}$ satisfies
	\[
	\|E^{\prime\prime}\|_{C^{k,\alpha}}\le C \ell_{1}^{-4}\ell_{n}^{-4+2\epsilon}.
	\]
	A further surgery in the compact region preserves the K\"ahler property.  The metric $\omega_{1}^{\prime\prime}$ now satisfies all the hypotheses required by Tian--Yau--Hein package.  Theorem~\ref{MAkejiexing} follows by Hein's estimates and elliptic bootstrap.

	\bibliographystyle{amsplain}   
	\bibliography{main}

\end{document}